\def\algbackskip{\hskip-\ALG@thistlm}
\newcommand{\bB}{\mathbb{B}} % Ball
\newcommand{\bS}{\mathbb{S}} % Sphere
\newcommand{\bF}{\mathbb{F}} % Field (\bR or \bC)
\newcommand{\bC}{\mathbb{C}} % Complex number
\newcommand{\bR}{\mathbb{R}} % Real number
\newcommand{\bZ}{\mathbb{Z}} % Integer
\newcommand{\bN}{\mathbb{N}} % Natural number
\newcommand{\bH}{\mathbb{H}} % Hilbert space
\newcommand{\bV}{\mathbb{V}} % Vector space
\newcommand{\bE}{\mathbb{E}} % Expectation
\newcommand{\bP}{\mathbb{P}} % Probability
\newcommand{\cB}{\mathcal{B}} % Borel sigma-algebra
\newcommand{\cH}{\mathcal{H}} % Harmonic poly
\newcommand{\cL}{\mathcal{L}} % L_2 space
\newcommand{\cN}{\mathcal{N}} % Null space
\newcommand{\cP}{\mathcal{P}} % Projection operator
\newcommand{\cS}{\mathcal{S}} % Schwarz space
\newcommand{\cR}{\mathcal{R}} % Range space
\newcommand{\cI}{\mathcal{I}} % Isomorphism operator
\newcommand{\mI}{\mathbf{I}} %Identity
\newcommand{\mG}{\mathbf{\Gamma}} %Right inverse of X-ray
\newcommand{\mP}{\mathbf{P}} %Euclidean projection
\newcommand{\mF}{\mathbf{F}} %Mesh
\newcommand{\mR}{\mathbf{R}} %Multiple orientations
\newcommand{\ma}{\boldsymbol{\alpha}} %Vectorization of \alpha
\newcommand{\mW}{\mathbf{W}} %Gram matrix
\newcommand{\mY}{\mathbf{Y}} %Vectorized response
\newcommand{\mzero}{\mathbf{0}} %zero matrix
\newcommand{\sK}{\mathscr{K}} %K-plane transform
\newcommand{\sP}{\mathscr{P}} %X-ray transform
\newcommand{\sI}{\mathscr{I}} %Riesz potential
\newcommand{\erf}{\text{erf}}
\newcommand{\vect}[1]{\ensuremath{\mathbf{#1}}}
\def\multiset#1#2{\ensuremath{\left(\kern-.3em\left(\genfrac{}{}{0pt}{}{#1}{#2}\right)\kern-.3em\right)}}
\newcommand*\Laplace{\mathop{}\!\mathbin\bigtriangleup}
\newcommand{\rvline}{\hspace*{-\arraycolsep}\vline\hspace*{-\arraycolsep}}
\newcommand{\rd}{\mathrm{d}} %Integral
\newcommand{\bigplus}{%
  \DOTSB\mathop{\mathpalette\mattos@bigplus\relax}\slimits@
}
\newcommand\mattos@bigplus[2]{%
  \vcenter{\hbox{%
    \sbox\z@{$#1\sum$}%
    \resizebox{!}{0.9\dimexpr\ht\z@+\dp\z@}{\raisebox{\depth}{$\m@th#1+$}}%
  }}%
  \vphantom{\sum}%
}
\DeclareMathOperator*{\spann}{span}
\DeclareMathOperator*{\argmin}{\arg\min}
\DeclareMathOperator*{\argmax}{\arg\max}
\theoremstyle{plain}
\newtheorem{theorem}{Theorem}[section]
\newtheorem{lemma}[theorem]{Lemma}
\newtheorem{corollary}[theorem]{Corollary}
\newtheorem{proposition}[theorem]{Proposition}
\newtheorem{definition}[theorem]{Definition}
\newtheorem{remark}[theorem]{Remark}
\begin{document}

%%%%%%%%%%%%%%%%%%%
%%%For arXiv Preprint
\pagestyle{fancy}
%\fancyhf{} % sets both header and footer to nothing
\fancyhead{}
\renewcommand{\headrulewidth}{0pt} %No header line
\fancyhead[CE]{H. Yun \& V.~M. Panaretos}
\fancyhead[CO]{Computerized Tomography and RKHS}
%%%%%%%%%%%%%%%%%%%

\begin{frontmatter}
\title{{\large Computerized Tomography and Reproducing Kernels}}
%\runtitle{Computerized Tomography and RKHS}

\begin{aug}
\author[A]{\fnms{Ho} \snm{Yun}\ead[label=e1]{ho.yun@epfl.ch}}
\and
\author[A]{\fnms{Victor M.} \snm{Panaretos}\ead[label=e2,mark]{victor.panaretos@epfl.ch}}
%\runauthor{H. Yun \& V.~M. Panaretos}

\thankstext{t1}{Research supported by a Swiss National Science Foundation grant.}
%%%%%%%%%%%%%%%%%%%%%%%%%%%%%%%%%%%%%%%%%%%%%%
%% Addresses                                %%
%%%%%%%%%%%%%%%%%%%%%%%%%%%%%%%%%%%%%%%%%%%%%%
\address[A]{Ecole Polytechnique F\'ed\'erale de Lausanne, \printead{e1,e2}}
\end{aug}

\begin{abstract}
The X-ray transform is one of the most fundamental integral operators in image processing and reconstruction. In this article, we revisit the formalism of the X-ray transform by considering it as an operator between Reproducing Kernel Hilbert Spaces (RKHS). Within this framework, the X-ray transform can be viewed as a natural analogue of Euclidean projection. The RKHS framework considerably simplifies  projection image interpolation, and leads to an analogue of the celebrated representer theorem for the problem of tomographic reconstruction. It leads to methodology that is dimension-free and stands apart from conventional filtered back-projection techniques, as it does not hinge on the Fourier transform. It also allows us to establish sharp stability results at a genuinely functional level (i.e. without recourse to discretization), but in the realistic setting where the data are discrete and noisy. The RKHS framework is versatile, accommodating any reproducing kernel on a unit ball, affording a high level of generality. When the kernel is chosen to be rotation-invariant, explicit spectral representations can be obtained, elucidating the regularity structure of the associated Hilbert spaces. Moreover, the reconstruction problem can be solved at the same computational cost as filtered back-projection.
\end{abstract}

\begin{keyword}[class=AMS]
\kwd[Primary ]{44A12}
\kwd[; secondary ]{46E22}
\end{keyword}

\begin{keyword}
\kwd{X-ray transform}
\kwd{reproducing kernel Hilbert space}
\kwd{representer theorem}
\kwd{stability estimates}
\kwd{inverse problem}
\kwd{Radon transform}
\end{keyword}
\end{frontmatter}

\maketitle

%\tableofcontents

\section{Introduction}
The purpose of this paper is to introduce a novel framework for tomography based on Reproducing Kernel Hilbert Spaces (RKHS), with the aim of addressing important challenges inherent in the problem of tomographic reconstruction. At the core of our approach lies the reformulation of the \emph{domain space} of the X-ray transform: instead of a (weighted) $\cL_{2}$ space, we take the domain as being an RKHS. We demonstrate that doing so offers substantial conceptual and mathematical advantages, particularly when relating continuum models with finite, discrete, and noisy data. Importantly, the new framework is straightforward and arguably simpler than traditional settings, and its development and theory rely essentially only on basic Hilbert space theory and elementary arguments. 

Since its inception in the 1970s, the technique of Computerized Tomography (CT) has evolved considerably,  becoming a vital tool in fields ranging from radiology to structural biology, and various scientific disciplines \cite{sorenson1987physics, frank2006three}. This method enables the visualization of an object's internal features through projection images. At the core of this technology lies the X-ray transform $\sP_{R}$, a mathematical operation that, at a given orientation $R \in SO(n)$, maps an original object $f: \bR^{n} \rightarrow \bR$ to a tomograph through a line integral:
\begin{align*}
    \sP(R, \vect{x}) = \sP_{R} f(\vect{x}) =
    \int_{-\infty}^{+\infty} f(R^{\top} [\vect{x}:z]) \rd z, 
\end{align*}
where $[\vect{x}:z] \in \bR^{n}$ denotes the concatenation of $\vect{x} \in \bR^{n-1}$ with $z \in \bR^{1}$. The X-ray transform, in essence, calculates the line integral of an object's density along the axis $\vect{r}:=R^{\top} e_{n}$ of orientation. The entire collection of projection images $g = \sP f$ is often referred to as a sinogram because the X-ray transform of an off-center point source generates a sinusoidal wave pattern, as depicted in the middle of \cref{sino::img}. Specifically, a sinogram should satisfy two essential conditions: the \emph{compatibility principle} and the \emph{moment condition}, commonly referred to jointly as the Helgason-Ludwig Consistency Conditions (HLCC, see \cref{HLCC}).
The fundamental task in tomography is to reconstruct an unknown function $f^{0}$ from a series of discretized projection images captured at finitely many orientations $R_{i} \in SO(n)$ and mesh points $\vect{x}_{j} \in \bR^{n-1}$, with noise contamination $\varepsilon_{ij} \stackrel{iid}{\sim} N(0, \sigma^{2})$, as shown in the right side of \cref{sino::img}:
\begin{equation}\label{eq:data:discrete}
    y_{ij} = \sP_{R_{i}} f^{0}(\vect{x}_{j}) + \varepsilon_{ij}, \quad 1 \le i \le N, \, 1 \le j \le M.
\end{equation}

\begin{figure}[t]
\centering
\includegraphics[width=\textwidth, height=4cm]{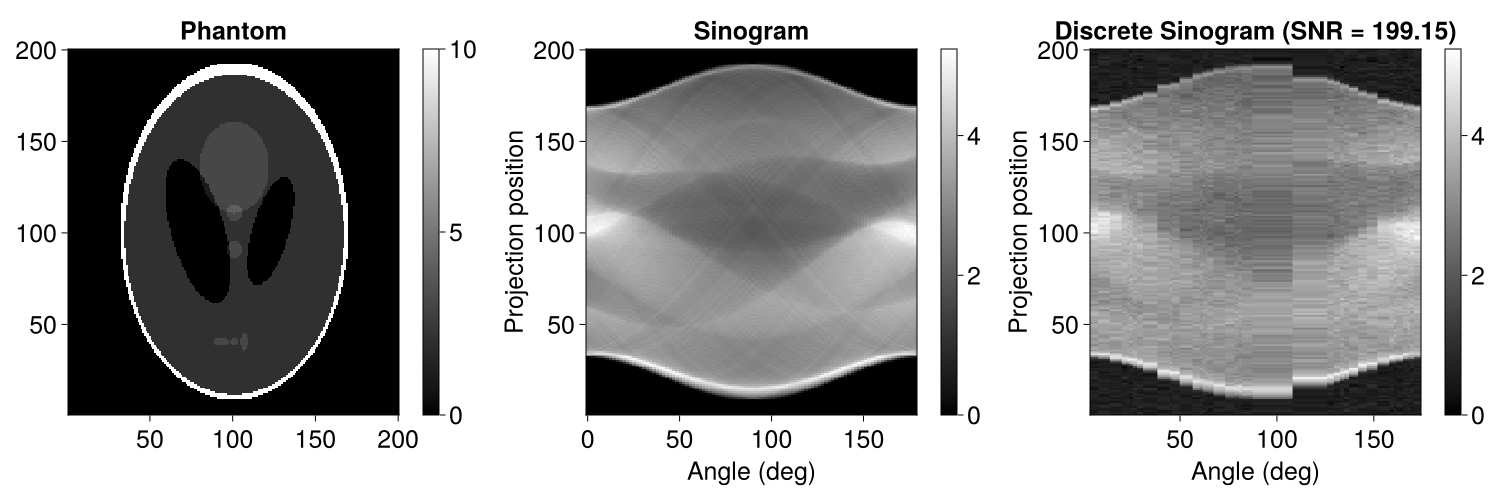}
\caption{Sinogram (middle) of the 2D Shepp-Logan phantom (left) over $180^{\circ}$, and its discretization (right) at randomly chosen $N=40$ angles with noise level $\sigma = 20$.}
\label{sino::img}
\end{figure}

While the X-ray transform itself $f \mapsto \sP_{R} f$ does not intrinsically depend on the domain and the range (as long as the line integral is well-defined), the analytical properties of the operator $\sP_{R}$ do, and thus the underlying image to be reconstructed depends on the chosen domain \cite{hanke2017taste}.
The X-ray transform coincides with the Radon transform when $n=2$, and a variety of reconstruction methods have been developed within the weighted $\cL_{2}$ setting, see \cite{natterer2001mathematics} and references therein. 
Departing from this conventional setup, we alter the domain to be an RKHS $\bH(K)$ equipped with a reproducing kernel $K$. We then show that $\sP_{R}$ becomes a surjective operator onto an induced RKHS $\bH(\tilde{K}^{R})$.
Notably, considering $\sP_{R}: \bH(K) \rightarrow \bH(\tilde{K}^{R})$ as an operator between RKHSs elicits a compelling analogy to the Euclidean projection. This is conceptually intuitive, as the X-ray transform simplifies to the Euclidean projection when examining a point mass. Several common properties align between the two, such as the adjoint operator (called the backprojection) acting as an isometry and constituting the right inverse of the X-ray transform: for a given  projection image $g_{R}: \bB^{n-1} \rightarrow \bR$, we have $\| \sP_{R}^{*} g_{R}\|_{\bH(K)}=\|g_{R}\|_{\bH(\tilde{K}^{R})}$ and $\sP_{R} (\sP_{R}^{*} g_{R})=g_{R}$. 
This property allows us to 
derive a \emph{dimension-free} representer theorem for 
\begin{align}\label{eq:tiko:intro}
    \hat{f}^{\nu}
    = \argmin_{f \in \bH(K)} \sum_{i=1}^{N} \sum_{j=1}^{M} &\left(y_{ij}- \sP_{R_{i}} f(\vect{x}_{j}) \right)^{2} +\nu \|f\|^{2}_{\bH(K)}, \quad \nu > 0,
\end{align}
which recasts the reconstruction problem as a penalized regression problem, with $\hat{f}^{\nu}$ being the reconstruction of $f^{0}$. The corresponding normal equation $(\mW+\nu \mI) \hat{\ma}^{\nu}= \mY$ involves a Gram matrix $\mW$ which can be \emph{precomputed} in tomographic hardware for fast computation. 

The RKHS framework affords a high degree of abstraction and flexibility, imposing no specific conditions on a reproducing kernel on the unit ball for reconstruction. The desired level of smoothness and oscillation of the solution could be tailored by manipulating the reproducing kernels. For instance, if we consider the $\cL_{2}$ norm of a function's Laplacian as a measure of its smoothness \cite{zhao2013fourier}, we may use a thin-plate spline derived from the Laplacian within the unit ball under the vanishing Dirichlet boundary condition \cite{iyer2016smoothing, wahba1981spline}. Compared to conventional approaches, the RKHS framework presents notable advantages on several fronts:

\begin{figure}[t]
\centering
    \includegraphics[width=\linewidth, height=6cm] {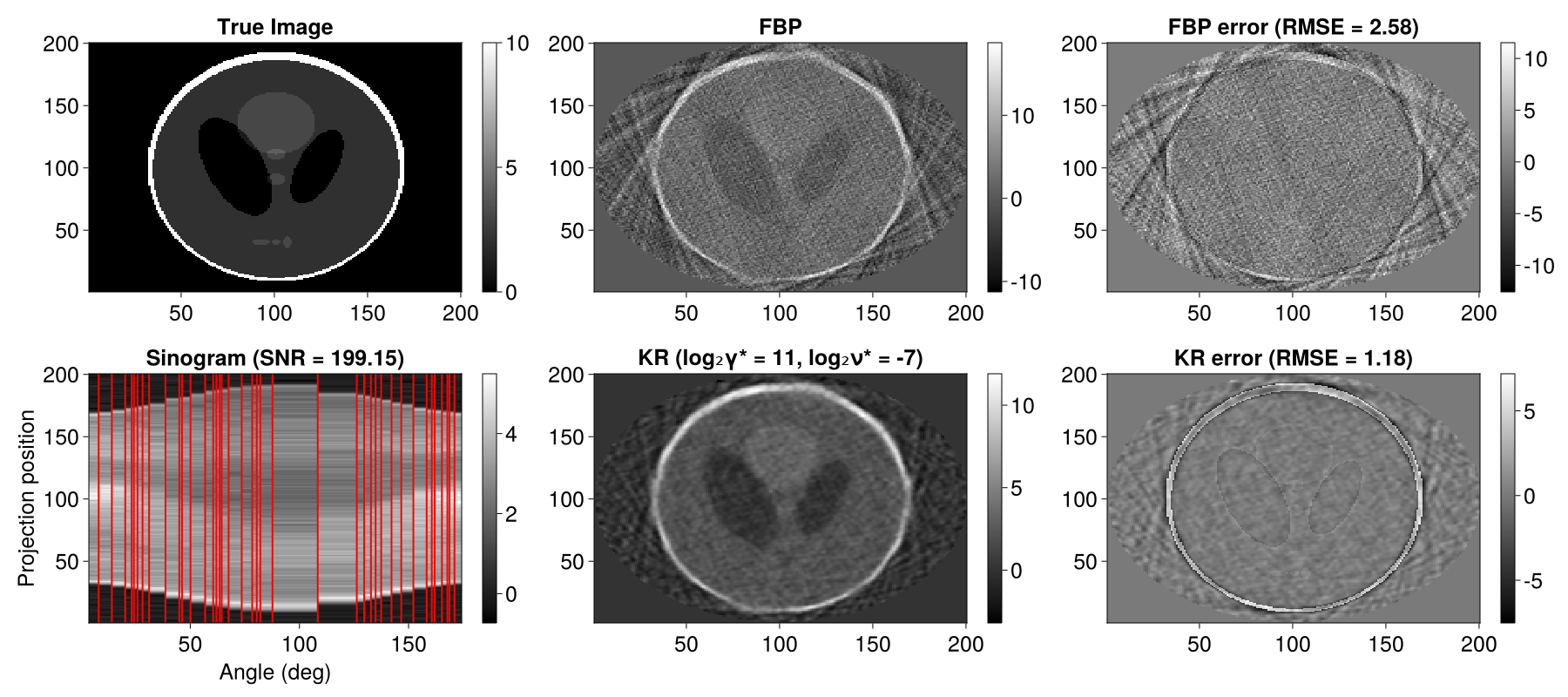}
\caption{FBP (top middle) and KR (bottom middle) reconstructions from a perturbed sinogram (bottom left) with $\sigma = 20$, along with the error plots for FBP (top right) and KR (bottom right). The red vertical lines in the sinogram  indicate the location of randomly chosen $N=40$ angles. For KR, we explore several choices for the parameter $\gamma > 0$ for Gaussian kernels $K(\vect{z}_{1}, \vect{z}_{2}) = \exp(-\gamma \|\vect{z}_{1}-\vect{z}_{2}\|^{2})$ and the penalty parameter $\nu > 0$ for the Tikhonov regularization, of which optimal values are found to be $\gamma^{*} = 2^{11}$ and $\nu^{*} = 2^{-7}$.}
\label{RKHS::error::0_20}
\end{figure}

\begin{itemize}[leftmargin=*]
\item \textit{It circumvents the inverse, dispenses with (Fourier) filtering and is dimension-indifferent}: In conventional approaches using a weighted $\cL_{2}$ domain space framework, na\"ive backprojection is \emph{not} the right inverse of the X-ray transform. To obtain the inverse, the sinogram must be filtered in the Fourier domain before backprojection -- hence the term \emph{Filtered Backprojection (FBP)}-- an approach upon which the majority of reconstruction algorithms rely. Such filtering is a local operation when the dimension $n$ is odd, but is global when $n$ is even, as it involves the Hilbert transform \cite{natterer2001mathematics}. Even so, the inverse operator $\sP^{-1}$ is unbounded, making reconstruction $f = \sP^{-1} g$  vulnerable to small measurement errors, even for a complete sinogram  \cite{hadamard1902problemes}. When only a finite number of projection images $\sP_{R_{i}}f, \, 1 \le i \le N$ can be observed, the inversion formula for the partial sinogram becomes impractical in dimensions $n \ge 3$, as the X-ray transform has a large null space \cite{natterer1980sobolev}. This requires additional regularization, e.g. in the form of truncating the transform's Singular Value Decomposition (SVD) \cite{izen1988series,maass1987x}. %, where these functions essentially amount to the multiplication of an orthogonal polynomial in radial direction and a spherical harmonic in angular direction \cite{maass1987x}.

\hspace{5pt} In comparison, our Kernel Reconstruction (KR) operates directly in the spatial domain without any filtering techniques, as $\sP_{R} \circ \sP_{R}^{*}$ is the identity, ensuring consistent methodology across different dimensions $n$. Regularization is automatically enforced (in Tikhonov form) through the representer theorem, circumventing the approximation of the inverse to the complete sinogram. KR also eliminates the need for band-limitedness assumptions, commonly made in signal-processing approaches \cite{natterer2001mathematics, stefanov2022radon, stefanov2020semiclassical, monard2023sampling}. Consequently, the conventional Nyquist-Shannon sampling scheme does not strictly apply within our setup, which results in fewer artifacts. Finally, the RKHS framework can encompass the truncated SVD algorithm within the $\cL_{2}$ framework, so can be seen as a generalization (truncated SVD corresponds to a finite-rank reproducing kernel, see \cref{left::ang::ex}, which also crystallizes the drawbacks of the SVD approach).

\item \textit{It seamlessly and consistently links the continuum unknown with the discrete data}: To address practical scenarios involving discretization and errors 
(as in \eqref{eq:data:discrete}), the conventional $\cL_{2}$ setup typically follows one of two approaches. The \emph{functional} approach first interpolates the projection images to generate a sinogram adhering to the HLCC by restricting resolution \cite{huang2017restoration}; and, subsequently performing FBP to the resulting interpolant to recover the original image. On the other hand, the \emph{discrete} approach conceptualizes the object as an $n$-dimensional tensor, with discretized variants of FBP employing the Fast Fourier transform (FFT). Iteration is usually required, because such discretized algorithms typically violate the HLCC \cite{zhou2020limited, zhang2016low}. 

\hspace{5pt} In stark contrast, KR is a one-step algorithm that simultaneously addresses sinogram reconstruction (projection image interpolation) and the retrieval of the original structure, in strict adherence to the HLCC (see \cref{{RKHS:HLCC}}). Unlike the approach in \cite{monard2019efficient, huang2017restoration}, which fixes a finite-dimensional functional space \textit{a priori}, KR is intrinsically functional: the optimal solution of \eqref{eq:tiko:intro} over an infinite-dimensional function space is shown \emph{a posteriori} to belong to an explicit finite-dimensional space, %$\text{span} \{\sP^{*}_{R_{i}} \tilde{K}^{R_{i}}(\vect{x}_{j}, \cdot): 1 \le i \le N, 1 \le j \le M \}$ 
tailored to the measurement setup.

\item \textit{It comprehensively outperforms FBP with no additional computational overhead}: KR is computationally efficient, taking only a few seconds for a single run ($\approx 0.3$ secs for $M = 100$ and $N = 40$, using 4 threads on Apple M1 Pro Chip). Additionally, KR consistently outperforms FBP across all configurations in numerical experiments, with the performance gap widening in adversarial setups. With noisy data, KR exhibits improved resistance over FBP without brute-force iterations. As angular irregularity increases, FBP algorithms often introduce severe artifacts, while KR maintains less pronounced artifacts, as illustrated in \cref{RKHS::error::0_20}. %The isometric nature of the kernel backprojection ensures the stability of KR, as discussed in the next point.

\hspace{5pt} This is noteworthy because the FBP reconstruction is widely used to initialize modern iterative methods leveraging deep learning to reduce scanning views while maintaining or improving reconstructed image quality \cite{jia2011gpu, jin2017deep, adler2017solving}. Our findings show that KR may well be preferable for initializing these methods.

\item \textit{It furnishes ``honest'' and non-asymptotic guarantees in the form of stability}: 
%The stability of existing discrete algorithms on a functional level often remains unclear, particularly because these methods primarily operate in the Fourier domain, complicating error estimation transitions to the continuous spatial domain.
The non-asymptotic stability of discrete algorithms at a functional level is often unclear due to their operating in the Fourier domain, making the passage to continuum error estimation challenging. Several works \cite{stefanov2022radon, stefanov2020semiclassical, monard2023sampling, katsevich2024analysis}  have examined the \emph{asymptotic} stability of the discretized FBP algorithm to address its artifacts, assuming conditions like semi-classical band-limitedness or specific function smoothness. As for state-of-the-art neural network methods, stability results are still elusive, with performance guarantees come in the form of heuristics or numerical evidence.

\hspace{5pt} Against this background, the representer theorem simplifies functional reconstruction into a finite-dimensional setting, enabling us to present a non-asymptotic sharp stability inequality. \cref{stable::recons} reveals that the quality of KR in the worst-case scenario is inversely proportional to the sum of  the regularization parameter $\nu>0$ and the least non-zero eigenvalue of the Gram matrix $\mW$. Our stability theory is distinctive in being ``honest" to both the functional nature of the object $f^{0}$ and to the discrete/noisy nature of the tomographic data $y_{ij}$: it does not simplify the problem by discretizing the functional object, nor does it assume ideal/noiseless continuum observations.
\end{itemize}

The paper is organized as follows. \cref{sec:prelim} reviews basic properties of the Euclidean projections and the $\cL_{2}$ framework. \cref{sec::RKHS} introduces the RKHS framework and the Kernel Reconstruction (KR) method. This includes its theoretical stability analyses and  illustrative examples comparing to FBP.
\cref{sec:invker} focuses on rotation-invariant kernels, which induce projections belonging to the same RKHS, invariant to orientation.
Notably, versions of Schoenberg's theorem (\cref{scheon::gen}) and Mercer's theorem (\cref{Xray::Aronszajn}) are developed, providing a comprehensive spectral characterization. % of continuous $O(n)$-invariant kernels. 
\cref{sec:circulant} investigates how the parallel geometry in the 2D plane simplifies solution of the normal equation $(\mW+\nu \mI) \hat{\ma}^{\nu}= \mY$ by way of the FFT, leading to complexity $O(NM^{2}+NM \log N)$, on par with FBP. %When the viewing angles are equidistant, the Gram matrix $\mW$ becomes a block circulant matrix, transforming the normal equation into a matrix convolution problem, allowing for more reliable and fast computation via the FFT. Specifically, the proposed algorithm requires only $O(NM^{2}+NM \log N)$ operations, a complexity on par with the discretized FBP algorithm. 
\cref{sec:fur} describes how our setting can be straightorwardly extended to cover the Radon transform, and more generally the $k$-plane transform \cite{christ1984estimates}. \cref{sec:Gauss:ker} demonstrates how $\mW$ can be easily computed when the chosen kernel is a (truncated) Gaussian kernel. The proofs of all the results presented in the main paper are contained in the appendix. The appendix also contains additional experimental results, and summarizes aspects of group theory and harmonic analysis (\cref{sec::sphe::har}) made use of in  our theory.

\section{Preliminaries}\label{sec:prelim}
Throughout the paper, we assume that $n \ge 2$ and we adhere to the following notation: 
\begin{itemize}
    \item $\bR^{n}$ is the $n$-dimensional Euclidean space.
    \item $\bS^{n-1}=\{\vect{z} \in \bR^{n}: \|\vect{z}\| = 1\}$ is the unit sphere in $\bR^{n}$.
    % \begin{equation*}
    %     |\bS^{n-1}|=\frac{2 \pi^{n/2}}{\Gamma(n/2)}, \quad |\bS^{0}|=2, |\bS^{1}|=2 \pi, |\bS^{2}|=4 \pi.
    % \end{equation*}
    \item $\bB^{n}=\{\vect{z} \in \bR^{n}: \|\vect{z}\| \le 1\}$ is the closed unit ball in $\bR^{n}$.   
    % \begin{equation*}
    %     |\bB^{n}|=\frac{\pi^{n/2}}{\Gamma(n/2+1)}, \quad |\bB^{0}|=1, |\bB^{1}|=2, |\bB^{2}|=\pi.
    % \end{equation*}
    \item $w(s):=\sqrt{1-s^2}, \, |s| \le 1$ is a weight function on $[-1, +1]$.
   
    \item $W(\vect{x}):=w(\|\vect{x}\|)=\sqrt{1-\|\vect{x}\|^2}, \, \vect{x} \in \bB^{n-1}$ is a weight function on $\bB^{n-1}$.
     \item $\{e_k\}_{k=1}^{n}$ is the canonical basis of $\mathbb{R}^n$, i.e. $e_k$ has a 1 in the $k$th coordinate and zeroes elsewhere. 
    \item $J \in O(n)$ is the reflection along the $e_{n}$ axis given by $J:=
        \begin{pmatrix}
    \mI_{n-1} & \rvline & \mzero \\
    \hline
    \mzero & \rvline & -1
    \end{pmatrix}$.
    \item $E(\theta) \in SO(n)$ is the Euler rotation matrix for $\theta \in \bS^{n-1}$ satisfying $E(\theta)^{\top} e_{n}=\theta$, see \cref{sec:SO(n)} for its construction.
    %\item $\mP_{R}$ and $\sP_{R}$ are the Euclidean projection and the X-ray transform at the orientation $R \in SO(n)$, respectively.
    \item For a given operator $\sP : X \rightarrow Y$, the null space and the range space are denoted by $\cN(\sP) \subset X$ and $\cR(\sP) \subset Y$, respectively.
\end{itemize}

Additionally, $\vect{z}$ and $\vect{x}$ represent an $n$-dimensional and an $(n-1)$-dimensional vector, respectively.
The \textit{tilde} symbol signifies the same notion, mutis mutandis, but on the reduced dimension: for instance, $\tilde{J} :=
\begin{pmatrix}
    \mI_{n-2} & \rvline & \mzero \\
    \hline
    \mzero & \rvline & -1
\end{pmatrix} \in O(n-1)$ represents the reflection along the $e_{n-1}$ axis.

\subsection{Euclidean Projection}\label{sec::Euclid}
To illustrate the similarity between plain Euclidean projection and the RKHS-based X-ray transform in \cref{sec::RKHS}, we collect and review some rudimentary properties of orthogonal projections in Euclidean spaces.

Given an orientation $R \in SO(n)$, let $\vect{r}=R^{\top}e_{n} \in \bS^{n-1}$ denote its axis of rotation. For any $\vect{z} \in \bR^{n}$, there is a unique element $\vect{x} \in \bR^{n-1}$ satisfying $\vect{z} = R^{\top} [\vect{x}:z] \ \Longleftrightarrow \ R\vect{z} = [\vect{x}:z]$ for some $z \in \bR$, which is seen to be $z=\vect{z}^{\top} \vect{r}$. As $\vect{x}$ is a linear function of $\vect{z}$, we can denote this operator as $\mP_{R}:\bR^{n} \rightarrow \bR^{n-1}$, where $\vect{x}=\mP_{R} (\vect{z})$. In matrix notation, we obtain $\mP_{R} =
\begin{pmatrix}
\mI_{n-1} & \rvline & \mzero
\end{pmatrix} \cdot R  \in \bR^{(n-1) \times n}$ and $\mP_{R}^{*} = \mP_{R}^{\top}$, so the following properties are straightforward:
\begin{proposition}\label{Euclid::image} For any $R \in SO(n)$, $\mP_{R}: \bR^{n} \rightarrow \bR^{n-1}$ is a contractive, surjective, linear map. Its adjoint operator $\mP_{R}^{*}: \bR^{n-1} \rightarrow \bR^{n}$ is an isometry. Also, $\cR(\mP_{R}^{*}) = \cN(\mP_{R})^{\perp}= \vect{r}^{\perp}$  and the following holds:
\begin{enumerate}
    \item $\mP_{R} \circ \mP_{R}^{*}: \bR^{n-1} \rightarrow \bR^{n-1}$ is the identity.
    \item Any $\vect{z} \in \bR^{n}$ can be uniquely expressed as a direct sum:
    \begin{equation*}
        \vect{z}=\mP_{R}^{*} \circ \mP_{R}(\vect{z}) \oplus (\vect{z}-(\mP_{R}^{*} \circ \mP_{R}(\vect{z})) \in \vect{r}^{\perp} \oplus \cN(\mP_{R}),
    \end{equation*}
    thus $\vect{z} \in \vect{r}^{\perp}$ if and only if $\vect{z}=\mP_{R}^{*} \circ \mP_{R}(\vect{z})$.
    \item For $\vect{x} \in \bR^{n-1}$, we have
    \begin{align*}
        &\left\{\vect{z} \in \bR^{n}: \vect{x} =\mP_{R} (\vect{z}) \right\}=\mP_{R}^{*} \vect{x} \oplus \cN(\mP_{R}),  \\
        &\|\vect{x}\|=\|\mP_{R}^{*} (\vect{x})\|= \min \left\{\|\vect{z}\| : \vect{x} =\mP_{R} (\vect{z}) \right\}.
    \end{align*}
    \item Euclidean projection is obtained as $\mP_{R}=\mP_{I} \cdot R: \bR^{n} \rightarrow \bR^{n-1}$.
    \item Backprojection is obtained as $\mP_{R}^{*}=R^{\top} \cdot \mP_{I}^{*}: \bR^{n-1} \rightarrow \bR^{n}$.
\end{enumerate} 
\end{proposition}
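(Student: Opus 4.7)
The plan is to reduce everything to the simple case $R = I$, where $\mP_I = (I_{n-1} \mid \mzero)$, and then exploit $R \in SO(n)$ being orthogonal. First I would verify directly from the block matrix form that $\mP_I \mP_I^{*} = I_{n-1}$ and $\mP_I^{*} \mP_I = \mathrm{diag}(I_{n-1}, 0)$, the projector onto $e_n^{\perp}$. Using $RR^{\top} = I_n$, the identity $\mP_R \circ \mP_R^{*} = \mP_I R R^{\top} \mP_I^{*} = \mP_I \mP_I^{*} = I_{n-1}$ then follows at once, giving claim (1) and immediately also surjectivity (onto $\bR^{n-1}$) and the two factorization identities (4) and (5).

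Next, I would deduce that $\mP_R^{*}$ is an isometry from claim (1): for any $\vect{x} \in \bR^{n-1}$,
\begin{equation*}
\|\mP_R^{*}\vect{x}\|^{2} = \langle \mP_R^{*}\vect{x},\mP_R^{*}\vect{x}\rangle = \langle \vect{x}, \mP_R\mP_R^{*}\vect{x}\rangle = \|\vect{x}\|^{2}.
\end{equation*}
Contractivity of $\mP_R$ then drops out by duality ($\|\mP_R\|_{\mathrm{op}}=\|\mP_R^{*}\|_{\mathrm{op}}=1$). For the range/null characterization, I would identify $\cN(\mP_R)$: since $\mP_R\vect{z} = \mP_I(R\vect{z})$ vanishes iff $R\vect{z}$ has its first $n-1$ coordinates zero iff $R\vect{z} \in \spann\{e_n\}$ iff $\vect{z} \in \spann\{R^{\top} e_n\} = \spann\{\vect{r}\}$. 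Thus $\cN(\mP_R) = \spann\{\vect{r}\}$ and, by the finite-dimensional rank-nullity duality $\cR(\mP_R^{*}) = \cN(\mP_R)^{\perp}$, we obtain $\cR(\mP_R^{*}) = \vect{r}^{\perp}$.

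For (2), the orthogonal decomposition $\vect{z} = \mP_R^{*}\mP_R\vect{z} \oplus (\vect{z} - \mP_R^{*}\mP_R\vect{z})$ is exactly the decomposition along the orthogonal sum $\vect{r}^{\perp} \oplus \cN(\mP_R)$, since $\mP_R^{*}\mP_R$ is an orthogonal projector (self-adjoint with $(\mP_R^{*}\mP_R)^{2} = \mP_R^{*}(\mP_R\mP_R^{*})\mP_R = \mP_R^{*}\mP_R$) whose range is $\cR(\mP_R^{*}) = \vect{r}^{\perp}$. The equivalence $\vect{z} \in \vect{r}^{\perp} \iff \vect{z} = \mP_R^{*}\mP_R\vect{z}$ is then immediate. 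For (3), the affine fiber $\{\vect{z} : \mP_R\vect{z} = \vect{x}\}$ is nonempty by surjectivity, contains $\mP_R^{*}\vect{x}$, and hence equals $\mP_R^{*}\vect{x} + \cN(\mP_R)$; the minimum norm statement follows because $\mP_R^{*}\vect{x} \perp \cN(\mP_R)$ (Pythagoras) and $\|\mP_R^{*}\vect{x}\| = \|\vect{x}\|$ by the isometry property.

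There is really no main obstacle here — the whole proposition is a packaging of elementary linear algebra facts, and the only thing worth being careful about is choosing the right order (establishing $\mP_R\mP_R^{*} = I$ first, so that isometry, contractivity, surjectivity, and the orthogonal projector identity for $\mP_R^{*}\mP_R$ all cascade from it) and correctly identifying $\cN(\mP_R)$ via the orthogonality of $R$.
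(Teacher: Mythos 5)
Your argument is correct and follows exactly the route the paper intends: the paper states these facts without proof, calling them straightforward consequences of the matrix representation $\mP_{R}=\begin{pmatrix}\mI_{n-1} & \rvline & \mzero\end{pmatrix}\cdot R$ and $\mP_{R}^{*}=\mP_{R}^{\top}$, and your write-up is precisely the fleshed-out version of that, with $\mP_{R}\mP_{R}^{*}=\mI_{n-1}$ established first and everything else cascading from it. No gaps.
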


\subsection{The X-ray Transform}\label{ssec:xray}
Though there are other integral transforms used in tomographic image processing, for the purposes of this paper, our exclusive attention is directed toward the X-ray transform. We remark that when $n=2$, the Radon transform and the X-ray transform coincide. Further information on these integral transforms can be found in \cite{solmon1976x, hanke2017taste, natterer2001mathematics} and references therein. 

\begin{definition}
Let $f \in \cL_{2}(\bR^{n})$. For $(R,\vect{x}) \in SO(n) \times \bR^{n-1}$, the X-ray transform is given by
\begin{align*}
    \sP f(R, \vect{x}) = \sP_{R} f(\vect{x}) =
    \int_{-\infty}^{+\infty} f(R^{\top} [\vect{x}:z]) \rd z,
\end{align*}
provided that the integral exists.
\end{definition}

\begin{remark}
When $n>2$, the above formula is integrable over a.e. $(R,\vect{x})$, see \cite{smith1975lower}. In most situations, the object of our interest admits compact support, and the above integral is a.e. finite. By restricting the domain into $\cL_{2}(\bB^{n})$, 
we can consider the X-ray transform at orientation $R \in SO(n)$ as $\sP_{R}:\cL_{2}(\bB^{n}) \rightarrow \cL_{2}(\bB^{n-1})$ given by
\begin{align*}
    \sP_{R} f(\vect{x}) =
    \int_{I_{\|\vect{x}\|}} f(R^{\top} [\vect{x}:z]) \rd z, \quad \vect{x} \in \bB^{n-1}
\end{align*}
where the interval of integration is given by $I_{\|\vect{x}\|}:=[-W(\vect{x}), +W(\vect{x})]$, independently of $R \in SO(n)$.
\end{remark}

We establish an equivalence relation on $SO(n)$ as follows: $R_{1} \sim R_{2}$ if
\begin{align}\label{rot::equiv}
    \cR(\mP_{R_{1}}^{*}) = \cR(\mP_{R_{2}}^{*}) \quad \Longleftrightarrow \quad \vect{r}_{1}^{\perp} = \vect{r}_{2}^{\perp} \quad \Longleftrightarrow \quad \vect{r}_{1}^{\top}\vect{r}_{2} = \pm 1,
\end{align}
where $\vect{r}_{1}: =R_{1}^{\top} e_{n}$ and $\vect{r}_{2}:=R_{2}^{\top} e_{n}$ represent the axes of rotation $R_{1}$ and $R_{2}$, respectively.  Due to \cref{Euler::repre} in the appendix, the equivalence class of $R \in SO(n)$ is given by $[R]=[E(\vect{r})]=[I]E(\vect{r})$, where $E(\vect{r})$ is the Euler matrix for $\vect{r}=R^{\top} e_{n}$ and
\begin{align*}
    [I] = \left\{
    \begin{pmatrix}
    \tilde{R} & \rvline & \mzero \\
    \hline
    \mzero & \rvline & 1
    \end{pmatrix}, \,
    \begin{pmatrix}
    \tilde{J} \tilde{R} & \rvline & \mzero \\
    \hline
    \mzero & \rvline & -1
    \end{pmatrix} : \tilde{R} \in SO(n-1) \right\}.
\end{align*}
In an ideal noise-free scenario, the information we obtain from $\sP_{R_{1}}f$ and $\sP_{R_{2}}f$ is virtually identical, assuming their orientation axes are parallel: the range of Euclidean backprojection $\cR(\mP_{R_{1}}^{*})=\cR(\mP_{R_{2}}^{*})=\vect{r}_{1}^{\perp}$ are the same, and one can easily show that $\sP_{R_{1}} f(\vect{x}_{1}) = \sP_{R_{2}} f(\vect{x}_{2})$ whenever $\mP_{R_{1}}^{*} (\vect{x}_{1}) = \mP_{R_{2}}^{*} (\vect{x}_{2}) \in \vect{r}_{1}^{\perp}$. 
We shall refer to this property as the \textit{compatibility principle} henceforth. 
In this regard, while the Grassmannian $Gr(1, n)$ may offer the non-redundant parameterization for the viewing angle instead of $SO(n)$ \cite{solmon1976x}, we employ the latter to accommodate a more general framework which encompasses the setting of cryo-Electron Microscopy (cryo-EM) \cite{anden2018structural,singer2018mathematics, panaretos2009random}. Here,  biological macromolecules are imaged subject to random orientations, without any control of the within-plane angle. %Indeed, the Signal-to-Noise Ratio (SNR) of cryo-EM images is significantly low, thus practitioners cannot ascertain if two  projection images share the same axis of orientation.

\begin{remark}\label{xray::Euclid}
In the case where the original structure $f=I_{\vect{z}}$ is a unit point mass at some $\vect{z} \in \bR^{n}$, then the X-ray transform simplifies to the orthogonal projection in Euclidean space in Section \ref{sec::Euclid}:
\begin{align*}
    \sP_{R} I_{\vect{z}}(\vect{x}) &=
    \int_{-\infty}^{+\infty} I_{\vect{z}}(R^{\top} [\vect{x}:z]) \rd z \\
    &= \delta (\vect{x} \in \bR^{n-1}: R^{\top}[\vect{x}:(\vect{z}^{\top} \vect{r})]=\vect{z})= \delta_{\mP_{R}(\vect{z})} (\vect{x}). 
\end{align*}
\end{remark}

%%%%%
\subsection{Conventional \texorpdfstring{$\cL_{2}$}{Lg} framework}\label{sec::convention}
%The tomographic reconstruction is an inverse problem, aiming to recover $f$ from a set of  projection images $\sP_{R_{i}} f, \, i=1,2,\dots, N$, as depicted in \cref{sino::img}.

Recall that we also write $\sP f(R, \vect{x}) = \sP_{R} f(\vect{x})$, and consider the X-ray transform that maps a function on $\bR^{n}$ to $SO(n) \times \bR^{n-1}$.
As we employ a different parametrization $(R,\vect{x}) \in SO(n) \times \bR^{n-1}$, in contrast to the more conventional parametrization $(\theta, \vect{z}) \in \bS^{n-1} \times \theta^{\perp}$ utilized in tomography \cite{solmon1976x, natterer2001mathematics}, we restate certain key properties. For the proofs of the statements in this subsection, please refer to \cref{sec::proof::convention} and \cite{natterer2001mathematics}.

\begin{proposition}\label{L2::conti}
The X-ray transform with respect to the given domains are bounded:
\begin{align*}
    \sP_{R} : \cL_{2}(\bB^{n}) \rightarrow \cL_{2}(\bB^{n-1}), \quad   
    \sP : \cL_{2}(\bB^{n}) \rightarrow \cL_{2}(SO(n) \times \bB^{n-1}).
\end{align*}
Hence, they admit bounded adjoint operators, given by
\begin{equation}\label{L2::adj}
   (\sP_{R}^{*} g)(\vect{z})=g(\mP_{R}(\vect{z})), \quad    
   (\sP^{*} g)(\vect{z})=\int_{SO(n)} g(R, \mP_{R}(\vect{z})) \rd R,    
\end{equation}
where $\rd R$ denotes the normalized Haar measure on $SO(n)$.
\end{proposition}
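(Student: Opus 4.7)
The plan is to establish boundedness via Cauchy--Schwarz and a change of variables exploiting the fact that $R \in SO(n)$ is an isometry, then compute the adjoints by Fubini.

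First, I would handle $\sP_R: \cL_2(\bB^n) \to \cL_2(\bB^{n-1})$. For fixed $\vect{x} \in \bB^{n-1}$, the integration is over $I_{\|\vect{x}\|} = [-W(\vect{x}), W(\vect{x})]$, a set of length $2W(\vect{x}) \le 2$. So Cauchy--Schwarz gives
\begin{equation*}
    |\sP_R f(\vect{x})|^2 \le 2W(\vect{x}) \int_{I_{\|\vect{x}\|}} |f(R^\top[\vect{x}:z])|^2 \rd z.
\end{equation*}
Integrating over $\vect{x} \in \bB^{n-1}$, and applying the change of variables $\vect{z} = R^\top[\vect{x}:z]$ (which is a rotation of $\bR^n$, so volume-preserving, and carries the ``cylinder'' $\{(\vect{x},z) : \vect{x} \in \bB^{n-1}, |z| \le W(\vect{x})\}$ onto $\bB^n$), I get $\|\sP_R f\|^2_{\cL_2(\bB^{n-1})} \le 2 \|f\|^2_{\cL_2(\bB^n)}$. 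The global bound for $\sP: \cL_2(\bB^n) \to \cL_2(SO(n) \times \bB^{n-1})$ then follows by integrating this pointwise (in $R$) estimate against the normalized Haar measure on $SO(n)$, giving the same constant.

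For the adjoint of $\sP_R$, I compute $\langle \sP_R f, g\rangle_{\cL_2(\bB^{n-1})}$ by unfolding the line integral and invoking the same change of variables $\vect{z} = R^\top[\vect{x}:z]$; since $\vect{x}$ is recovered as the first $n-1$ coordinates of $R\vect{z}$, namely $\vect{x} = \mP_R(\vect{z})$, this yields
\begin{equation*}
    \langle \sP_R f, g\rangle_{\cL_2(\bB^{n-1})} = \int_{\bB^n} f(\vect{z}) \, g(\mP_R(\vect{z})) \rd \vect{z},
\end{equation*}
identifying $\sP_R^* g(\vect{z}) = g(\mP_R(\vect{z}))$. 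For $\sP^*$, I apply the same identity pointwise in $R$ and then Fubini (justified by the $\cL_2$ boundedness) to exchange the $SO(n)$ and $\bB^n$ integrations, giving the stated formula $(\sP^* g)(\vect{z}) = \int_{SO(n)} g(R, \mP_R(\vect{z})) \rd R$.

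The only delicate step is ensuring that the change of variables $[\vect{x}:z] \mapsto R^\top[\vect{x}:z]$ sends the cylindrical region $\{(\vect{x},z) : \vect{x} \in \bB^{n-1},\ |z| \le W(\vect{x})\}$ bijectively and measure-preservingly onto $\bB^n$. This is immediate from $\|R^\top[\vect{x}:z]\|^2 = \|\vect{x}\|^2 + z^2$, so the cylindrical region is precisely $\bB^n$ in disguise, and $R^\top$ has unit Jacobian. Everything else is routine Cauchy--Schwarz and Fubini.
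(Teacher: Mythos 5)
Your proposal is correct and follows essentially the same route as the paper: Cauchy--Schwarz with the bound $2W(\vect{x})\le 2$ on the length of $I_{\|\vect{x}\|}$, the measure-preserving change of variables $\vect{z}=R^{\top}[\vect{x}:z]$ identifying the cylindrical region with $\bB^{n}$, integration over $SO(n)$ in Haar measure for the global bound, and Fubini plus the same substitution for the adjoint formulas. No gaps.
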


\begin{remark}
One can prove in the same way that $\sP_{R}: \cL_{2}(\bB^{n}) \rightarrow \cL_{2}(\bB^{n-1}, W^{-1})$ is also continuous \cite{natterer2001mathematics}, with the adjoint operator now given by 
\begin{equation*}
    \sP_{R}^{*} g(\vect{z})=W^{-1}(\mP_{R}(\vect{z})) g(\mP_{R}(\vect{z})).
\end{equation*}
Meanwhile, the X-ray transform becomes unbounded if we consider $\cL_{2}(\bR^{n})$ as the domain instead. Yet, the formal adjoint operator with respect to the $\cL_{2}$ norm is still given by \eqref{L2::adj}.
\end{remark}

Let $\cS(\bR^{n})$ denote the Schwarz space. For $f \in \cS(\bR^{n})$, the Fourier transform $\hat{f}$ and the inverse Fourier transform $\check{f}$ are defined by
\begin{align*}
    \hat{f}(\xi):=(2 \pi)^{-n/2} \int_{\bR^{n}} e^{-\imath \vect{z}^{\top} \xi} f(\vect{z}) \rd \vect{z}, \quad 
    \check{f}(\vect{z}):=(2 \pi)^{-n/2} \int_{\bR^{n}} e^{\imath \vect{z}^{\top} \xi} f(\xi) \rd \xi, \quad \vect{z}, \xi \in \bR^{n},
\end{align*}
and the Fourier inversion formula reads $(\hat{f})\check{}=(\check{f})\hat{}=f$ for any $f \in \cS(\bR^{n})$.

\begin{theorem}[Fourier slice theorem (FST)]\label{Fou::slice}
For $f \in \cS(\bR^{n})$ and $R \in SO(n)$,
\begin{equation*}
    \widehat{\sP_{R} f}(\eta)=\sqrt{2 \pi} \hat{f}(\mP_{R}^{*}(\eta)), \quad \eta \in \bR^{n-1}.
\end{equation*}
Thus, for any $R_{1}, R_{2} \in SO(n)$, whenever $\mP_{R_{1}}^{*}(\eta_{1})= \mP_{R_{2}}^{*}(\eta_{2}) \in \vect{r}_{1}^{\perp} \cap \vect{r}_{2}^{\perp}$, we have
\begin{equation*}
    \widehat{\sP_{R_{1}} f}(\eta_{1})=\widehat{\sP_{R_{2}} f}(\eta_{2}), \quad \eta_{1}, \eta_{2} \in \bR^{n-1}.
\end{equation*}
\end{theorem}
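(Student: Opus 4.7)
The plan is to expand the Fourier transform of $\sP_{R} f$ directly from the definition, interchange the order of integration (which is legitimate since $f \in \cS(\bR^n)$ implies absolute integrability), and then recognize the resulting iterated integral as a single $n$-dimensional Fourier integral after an orthogonal change of variables.

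Specifically, I would begin by writing
\begin{equation*}
    \widehat{\sP_{R} f}(\eta) = (2\pi)^{-(n-1)/2} \int_{\bR^{n-1}} e^{-\imath \vect{x}^{\top}\eta} \int_{-\infty}^{+\infty} f(R^{\top}[\vect{x}:z])\, \rd z\, \rd \vect{x}.
\end{equation*}
The key change of variables is $\vect{z} = R^{\top}[\vect{x}:z]$, which by \cref{sec::Euclid} corresponds to $\vect{x} = \mP_{R}(\vect{z})$ and $z = \vect{z}^{\top}\vect{r}$, and which sends $\bR^{n-1} \times \bR$ bijectively onto $\bR^{n}$ with unit Jacobian (since $R \in SO(n)$). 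The next step is to rewrite the phase as $\vect{x}^{\top}\eta = [\vect{x}:z]^{\top}[\eta:0] = (R\vect{z})^{\top}[\eta:0] = \vect{z}^{\top}\mP_{R}^{*}(\eta)$, using the matrix identity $\mP_{R}^{*} = R^{\top} \mP_{I}^{*}$ from \cref{Euclid::image}(5). Combining these gives
\begin{equation*}
    \widehat{\sP_{R} f}(\eta) = (2\pi)^{-(n-1)/2} \int_{\bR^{n}} e^{-\imath \vect{z}^{\top}\mP_{R}^{*}(\eta)} f(\vect{z})\, \rd \vect{z} = \sqrt{2\pi}\, \hat{f}(\mP_{R}^{*}(\eta)),
\end{equation*}
where the last equality absorbs the discrepancy between the $(n-1)$- and $n$-dimensional Fourier normalizations into a factor of $\sqrt{2\pi}$.

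The second claim is an immediate corollary: if $\mP_{R_{1}}^{*}(\eta_{1}) = \mP_{R_{2}}^{*}(\eta_{2})$, then both $\widehat{\sP_{R_{i}} f}(\eta_{i})$ equal $\sqrt{2\pi} \hat{f}$ evaluated at the common point in $\vect{r}_{1}^{\perp}\cap \vect{r}_{2}^{\perp}$. There is no real obstacle here; the only subtlety worth spelling out is the verification that the change of variables maps $\bR^{n-1}\times\bR$ onto $\bR^{n}$ bijectively with Jacobian one, which follows from orthogonality of $R$, and the justification of Fubini's theorem, which follows from the Schwarz decay of $f$ ensuring that $(\vect{x},z)\mapsto |f(R^{\top}[\vect{x}:z])|$ is jointly integrable.
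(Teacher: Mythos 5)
Your proposal is correct and follows essentially the same route as the paper's proof: the substitution $\vect{z}=R^{\top}[\vect{x}:z]$, the identification of the phase $\vect{x}^{\top}\eta=\vect{z}^{\top}R^{\top}[\eta:0]=\vect{z}^{\top}\mP_{R}^{*}(\eta)$, and the $\sqrt{2\pi}$ normalization bookkeeping are exactly the paper's steps. The only difference is that you additionally spell out the Fubini and Jacobian justifications, which the paper leaves implicit.
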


The FST reveals the \emph{common line property}:  two (non-identical) central slices of the Fourier transform intersect over a hyperplane $\vect{r}_{1}^{\perp} \cap \vect{r}_{2}^{\perp}$ of dimension $n-2$, on which their Fourier transforms agree. In cryo-EM, the FST plays a crucial role in determining the relative angle between two projection images when the signal-to-noise ratio (SNR) is relatively high \cite{van1987angular}. However, the FST trivializes when $n=2$ since the Fourier transform at the origin accounts for the total mass.

If practitioners had access to a perfect sinogram $\sP f$ with zero measurement error, a reconstruction would be obtained via an explicit inversion formula, as presented below. For $ \alpha < n$, let $\sI^{\alpha}$ be the Riesz potential defined by
\begin{equation*}
    (\sI^{\alpha} f) \hat \, (\xi) := |\xi|^{-\alpha} \hat{f}(\xi), \quad f \in \cS(\bR^{n}).
\end{equation*}

\begin{theorem}[Inversion Formula, \cite{natterer2001mathematics}]\label{inv::for}
Let $f \in \cS(\bR^{n})$. For $\alpha < n$, the inversion formula for the sinogram is given by
\begin{align*}
    f = \frac{|\bS^{n-1}|}{2 \pi |\bS^{n-2}|}  \sI^{-\alpha} \sP^{*} \sI^{\alpha-1} g, \quad g=\sP f.
\end{align*}
\end{theorem}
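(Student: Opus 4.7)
The plan is to apply the Fourier transform to both sides and verify that the right-hand side reduces, up to the stated constant, to $\hat{f}(\xi)$; the formula will then follow by Fourier inversion on $\cS(\bR^{n})$. The three ingredients are the Fourier Slice Theorem (\cref{Fou::slice}), the multiplier definition $\widehat{\sI^{\alpha}f}(\xi)=|\xi|^{-\alpha}\hat{f}(\xi)$, and the backprojection formula $(\sP^{*}\phi)(\vect{z})=\int_{SO(n)}\phi(R,\mP_{R}(\vect{z}))\,\rd R$ from \cref{L2::conti}.

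Setting $h:=\sI^{\alpha-1}g$, the FST yields slice by slice
$$\widehat{h(R,\cdot)}(\eta)=\sqrt{2\pi}\,|\eta|^{-(\alpha-1)}\hat{f}(\mP_{R}^{*}\eta),\qquad \eta\in\bR^{n-1}.$$
To compute $\widehat{\sP^{*}h}(\xi)$, I would swap the $SO(n)$ and $\bR^{n}$ integrations and, for each fixed $R$, change variables via $\vect{z}=R^{\top}[\vect{x}:z]$. Writing $R\xi=[\eta:\zeta]$ with $\eta=\mP_{R}\xi$ and $\zeta=\xi^{\top}\vect{r}$, the $z$-integral produces the distribution $2\pi\,\delta(\zeta)$ and, once combined with the normalization constants, leaves
$$\widehat{\sP^{*}h}(\xi)=\sqrt{2\pi}\int_{SO(n)}\delta(\xi^{\top}\vect{r})\,\widehat{h(R,\cdot)}(\mP_{R}\xi)\,\rd R.$$
On the support $\{\xi^{\top}\vect{r}=0\}$ one has $\xi\in\vect{r}^{\perp}$, so by \cref{Euclid::image} $\mP_{R}^{*}\mP_{R}\xi=\xi$ and $|\mP_{R}\xi|=|\xi|$; the sliced integrand therefore collapses to $\sqrt{2\pi}\,|\xi|^{-(\alpha-1)}\hat{f}(\xi)$, independently of $R$.

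The final ingredient is the sphere integral $\int_{SO(n)}\delta(\xi^{\top}\vect{r})\,\rd R$. Pushing normalized Haar forward under $R\mapsto R^{\top}e_{n}$ gives the uniform probability measure on $\bS^{n-1}$, and the classical identity $\int_{\bS^{n-1}}\delta(\xi^{\top}\theta)\,\rd\theta=|\bS^{n-2}|/|\xi|$ (in unnormalized surface measure) delivers $|\bS^{n-2}|/(|\bS^{n-1}|\,|\xi|)$. Assembling the factors gives $\widehat{\sP^{*}h}(\xi)=\frac{2\pi\,|\bS^{n-2}|}{|\bS^{n-1}|}\,|\xi|^{-\alpha}\,\hat{f}(\xi)$, and applying $\sI^{-\alpha}$ (multiplication by $|\xi|^{\alpha}$ on the Fourier side) cancels the singular factor to yield the claim via Fourier inversion. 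The main obstacle will be rigorously justifying the distributional step: $\vect{z}\mapsto h(R,\mP_{R}(\vect{z}))$ is a plane wave with Fourier transform supported on the hyperplane $\vect{r}^{\perp}$, so to avoid manipulating delta functions directly I would pair both sides of the identity against an arbitrary Schwartz test function and carry out all iterated integrations by Fubini in a favourable order, invoking $\alpha<n$ to ensure local integrability of $|\xi|^{-\alpha}$ near the origin so that both Riesz potentials act as bona fide tempered distributions.
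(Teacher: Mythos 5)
Your proposal is correct in substance (the constants check out: the factor $\sqrt{2\pi}$ from the $z$-integral times the $\sqrt{2\pi}$ from the FST, combined with $\int_{SO(n)}\delta(\xi^{\top}\vect{r})\,\rd R=|\bS^{n-2}|/(|\bS^{n-1}|\,|\xi|)$, does give $\widehat{\sP^{*}h}(\xi)=\tfrac{2\pi|\bS^{n-2}|}{|\bS^{n-1}|}|\xi|^{-\alpha}\hat{f}(\xi)$), but it runs the argument in the opposite direction from the paper and pays a price in rigor for doing so. The paper never computes the Fourier transform of a backprojection. Instead it starts from $\sI^{\alpha}f(\vect{z})=(2\pi)^{-n/2}\int e^{\imath\vect{z}^{\top}\xi}|\xi|^{-\alpha}\hat{f}(\xi)\,\rd\xi$ and converts this single absolutely convergent integral into an iterated integral over $SO(n)\times\bR^{n-1}$ via the smooth coarea identity \eqref{SOR2R::inte}, $\int_{SO(n)}\int_{\bR^{n-1}}F(R^{\top}[\eta:0])\,\rd\eta\,\rd R=\tfrac{|\bS^{n-2}|}{|\bS^{n-1}|}\int_{\bR^{n}}|\xi|^{-1}F(\xi)\,\rd\xi$; the FST then turns $\hat{f}(R^{\top}[\eta:0])$ into $\widehat{\sP_{R}f}(\eta)$, the inner $\eta$-integral is recognized as $\sI^{\alpha-1}\sP_{R}f$ evaluated at $\mP_{R}(\vect{z})$, and the outer $R$-integral is literally the definition of $\sP^{*}$. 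Every step is a classical Fubini manipulation of integrable functions ($\alpha<n$ makes $|\xi|^{-\alpha}\hat f$ integrable), with no distributions anywhere. Your delta-function identity on the sphere is exactly the distributional dual of \eqref{SOR2R::inte}, so the two proofs are mirror images; what yours buys is the transparent heuristic that backprojection introduces a $1/|\xi|$ Jacobian in frequency that the filter must cancel, but what it costs is the extra work you yourself flag — justifying $\widehat{\sP^{*}h}$ as a pairing of a hyperplane-supported distribution against $|\eta|^{1-\alpha}\widehat{g(R,\cdot)}$, which is singular at $\eta=0$ when $\alpha>1$ and only harmless because $|\xi|^{-\alpha}$ remains locally integrable after the $SO(n)$ average. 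If you carry out the test-function/Fubini regularization you describe, the argument closes; the paper's ordering simply makes that regularization unnecessary.
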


The inversion formula obviates how na\"ive backprojection is not sufficient for reconstruction: performing an X-ray transform after backprojecting the sinogram by no means provides the original sinogram, i.e. $g \neq \sP \sP^{*} g$.
% since
% \begin{equation*}
%     f = \frac{|\bS^{n-1}|}{2 \pi |\bS^{n-2}|} \sP^{*} \sI^{-1} g.
% \end{equation*}
Rather,  to recover the image, one has to filter the  projection images, and this operation is often called the \emph{filtered back projection} (FBP). Although the formula ensures unique recovery when a complete sinogram is available, practical scenarios will involve incomplete data due to limited scanning views. This prompts the question of what conditions a bona fide sinogram must satisfy, to achieve reliable reconstructions from partial sinograms. See \cite{natterer2001mathematics} for the proof of the following theorem.

\begin{theorem}[Helgason-Ludwig Consistency Condition (HLCC), \cite{ludwig1966radon, helgason1965radon}]\label{HLCC}
For $g \in \cS(SO(n) \times \bB^{n-1})$, there is some $f \in \cS(\bB^{n})$ such that $g=\sP f$ if and only if 
\begin{enumerate}
    \item If $R_{1} \sim R_{2} \in SO(n)$ and $\mP_{R_{1}}^{*} (\vect{x}_{1}) = \mP_{R_{2}}^{*} (\vect{x}_{2}) \in \vect{r}_{1}^{\perp}$, then $g(R_{1}, \vect{x}_{1})=g(R_{2}, \vect{x}_{2})$.
    \item For any $l \in \bZ_{+}$, there is some homogeneous polynomial $q_{l} \in \cP_{l}(\bR^{n})$ of degree $l$, independent on $R \in SO(n)$, such that
    \begin{equation*}
        \int_{\bR^{n-1}} (\vect{x}^{\top} \vect{y})^{l} g(R,\vect{x}) \rd \vect{x} = q_{l}(\mP_{R}^{*}(\vect{y})), \quad \vect{y} \in \bR^{n-1}.
    \end{equation*}
\end{enumerate}
\end{theorem}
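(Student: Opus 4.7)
The plan is to treat the two implications separately. \textbf{Necessity} follows by direct calculation. Condition (1) is a restatement of the compatibility principle noted earlier in \cref{ssec:xray}: when $R_{1} \sim R_{2}$ and $\mP_{R_{1}}^{*}(\vect{x}_{1}) = \mP_{R_{2}}^{*}(\vect{x}_{2})$, the pairs $(R_{1}, \vect{x}_{1})$ and $(R_{2}, \vect{x}_{2})$ parametrize the same line in $\bR^{n}$, forcing $\sP_{R_{1}} f(\vect{x}_{1}) = \sP_{R_{2}} f(\vect{x}_{2})$. For condition (2), the change of variables $\vect{z} = R^{\top}[\vect{x}:z]$ (unit Jacobian) transforms the moment integral into $\int_{\bR^{n}} (\vect{z}^{\top}\mP_{R}^{*}(\vect{y}))^{l} f(\vect{z})\,\rd\vect{z}$, which equals $q_{l}(\mP_{R}^{*}(\vect{y}))$ with $q_{l}(\vect{w}) := \int_{\bR^{n}} (\vect{z}^{\top}\vect{w})^{l} f(\vect{z})\,\rd\vect{z}$ homogeneous of degree $l$ and independent of $R$.

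For \textbf{sufficiency}, the strategy is to invert the Fourier Slice Theorem. For each nonzero $\xi \in \bR^{n}$ I would choose any $R \in SO(n)$ with $\vect{r} \perp \xi$ (so $\xi = \mP_{R}^{*}(\mP_{R}(\xi))$) and set
\begin{align*}
F(\xi) := (2\pi)^{-1/2}\,\hat{g}(R, \mP_{R}(\xi)),
\end{align*}
which is the Fourier Slice Theorem relation solved for $\hat{f}$. The critical step is well-definedness. When $R_{1} \sim R_{2}$, condition (1) identifies the two slices $g(R_{i}, \cdot)$, so their Fourier transforms agree at corresponding arguments. When $R_{1} \not\sim R_{2}$ share a common subspace $\vect{r}_{1}^{\perp} \cap \vect{r}_{2}^{\perp}$, substituting $\vect{y} = \mP_{R}(\xi)$ into condition (2) yields
\begin{align*}
\int_{\bR^{n-1}} (\vect{x}^{\top}\mP_{R}(\xi))^{l}\, g(R,\vect{x})\,\rd\vect{x} = q_{l}(\xi),
\end{align*}
an $R$-independent quantity. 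The standard moments-to-Taylor-coefficients dictionary then shows that the Taylor series at $t=0$ of the map $t \mapsto \hat{g}(R, t\mP_{R}(\xi))$ depends only on $\xi$. Since each $g(R, \cdot)$ is supported in $\bB^{n-1}$, Paley-Wiener makes this map entire of exponential type $\le |\xi|$, hence determined globally by its Taylor data at $0$. Therefore $F(\xi)$ is unambiguous.

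It then remains to verify $F$ has the right regularity. Smoothness of $F$ on $\bR^{n}\setminus\{0\}$ follows from slicewise smoothness of $\hat{g}(R,\cdot)$; smoothness at the origin is supplied by the polynomial Taylor expansion arising from condition (2); and rapid decay in $\xi$ is inherited from the joint smoothness of $g$ on the compact set $SO(n) \times \bB^{n-1}$. A uniform Paley-Wiener-Schwartz argument across orientations shows $F$ is entire of exponential type $\le 1$, so $f := \check{F}$ is smooth with support in $\bB^{n}$. The Fourier Slice Theorem finally gives
\begin{align*}
\widehat{\sP_{R} f}(\eta) = \sqrt{2\pi}\,\hat{f}(\mP_{R}^{*}(\eta)) = \sqrt{2\pi}\,F(\mP_{R}^{*}(\eta)) = \hat{g}(R,\eta),
\end{align*}
and Fourier inversion yields $\sP f = g$. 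I expect the main obstacle to be combining smoothness at the origin with a uniform Paley-Wiener estimate: while condition (2) prescribes the formal Taylor coefficients at $0$ via the $R$-independent polynomials $q_{l}$, rigorously gluing these with the slicewise Fourier data off the origin and extracting a uniform exponential-type bound requires careful control of joint regularity in $(R, \eta)$ and a compactness argument to uniformize across $R \in SO(n)$.
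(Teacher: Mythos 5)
The paper does not actually prove this theorem: it is stated as a classical result with the proof deferred to \cite{natterer2001mathematics} (and originally \cite{helgason1965radon, ludwig1966radon}), so there is no in-paper argument to compare against line by line. Measured against the classical proof, your \textbf{necessity} direction is complete and correct: condition (1) is indeed just the compatibility principle, and your change of variables $\vect{z}=R^{\top}[\vect{x}:z]$ giving $\vect{x}^{\top}\vect{y}=\vect{z}^{\top}\mP_{R}^{*}(\vect{y})$ and $q_{l}(\vect{w})=\int(\vect{z}^{\top}\vect{w})^{l}f(\vect{z})\,\rd\vect{z}$ is exactly the computation the paper itself carries out later in the proof of \cref{RKHS:HLCC}. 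Your well-definedness argument for $F$ is also sound: expanding $\hat{g}(R,t\mP_{R}(\xi))$ in $t$ produces the $R$-independent coefficients $q_{l}(\xi)$, and since each slice Fourier transform is entire (compact support), agreement of Taylor data at $t=0$ forces agreement at $t=1$.

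The gap is in the \textbf{sufficiency} direction, and you have correctly located it but not closed it. Having an $R$-independent formal Taylor series at the origin whose $l$-th term is the homogeneous polynomial $q_{l}$ does \emph{not} by itself yield smoothness of $F$ at $\xi=0$: one must show that the remainder after truncating at order $L$ is $O(|\xi|^{L+1})$ together with all derivatives, uniformly over the (noncompact in $\xi$, compact in $R$) family of slices, and this quantitative estimate is where essentially all of the effort in Helgason's and Natterer's proofs is spent. Similarly, "a uniform Paley--Wiener--Schwartz argument shows $F$ is entire of exponential type $\le 1$" is asserted rather than proved; extending $F$ to $\bC^{n}$ requires making the $\xi$-dependent choice of $R$ compatible with complexification, or else invoking the support theorem for the X-ray transform separately. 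So your write-up is a faithful outline of the standard proof with the easy half done and the hard analytic half honestly flagged but missing; as it stands it would not substitute for the cited proof.
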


The consistency condition stipulates that any complete sinogram must adhere to the compatibility principle and the moment condition. Conversely, when a suitable smoothness condition is imposed on the range space to ensure that the inverse Fourier transform is well-defined, these two conditions collectively guarantee the existence of a solution \cite{helgason1965radon,solmon1976x, sharafutdinov2016reshetnyak}. In challenging scenarios involving limited viewing angles and contaminated  projection images, the standard FBP algorithm utilizing FFT approximation can result in pronounced smearing artifacts \cite{maass1987x}. In such cases,  projection image interpolation following the HLCC becomes a necessary pre-processing step before applying FBP for reconstruction \cite{basu2000uniqueness, huang2017restoration}. However, even in noiseless situations, this interpolation has its limitations, particularly concerning resolution. Specifically, when the number of viewing angles $N$ is smaller than the dimension $\binom{n+l-1}{l}$ of the space $\cP_{l}(\bR^{n})$ of homogeneous polynomials of degree $l$, there exists a non-trivial homogeneous polynomial of degree $l$ that vanishes at these $N$ viewing angles \cite{dai2013approximation}.

This observation motivates us to reinterpret the X-ray transform as an operator between two RKHS, i.e. Hilbert spaces where point evaluation corresponds to the inner product with a kernel generator. Such spaces offer a powerful mechanism for incorporating desired levels of smoothness through Green's functions, such as the Paley–Wiener space and Sobolev spaces  \cite{hsing2015theoretical, paulsen2016introduction}. 
In our RKHS-based approach, the tomographic reconstruction is elegantly transformed into a linear regression problem, exploiting the kernel trick. This novel perspective allows us to reconstruct sinograms efficiently even with limited angles. Importantly, our reconstruction algorithm within the RKHS framework does not require sinogram interpolation beforehand, as it simultaneously reconstructs both the sinogram and the original structure. Furthermore, our reconstruction method automatically satisfies both the FST and the HLCC. This notable advantage renders our reconstruction algorithm more robust and provides smoother outputs even when the data is incomplete and noisy. 

%%%%%%%%%%
\section{RKHS Framework}\label{sec::RKHS}
Given a finite number of viewing angles, tomographic reconstruction is an ill-posed inverse problem as there are an infinite number of solutions \cite{natterer2001mathematics}. Furthermore, \cref{L2::conti} indicates that the inverse of the X-ray transform is an unbounded operator between $\cL_{2}$ spaces. Consequently, addressing the stability of an inversion algorithm has led to numerous efforts to improve error estimates by altering the function space domain, such as Sobolev spaces \cite{natterer1980sobolev} or their variants \cite{sharafutdinov2016reshetnyak, sharafutdinov2012integral}. In contrast, the RKHS framework offers a stable reconstruction approach because the backprojection is not only isometric but also \emph{measurement consistent}: for any  projection image $g: \bB^{n-1} \rightarrow \bR$, we have $\|\sP_{R}^{*} g\|=\|g\|$ and $\sP_{R} (\sP_{R}^{*} g)=g$. We develop the framework in further detail in what follows.  {For the proofs of the statements in this section, we refer to \cref{sec:proof:RKHS} in the appendix.}

%%%%%%%%%%
\subsection{Setup}\label{ssec:Setup}
\begin{definition}
Let $\bH$ be a Hilbert space of real-valued functions defined on a set $E$. A bivariate function $K:E \times E \rightarrow \bR$ is called a reproducing kernel for $\bH$ if
\begin{enumerate}
    \item For any $\vect{z} \in E$, a generator $k_{\vect{z}}(\cdot):= K(\cdot, \vect{z})$ at $\vect{z} \in E$ belongs to $\bH$.
    \item $K$ satisfies the reproducing property: for any $f \in \bH$, the point evaluation at $\vect{z} \in E$ is given by $f(\vect{z})=\langle f, k_{\vect{z}} \rangle_{\bH}$.
\end{enumerate}
A Hilbert space equipped with a reproducing kernel is called an RKHS. \end{definition}
By the Moore–Aronszajn theorem \cite{aronszajn1950theory}, any reproducing kernel is symmetric and positive semidefinite. Conversely, a kernel with these properties induces a unique RKHS, denoted by $\bH=\bH(K)$. We refer to \cite{paulsen2016introduction,hsing2015theoretical}. 

Given a reproducing kernel $K: \bB^{n} \times \bB^{n} \rightarrow \bR$ on $\bB^{n}$ and an orientation $R \in SO(n)$, we define a bivariate function $\tilde{K}^{R}: \bB^{n-1} \times \bB^{n-1} \rightarrow \bR$ as follows: for any $\vect{x}_{1}, \vect{x}_{2} \in \bB^{n-1}$,
\begin{equation}\label{induced::ker}
    \tilde{K}^{R}(\vect{x}_{1}, \vect{x}_{2}):=
    \int_{I_{\|\vect{x}_{2}\|}} \int_{I_{\|\vect{x}_{1}\|}} K(R^{\top} [\vect{x}_{1}:z_{1}], R^{\top} [\vect{x}_{2}:z_{2}]) \rd z_{1} \rd z_{2},
\end{equation}
provided that the integral exists for all $\vect{x}_{1}, \vect{x}_{2} \in \bB^{n-1}$. The induced kernel $\tilde{K}^{R}=(\sP_{R} \times \sP_{R})K$, is obtained by performing the X-ray transform on each component. Note that $\tilde{K}^{R}$ is \emph{not} a push-forward kernel, as the X-ray transform is an integral operator, not a function between Euclidean spaces.

\begin{remark}
In practice, we often opt for $K: \bB^{n} \times \bB^{n} \rightarrow \bR$ to be continuous. The compact nature of $\bB^{n}$ ensures that $K$ is bounded, allowing the integral to be well-defined for any $\vect{x}_{1}, \vect{x}_{2} \in \bB^{n-1}$.
\end{remark}

\begin{proposition}\label{push::kernel}
Let $K: \bB^{n} \times \bB^{n} \rightarrow \bR$ be a kernel and $R \in SO(n)$. Then $\tilde{K}^{R}: \bB^{n-1} \times \bB^{n-1} \rightarrow \bR$ defined in \eqref{induced::ker} is also a kernel. If $K$ is continuous, then so is $\tilde{K}^{R}$. Moreover, if $K$ is strictly positive definite (p.d.), then the restriction of $\tilde{K}^{R}$ to the open unit ball $(\bB^{n-1})^{\circ}=\{\vect{z} \in \bR^{n}: \|\vect{z}\| < 1\}$ is also strictly p.d.
\end{proposition}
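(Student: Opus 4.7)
The plan is to handle in sequence (i) symmetry and positive semidefiniteness of $\tilde K^R$, (ii) its continuity, and (iii) strict positive definiteness on the open ball. The unifying device is to realise $\tilde K^{R}(\vect{x}_{1},\vect{x}_{2})$ as an inner product in $\bH(K)$: for each $\vect{x}\in\bB^{n-1}$, set
\[
\Phi_{\vect{x}}^{R} := \int_{I_{\|\vect{x}\|}} k_{R^{\top}[\vect{x}:z]}\,\rd z \;\in\; \bH(K),
\]
a Bochner integral that is well-defined because $\vect{w}\mapsto k_{\vect{w}}$ is norm-continuous---from the identity $\|k_{\vect{u}}-k_{\vect{v}}\|^{2}=K(\vect{u},\vect{u})-2K(\vect{u},\vect{v})+K(\vect{v},\vect{v})$ and continuity of $K$---on the compact ball $\bB^{n}$, while $I_{\|\vect{x}\|}$ is compact. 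The reproducing property then yields $\langle\Phi_{\vect{x}_{1}}^{R},\Phi_{\vect{x}_{2}}^{R}\rangle_{\bH(K)}=\tilde K^{R}(\vect{x}_{1},\vect{x}_{2})$, from which (i) follows at once: symmetry is immediate (and can also be read off Fubini applied to the symmetric $K$), and $\sum_{i,j}c_{i}c_{j}\tilde K^{R}(\vect{x}_{i},\vect{x}_{j})=\|\sum_{i} c_{i}\Phi_{\vect{x}_{i}}^{R}\|_{\bH(K)}^{2}\geq 0$.

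For (ii), I would rewrite \eqref{induced::ker} as an integral over the fixed rectangle $[-1,1]^{2}$ with indicator weights $\mathbf{1}_{|z_{1}|\le W(\vect{x}_{1})}\mathbf{1}_{|z_{2}|\le W(\vect{x}_{2})}$. Compactness supplies a uniform bound $|K|\le C$, continuity of $K$ and of $W$ makes the integrand continuous in $(\vect{x}_{1},\vect{x}_{2})$ for almost every $(z_{1},z_{2})$, and dominated convergence yields continuity of $\tilde K^{R}$ on $\bB^{n-1}\times\bB^{n-1}$.

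Step (iii) is the main challenge. Let $\vect{x}_{1},\dots,\vect{x}_{N}\in(\bB^{n-1})^{\circ}$ be distinct and assume $\sum_{i,j}c_{i}c_{j}\tilde K^{R}(\vect{x}_{i},\vect{x}_{j})=0$. By (i) this is equivalent to $\sum_{i} c_{i}\Phi_{\vect{x}_{i}}^{R}=0$ in $\bH(K)$, i.e.\ by the reproducing property,
\[
\sum_{i=1}^{N}c_{i}\,(\sP_{R}\psi)(\vect{x}_{i})=0 \qquad \text{for every }\psi\in\bH(K).
\]
My plan is to pass to a spectral representation: continuity of $K$ on compact $\bB^{n}$ together with strict positive definiteness yields, through Mercer's theorem, an absolutely and uniformly convergent expansion $K(\vect{u},\vect{v})=\sum_{k}\lambda_{k}\phi_{k}(\vect{u})\phi_{k}(\vect{v})$ with $\lambda_{k}>0$ and $\{\phi_{k}\}$ orthonormal in $\cL_{2}(\bB^{n})$. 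Substituting into the vanishing quadratic form yields $\sum_{i} c_{i}(\sP_{R}\phi_{k})(\vect{x}_{i})=0$ for every $k$, and boundedness of $\sP_{R}:\cL_{2}(\bB^{n})\to\cL_{2}(\bB^{n-1})$ (\cref{L2::conti}) plus $\cL_{2}$-density of the Mercer basis promotes this to $\sum_{i} c_{i}(\sP_{R}\psi)(\vect{x}_{i})=0$ for every $\psi\in\cL_{2}(\bB^{n})$. Choosing $\psi$ to be a smooth bump supported near an interior point of $L_{\vect{x}_{j}}:=\{R^{\top}[\vect{x}_{j}:z]:z\in I_{\|\vect{x}_{j}\|}\}$ but disjoint from the other (parallel, pairwise distinct) line segments $L_{\vect{x}_{i}}$, $i\neq j$, then forces $c_{j}=0$; since $j$ is arbitrary, all $c_{i}$ vanish---contradicting nontriviality.

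The hard part here is precisely the density claim for the Mercer basis, which requires that the paper's notion of ``strict p.d.'' also excludes $\cL_{2}$-nontrivial null directions of the integral operator $T_{K}f := \int K(\cdot,\vect{v})f(\vect{v})\,\rd\vect{v}$---equivalently, integral strict positive definiteness of $K$. For continuous kernels on compact sets this equivalence with the pointwise/finite-sample definition is standard, but it is not entirely automatic: a naive Riemann-sum limit of the finite strict inequality can collapse to zero. A direct alternative that avoids Mercer is to set $\mu:=\sum_{i} c_{i}\,(R^{\top}[\vect{x}_{i}:\,\cdot\,])_{*}(\mathbf{1}_{I_{\|\vect{x}_{i}\|}}\,\rd z)$ on $\bB^{n}$, observe that the vanishing quadratic form equals $\iint K(\vect{u},\vect{v})\,\rd\mu(\vect{u})\,\rd\mu(\vect{v})$, and exploit the disjointness of the parallel segments $L_{\vect{x}_{i}}$ (ensured because the $\vect{x}_{i}$ are distinct interior points) together with strict positive definiteness of $K$ to conclude that $\mu$, and hence $(c_{i})$, must be zero.
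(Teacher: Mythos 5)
Your steps (i) and (ii) are correct and essentially coincide with the paper's: the identity $\tilde K^{R}(\vect{x}_{1},\vect{x}_{2})=\langle\Phi^{R}_{\vect{x}_{1}},\Phi^{R}_{\vect{x}_{2}}\rangle_{\bH(K)}$ is precisely the Bochner-integral representation the paper itself uses (cf.\ \eqref{adj::Boch}), and the dominated-convergence/uniform-continuity argument for continuity is the paper's as well. (A minor caveat: the first assertion of the proposition does not assume $K$ continuous, only that the integral exists, whereas your Bochner integrability rests on continuity; the paper's Fubini computation avoids this.)

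Step (iii) contains a genuine gap, and the repair you propose does not close it. Both of your routes reduce to the implication ``$\iint K\,\rd\mu\,\rd\mu=0\Rightarrow\mu=0$'' for the non-atomic signed measure $\mu$ carried by the segments $L_{\vect{x}_{i}}$, i.e.\ to \emph{integral} strict positive definiteness of $K$. That is strictly stronger than the finite-sample notion assumed in the proposition: a continuous kernel $\sum_{k}\lambda_{k}\phi_{k}\otimes\phi_{k}$ can be strictly p.d.\ while $\overline{\spann}\{\phi_{k}\}\subsetneq\cL_{2}$ (Müntz-type dot-product kernels such as $\sum_{k}2^{-k}(st)^{k^{2}}$ on $(0,1]$ already exhibit this), and such a kernel annihilates nonzero measures. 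You flag this honestly for the Mercer route, but the ``direct alternative'' via $\mu$ invokes exactly the same unproved implication in measure-theoretic clothing; disjointness of the segments is of no help, because strict p.d.\ only constrains finitely many point masses. There is also a second, independent defect in the Mercer route: even granting $\cL_{2}$-density of $\{\phi_{k}\}$, the functional $\psi\mapsto\sum_{i}c_{i}(\sP_{R}\psi)(\vect{x}_{i})$ integrates $\psi$ over a Lebesgue-null set and is therefore not $\cL_{2}$-bounded, so its vanishing on a dense subspace cannot be ``promoted'' to all of $\cL_{2}$, in particular not to your bump functions. The paper argues quite differently and uses only the finite-sample hypothesis: it writes the quadratic form as $\int_{-1}^{1}\int_{-1}^{1}F(z_{1},z_{2})\,\rd z_{1}\rd z_{2}$ with $F(z_{1},z_{2})=\sum_{i,j}\overline{\alpha_{i}}\alpha_{j}K(R^{\top}[\vect{x}_{i}:z_{1}],R^{\top}[\vect{x}_{j}:z_{2}])$, asserts $F\ge0$ so that a vanishing integral forces $F\equiv0$, and then evaluates on the diagonal $z_{1}=z_{2}=0$ (admissible because every $\vect{x}_{i}$ is interior), where $F(0,0)$ is an honest quadratic form of $K$ at the distinct points $R^{\top}[\vect{x}_{i}:0]$ and strict p.d.\ gives $\alpha=0$. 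That diagonal evaluation is the elementary ingredient your argument lacks; note, however, that for $z_{1}\ne z_{2}$ the integrand $F$ is a bilinear rather than a quadratic form in the generators, so the pointwise bound $F\ge0$ is itself a delicate point -- your sense that something nontrivial is being swept under the rug here is well founded.
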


Denote by $k_{\vect{z}}(\cdot):= K(\cdot, \vect{z})$  the generator at the point $\vect{z} \in \bB^{n}$. Due to \cref{push::kernel}, the induced generator $\tilde{k}^{R}_{\vect{x}}$ corresponding to the induced kernel $\tilde{K}^{R}$ at the point $\vect{x} \in \bB^{n-1}$ is given by
\begin{equation*}
    \tilde{k}^{R}_{\vect{x}}(\cdot)=\tilde{K}^{R}(\cdot, \vect{x})=
    \int_{I_{\|\vect{x}\|}} \int_{I_{\|\cdot\|}} K(R^{\top} [\cdot:z_{1}], R^{\top} [\vect{x}:z_{2}]) \rd z_{1} \rd z_{2}.
\end{equation*}
Note that  $\tilde{k}^{R}_{\vect{x}}=0$ whenever $\vect{x} \in \partial \bB^{n-1} \Longleftrightarrow I_{\|\vect{x}\|}=\{0\}$. Consequently, any function $g \in \bH(\tilde{K}^{R})$ vanishes at the boundary. Below, we present the central theorem that grounds our framework.

\begin{theorem}\label{rkhs::image::re} For any $R \in SO(n)$, $\sP_{R}: \bH(K) \rightarrow \bH(\tilde{K}^{R})$ is a contractive, surjective, linear map. Its adjoint operator $\sP_{R}^{*}: \bH(\tilde{K}^{R}) \rightarrow \bH(K)$ is an isometry, uniquely determined by
\begin{equation}\label{adj::Boch}
    \sP_{R}^{*} ( \tilde{k}^{R}_{\vect{x}} )
    = \int_{I_{\|\vect{x}\|}} k_{R^{\top} [\vect{x}:z]} \rd z, \quad \vect{x} \in \bB^{n-1},
\end{equation}
in the sense of Bochner integration, or equivalently, for any $\vect{x} \in \bB^{n-1}, \vect{z} \in \bB^{n}$,
\begin{equation}\label{adj::ind::gen}
    \sP_{R}^{*} ( \tilde{k}^{R}_{\vect{x}} )(\vect{z})
    = \langle \sP_{R}^{*} ( \tilde{k}^{R}_{\vect{x}} ), k_{\vect{z}} \rangle_{\bH(K)} 
    =\int_{I_{\|\vect{x}\|}} K(\vect{z}, R^{\top} [\vect{x}:z]) \rd z = \sP_{R} k_{\vect{z}} (\vect{x}).
\end{equation}
Let $\bH_{R}(K):=\cR (\sP_{R}^{*})$ be the range space of the adjoint operator. Then,
\begin{enumerate}
    \item $\sP_{R} \circ \sP_{R}^{*}: \bH(\tilde{K}^{R}) \rightarrow \bH(\tilde{K}^{R})$ is the identity.
    \item $\bH_{R}(K)=(\cN(\sP_{R}))^{\perp}, (\bH_{R}(K))^{\perp}=\cN(\sP_{R})$.
    \item Any $f \in \bH(K)$ can be uniquely expressed as a direct sum:
    \begin{equation*}
        f=(\sP_{R}^{*} \circ \sP_{R})f \oplus (f-(\sP_{R}^{*} \circ \sP_{R})f) \in \bH_{R}(K) \oplus \cN(\sP_{R}),
    \end{equation*}
    thus $f \in \bH_{R}(K)$ if and only if $f=(\sP_{R}^{*} \circ \sP_{R})f$.
    \item For $g \in \bH(\tilde{K}^{R})$, we have
    \begin{align}\label{inv::soln}
        &\left\{f \in \bH(K): g=\sP_{R}f \right\}=\sP_{R}^{*} g \oplus \cN(\sP_{R}), \\ \nonumber 
        &\|g\|_{\bH(\tilde{K}^{R})}=\|\sP_{R}^{*} g\|_{\bH(K)}= \min \left\{\|f\|_{\bH(K)} : g=\sP_{R}f \right\}.
    \end{align}
\end{enumerate} 
\end{theorem}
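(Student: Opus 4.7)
The plan is to reverse the direction of the theorem: I first construct the candidate adjoint $V := \sP_R^*$ as an isometry by specifying its action on the generators of $\bH(\tilde{K}^R)$ via formula \eqref{adj::Boch}, and then recover $\sP_R$ as the Hilbert-space adjoint of $V$, which forces it to coincide with the X-ray transform. Continuity of $K$ on the compact ball $\bB^n$ makes $z \mapsto k_{R^\top [\vect{x}:z]}$ norm-continuous into $\bH(K)$, so its Bochner integral on the compact interval $I_{\|\vect{x}\|}$ is well-defined and yields an element $V \tilde{k}^R_{\vect{x}} \in \bH(K)$. Extending $V$ linearly to $\mathrm{span}\{\tilde{k}^R_{\vect{x}}\}$, the key computation is the isometry identity
\[
\langle V \tilde{k}^R_{\vect{x}_1}, V \tilde{k}^R_{\vect{x}_2} \rangle_{\bH(K)} = \int_{I_{\|\vect{x}_1\|}} \int_{I_{\|\vect{x}_2\|}} K\bigl(R^\top [\vect{x}_1:z_1], R^\top [\vect{x}_2:z_2]\bigr)\, \rd z_1 \, \rd z_2 = \tilde{K}^R(\vect{x}_1, \vect{x}_2),
\]
obtained by pulling the inner product through both Bochner integrals and using the reproducing property of $K$. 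This identity simultaneously shows that $V$ is well-defined on the span (any null combination of generators in $\bH(\tilde{K}^R)$ is sent to zero in $\bH(K)$) and that $V$ is isometric, so by density of generators it extends uniquely to an isometry $V: \bH(\tilde{K}^R) \to \bH(K)$.

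Next I would identify $V^* = \sP_R$. For $f \in \bH(K)$ and $\vect{x} \in \bB^{n-1}$, the reproducing property in $\bH(\tilde{K}^R)$ together with a second inner-product/Bochner-integral interchange gives
\[
(V^* f)(\vect{x}) = \langle V^* f, \tilde{k}^R_{\vect{x}}\rangle_{\bH(\tilde{K}^R)} = \langle f, V \tilde{k}^R_{\vect{x}}\rangle_{\bH(K)} = \int_{I_{\|\vect{x}\|}} f(R^\top [\vect{x}:z])\, \rd z = \sP_R f(\vect{x}),
\]
so the abstract adjoint $V^*$ coincides pointwise with the X-ray transform. This simultaneously confirms that $\sP_R$ maps $\bH(K)$ into $\bH(\tilde{K}^R)$ and is bounded with operator norm equal to $\|V\|_{\mathrm{op}} = 1$, hence contractive; equation \eqref{adj::ind::gen} is the pointwise restatement of \eqref{adj::Boch} obtained by testing $\sP_R^* \tilde{k}^R_{\vect{x}}$ against $k_{\vect{z}}$.

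Properties (1)--(4) then follow from abstract Hilbert-space theory applied to the isometry $V = \sP_R^*$. The identity $V^* V = I$ characteristic of an isometry gives $\sP_R \sP_R^* = I$ on $\bH(\tilde{K}^R)$, which is (1) and in particular establishes surjectivity of $\sP_R$. The range of an isometry is automatically closed, so $\bH_R(K) = \cR(V) = (\cR(V)^\perp)^\perp = \cN(V^*)^\perp = \cN(\sP_R)^\perp$, yielding (2) and the orthogonal decomposition in (3). For (4), applying $\sP_R$ to $f - \sP_R^* g$ shows that the full solution set of $g = \sP_R f$ equals $\sP_R^* g + \cN(\sP_R)$, and Pythagoras combined with the isometry identity $\|\sP_R^* g\|_{\bH(K)} = \|g\|_{\bH(\tilde{K}^R)}$ identifies $\sP_R^* g$ as the minimum-norm element of this set. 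The main obstacle is the careful but routine justification of the Bochner-integral manipulations -- the interchange of inner product with integration, and Fubini in the isometry identity -- which rests on the norm-continuity of $z \mapsto k_{R^\top [\vect{x}:z]}$ established at the outset; everything downstream is clean functional analysis.
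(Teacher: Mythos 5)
Your proposal is correct, but it reaches the conclusion by a genuinely different route than the paper. The paper first proves that $\sP_{R}$ is contractive using the kernel-domination characterization of RKHS membership (its Lemma~\ref{rkhs::tfae}: $f\in\bH(K)$ iff $c^{2}K-f\bar f$ is a kernel, which pushes forward through the double integral to give $c^{2}\tilde K^{R}-(\sP_{R}f)\overline{(\sP_{R}f)}\succeq 0$); it then builds the isometry $\mG_{R}$ on the span of generators exactly as you do, verifies $\sP_{R}\circ\mG_{R}=I$ by direct evaluation on generators, and obtains the orthogonal decomposition by a variational argument (expanding $0\le\|h\|^{2}-\|\sP_{R}h\|^{2}$ for $h=\mG_{R}g+t(f-\mG_{R}\sP_{R}f)$ and letting the quadratic in $t$ force the cross term to vanish), only afterwards identifying $\mG_{R}=\sP_{R}^{*}$. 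You instead take the isometry $V$ as primary, identify $V^{*}$ pointwise with the line-integral definition of $\sP_{R}$ via the reproducing property, and then harvest everything from the standard operator theory of isometries: $\|{\sP_{R}}\|=\|V^{*}\|=\|V\|=1$ gives contractivity for free, $V^{*}V=I$ gives item (1) and surjectivity, closedness of $\cR(V)$ and $\cR(V)^{\perp}=\cN(V^{*})$ give (2), and $VV^{*}$ being the orthogonal projection onto $\cR(V)$ gives (3). This is shorter and avoids Lemma~\ref{rkhs::tfae} and the variational computation entirely; what it costs is a slightly stronger standing hypothesis — you invoke continuity of $K$ to secure norm-continuity of $z\mapsto k_{R^{\top}[\vect{x}:z]}$ and hence Bochner integrability, whereas the paper's statement nominally only assumes the induced kernel integral exists (though the paper also needs the Bochner integral in \eqref{adj::Boch} and itself flags continuity as the practical assumption, so this is a presentational rather than a substantive gap). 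The one step you should make sure not to elide in a full write-up is that the pointwise identity $(V^{*}f)(\vect{x})=\int_{I_{\|\vect{x}\|}}f(R^{\top}[\vect{x}:z])\,\rd z$ is precisely what proves the a priori line-integral operator lands in $\bH(\tilde K^{R})$ at all; you do note this, so the argument is complete.
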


\begin{figure}[H]
\centering
\includegraphics[width=\textwidth, height=4cm]{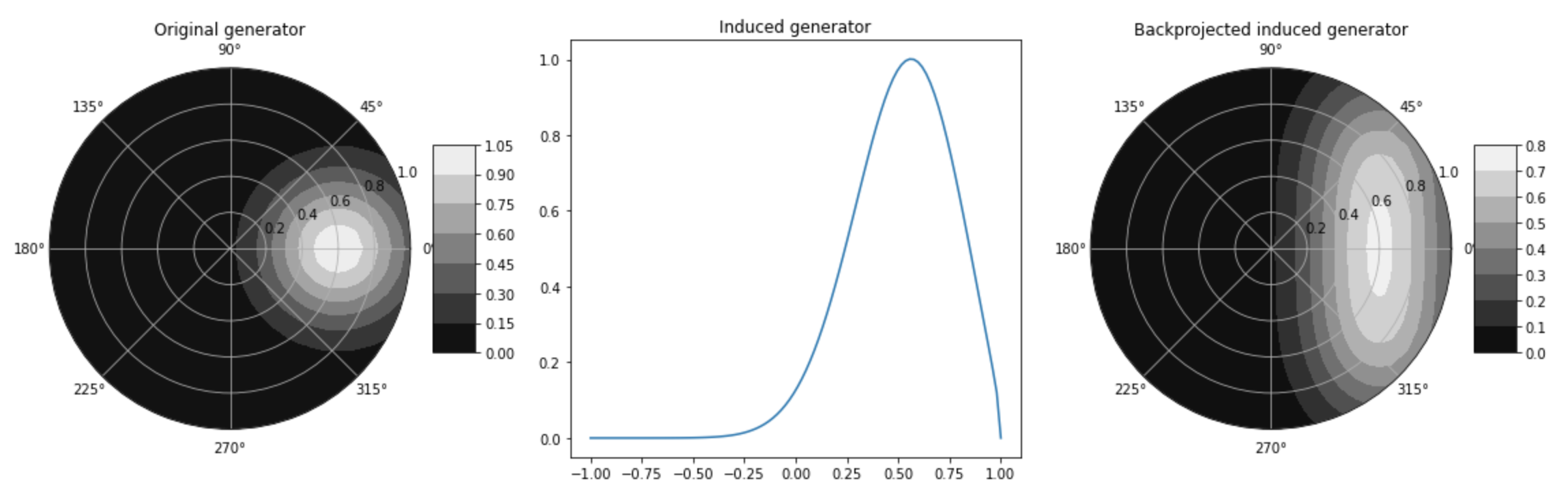}
\caption{Gaussian kernel $K: \bB^{2} \times \bB^{2} \rightarrow \bR, (\vect{z}_{1}, \vect{z}_{2}) \mapsto \exp(-\gamma \|\vect{z}_{1}-\vect{z}_{2}\|^{2})$ with $\gamma=6$, used to plot three images: the original generator $k_{\vect{z}}: \bB^{2} \rightarrow \bR$ evaluated at $\vect{z}=(0.6,0)$, the induced generator $\tilde{k}^{R}_{\vect{x}}: \bB \rightarrow \bR$ evaluated at $(R, \vect{x})= (I, 0.6)$, and its kernel backprojection $\sP_{R}^{*} ( \tilde{k}^{R}_{\vect{x}}): \bB^{2} \rightarrow \bR$ evaluated at $(R, \vect{x})= (I, 0.6)$, respectively.}
\label{gen::img}
\end{figure}

As the adjoint operator $\sP_{R}^{*}$ takes a function defined on $\bB^{n-1}$ and smears over $\bB^{n}$ to produce an a function defined on $\bB^{n}$, we call it the \emph{kernel backprojection} from this point onward, to distinguish from the usual backprojection in the $\cL_{2}$ setting. In the $\cL_{2}$ setting, $\sP_{R} \circ \sP_{R}^{*}$ does \emph{not} result in the identity operator, necessitating the use of pre-filtering. In contrast, within the RKHS framework, we draw a natural analogy with the Euclidean projection in \cref{Euclid::image}: the kernel backprojection $\sP_{R}^{*}$ is an isometry, which might be interpreted as a smoothness-preserving operator, and it serves as the right inverse of the X-ray transform $\sP_{R}$. 
Furthermore, given a finite number of viewing angles, although the reconstruction is an ill-posed problem in both the $\cL_{2}$ and the RKHS framework \cite{natterer2001mathematics}, \eqref{inv::soln} demonstrates that the kernel backprojection generates the $n$-dimensional image of minimal norm among the solution set $\left\{f \in \bH(K): g=\sP_{R}f \right\}$.

\subsection{Kernel Decomposition}\label{ssec:decomp}
To derive a representer theorem for the reconstruction in the following subsection, we introduce the decomposition of $\bH(K)$ relevant to the discrete setup. Given limited access to the sinogram, in the form of finitely many  projection images at orientations (say) $\mR:=\{R_{i} \in SO(n): \ i=1,2,\dots, N\}$ (e.g.  \cref{sino::img}, right side), we aim to reconstruct the complete sinogram and the original $n$-dimensional structure $f^{0} \in \bH(K)$. Let $\mF:= \{\vect{x}_{1}, \dots, \vect{x}_{M} \} \subset \bB^{n-1}$ denote the mesh where the transformed images $g_{i}=\sP_{R_{i}} f^{0}+ W_{i}$ are evaluated under the presence of independent white noise $W_{i}$ in $\bH(\tilde{K}^{R_{i}})$ of variance $\sigma^{2}>0$. Consequently, our observations are given by
\begin{align*}
    y_{ij}:=g_{i}(\vect{x}_{j})=\sP_{R_{i}} f^{0}(\vect{x}_{j})+\varepsilon_{ij}, \quad 1 \le i \le N, \ 1 \le j \le M, %(\sP_{R_{i}} f^{0}+W_{i})(\vect{x}_{j}) \\
\end{align*}
where $\varepsilon_{ij}:=W_{i}(\vect{x}_{j})$ are i.i.d. $N(0, \sigma^{2})$. 

\begin{proposition}\label{repre::proj}
For any $R \in SO(n)$, let $\bV_{\mF}(\tilde{K}^{R}) := \spann \{\tilde{k}^{R}_{\vect{x}_{1}}, \tilde{k}^{R}_{\vect{x}_{2}}, \dots, \tilde{k}^{R}_{\vect{x}_{n}} \}$ denote the subspace of $\bH(\tilde{K}^{R})$ spanned by the induced generators at mesh points.
\begin{enumerate}
\item $\sP_{R}f(\vect{x}_{k})=0$ for all $k=1, \dots, M$ if and only if $f \in \sP^{-1}_{R}((\bV_{\mF}(\tilde{K}^{R}))^{\perp})$.
\item $\sP^{-1}_{R}((\bV_{\mF}(\tilde{K}^{R}))^{\perp})=\sP_{R}^{*}((\bV_{\mF}(\tilde{K}^{R}))^{\perp}) \oplus \cN(\sP_{R})$ is a closed subspace of $\bH(K)$.
\item $(\sP^{-1}_{R}((\bV_{\mF}(\tilde{K}^{R}))^{\perp}))^{\perp}=\sP_{R}^{*}(\bV_{\mF}(\tilde{K}^{R}))$ is a closed subspace of $\bH(K)$.
\end{enumerate}
\end{proposition}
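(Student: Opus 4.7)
The proof is essentially a bookkeeping exercise built on two pillars: the reproducing property for $\bH(\tilde{K}^R)$, and the decomposition $\bH(K) = \bH_R(K) \oplus \cN(\sP_R)$ together with the isometry/right-inverse relations $\sP_R \sP_R^* = I$ and $\sP_R^* \sP_R = \mathrm{Proj}_{\bH_R(K)}$ furnished by \cref{rkhs::image::re}. No deep new ingredient is required; I mainly have to translate conditions on $\sP_R f$ in $\bH(\tilde{K}^R)$ back to conditions on $f$ in $\bH(K)$ using these properties.

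For part (1), the reproducing property on $\bH(\tilde{K}^R)$ gives
\begin{equation*}
\sP_R f(\vect{x}_k) = \langle \sP_R f, \tilde{k}^R_{\vect{x}_k}\rangle_{\bH(\tilde{K}^R)}, \qquad 1 \le k \le M.
\end{equation*}
So $\sP_R f(\vect{x}_k) = 0$ for all $k$ is equivalent to $\sP_R f \perp \bV_{\mF}(\tilde{K}^R)$, i.e. $\sP_R f \in (\bV_{\mF}(\tilde{K}^R))^\perp$, which is exactly the membership $f \in \sP_R^{-1}((\bV_{\mF}(\tilde{K}^R))^\perp)$.

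For part (2), I would verify the two inclusions directly using the decomposition. If $f = \sP_R^* g + h$ with $g \in (\bV_{\mF}(\tilde{K}^R))^\perp$ and $h \in \cN(\sP_R)$, then $\sP_R f = \sP_R \sP_R^* g + 0 = g \in (\bV_{\mF}(\tilde{K}^R))^\perp$ by the right-inverse property. Conversely, writing any $f \in \sP_R^{-1}((\bV_{\mF}(\tilde{K}^R))^\perp)$ as $f = \sP_R^*(\sP_R f) \oplus (f - \sP_R^* \sP_R f)$ per \cref{rkhs::image::re}(3), the first summand lies in $\sP_R^*((\bV_{\mF}(\tilde{K}^R))^\perp)$ since $\sP_R f \in (\bV_{\mF}(\tilde{K}^R))^\perp$, while the second lies in $\cN(\sP_R)$. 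Closedness then follows on two grounds, either of which suffices: the set is the preimage of the closed subspace $(\bV_{\mF}(\tilde{K}^R))^\perp$ under the bounded operator $\sP_R$; or the direct sum is of two orthogonal closed subspaces, namely the image of the closed set $(\bV_{\mF}(\tilde{K}^R))^\perp$ under the isometry $\sP_R^*$ (hence closed) and $\cN(\sP_R)$, and the former sits inside $\bH_R(K) = \cN(\sP_R)^\perp$.

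For part (3), I would take orthogonal complements in the identity from part (2):
\begin{equation*}
\bigl(\sP_R^*((\bV_{\mF}(\tilde{K}^R))^\perp) \oplus \cN(\sP_R)\bigr)^\perp
= \bigl(\sP_R^*((\bV_{\mF}(\tilde{K}^R))^\perp)\bigr)^\perp \cap \cN(\sP_R)^\perp.
\end{equation*}
Since $\cN(\sP_R)^\perp = \bH_R(K) = \sP_R^*(\bH(\tilde{K}^R))$ and $\sP_R^*$ is an isometry, it carries the orthogonal decomposition $\bH(\tilde{K}^R) = \bV_{\mF}(\tilde{K}^R) \oplus (\bV_{\mF}(\tilde{K}^R))^\perp$ to an orthogonal decomposition $\bH_R(K) = \sP_R^*(\bV_{\mF}(\tilde{K}^R)) \oplus \sP_R^*((\bV_{\mF}(\tilde{K}^R))^\perp)$. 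Intersecting $\sP_R^*((\bV_{\mF}(\tilde{K}^R))^\perp)^\perp$ with $\bH_R(K)$ therefore yields exactly $\sP_R^*(\bV_{\mF}(\tilde{K}^R))$. Closedness is automatic because $\bV_{\mF}(\tilde{K}^R)$ is finite-dimensional ($\dim \le M$) and $\sP_R^*$ is bounded, so $\sP_R^*(\bV_{\mF}(\tilde{K}^R))$ is finite-dimensional and hence closed.

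The only step that warrants a moment of care is the identification within $\bH_R(K)$ used in part (3): one needs the isometry of $\sP_R^*$ to conclude that the images of orthogonal complements stay orthogonal complements inside the range $\bH_R(K)$. This is where checking that one stays inside $\bH_R(K)$ (rather than in all of $\bH(K)$) is essential, since outside this range the preimage under $\sP_R^*$ is not well defined.
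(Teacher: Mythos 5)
Your proof is correct and follows essentially the same route as the paper: the reproducing property for part (1), the decomposition $f=\sP_R^*\sP_R f\oplus(f-\sP_R^*\sP_R f)$ together with $\sP_R\sP_R^*=I$ for part (2), and taking orthogonal complements plus the isometry of $\sP_R^*$ for part (3). Your added remarks on closedness (preimage of a closed subspace under the bounded map $\sP_R$; finite-dimensionality of $\sP_R^*(\bV_{\mF}(\tilde{K}^R))$) are sound and only make the argument more explicit than the paper's.
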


Owing to the proposition above, we get an elegant decomposition of the domain $\bH(K)$ for any $R \in SO(n)$:
\begin{alignat}{3}\label{3::decomp}
    \bH(K)&=\sP_{R}^{*}(\bV_{\mF}(\tilde{K}^{R})) &&\oplus 
    %\underbrace{\sP_{R}^{*}((\bV_{\mF}(\tilde{K}^{R}))^{\perp}) \oplus \cN(\sP_{R})}_{\sP^{-1}_{R}((\bV_{\mF}(\tilde{K}^{R}))^{\perp})}
    \sP_{R}^{*}((\bV_{\mF}(\tilde{K}^{R}))^{\perp}) &&\oplus \, \cN(\sP_{R}), \\ \nonumber
    f &= \quad f_{\mF}^{R} &&\oplus \, (\sP^{*}_{R} \sP_{R} f- f_{\mF}^{R}) &&\oplus \, (f-\sP^{*}_{R} \sP_{R} f),
\end{alignat}
where $f_{\mF}^{R}:=\sum_{j=1}^{M} \alpha_{j} \sP^{*}_{R}  \tilde{k}_{\vect{x}_{j}}^{R}$    
is uniquely determined by the linear system:
\begin{equation}\label{de::facto::dim}
    \sP_{R} f(\vect{x}_{j})= \sP_{R} f_{\mF}^{R}(\vect{x}_{j})=\sum_{j'=1}^{M} \alpha_{j'} \tilde{K}^{R}(\vect{x}_{j}, \vect{x}_{j'}), \quad j=1, \dots, M.
\end{equation}
  
 The unidentifiable nature of the component in $\cN(\sP_{R})$ in \eqref{3::decomp} persists irrespective of the mesh points used. On the other hand, the identifiability (or lack thereof) of $\sP_{R}^{*}((\bV_{\mF}(\tilde{K}^{R}))^{\perp})$ is dependent on the selected grid.
Lastly, as evident from \eqref{de::facto::dim}, the dimension of the identifiable component, $\sP_{R}^{*}(\bV_{\mF}(\tilde{K}^{R}))$, does not exceed the number of mesh points used in the reconstruction.

%For closed subspaces $A_{1}, \dots, A_{l}$, it holds that $( \bigcap_{i=1}^{N} A_{i})^{\perp}=\overline{A_{1}^{\perp}+\dots+A_{N}^{\perp}}$. 

For subspaces $\bV_{1}, \dots, \bV_{N}$ in $\bH(K)$, we denote the sum of subspaces by $\bigplus_{i=1}^{N} \bV_{i}:=\{\sum_{i=1}^{N} v_{i} : v_{1} \in \bV_{1}, \dots, v_{N} \in \bV_{N} \}$. When $\bV_{1}, \dots, \bV_{N}$ are closed subspaces, then $( \bigcap_{i=1}^{N} \bV_{i})^{\perp}=\overline{\bV_{1}^{\perp}+\dots+\bV_{N}^{\perp}}$. Hence, for multiple orientations $\mR=\{R_{i} \in SO(n): \ i=1,2,\dots, N\}$, \eqref{3::decomp} becomes
\begin{align}\label{space::repre::decomp}
    \bH(K)=\left(\bigplus_{i=1}^{N} \sP^{*}_{R_{i}}(\bV_{\mF}(\tilde{K}^{R_{i}}))\right) \oplus \left( \bigcap_{i=1}^{N} \sP^{-1}_{R_{i}}((\bV_{\mF}(\tilde{K}^{R_{i}}))^{\perp}) \right).
\end{align}
Let $\bH^{\mR}_{\mF}:=\bigplus_{i=1}^{N} \sP^{*}_{R_{i}}(\bV_{\mF}(\tilde{K}^{R_{i}})) =\text{span} \{\sP^{*}_{R_{i}} \tilde{k}_{\vect{x}_{j}}^{R_{i}}: 1 \le i \le N, 1 \le j \le M \}$, a subspace of $\bH(K)$ with dimension at most $NM$, and define $\cP_{\mF}^{\mR}:\bH(K) \rightarrow \bH^{\mR}_{\mF}$ to be the projection operator onto $\bH^{\mR}_{\mF}$.  \cref{repre::proj} part (1) reveals  {that} the evaluation of the projected image solely depends on $\cP_{\mF}^{\mR} f$, i.e.
\begin{equation}\label{eval::proj}
    \sP_{R_{i}}f(\vect{x}_{j})=(\sP_{R_{i}} \circ \cP_{\mF}^{\mR}) f(\vect{x}_{j}), \quad 1 \le i \le N, \ 1 \le j \le M.
\end{equation}
Again, when we observe  projection images from angles $R_{1}, \dots, R_{N}$ with the mesh points  $\vect{x}_{1}, \dots, \vect{x}_{M}$, it is impossible to extract any information about the image component that belongs to $(\bH^{\mR}_{\mF})^{\perp}$.  The best we can do is to estimate
\begin{equation*}
    \cP_{\mF}^{\mR} f=\sum_{i=1}^{N} \sum_{j=1}^{M} \alpha_{ij} \sP^{*}_{R_{i}} \tilde{k}_{\vect{x}_{j}}^{R_{i}}.
\end{equation*}

To introduce the representer theorem, we establish some notation. We define the vectorization of $\alpha_{ij}$ and the weight matrix, respectively, as
\begin{align*}
    &\ma:=\mathrm{vec}[\alpha_{ij}] \in \bR^{NM}, \\ \nonumber
    &\mW:=\left[w_{ij, i'j'} = \langle \sP_{R_{i}}^{*} \tilde{k}^{R_{i}}_{\vect{x}_{j}}, \sP_{R_{i'}}^{*} \tilde{k}^{R_{i'}}_{\vect{x}_{j'}} \rangle_{\bH(K)} \right] \in \bR^{NM \times NM}. \nonumber
\end{align*}
The \emph{Gram matrix} $\mW$ is clearly positive semidefinite. Additionally, due to \eqref{adj::ind::gen}, we have an equivalent expression for $w_{ij, i'j'} \in \bR$:
\begin{align}\label{weight::equiv}
    w_{ij, i'j'} &=\langle \sP_{R_{i}}^{*} \tilde{k}^{R_{i}}_{\vect{x}_{j}}, \sP_{R_{i'}}^{*} \tilde{k}^{R_{i'}}_{\vect{x}_{j'}} \rangle_{\bH(K)} 
    =(\sP_{R_{i}} \circ \sP_{R_{i'}}^{*} \tilde{k}^{R_{i'}}_{\vect{x}_{j'}})(\vect{x}_{j}) \\ \nonumber
    &=\int_{I_{\|\vect{x}_{j'}\|}} \int_{I_{\|\vect{x}_{j}\|}} K(R_{i}^{\top} [\vect{x}_{j}:z_{1}], R_{i'}^{\top} [\vect{x}_{j'}:z_{2}]) \rd z_{1} \rd z_{2}.
\end{align}

\begin{proposition}\label{iso::full::rk}
The projection $\cP_{\mF}^{\mR} f=\sum_{i=1}^{N} \sum_{j=1}^{M} \alpha_{ij} \sP^{*}_{R_{i}} \tilde{k}_{\vect{x}_{j}}^{R_{i}}$ of $f \in \bH(K)$ onto $\bH^{\mR}_{\mF}$ is uniquely determined by the linear system:
\begin{equation*}
    \mathrm{vec}[\sP_{R_{i}}f(\vect{x}_{j})]=\mathrm{vec}[(\sP_{R_{i}} \circ \cP_{\mF}^{\mR}) f(\vect{x}_{j})]=\mW \ma \in \bR^{NM}.
\end{equation*}
Denote by $\cR(\mW)$ the range space of the Gram matrix $\mW$, equipped with the inner product $\langle \mW \alpha, \mW \beta \rangle_{\mW} = \alpha^{\top} \mW \beta$. Then, the map 
\begin{equation}\label{iso::proj}
    \cI^{\mR}_{\mF}: (\bH^{\mR}_{\mF}, \langle \cdot, \cdot \rangle_{\bH(K)}) \rightarrow (\cR(\mW), \langle \cdot, \cdot \rangle_{\mW}), \ \sum_{i=1}^{N} \sum_{j=1}^{M} \alpha_{ij} \sP^{*}_{R_{i}} \tilde{k}_{\vect{x}_{j}}^{R_{i}} \mapsto \mW \ma
\end{equation}
is an isomorphism 
% \begin{align*}
%     \langle \sum_{i=1}^{N} \sum_{j=1}^{M} \alpha_{ij} \sP^{*}_{R_{i}} \tilde{k}_{\vect{x}_{j}}^{R_{i}}, \sum_{i=1}^{N} \sum_{j=1}^{M} \beta_{ij} \sP^{*}_{R_{i}} \tilde{k}_{\vect{x}_{j}}^{R_{i}} \rangle_{\bH(K)} 
%     =\langle \mW \alpha, \mW \beta \rangle_{\mW} = \alpha^{\top} \mW \beta.    
% \end{align*}
Furthermore, if the kernel $K: \bB^{n} \times \bB^{n} \rightarrow \bR$ is strictly p.d. and $\mF= \{\vect{x}_{1}, \dots, \vect{x}_{M} \} \subset (\bB^{n-1})^{\circ}$, then $\mW \in \bR^{NM \times NM}$ is also strictly p.d., thus $\dim (\bH^{\mR}_{\mF})=\dim (\mW) = NM$.
\end{proposition}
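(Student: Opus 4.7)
The plan is to verify the three assertions in turn: the linear characterization of $\cP_{\mF}^{\mR}f$, the isometric isomorphism of $\cI^{\mR}_{\mF}$, and the strict positive definiteness of $\mW$ under the additional hypotheses. For the first, I characterize $\cP_{\mF}^{\mR}f$ via orthogonality: writing $\cP_{\mF}^{\mR}f=\sum_{i,j}\alpha_{ij}\sP_{R_i}^{*}\tilde{k}^{R_i}_{\vect{x}_j}$, the projection identity $\langle f-\cP_{\mF}^{\mR}f,\sP_{R_{i'}}^{*}\tilde{k}^{R_{i'}}_{\vect{x}_{j'}}\rangle_{\bH(K)}=0$ combined with the isometry of $\sP_{R_{i'}}^{*}$ from \cref{rkhs::image::re} and the reproducing property in $\bH(\tilde{K}^{R_{i'}})$ rewrites the right-hand side as $\langle \sP_{R_{i'}}f,\tilde{k}^{R_{i'}}_{\vect{x}_{j'}}\rangle_{\bH(\tilde{K}^{R_{i'}})}=\sP_{R_{i'}}f(\vect{x}_{j'})$ and the left-hand side as $\sum_{i,j}\alpha_{ij}w_{ij,i'j'}$. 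Symmetry of $\mW$ then gives the claimed normal equation $\mW\ma=\mathrm{vec}[\sP_{R_i}f(\vect{x}_j)]$. The element $\cP_{\mF}^{\mR}f$ is unambiguously determined by this system (though $\ma$ need not be), since any two solutions $\ma,\boldsymbol{\beta}$ satisfy $(\ma-\boldsymbol{\beta})^{\top}\mW(\ma-\boldsymbol{\beta})=0$, which coincides with the squared $\bH(K)$-norm of their difference in $\bH^{\mR}_{\mF}$.

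For the isomorphism claim, I would verify well-definedness, isometry, and bijectivity of $\cI^{\mR}_{\mF}$ in turn. Well-definedness follows from the previous paragraph: if $\sum\alpha_{ij}\sP_{R_i}^{*}\tilde{k}^{R_i}_{\vect{x}_j}=\sum\beta_{ij}\sP_{R_i}^{*}\tilde{k}^{R_i}_{\vect{x}_j}$, then applying $\sP_{R_{i'}}$ and evaluating at each $\vect{x}_{j'}$ yields $\mW\ma=\mW\boldsymbol{\beta}$. The bilinear form $\langle \mW\ma,\mW\boldsymbol{\beta}\rangle_{\mW}=\ma^{\top}\mW\boldsymbol{\beta}$ is itself well-defined on $\cR(\mW)$ because $\ma-\ma'\in\cN(\mW)=\cR(\mW)^{\perp}$ whenever $\mW\ma=\mW\ma'$. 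The isometric property is then the algebraic identity $\|\sum\alpha_{ij}\sP_{R_i}^{*}\tilde{k}^{R_i}_{\vect{x}_j}\|_{\bH(K)}^{2}=\ma^{\top}\mW\ma=\|\mW\ma\|_{\mW}^{2}$, obtained by bilinear expansion of the norm square. Surjectivity onto $\cR(\mW)$ is immediate from the definition, and injectivity follows from isometry.

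The third claim hinges on recognizing $\mW$ as the Gram matrix of the $NM$ vectors $\{\sP_{R_i}^{*}\tilde{k}^{R_i}_{\vect{x}_j}\}\subset\bH(K)$; a Gram matrix is strictly positive definite precisely when its generating vectors are linearly independent. Suppose $h:=\sum_{i,j}\alpha_{ij}\sP_{R_i}^{*}\tilde{k}^{R_i}_{\vect{x}_j}\equiv 0$ on $\bB^{n}$; by \eqref{adj::ind::gen} this becomes the continuum identity
\begin{equation*}
\sum_{i,j}\alpha_{ij}\int_{I_{\|\vect{x}_j\|}}K\bigl(\vect{z},R_i^{\top}[\vect{x}_j:z]\bigr)\,\rd z=0,\qquad \vect{z}\in\bB^{n}.
\end{equation*}
Within a single orientation $R_i$, the strict positive definiteness of $\tilde{K}^{R_i}$ on $(\bB^{n-1})^{\circ}$ provided by \cref{push::kernel} immediately gives linear independence of the block $\{\sP_{R_i}^{*}\tilde{k}^{R_i}_{\vect{x}_j}\}_j$, by applying $\sP_{R_i}$ and invoking the isometry of $\sP_{R_i}^{*}$ from \cref{rkhs::image::re}. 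The main obstacle, and the crux of the argument, is the cross-orientation case: extracting vanishing of every $\alpha_{ij}$ from the single pointwise identity above requires separating contributions coming from chords $\ell_{ij}=\{R_i^{\top}[\vect{x}_j:z]:z\in I_{\|\vect{x}_j\|}\}$ attached to different orientations. My plan is to exploit that, for distinct equivalence classes $[R_i]$ and distinct $\vect{x}_j\in(\bB^{n-1})^{\circ}$, the chords are pairwise distinct sets intersecting in at most a single point, and then apply strict positive definiteness of $K$ to a sufficiently rich family of test points $\vect{z}$ selected along an individual chord to isolate and annihilate the corresponding $\alpha_{ij}$.
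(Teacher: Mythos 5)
Your treatment of the first two assertions is correct and complete, and follows essentially the route the paper has in mind (the paper simply declares them ``straightforward from \eqref{eval::proj}''): the normal equation $\mW\ma=\mathrm{vec}[\sP_{R_i}f(\vect{x}_j)]$ via orthogonality of $f-\cP_{\mF}^{\mR}f$ to $\bH^{\mR}_{\mF}$, the well-definedness of $\langle\cdot,\cdot\rangle_{\mW}$ on $\cR(\mW)$, and the isometry $\|\sum_{i,j}\alpha_{ij}\sP_{R_i}^{*}\tilde{k}^{R_i}_{\vect{x}_j}\|^{2}_{\bH(K)}=\ma^{\top}\mW\ma=\|\mW\ma\|^{2}_{\mW}$ are exactly what is needed.

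The genuine gap is in the third part, precisely at the point you yourself flag as the crux. Reducing strict positive definiteness of $\mW$ to linear independence of the family $\{\sP_{R_i}^{*}\tilde{k}^{R_i}_{\vect{x}_j}\}$ is fine, and your single-orientation block argument works, but the cross-orientation case is left as a ``plan'', and the proposed mechanism would not deliver it. Evaluating $h=\sum_{i,j}\alpha_{ij}\sP_{R_i}^{*}\tilde{k}^{R_i}_{\vect{x}_j}$ at test points $\vect{z}$ along a chord only produces the relations $\langle h,k_{\vect{z}}\rangle_{\bH(K)}=0$, which carry no information beyond $h=0$ itself; and strict positive definiteness of $K$ is a statement about finite quadratic forms $\sum_{a,b}c_{a}c_{b}K(\vect{p}_a,\vect{p}_b)$, so it cannot be ``applied to selected test points'' to isolate a single $\alpha_{ij}$ out of the identity $\sum_{i,j}\alpha_{ij}\int_{I_{\|\vect{x}_j\|}}K(\vect{z},R_i^{\top}[\vect{x}_j:z])\,\rd z=0$. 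The paper instead works with the quadratic form itself: by \eqref{weight::equiv},
\begin{equation*}
0=\ma^{\top}\mW\ma=\int_{-1}^{+1}\int_{-1}^{+1}\Big(\sum_{i,i'}\sum_{j,j'}\alpha_{ij}\alpha_{i'j'}K(R_i^{\top}[\vect{x}_j:z_1],R_{i'}^{\top}[\vect{x}_{j'}:z_2])\Big)\,\rd z_1\,\rd z_2,
\end{equation*}
argues that the integrand must vanish, and then for fixed $z_1=z_2=z$ reads off a genuine finite-point quadratic form of $K$ at the points $R_i^{\top}[\vect{x}_j:z]$, to which strict positive definiteness applies (with $W(\vect{x}_j)>0$ guaranteeing a nonempty range of admissible $z$). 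This is the step your argument is missing: to complete your route you should pass through $\|h\|^{2}_{\bH(K)}=\ma^{\top}\mW\ma$ and this double-integral expansion, not through pointwise evaluations of $h$. Be aware that even this step requires care beyond what the paper writes: the integrand equals $\langle u(z_1),u(z_2)\rangle_{\bH(K)}$ with $u(z)=\sum_{i,j}\alpha_{ij}\,k_{R_i^{\top}[\vect{x}_j:z]}$ (coefficients cut off outside $I_{\|\vect{x}_j\|}$), so it is not pointwise nonnegative off the diagonal and its double integral vanishing only says $\int u=0$; one must separately justify why the diagonal values $\|u(z)\|^{2}$ vanish, and — as your chord discussion correctly anticipates — handle parameter values $z$ at which $R_i^{\top}[\vect{x}_j:z]=R_{i'}^{\top}[\vect{x}_{j'}:z]$ for inequivalent orientations, which can occur for at most one $z$ per pair.
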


\begin{remark}
With regard to implementation, we inscribe the pixelated projected images in the unit ball $\bB^{n-1}$ and register coordinates $\mF=\{\vect{x}_{j}: j=1,\dots, M\}$ to each pixel. If a pixel is matched to a point on the boundary, say $\vect{x}_{1} \in \partial \bB^{n-1}$, it is virtually the same as neglecting the corresponding pixel. The weights $w_{ij, i'j'}$ in \eqref{weight::equiv} will be zero whenever $j=1$ or $j'=1$, resulting in entirely zero columns and rows within the weight matrix $\mW$, which renders $\mW$ non-invertible, while its Moore-Penrose inverse is equivalent to that of $\mF-\{x_{1}\}$. For strictly p.d. kernels, the singularity of $\mW$ is not tied to inherent relationships between mesh points, but rather to whether $\mF \cap \partial \bB^{n-1}=\emptyset$ or not. In conclusion, it is prudent to avoid situating a mesh point on the boundary, as such a point jeopardizes computational stability without playing any role.
\end{remark}

\subsection{Kernel Reconstruction}\label{ssec:recons}
We now consider the reconstruction problem, adopting a (penalized) Maximum Likelihood Estimator (MLE) approach within the RKHS framework. This reduces reconstruction to a linear regression problem. Recall that our observations are
\begin{align*}
    y_{ij} =\sP_{R_{i}} f^{0}(\vect{x}_{j})+ \varepsilon_{ij}, \quad i=1,2,\dots, N, \, j=1,2,\dots,M,
\end{align*}
where $\varepsilon_{ij}=W_{i}(\vect{x}_{j})$ are i.i.d. $N(0, \sigma^{2})$. Hence, an MLE satisfies
\begin{align*}
    \hat{f}^{0} \ \in& \ \argmax_{f \in \bH(K)} \prod_{i=1}^{N} \prod_{j=1}^{M} \frac{1}{\sqrt{2 \pi} \sigma} \exp \left(-\frac{(y_{ij}-\sP_{R_{i}}f^{0}(\vect{x}_{j}))^{2}}{2 \sigma^{2}} \right) = \argmin_{f \in \bH(K)} \hat{L}^{0}(f),
\end{align*}
where the empirical risk functional 
\begin{equation*}
    \hat{L}^{0}(f):=\sum_{i=1}^{N} \sum_{j=1}^{M} \left(y_{ij}-\sP_{R_{i}}f(\vect{x}_{j}) \right)^{2},
\end{equation*}
is associated with the risk functional 
\begin{equation*}
    L^{0}(f):=\sum_{i=1}^{N} \sum_{j=1}^{M} \bE [ \left(y_{ij}-\sP_{R_{i}}f(\vect{x}_{j}) \right)^{2} ].
\end{equation*}
Denote by $\mY:=\mathrm{vec}[y_{ij}] \in \bR^{NM}, \ \ma^{0}:=\mathrm{vec}[\alpha^{0}_{ij}] \in \bR^{NM}$ the vectorizations of $y_{ij}, \alpha^{0}_{ij}$,
where $\cP_{\mF}^{\mR} f^{0}=\sum_{i=1}^{N} \sum_{j=1}^{M} \alpha^{0}_{ij} \sP^{*}_{R_{i}} \tilde{k}^{R_{i}}_{\vect{x}_{j}}$.
Using the vectorization, the risk functionals become 
\begin{align*}%\label{lse::risk}
    &\hat{L}^{0}(f)=\hat{L}^{0}(\cP_{\mF}^{\mR} f)=\hat{L}^{0}(\ma)=\|\mY-\mW \ma\|^{2}, \\
    &L^{0}(f)=L^{0}(\cP_{\mF}^{\mR} f)=L^{0}(\ma)=\|\mW \ma^{0}-\mW \ma\|^{2}+NM \sigma^{2}.
\end{align*}
Therefore, we recover the following normal equation:
\begin{align}\label{normal::eq::lse}
    \mW^{2} \hat{\ma}^{0}= \mW \mY \quad &\Longleftrightarrow \quad \mW \hat{\ma}^{0}=\mW \mW^{\dagger} \mY \\ \nonumber
    \quad &\Longleftrightarrow \quad \cP_{\mF}^{\mR} \hat{f}^{0}=(\cI^{\mR}_{\mF})^{-1}(\mW \hat{\ma}^{0})=(\cI^{\mR}_{\mF})^{-1}(\mW \mW^{\dagger} \mY).
\end{align}

It is worthwhile to highlight the logical pathway through which we can derive the classical least square estimation for the reconstruction. As indicated in \eqref{eval::proj}, given $R_{1}, \dots, R_{N}$ with the mesh points $\vect{x}_{1}, \dots, \vect{x}_{M}$, we cannot identify components that belong to $(\bH^{\mR}_{\mF})^{\perp}$. As a result, instead of $\bH(K)$, the \emph{de facto} space we are working on is 
$(\bH^{\mR}_{\mF}, \langle \cdot, \cdot \rangle_{\bH(K)}) \cong (\cR(\mW), \langle \cdot, \cdot \rangle_{\mW})$, as outlined in \eqref{iso::proj}.

\begin{theorem}[X-ray Linear Regression]\label{Xray::MLE}
Let $\mR =\{R_{1}, \dots, R_{N} \} \subset SO(n)$ be viewing angles and $\mF = \{\vect{x}_{1}, \dots, \vect{x}_{M} \} \subset \bB^{n-1}$ be an evaluating grid. Also, let $\cI^{\mR}_{\mF}: (\bH^{\mR}_{\mF}, \langle \cdot, \cdot \rangle_{\bH(K)}) \rightarrow (\cR(\mW), \langle \cdot, \cdot \rangle_{\mW})$ be the isomorphism defined as in \eqref{iso::proj}.
\begin{enumerate}
    \item Let $\hat{\ma}^{0}:=\mW^{\dagger} \mY \in \cR(\mW)$ and $\hat{f}^{0}:= \sum_{i=1}^{N} \sum_{j=1}^{M} \hat{\alpha}^{0}_{ij} \sP^{*}_{R_{i}} \tilde{k}^{R_{i}}_{\vect{x}_{j}} \in \bH^{\mR}_{\mF}$. Then, $\argmin_{f \in \bH(K)} \hat{L}^{0}(f) =\hat{f}^{0} \oplus (\bH^{\mR}_{\mF})^{\perp}$, i.e. $\hat{f}^{0}$ is the minimizer of $\hat{L}^{0}$ with minimum norm. 
    \item $\argmin_{f \in \bH(K)} L^{0}(f)= \cP_{\mF}^{\mR} f^{0} \oplus (\bH^{\mR}_{\mF})^{\perp}$, i.e. $\cP_{\mF}^{\mR} f^{0}$ is the minimizer of $L^{0}$ with minimum norm. 
    \item $\hat{\ma}^{0} \sim N(\mW \mW^{\dagger} \ma^{0}, \sigma^{2} (\mW^{\dagger})^{2})$, $\hat{f}^{0} \sim N(\cP_{\mF}^{\mR} f^{0}, \sigma^{2} (\cI^{\mR}_{\mF})^{-1} \circ  \mW^{\dagger} \circ (\cI^{\mR}_{\mF}))$, where $\mW^{\dagger}$ is the Moore-Penrose inverse of $\mW$.
\end{enumerate}
\end{theorem}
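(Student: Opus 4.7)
The plan is to reduce the (infinite-dimensional) variational problem to finite-dimensional linear algebra by leveraging the orthogonal decomposition \eqref{space::repre::decomp} and the isometric isomorphism $\cI^{\mR}_{\mF}$ of \cref{iso::full::rk}. The central observation is that, by \eqref{eval::proj}, both $\hat{L}^{0}$ and $L^{0}$ depend on $f \in \bH(K)$ only through $\cP^{\mR}_{\mF}f \in \bH^{\mR}_{\mF}$, equivalently only through the image $\mW\ma \in \cR(\mW)$ under $\cI^{\mR}_{\mF}$. This is already made explicit by the pre-stated formulas $\hat{L}^{0}(f)=\|\mY-\mW\ma\|^{2}$ and $L^{0}(f)=\|\mW(\ma^{0}-\ma)\|^{2}+NM\sigma^{2}$, from which it is immediate that any minimizer is determined only modulo a summand in $(\bH^{\mR}_{\mF})^{\perp}$; hence it suffices to locate the minimizer inside $\bH^{\mR}_{\mF}$.

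For part (1), the unconstrained least-squares problem $\min_{\ma \in \bR^{NM}} \|\mY-\mW\ma\|^{2}$ is solved by Moore--Penrose theory: the minimum is attained precisely when $\mW\ma=\mW\mW^{\dagger}\mY$, and $\hat{\ma}^{0}=\mW^{\dagger}\mY$ is one such coefficient vector. Since $\cI^{\mR}_{\mF}$ is a bijection from $\bH^{\mR}_{\mF}$ onto $\cR(\mW)$, the unique element $\hat{f}^{0} \in \bH^{\mR}_{\mF}$ whose image under $\cI^{\mR}_{\mF}$ equals $\mW\hat{\ma}^{0}$ is exactly $\sum \hat{\alpha}^{0}_{ij}\sP^{*}_{R_{i}}\tilde{k}^{R_{i}}_{\vect{x}_{j}}$. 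Combining with $\bH(K)=\bH^{\mR}_{\mF}\oplus(\bH^{\mR}_{\mF})^{\perp}$ yields $\argmin_{f \in \bH(K)} \hat{L}^{0}(f) = \hat{f}^{0}\oplus(\bH^{\mR}_{\mF})^{\perp}$, and the minimum-norm property follows from the Pythagorean identity $\|\hat{f}^{0}+v\|_{\bH(K)}^{2}=\|\hat{f}^{0}\|_{\bH(K)}^{2}+\|v\|_{\bH(K)}^{2}$ for $v \in (\bH^{\mR}_{\mF})^{\perp}$. Part (2) is entirely parallel, with $L^{0}$ attaining its minimum value $NM\sigma^{2}$ exactly when $\mW\ma=\mW\ma^{0}$, which under $\cI^{\mR}_{\mF}$ identifies uniquely with $\cP^{\mR}_{\mF}f^{0} \in \bH^{\mR}_{\mF}$.

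For part (3), I would propagate Gaussians through linear maps. Writing $\mY=\mW\ma^{0}+\boldsymbol{\varepsilon}$ with $\boldsymbol{\varepsilon} \sim N(\mzero,\sigma^{2}\mI_{NM})$, the transformation $\hat{\ma}^{0}=\mW^{\dagger}\mY$ yields $\hat{\ma}^{0} \sim N(\mW^{\dagger}\mW\ma^{0},\sigma^{2}\mW^{\dagger}(\mW^{\dagger})^{\top})$, which simplifies to $N(\mW\mW^{\dagger}\ma^{0},\sigma^{2}(\mW^{\dagger})^{2})$ using self-adjointness of $\mW$ (so that $\mW^{\dagger}\mW=\mW\mW^{\dagger}$ and $(\mW^{\dagger})^{\top}=\mW^{\dagger}$). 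Since $\hat{f}^{0} \in \bH^{\mR}_{\mF}$ is a linear image of $\hat{\ma}^{0}$, it is Gaussian on the finite-dimensional subspace $\bH^{\mR}_{\mF}$; its mean coincides with $\cP^{\mR}_{\mF}f^{0}$ because $\cI^{\mR}_{\mF}(\bE[\hat{f}^{0}])=\mW\mW^{\dagger}\mW\ma^{0}=\mW\ma^{0}=\cI^{\mR}_{\mF}(\cP^{\mR}_{\mF}f^{0})$, invoking the defining identity $\mW\mW^{\dagger}\mW=\mW$.

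The main obstacle, and the step demanding the most care, is identifying the covariance operator on $\bH^{\mR}_{\mF}$ as precisely $\sigma^{2}(\cI^{\mR}_{\mF})^{-1}\circ\mW^{\dagger}\circ\cI^{\mR}_{\mF}$. The subtlety is that $\cI^{\mR}_{\mF}$ is an isometry with respect to the custom inner product $\langle\cdot,\cdot\rangle_{\mW}$ on $\cR(\mW)$ rather than the Euclidean inner product on $\bR^{NM}$, so the pullback must be computed in that geometry. I would verify the stated form by computing $\mathrm{Cov}(\langle \hat{f}^{0},f\rangle_{\bH(K)},\langle \hat{f}^{0},h\rangle_{\bH(K)})$ for $f,h \in \bH^{\mR}_{\mF}$ directly from the Gaussian law of $\hat{\ma}^{0}$, obtaining $\sigma^{2}\boldsymbol{\beta}^{\top}\mW(\mW^{\dagger})^{2}\mW\boldsymbol{\gamma}$, and then matching this with the quadratic form induced by the proposed operator, using the defining rule $\langle\mW\alpha,\mW\beta\rangle_{\mW}=\alpha^{\top}\mW\beta$ together with the Moore--Penrose identity $\mW(\mW^{\dagger})^{2}\mW=\mW\mW^{\dagger}$.
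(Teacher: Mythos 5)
Your proposal is correct and follows essentially the same route as the paper: reduce both risk functionals to finite-dimensional least squares via \eqref{eval::proj} and the isomorphism $\cI^{\mR}_{\mF}$, identify the minimum-norm solution through the Moore--Penrose inverse and the orthogonal decomposition \eqref{space::repre::decomp}, and obtain the law of $\hat{f}^{0}$ by pushing the Gaussian law of $\hat{\ma}^{0}$ through $\cI^{\mR}_{\mF}$. The only cosmetic difference is that you verify the covariance by matching bilinear forms $\mathrm{Cov}(\langle \hat{f}^{0},f\rangle,\langle \hat{f}^{0},h\rangle)$ while the paper computes the covariance operator $\mathrm{Cov}(\hat{f}^{0})f$ directly; both rest on the same identities $\langle \mW\alpha,\mW\beta\rangle_{\mW}=\alpha^{\top}\mW\beta$ and $\mW(\mW^{\dagger})^{2}\mW=\mW\mW^{\dagger}$.
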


Our regression setting differs from the traditional $\cL_{2}$ setting. Methods based on the SVD of the compact operator $\sP: \cL_{2}(\bB^{n}) \rightarrow \cL_{2}(SO(n) \times \bB^{n-1}, W^{-1})$ typically fix the choice of basis as the singular vectors, taking the form of $f_{mlk}(r \theta)=Z_{ml}(r) Y_{lk}(\theta)$, where the radial basis $Z_{ml}$ consists of Zernike polynomials, and $Y_{lk}$ represents spherical harmonics \cite{natterer2001mathematics, maass1987x}. However, in our framework:
\begin{enumerate}
    \item The MLE $\hat{f}^{0}$ can vary based on the selection of the kernel $K: \bB^{n} \times \bB^{n} \rightarrow \bR$, granting us more flexibility in selecting a basis.
    \item The rank of the design matrix $\mW$ is not determined by the truncation level, but by the total number of observations $NM$.
\end{enumerate}
Arguably, our reconstruction method represents a more general framework than the truncated SVD, as the latter corresponds to a specific choice of non-strictly p.d. kernels. For further details, we refer to the discussion following \cref{left::ang::ex}.

The design matrix $\mW$ is always square by construction, so there is no \emph{parsimony} at play here. In \cref{iso::full::rk}, we demonstrate that when using a strictly positive definite kernel with interior mesh points $\mF= \{\vect{x}_{1}, \dots, \vect{x}_{M} \} \subset (\bB^{n-1})^{\circ}$, $\mW$ becomes full rank, i.e., $\cR(\mW)=\bR^{NM}$. Consequently, the MLE $\hat{f}^{0}$ generates a sinogram that perfectly interpolates our observations $\mY=\mathrm{vec}[y_{ij}] \in \bR^{NM}$ regardless of the presence of the noise, i.e. $\min_{f \in \bH(K)} \hat{L}^{0}(f) = \min_{\ma \in \bR^{MN}} \hat{L}^{0}(\ma)= 0$. 
To address this issue of overfitting, we consider Tikhonov regularization, leading to a \emph{penalized} MLE. For a tuning parameter $\nu >0$, we introduce a penalty term to the empirical risk functional $\hat{L}^{\nu}:\bH(K) \rightarrow \bR$ as follows: 
\begin{align*}
    \hat{L}^{\nu}(f)
    =\sum_{i=1}^{N} \sum_{j=1}^{M} &\left(y_{ij}-\sP_{R_{i}} f(\vect{x}_{j}) \right)^{2} +\nu \|f\|^{2}_{\bH(K)}.
\end{align*}
 {Due to} \eqref{eval::proj} , we have $\hat{L}^{\nu}(f) \ge \hat{L}^{\nu}(\cP_{\mF}^{\mR} f)$, hence any minimizer $\hat{f}^{\nu}$ of  {the penalized risk} $\hat{L}^{\nu}$ belongs to $\bH^{\mR}_{\mF}$, i.e. there are some $\hat{\alpha}^{\nu}_{ij}$ such that $\hat{f}^{\nu}=\cP_{\mF}^{\mR} \hat{f}^{\nu}=\sum_{i=1}^{N} \sum_{j=1}^{M} \hat{\alpha}^{\nu}_{ij} \sP^{*}_{R_{i}} \tilde{k}^{R_{i}}_{\vect{x}_{j}}$.
By \cref{iso::full::rk}, the empirical risk functional becomes
\begin{equation*}
    \hat{L}^{\nu}(f) \ge \hat{L}^{\nu}(\cP_{\mF}^{\mR} f)=\|\mY-\mW \ma\|^{2}+\nu \ma^{\top} \mW \ma,
\end{equation*}
which yields the normal equation as follows:
\begin{align}\label{normal::eq::tikho}
    (\mW^{2}+\nu \mW) \hat{\ma}^{\nu}= \mW \mY \,&\Longleftrightarrow \, \mW \hat{\ma}^{\nu}=\mW (\mW+\nu \mI)^{-1} \mY. %\\ \nonumber
    %\, &\Longleftrightarrow \, \hat{f}^{\nu}=(\cI^{\mR}_{\mF})^{-1}(\mW \hat{\ma}^{\nu})=(\cI^{\mR}_{\mF})^{-1}(\mW (\mW+\nu \mI)^{-1} \mY).
\end{align}
In summary, we deduce the representer theorem below for the regularized reconstruction. We also highlight that, whenever the penalty function is a strictly monotone increasing function of $\|f\|_{\bH(K)}$, the same argument remains valid: the minimizer belongs to $\bH_{\mF}^{\mR}$. Additionally, if the penalty function is also convex, then the minimizer is unique.

\begin{theorem}[Tikhonov Kernel Reconstruction]\label{tikhonov} 
Let $\mR =\{R_{1}, \dots, R_{N} \} \subset SO(n)$ be viewing angles, $\mF = \{\vect{x}_{1}, \dots, \vect{x}_{M} \} \subset \bB^{n-1}$ be an evaluating grid, and $\nu > 0$ be a regularization parameter. Also, let $\cI^{\mR}_{\mF}: (\bH^{\mR}_{\mF}, \langle \cdot, \cdot \rangle_{\bH(K)}) \rightarrow (\cR(\mW), \langle \cdot, \cdot \rangle_{\mW})$ be the isomorphism defined as in \eqref{iso::proj}.
\begin{enumerate}
    \item The unique minimizer of $\hat{L}^{\nu}(f)$ is given by
    $\hat{f}^{\nu}=\sum_{i=1}^{N} \sum_{j=1}^{M} \hat{\alpha}^{\nu}_{ij} \sP^{*}_{R_{i}} \tilde{k}^{R_{i}}_{\vect{x}_{j}} \in \bH^{\mR}_{\mF}$, where $\hat{\ma}^{\nu}:=(\mW+\nu \mI)^{-1} \mY$.
    \item The minimum of the empirical risk is given by
    \begin{equation*}
        \min_{f \in \bH(K)} \hat{L}^{\nu}(f)=\min_{\ma \in \bR^{MN}} \hat{L}^{\nu}(\ma)=\hat{L}^{\nu}(\hat{\ma}^{\nu})= \nu \mY^{\top} (\mW + \nu \mI)^{-1} \mY.
    \end{equation*}
    \item $\hat{\ma}^{\nu} \sim N(\mW (\mW+\nu \mI)^{-1} \ma^{0}, \sigma^{2} (\mW+\nu \mI)^{-2})$ and $\hat{f}^{\nu}$ is the finite-dimensional degenerate Gaussian process:
    \begin{equation*}
    \hat{f}^{\nu} \sim N((\cI^{\mR}_{\mF})^{-1}(\mW^{2} (\mW+\nu \mI)^{-1} \ma^{0}), \sigma^{2} (\cI^{\mR}_{\mF})^{-1} \circ (\mW (\mW+\nu \mI)^{-2}) \circ (\cI^{\mR}_{\mF})).    
    \end{equation*}
    \item As $\nu \downarrow 0$, $\hat{\ma}^{\nu}$ and $\hat{f}^{\nu}$ converge to the MLEs $\hat{\ma}^{0}$ and $\hat{f}^{0}$ in $\bR^{NM}$ and $\bH(K)$, respectively: $\lim_{\nu \downarrow 0} \hat{\ma}^{\nu}=\hat{\ma}^{0}, \ \lim_{\nu \downarrow 0} \hat{f}^{\nu}=\hat{f}^{0}$.
\end{enumerate}
\end{theorem}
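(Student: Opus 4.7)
The strategy is to reduce the infinite-dimensional variational problem to a finite-dimensional quadratic program on the coefficient space, then solve it by standard linear algebra, and finally transport the solution and its distribution back to $\bH(K)$ through the isomorphism $\cI^{\mR}_{\mF}$.

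First, I would argue that any minimizer of $\hat{L}^{\nu}$ must lie in $\bH^{\mR}_{\mF}$. Write $f = \cP^{\mR}_{\mF} f + (f - \cP^{\mR}_{\mF} f)$. By \eqref{eval::proj}, the data-fitting terms $(y_{ij} - \sP_{R_i} f(\vect{x}_j))^2$ depend only on $\cP^{\mR}_{\mF} f$, while Pythagoras gives $\|f\|_{\bH(K)}^2 = \|\cP^{\mR}_{\mF} f\|_{\bH(K)}^2 + \|f - \cP^{\mR}_{\mF} f\|_{\bH(K)}^2 \ge \|\cP^{\mR}_{\mF} f\|_{\bH(K)}^2$ with equality iff $f \in \bH^{\mR}_{\mF}$. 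Hence $\hat{L}^{\nu}(f) \ge \hat{L}^{\nu}(\cP^{\mR}_{\mF} f)$ with equality iff $f \in \bH^{\mR}_{\mF}$, so any minimizer belongs to this finite-dimensional subspace. Parametrizing $f = \sum_{i,j} \alpha_{ij}\,\sP^{*}_{R_i}\tilde{k}^{R_i}_{\vect{x}_j}$ and invoking \cref{iso::full::rk} together with \eqref{weight::equiv}, we get $\sP_{R_i} f(\vect{x}_j) = (\mW \ma)_{ij}$ and $\|f\|_{\bH(K)}^2 = \ma^{\top} \mW \ma$, so
\begin{equation*}
\hat{L}^{\nu}(f) = \|\mY - \mW \ma\|^2 + \nu\, \ma^{\top} \mW \ma.
\end{equation*}

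For parts 1 and 2, I would set the gradient with respect to $\ma$ to zero, obtaining the normal equation $\mW(\mW + \nu \mI)\ma = \mW\mY$, as in \eqref{normal::eq::tikho}. The choice $\hat{\ma}^{\nu} := (\mW + \nu \mI)^{-1}\mY$, well-defined since $\mW + \nu\mI \succ 0$ for $\nu > 0$, solves this equation. Uniqueness of the \emph{function} $\hat{f}^{\nu}$ follows because the map $\ma \mapsto f$ restricted to a complementary subspace of $\cN(\mW)$ is injective, and the functional $\ma \mapsto \|\mY - \mW\ma\|^2 + \nu\, \ma^{\top}\mW\ma$, viewed as a functional on $\bH^{\mR}_{\mF}$, is strictly convex when $\nu>0$. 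For the minimum value, I would use the algebraic identity $\mY - \mW(\mW + \nu\mI)^{-1}\mY = \nu(\mW + \nu\mI)^{-1}\mY$ together with the commutativity of $\mW$ and $(\mW + \nu\mI)^{-1}$, yielding $\nu^2 \mY^\top (\mW+\nu\mI)^{-2}\mY + \nu\mY^\top(\mW+\nu\mI)^{-1}\mW(\mW+\nu\mI)^{-1}\mY = \nu\mY^\top(\mW+\nu\mI)^{-1}[\nu\mI + \mW](\mW+\nu\mI)^{-1}\mY = \nu\mY^\top(\mW+\nu\mI)^{-1}\mY$.

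For part 3, note that $\mY = \mW \ma^0 + \boldsymbol{\varepsilon}$ with $\boldsymbol{\varepsilon} \sim N(\mzero, \sigma^2 \mI_{NM})$, since by \cref{iso::full::rk} the noise-free projection values coincide with $\mW\ma^0$. Since $\hat{\ma}^\nu = (\mW+\nu\mI)^{-1}\mY$ is an affine image of $\boldsymbol{\varepsilon}$, Gaussianity is preserved, and a direct computation of the mean and covariance (again using commutativity of $\mW$ and $(\mW+\nu\mI)^{-1}$) yields the stated law of $\hat{\ma}^\nu$. Pushing forward through the bounded linear map $\ma \mapsto \sum \alpha_{ij}\sP^*_{R_i}\tilde{k}^{R_i}_{\vect{x}_j} = (\cI^{\mR}_{\mF})^{-1}(\mW\ma)$ gives the stated finite-dimensional Gaussian law for $\hat{f}^\nu$. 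For part 4, using the spectral decomposition $\mW = U\Lambda U^\top$, the continuity of $\nu \mapsto (\mW+\nu\mI)^{-1}$ on the range of $\mW$ (or full invertibility when the kernel is strictly p.d.\ with interior grid) yields $\hat{\ma}^\nu \to \mW^{\dagger}\mY = \hat{\ma}^0$, while for the function itself one always has $\mW(\mW+\nu\mI)^{-1} \to \mW\mW^{\dagger}$ (the orthogonal projector onto $\cR(\mW)$), so $\hat{f}^{\nu} = (\cI^{\mR}_{\mF})^{-1}(\mW(\mW+\nu\mI)^{-1}\mY) \to (\cI^{\mR}_{\mF})^{-1}(\mW\mW^{\dagger}\mY) = \hat{f}^0$ by continuity of $(\cI^{\mR}_{\mF})^{-1}$.

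The only genuinely delicate point is the \emph{reduction} step: one must be careful that the projection $\cP^{\mR}_{\mF}$ is indeed an $\bH(K)$-orthogonal projection and that it preserves point-evaluations of the X-ray transform. Both facts rest on \cref{repre::proj} and \eqref{eval::proj}, which have been established earlier, so once these are in hand the rest is a clean finite-dimensional calculation and standard Gaussian linear-model reasoning.
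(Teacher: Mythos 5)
Your proposal is correct and follows essentially the same route as the paper: reduction to $\bH^{\mR}_{\mF}$ via \eqref{eval::proj} and Pythagoras, passage to the quadratic form $\|\mY-\mW\ma\|^{2}+\nu\,\ma^{\top}\mW\ma$ through \cref{iso::full::rk}, the normal equation \eqref{normal::eq::tikho}, the identity $\mI-\mW(\mW+\nu\mI)^{-1}=\nu(\mW+\nu\mI)^{-1}$ for the minimum value, the Gaussian push-forward as in \cref{Xray::MLE}, and the spectral limit for $\nu\downarrow 0$. If anything, your handling of part 4 is slightly more careful than the paper's, since you correctly observe that $\mW(\mW+\nu\mI)^{-1}\to\mW\mW^{\dagger}$ always holds (giving $\hat{f}^{\nu}\to\hat{f}^{0}$ unconditionally), whereas $\hat{\ma}^{\nu}\to\mW^{\dagger}\mY$ requires invertibility of $\mW$ or $\mY\in\cR(\mW)$.
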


\begin{remark}
Our Tikhonov regularizer can be interpreted as a Maximum a posteriori (MAP) estimator with a Gaussian prior. If we equip $f$ with the Gaussian white noise prior of level $\sigma^{2}/\nu > 0$, following Theorem 2.12 of \cite{da2014stochastic}, i.e., for any $h \in \bH(K)$, $\langle h, f \rangle_{\bH(K)} \sim N(0, \sigma^{2}/\nu \cdot \|h\|_{\bH(K)}^{2})$, then the prior of $\mathrm{vec}[\sP_{R_{i}}f(\vect{x}_{j})]=\mW \ma \in \bR^{NM}$ becomes $N(0, \sigma^{2}/\nu \mW)$. Consequently, its prior density with respect to the Lebesgue measure on $\cR(\mW) \subset \bR^{NM}$ is given by
\begin{equation*}
    p( \mW \ma) = \frac{1}{\sqrt{\det (2\pi \mW \vert_{\cR(\mW)})}} \exp \left( - \frac{\nu}{2 \sigma^{2}} (\mW \ma)^{\top} \mW^{\dagger} (\mW \ma) \right) \propto \exp \left( - \frac{\nu}{2 \sigma^{2}} \ma^{\top} \mW \ma \right)
\end{equation*}
Thus, the posterior density is proportional to
\begin{align*}
    p( \mW \ma \vert \mY) \propto p( \mY | \mW \ma) \cdot p( \mW \ma) 
    \propto \exp \left( - \frac{1}{2 \sigma^{2}} \|\mY-\mW \ma\|^{2} - \frac{\nu}{2 \sigma^{2}} \ma^{\top} \mW \ma \right),
\end{align*}
and the MAP estimator of $\ma$ is given by
\begin{equation*}
    \hat{\ma}^{\mathrm{MAP}} = \argmin_{\ma \in \bR^{NM}} \|\mY-\mW \ma\|^{2} + \nu \ma^{\top} \mW \ma = \hat{\ma}^{\nu}.
\end{equation*}
\end{remark}

Our X-ray representer theorem produces a reconstructed image at a functional level within the discrete setup. This reconstruction bypasses the  projection image interpolation step, as it automatically satisfies the consistency condition in \cref{HLCC}:
\begin{proposition}\label{RKHS:HLCC}
The Tikhonov regularized reconstruction in \cref{tikhonov} produces an interpolation of the  projection images as follows: for any $R \in SO(n)$ and $\vect{x} \in \bB^{n-1}$,
\begin{align*}
    \sP \hat{f}^{\nu}(R, \vect{x}) = \langle \sP_{R}^{*} \tilde{k}^{R}_{\vect{x}}, \hat{f}^{\nu} \rangle_{\bH(K)}
    =\sum_{i=1}^{N} \sum_{j=1}^{M} \hat{\alpha}^{\nu}_{ij} \langle \sP_{R}^{*} \tilde{k}^{R}_{\vect{x}}, \sP_{R_{i}}^{*} \tilde{k}^{R_{i}}_{\vect{x}_{j}} \rangle_{\bH(K)}. 
\end{align*}
Furthermore, the interpolant $\sP \hat{f}^{\nu} \in \bH(\tilde{K}^{R})$ is a bona fide sinogram, i.e. it satisfies the HLCC in \cref{HLCC}: for any $l \in \bZ_{+}$, we have
\begin{equation*}
    \int_{\bB^{n-1}} (\vect{x}^{\top} \vect{y})^{l} \sP \hat{f}^{\nu}(R,\vect{x}) \rd \vect{x} = q_{l}^{\nu}(\mP_{R}^{*}(\vect{y})), \quad \vect{y} \in \bR^{n-1},
\end{equation*}
where $q_{l}^{\nu} (\cdot) :=\int_{\bB^{n}} (\vect{z} ^{\top} \cdot)^{l}  \hat{f}^{\nu}(\vect{z}) \rd \vect{z}$ is a homogeneous polynomial of degree $l$, independent of $R \in SO(n)$. 
\end{proposition}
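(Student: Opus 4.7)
The plan is to verify part (1) by directly combining the reproducing property of $\bH(\tilde K^R)$ with the adjoint identity established in \cref{rkhs::image::re}, and to tackle part (2) by splitting the HLCC into its two clauses: the compatibility principle will be inherited verbatim from the X-ray transform itself (since $\sP\hat f^\nu$ is the X-ray transform of a bona fide function in $\bH(K)$), and the moment condition will reduce to Fubini's theorem together with a single orthogonal change of variables.

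For part (1), since $\hat f^\nu\in\bH(K)$, \cref{rkhs::image::re} gives $\sP_R\hat f^\nu\in\bH(\tilde K^R)$. Applying the reproducing property of $\bH(\tilde K^R)$ at the point $\vect x$ yields $\sP_R\hat f^\nu(\vect x)=\langle\sP_R\hat f^\nu,\tilde k^R_{\vect x}\rangle_{\bH(\tilde K^R)}$, and the adjoint relation \eqref{adj::ind::gen} converts this to $\langle\hat f^\nu,\sP_R^*\tilde k^R_{\vect x}\rangle_{\bH(K)}$. Substituting the representer expansion $\hat f^\nu=\sum_{i=1}^{N}\sum_{j=1}^{M}\hat\alpha^\nu_{ij}\,\sP^*_{R_i}\tilde k^{R_i}_{\vect x_j}$ from \cref{tikhonov} then produces the asserted formula.

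For part (2), the compatibility principle is immediate from its namesake property discussed in \cref{ssec:xray}: whenever $R_1\sim R_2$ and $\mP_{R_1}^*(\vect x_1)=\mP_{R_2}^*(\vect x_2)$, the two lines of integration coincide as subsets of $\bR^n$, so $\sP\hat f^\nu(R_1,\vect x_1)=\sP\hat f^\nu(R_2,\vect x_2)$. For the moment condition, I would start from
\[
\int_{\bB^{n-1}}(\vect x^\top\vect y)^l\,\sP\hat f^\nu(R,\vect x)\,\rd\vect x=\int_{\bB^{n-1}}\int_{I_{\|\vect x\|}}(\vect x^\top\vect y)^l\,\hat f^\nu(R^\top[\vect x:z])\,\rd z\,\rd\vect x,
\]
justify Fubini by boundedness of $\hat f^\nu$ on the compact $\bB^n$, and then change variables via the orthogonal map $\vect z=R^\top[\vect x:z]$. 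The joint domain $\{(\vect x,z):\vect x\in\bB^{n-1},\ z\in I_{\|\vect x\|}\}$ maps bijectively onto $\bB^n$ with unit Jacobian, and $\vect x=\mP_R(\vect z)$. Invoking the Euclidean adjoint identity $\mP_R(\vect z)^\top\vect y=\vect z^\top\mP^*_R(\vect y)$ recasts the right-hand side as $\int_{\bB^n}(\vect z^\top\mP^*_R(\vect y))^l\,\hat f^\nu(\vect z)\,\rd\vect z=q_l^\nu(\mP^*_R(\vect y))$, where $q_l^\nu(\cdot):=\int_{\bB^n}(\vect z^\top\cdot)^l\,\hat f^\nu(\vect z)\,\rd\vect z$ is manifestly a homogeneous polynomial of degree $l$ in its vector argument and independent of $R$.

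I do not anticipate a substantive obstacle. The argument is essentially a formal manipulation: $\hat f^\nu$ is a finite linear combination of kernel backprojections of generators of a continuous kernel on the compact ball, hence bounded, disposing of any integrability concern. The only point requiring attention is the observation that the interval $I_{\|\vect x\|}=[-W(\vect x),+W(\vect x)]$ is precisely calibrated so that $[\vect x:z]$ sweeps out $\bB^n$, which makes the orthogonal rotation $R^\top$ preserve the domain and the change of variables work cleanly.
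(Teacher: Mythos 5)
Your proposal is correct and follows essentially the same route as the paper: the first identity is the reproducing property plus the adjoint relation, and the moment condition is obtained by Fubini and the change of variables $\vect{z}=R^{\top}[\vect{x}:z]$ together with $\mP_{R}(\vect{z})^{\top}\vect{y}=\vect{z}^{\top}\mP_{R}^{*}(\vect{y})$. The only cosmetic difference is that the paper writes the integrand as $\langle k_{R^{\top}[\vect{x}:z]},\hat{f}^{\nu}\rangle_{\bH(K)}$ via the Bochner formula \eqref{adj::Boch}, which coincides with your pointwise evaluation $\hat{f}^{\nu}(R^{\top}[\vect{x}:z])$; your explicit check of the compatibility clause is a harmless addition.
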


\subsection{Stability}\label{ssec:stab}
Reconstruction stability is influenced by two primary sources of error. The first source is obviously measurement errors, and several stability inequalities have been established in terms of $\|\sP(f-f^{0})\|$ and $\|f^{0}\|$ under some regularity conditions \cite{rullgaard2004stability, natterer2001mathematics}. These results prove valuable only if we have access to an entire perturbed sinogram. However, our situation involves only a finite number of discretized  projection images, which in effect constitutes a second error source. In this discrete setup, previous studies \cite{maass1987x, natterer2001mathematics} have addressed the issue of \emph{resolution}, i.e. characterizing the degree of oscillation in reconstructed images. Nonetheless, to the best of our knowledge, there exists no \emph{non-asymptotic} stability inequality that specifically addresses the discretized measurement error. To bridge this gap, we establish a \emph{sharp} stability result for our algorithm, in the realistic  setting of discretized measurements corrupted by errors, in two setups: random errors of given variance, \cref{MSE:Tikho}, and deterministic errors with given bounded norm, \cref{stable::recons}.

\begin{proposition}\label{MSE:Tikho}
The mean squared error of the Tikhonov regularizer with tuning parameter $\nu > 0$ in \cref{tikhonov} is given by
\begin{align*}
    \text{MSE} &(\hat{f}^{\nu}) := \bE[ \|\hat{f}^{\nu}-f^{0}\|^{2}_{\bH(K)}] \\
    =& \nu^{2} (\ma^{0})^{\top} \mW (\mW + \nu \mI)^{-2} \ma^{0} + \sigma^{2} \text{tr} (\mW (\mW + \nu \mI)^{-2}) + \|f^{0}-\cP_{\mF}^{\mR} f^{0}\|^{2}_{\bH(K)}.    
\end{align*}
\end{proposition}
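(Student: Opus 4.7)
The plan is to split the squared error along the orthogonal decomposition $\bH(K) = \bH^{\mR}_{\mF} \oplus (\bH^{\mR}_{\mF})^{\perp}$, transport the finite-dimensional component to $(\cR(\mW), \langle \cdot, \cdot\rangle_{\mW})$ via the isomorphism $\cI^{\mR}_{\mF}$, and then do a standard bias--variance computation for a linear Gaussian estimator.

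First I would note that by Theorem~\ref{tikhonov}(1) the minimizer $\hat{f}^{\nu}$ lies in $\bH^{\mR}_{\mF}$, while $f^0 - \cP_{\mF}^{\mR} f^0 \in (\bH^{\mR}_{\mF})^{\perp}$. Writing $\hat{f}^{\nu} - f^0 = (\hat{f}^{\nu} - \cP_{\mF}^{\mR} f^0) - (f^0 - \cP_{\mF}^{\mR} f^0)$ as a sum of orthogonal summands, Pythagoras gives
\begin{equation*}
\|\hat{f}^{\nu} - f^0\|^2_{\bH(K)} = \|\hat{f}^{\nu} - \cP_{\mF}^{\mR} f^0\|^2_{\bH(K)} + \|f^0 - \cP_{\mF}^{\mR} f^0\|^2_{\bH(K)}.
\end{equation*}
The second term is deterministic and already matches the third term in the claimed formula.

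Next I would invoke the isomorphism $\cI^{\mR}_{\mF}$ from Proposition~\ref{iso::full::rk}. Under it, $\cP_{\mF}^{\mR} f^0 \mapsto \mW \ma^0$ and $\hat{f}^{\nu} \mapsto \mW \hat{\ma}^{\nu}$, so
\begin{equation*}
\|\hat{f}^{\nu} - \cP_{\mF}^{\mR} f^0\|^2_{\bH(K)} = \langle \mW(\hat{\ma}^{\nu}-\ma^0), \mW(\hat{\ma}^{\nu}-\ma^0)\rangle_{\mW} = (\hat{\ma}^{\nu}-\ma^0)^{\top} \mW (\hat{\ma}^{\nu}-\ma^0).
\end{equation*}
From \eqref{eval::proj} and Proposition~\ref{iso::full::rk}, the data satisfy $\mY = \mW \ma^0 + \boldsymbol{\varepsilon}$ with $\boldsymbol{\varepsilon} \sim N(\mzero, \sigma^2 \mI_{NM})$, and by Theorem~\ref{tikhonov}(1) $\hat{\ma}^{\nu} = (\mW+\nu\mI)^{-1}\mY$. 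A one-line algebraic manipulation, using $(\mW+\nu\mI)^{-1}\mW - \mI = -\nu(\mW+\nu\mI)^{-1}$, gives the bias--variance split
\begin{equation*}
\hat{\ma}^{\nu} - \ma^0 = -\nu (\mW+\nu\mI)^{-1} \ma^0 + (\mW+\nu\mI)^{-1} \boldsymbol{\varepsilon}.
\end{equation*}

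Finally I would take expectation of the quadratic form. The cross term vanishes because $\bE[\boldsymbol{\varepsilon}] = \mzero$, leaving a bias contribution $\nu^2 (\ma^0)^{\top} (\mW+\nu\mI)^{-1} \mW (\mW+\nu\mI)^{-1} \ma^0$ and a variance contribution $\sigma^2 \operatorname{tr}\bigl((\mW+\nu\mI)^{-1} \mW (\mW+\nu\mI)^{-1}\bigr)$; since $\mW$ and $(\mW+\nu\mI)^{-1}$ commute as functions of the same symmetric matrix, both simplify to the forms $\nu^2 (\ma^0)^{\top} \mW (\mW+\nu\mI)^{-2} \ma^0$ and $\sigma^2 \operatorname{tr}(\mW(\mW+\nu\mI)^{-2})$ in the statement. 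Adding back the orthogonality defect $\|f^0 - \cP_{\mF}^{\mR} f^0\|^2_{\bH(K)}$ yields the result.

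The only subtle point is that $\ma^0$ need not be unique when $\mW$ is rank-deficient, but this is harmless: the vector $\mW \ma^0 = \cI^{\mR}_{\mF}(\cP_{\mF}^{\mR} f^0)$ is uniquely defined, and every quantity appearing in the final formula is either a function of $\mW \ma^0$ (the bias is $\nu^2 (\mW\ma^0)^{\top}(\mW+\nu\mI)^{-2}(\mW\ma^0)/\text{something}$... in fact the expression $(\ma^0)^{\top} \mW (\mW + \nu \mI)^{-2} \ma^0$ depends only on $\mW \ma^0$ since $\mW(\mW+\nu\mI)^{-2} = (\mW+\nu\mI)^{-1}\mW(\mW+\nu\mI)^{-1}$ factors through $\mW \ma^0$) or independent of the choice of representative, so the representation issue is not a genuine obstacle.
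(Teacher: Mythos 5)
Your proposal is correct and follows essentially the same route as the paper's proof: orthogonal (Pythagorean) split across $\bH^{\mR}_{\mF}\oplus(\bH^{\mR}_{\mF})^{\perp}$, transport to $(\cR(\mW),\langle\cdot,\cdot\rangle_{\mW})$ via $\cI^{\mR}_{\mF}$, and then the bias--variance evaluation of $\bE[(\hat{\ma}^{\nu}-\ma^{0})^{\top}\mW(\hat{\ma}^{\nu}-\ma^{0})]$ using $\hat{\ma}^{\nu}-\ma^{0}\sim N(-\nu(\mW+\nu\mI)^{-1}\ma^{0},\sigma^{2}(\mW+\nu\mI)^{-2})$. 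The only cosmetic difference is that the paper invokes the quadratic-form identity $\bE[X^{\top}AX]=\mu^{\top}A\mu+\operatorname{tr}(A\Sigma)$ directly rather than expanding the cross term, and your closing remark on the non-uniqueness of $\ma^{0}$ when $\mW$ is rank-deficient is a valid, harmless addition.
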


% \begin{lemma}\label{lem::stable::ineq}
% Let $\nu>0$ and $d \ge 0$. Then for any $v, w \in \bR$,
% \begin{equation*}
%     \frac{d}{(d+\nu)^{2}} (v- \nu w)^{2} \le \frac{v^{2}}{d+2 \nu}  + d w^{2},
% \end{equation*}
% and equality holds if and only if $v = -d(d+2 \nu) w/\nu$.
% \end{lemma}
% \begin{proof}
% For any $v, w \in \bR$, the quadratic equation
% \begin{align*}
%     &(d + \nu)^{2} \left(\frac{v^{2}}{d+2 \nu}  + d w^{2} \right) - d (v- \nu w)^{2} \\
%     =& \left[\frac{(d + \nu)^{2}}{d+2 \nu}-d \right] v^{2} + 2 d \nu v w + d[(d+\nu)^{2}-\nu^{2}] w^{2} 
%     = \frac{1}{d+2 \nu} [\nu v + d(d+ 2\nu) w]^{2} \ge 0,
% \end{align*}
% and equality holds if and only if $v = -d(d+2 \nu) w/\nu$.
% \end{proof}

\begin{theorem}\label{stable::recons}
Let be $\nu>0$ a tuning parameter, and $d > 0$ be the smallest non-zero eigenvalue of the Gram matrix $\mW \in \bR^{NM \times NM}$.
Then, the worst-case squared error of the Tikhonov regularizer in \cref{tikhonov} is given as follows:
\begin{equation*}
    \text{err}(\rho, \varepsilon) := \max \left\{\|\hat{f}^{\nu}- f^{0}\|^{2}_{\bH(K)} : \|f^{0}\|_{\bH(K)} \le \rho,\, \sum_{i=1}^{N} \sum_{j=1}^{M} \varepsilon_{ij}^{2} \le \varepsilon^{2} \right\} 
    =\rho^{2} + \frac{\varepsilon^{2}}{d+ 2 \nu}.
\end{equation*}
\end{theorem}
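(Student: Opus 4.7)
The plan is to reduce the problem to a finite-dimensional quadratic optimization via the isomorphism $\cI_{\mF}^{\mR}$ of \cref{iso::full::rk}, diagonalize the Gram matrix $\mW$, and close the gap between matching upper and lower bounds with a perfect-square identity. Since $\hat{f}^{\nu} \in \bH_{\mF}^{\mR}$ while $f^{0}-\cP_{\mF}^{\mR}f^{0} \in (\bH_{\mF}^{\mR})^{\perp}$, Pythagoras gives
\[
\|\hat{f}^{\nu}-f^{0}\|_{\bH(K)}^{2} = \|\hat{f}^{\nu}-\cP_{\mF}^{\mR} f^{0}\|_{\bH(K)}^{2}+\rho_{1}^{2}, \qquad \rho_{1}:=\|f^{0}-\cP_{\mF}^{\mR} f^{0}\|_{\bH(K)}.
\]
Writing $\mW\ma^{0}:=\mathrm{vec}[\sP_{R_{i}}f^{0}(\vect{x}_{j})]$ and $\boldsymbol{\varepsilon}:=\mathrm{vec}[\varepsilon_{ij}]$, one has $\hat{\ma}^{\nu}-\ma^{0}=(\mW+\nu\mI)^{-1}(\boldsymbol{\varepsilon}-\nu\ma^{0})$ and, via \eqref{iso::proj}, $\|\hat{f}^{\nu}-\cP_{\mF}^{\mR} f^{0}\|_{\bH(K)}^{2}=(\hat{\ma}^{\nu}-\ma^{0})^{\top}\mW(\hat{\ma}^{\nu}-\ma^{0})$. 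Diagonalising $\mW=U\Lambda U^{\top}$ and writing $\tilde{a}_{k}:=(U^{\top}\ma^{0})_{k}$, $\tilde{e}_{k}:=(U^{\top}\boldsymbol{\varepsilon})_{k}$, the total error becomes
\[
\|\hat{f}^{\nu}-f^{0}\|^{2} = \sum_{k:\lambda_{k}>0}\frac{\lambda_{k}(\tilde{e}_{k}-\nu\tilde{a}_{k})^{2}}{(\lambda_{k}+\nu)^{2}}+\rho_{1}^{2},
\]
subject to $\sum_{k}\tilde{e}_{k}^{2}\le\varepsilon^{2}$ and $\sum_{k}\lambda_{k}\tilde{a}_{k}^{2}+\rho_{1}^{2}\le\rho^{2}$; any components of $\ma^{0}$ or $\boldsymbol{\varepsilon}$ in $\cN(\mW)$ are annihilated by the outer $\mW$ and so play no role.

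The crux is the algebraic identity
\[
(\lambda_{k}+\nu)^{2}\left[\lambda_{k}\tilde{a}_{k}^{2}+\frac{\tilde{e}_{k}^{2}}{\lambda_{k}+2\nu}\right]-\lambda_{k}(\tilde{e}_{k}-\nu\tilde{a}_{k})^{2}=\frac{[\lambda_{k}(\lambda_{k}+2\nu)\tilde{a}_{k}+\nu\tilde{e}_{k}]^{2}}{\lambda_{k}+2\nu}\ge 0,
\]
which I verify by direct expansion. It delivers the pointwise bound $\frac{\lambda_{k}(\tilde{e}_{k}-\nu\tilde{a}_{k})^{2}}{(\lambda_{k}+\nu)^{2}}\le\lambda_{k}\tilde{a}_{k}^{2}+\frac{\tilde{e}_{k}^{2}}{\lambda_{k}+2\nu}$, with equality iff $\lambda_{k}(\lambda_{k}+2\nu)\tilde{a}_{k}+\nu\tilde{e}_{k}=0$. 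Summing over $k$, using $\lambda_{k}\ge d$ for all $\lambda_{k}>0$, and then the two budget constraints gives
\[
\sum_{k}\frac{\lambda_{k}(\tilde{e}_{k}-\nu\tilde{a}_{k})^{2}}{(\lambda_{k}+\nu)^{2}}\le\sum_{k}\lambda_{k}\tilde{a}_{k}^{2}+\frac{\sum_{k}\tilde{e}_{k}^{2}}{d+2\nu}\le(\rho^{2}-\rho_{1}^{2})+\frac{\varepsilon^{2}}{d+2\nu},
\]
so adding $\rho_{1}^{2}$ cleanly cancels the budget split and delivers $\text{err}(\rho,\varepsilon)\le\rho^{2}+\varepsilon^{2}/(d+2\nu)$.

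For the matching lower bound I exhibit the extremal pair. Let $\vect{u}$ be a unit eigenvector of $\mW$ with $\mW\vect{u}=d\vect{u}$. Set $\boldsymbol{\varepsilon}^{\star}=\varepsilon\vect{u}$ and $\ma^{0\star}=-\frac{\nu\varepsilon}{d(d+2\nu)}\vect{u}$ so that both the perfect-square identity (at the index of $\vect{u}$) and the noise concentration on the smallest eigenvalue hold with equality; define $f^{0\star}=(\cI_{\mF}^{\mR})^{-1}(\mW\ma^{0\star})+f_{\perp}$ with $f_{\perp}\in(\bH_{\mF}^{\mR})^{\perp}$ of squared norm $\rho^{2}-\nu^{2}\varepsilon^{2}/(d(d+2\nu)^{2})$. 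Such an $f_{\perp}$ exists because $(\bH_{\mF}^{\mR})^{\perp}$ is infinite-dimensional — it contains, e.g., $\bigcap_{i}\cN(\sP_{R_{i}})$ — so in particular any desired non-negative norm is attainable. Plugging $(f^{0\star},\boldsymbol{\varepsilon}^{\star})$ into the calculation above makes every inequality an equality and yields $\|\hat{f}^{\nu}-f^{0\star}\|^{2}=\rho^{2}+\varepsilon^{2}/(d+2\nu)$.

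The main obstacle is recognising the perfect-square factorisation: a direct Lagrangian attack does show that the optimum concentrates on the eigenvector of the smallest eigenvalue $d$, but reassembling the clean closed form $\rho^{2}+\varepsilon^{2}/(d+2\nu)$ — crucially, with the $\rho_{1}^{2}$ budget cancelling out entirely — relies on the right algebraic identity rather than on brute calculus. A secondary technical point is that $(\bH_{\mF}^{\mR})^{\perp}$ be rich enough to carry the prescribed $f_{\perp}$, which is automatic from \cref{iso::full::rk} since $\dim \bH_{\mF}^{\mR}\le NM$ while $\bH(K)$ is infinite-dimensional whenever $K$ has infinite rank.
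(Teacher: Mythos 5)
Your proof is correct and follows essentially the same route as the paper's: the Pythagorean split across $\bH_{\mF}^{\mR}\oplus(\bH_{\mF}^{\mR})^{\perp}$, the reduction to $(\hat{\ma}^{\nu}-\ma^{0})^{\top}\mW(\hat{\ma}^{\nu}-\ma^{0})$ via the normal equation, the eigendecomposition of $\mW$, and exactly the same perfect-square identity $(\lambda+\nu)^{2}\bigl[\lambda a^{2}+e^{2}/(\lambda+2\nu)\bigr]-\lambda(e-\nu a)^{2}=[\lambda(\lambda+2\nu)a+\nu e]^{2}/(\lambda+2\nu)$ with the same extremal configuration on the eigenvector of the smallest non-zero eigenvalue. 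Your treatment is marginally more explicit than the paper's in writing out the extremizer $f^{0\star}$ and verifying that $(\bH_{\mF}^{\mR})^{\perp}$ can absorb the residual norm budget, but this is a presentational refinement rather than a different argument.
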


\begin{remark}
The proof of \cref{stable::recons} exemplifies the technical advantage of the RKHS framework. In the context of discrete setup, the RKHS framework has the ability to explicitly define the identifiable space $\bH^{\mR}_{\mF}$, which is a finite-dimensional space. We can then apply a technique similar to demonstrating the shrinkage effect in ridge regression, allowing us to derive the sharp inequality.
\end{remark}

The above theorem reveals the stabilizing effect of introducing a penalty term. Whereas $1/(d+2 \nu) \approx 1/(2 \nu)$ when $\nu \gg 1$, $1/(d+2 \nu) \approx 1/d$ when $\nu \ll 1$. Hence, in cases where the dimension of the Gram matrix increases substantially, leading to a situation where the smallest non-zero eigenvalue $d > 0$ approaches zero, the MLE in \cref{Xray::MLE} may exhibit instability, see \cref{RKHS::recons::no::pen::img}  {in the appendix}. We further remark that the above result also immediately yields pointwise stability since $|\hat{f}^{\nu} (\vect{z})- f^{0} (\vect{z})|^{2} \le K(\vect{z}, \vect{z}) \|\hat{f}^{\nu}- f^{0}\|^{2}_{\bH(K)}$ by the Cauchy-Schwarz inequality.

\subsection{Illustrative Examples}\label{sec::simul}
We provide several illustrative examples in this section, with two additional scenarios covered in \cref{sec:add:exper}. These scenarios highlight how our Kernel Reconstruction (KR) significantly outperforms conventional Filtered Backprojection (FBP). In our experiments, we take a 10-times intensified $M \times M$ image of the 2D Shepp-Logan phantom as the original structure $f^{0}$. For FBP reconstruction, we apply a ramp filter during the filtering process. 

\medskip
\noindent \textbf{First Setup}

In our initial setup, with fixed $M = 200$ and $N = 40$, we investigate six different scenarios for generating a sinogram. These scenarios involve three noise levels $\sigma \in \{0, 20, 100 \}$ and two types of angle grids: the equiangular grid $\phi_{j} = \pi (j-1)/N , \, j = 1, \dots, N$, and the random grid $\phi_{j}, \dots, \phi_{N}$ drawn from $\mathrm{Unif}[0, \pi]$. With each angle grid, we generate a $M \times N$ pixelated sinogram, perturbed with white noise of level $\sigma \ge 0$. Given a sinogram, we then run the FBP and KR algorithm, exploring several choices for the parameter $\gamma > 0$ for Gaussian kernels and the penalty parameter $\nu > 0$ for the normal equation of Tikhonov regularization. A higher value of $\gamma > 0$ accounts for a sharper Gaussian kernel.

\begin{figure}[h]
\centering
    \begin{subfigure}{.45\linewidth}
        \centering
        \includegraphics[width=\linewidth, height=6.5cm]
        {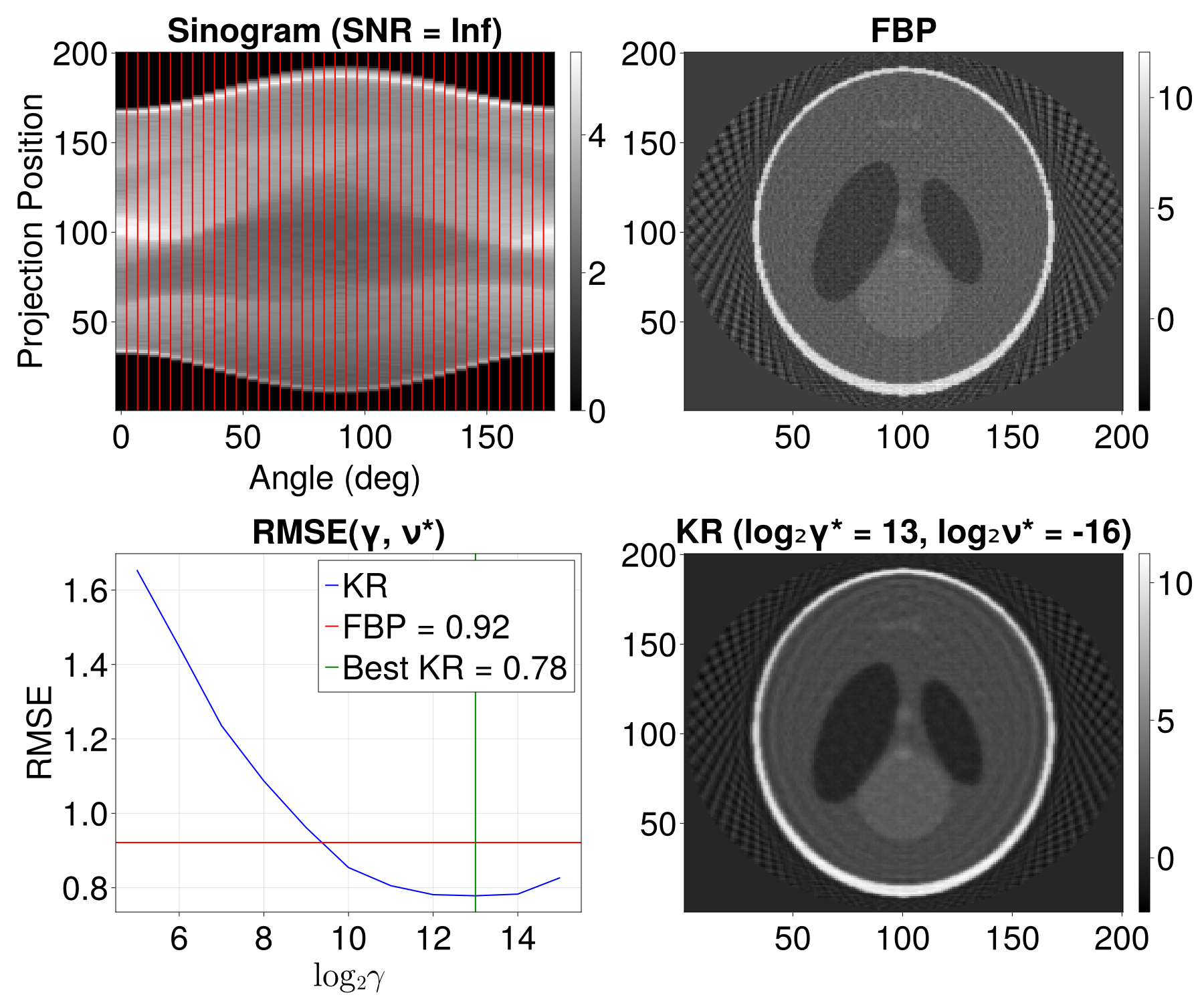}
    \end{subfigure}
    \hspace{0.5cm}
    \begin{subfigure}{.45\linewidth}
        \centering
        \includegraphics[width=\linewidth, height=6.5cm]{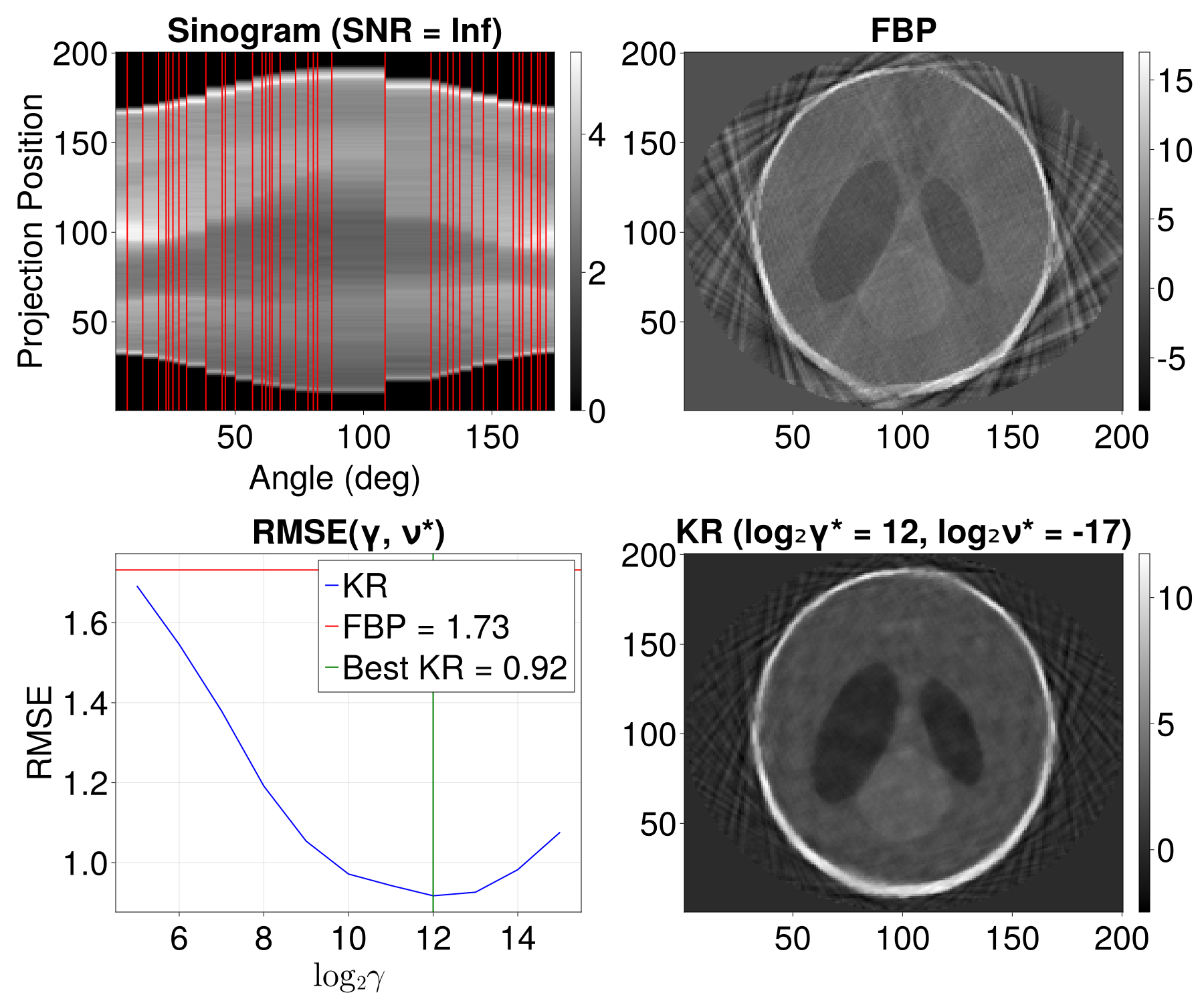}
    \end{subfigure}
\caption{Left panel: Equiangular grid. Right panel: Random grid (red vertical lines in the sinogram  indicate the location of the angular grid nodes). In each panel, FBP (top right) and KR (bottom right) reconstructions from a noiseless sinogram (top left). The bottom left plot shows the minimum RMSE achieved for our method (blue curve) for each $\gamma > 0$, the minimum RMSE across all $\gamma, \nu >0$ (green vertical line), and the RMSE for FBP (red horizontal line).}
\label{RKHS::recons::0}
\end{figure}

When using the cross-validated choices of $\gamma$ and $\nu$, the RKHS method consistently outperforms FBP across all scenarios. For the noiseless sinogram with parallel geometry, depicted in the left panel of \cref{RKHS::recons::0}, we note that the FBP algorithm might outperform our method when $\gamma$ is excessively small, as indicated by the Root Mean Squared Error (RMSE). However, as $\gamma$ increases, the RMSE rapidly diminishes, and when we reach the cross-validation values for $\gamma$ and $\nu$, the RMSE of our method becomes smaller than that of FBP.

\begin{figure}[h]
\centering
    \begin{subfigure}{.45\linewidth}
        \centering
        \includegraphics[width=\linewidth, height=6.5cm]{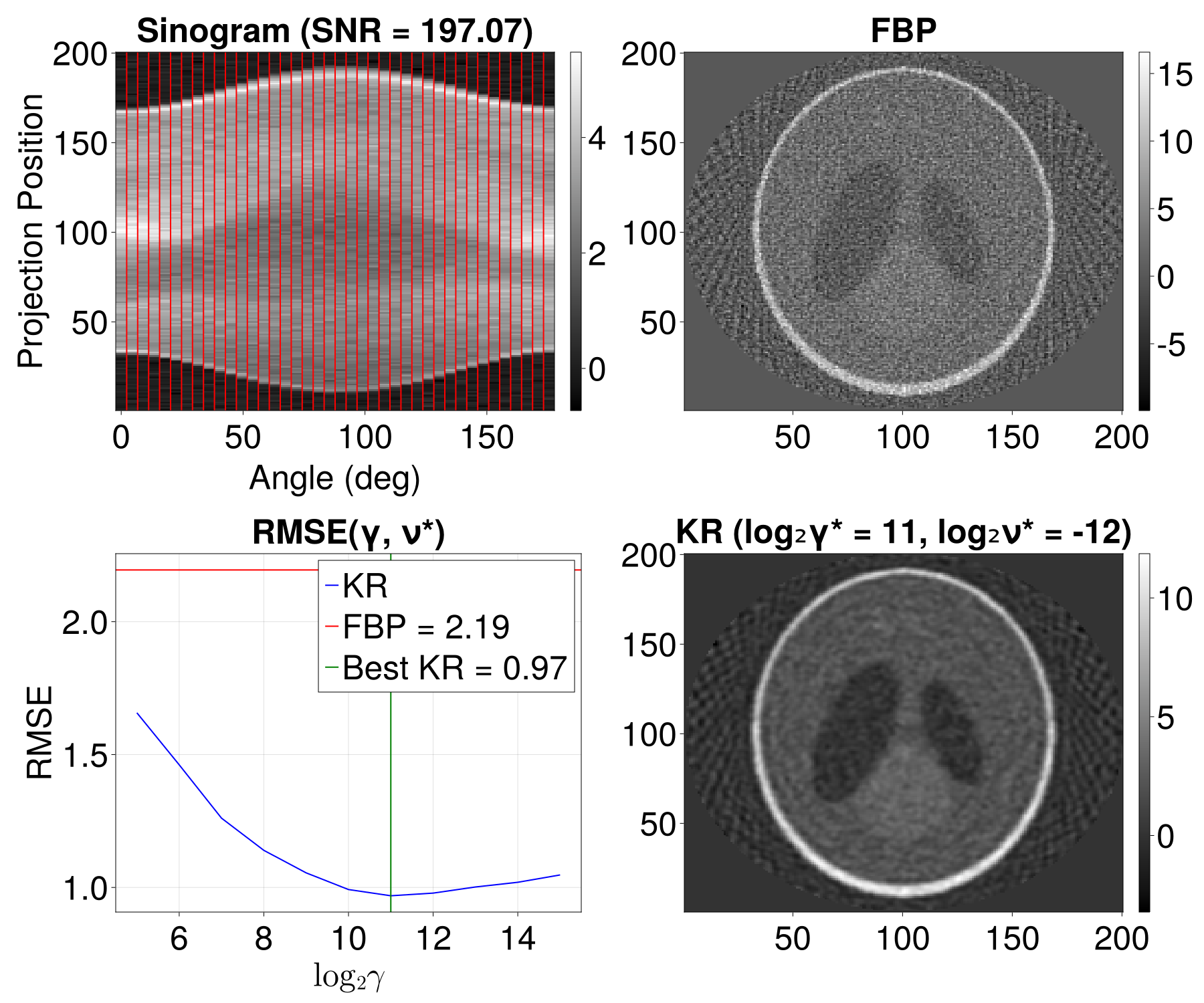}
    \end{subfigure}
    \hspace{0.5cm}
    \begin{subfigure}{.45\linewidth}
        \centering
        \includegraphics[width=\linewidth, height=6.5cm]{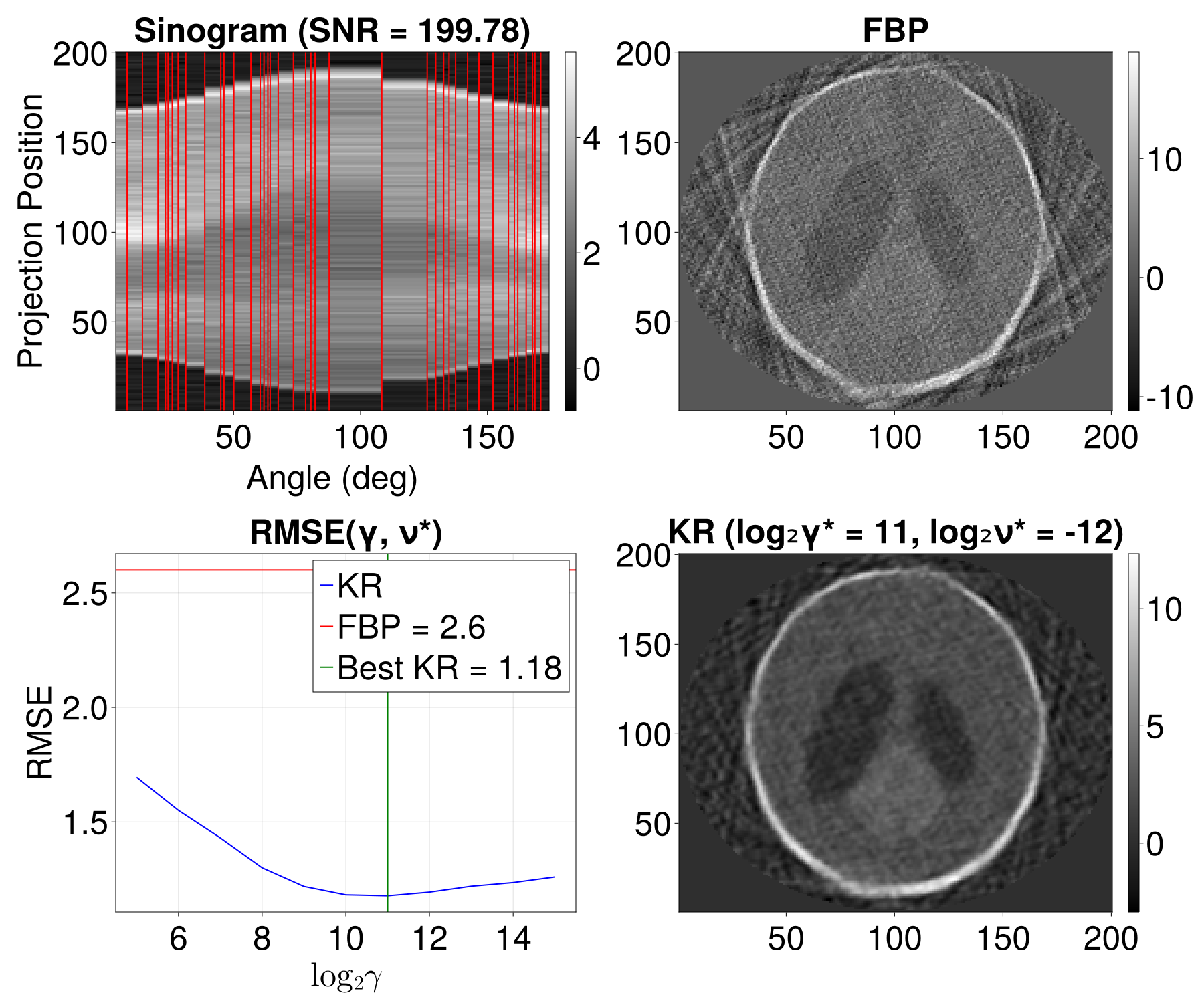}
    \end{subfigure}
    
    \begin{subfigure}{.45\linewidth}
        \centering
        \includegraphics[width=\linewidth, height=6.5cm]{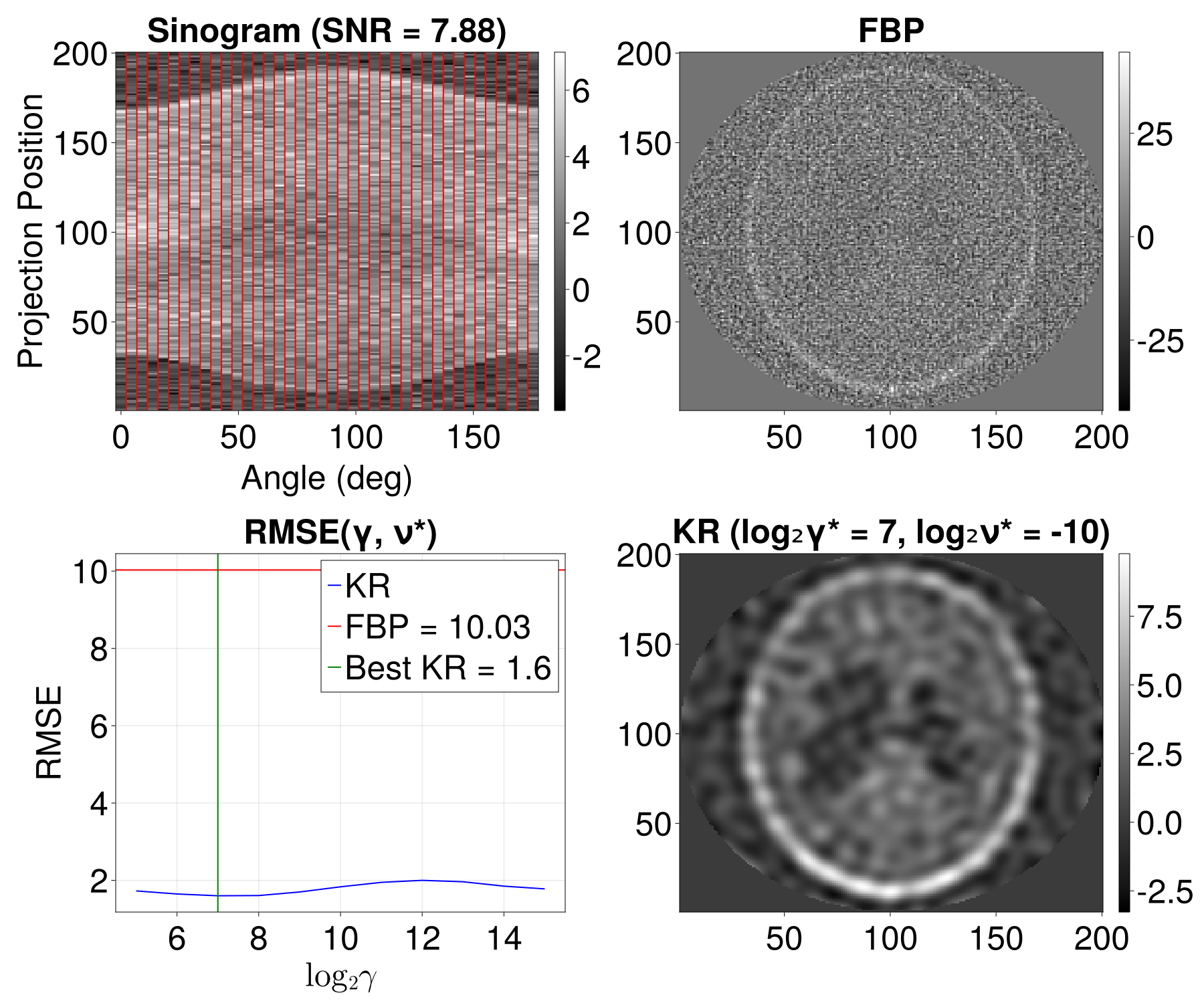}
    \end{subfigure}
    \hspace{0.5cm}
    \begin{subfigure}{.45\linewidth}
        \centering
        \includegraphics[width=\linewidth, height=6.5cm]{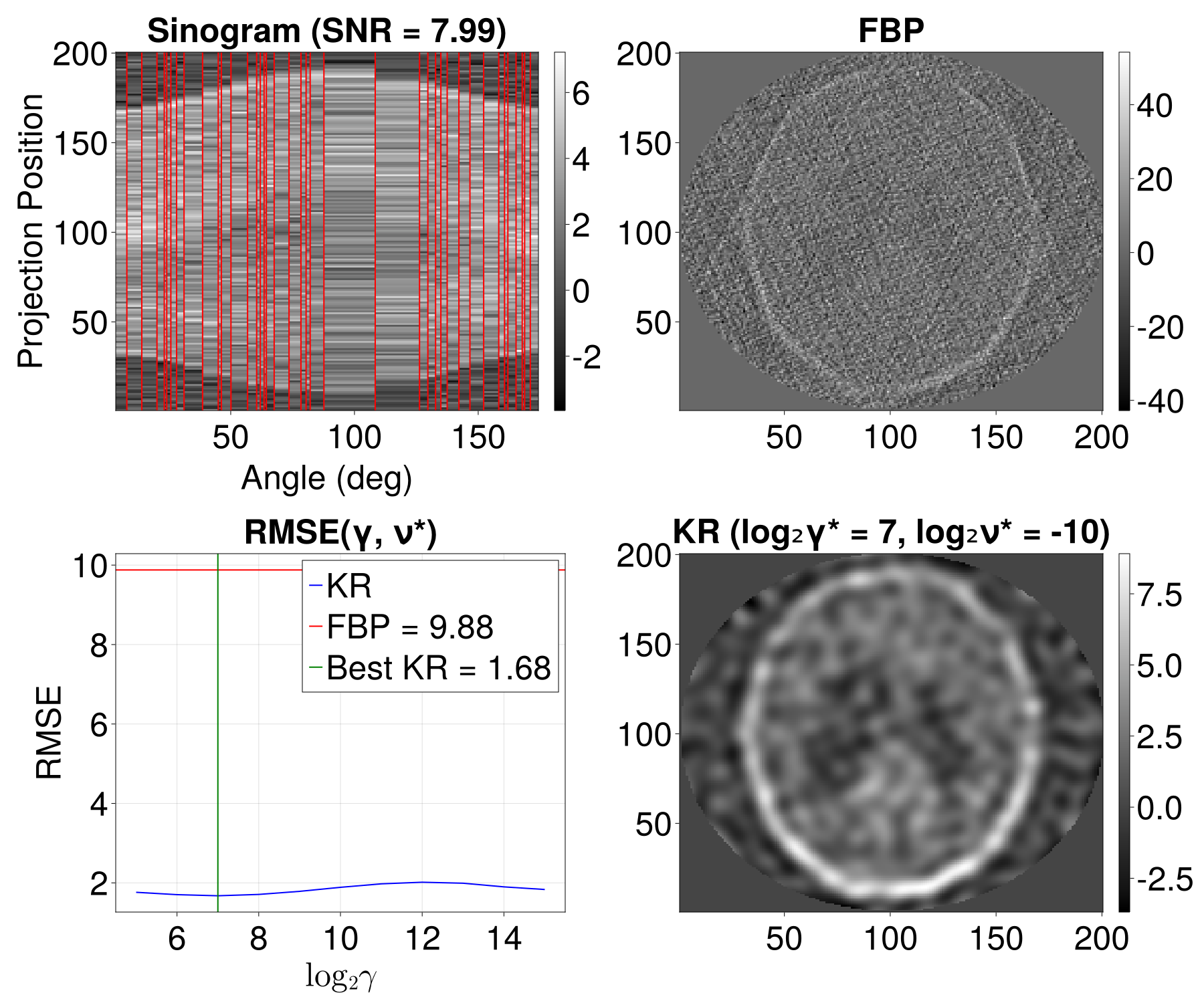}
    \end{subfigure}
    \caption{Left panels: Equiangular grid, Right panel: Random grid (Top: $\sigma = 20$, Bottom: $\sigma = 100$). In each panel, FBP (top right) and KR (bottom right) reconstructions from a perturbed sinogram (top left).}
\label{RKHS::recons::20}
\end{figure}

In the remaining five scenarios with a higher level of noise and increasing angular irregularity, our method tends to yield a smaller RMSE compared to FBP without the need for optimally tuned parameters. Additionally, as the kernel backprojection $\sP_{R}^{*} ( \tilde{k}^{R}_{\vect{x}} )(\vect{z}) = \sP_{R} k_{\vect{z}} (\vect{x})$ rapidly fades away as $\vect{z} \in \bB^{2}$ approaches the boundary of the unit ball, the KR algorithm yields a more regularized image with fewer pronounced artifacts, as evidenced in \cref{RKHS::recons::0,RKHS::recons::20}. Moreover, in scenarios where viewing angles are irregular, as in the right panels of \cref{RKHS::recons::0,RKHS::recons::20}, the FBP algorithm suffers from aliasing degradation caused by discretization of the projection images \cite{shi2015novel}. Even under challenging conditions with very low signal-to-noise ratio (SNR) ($\sigma = 100$) as in \cref{RKHS::recons::20}, where the FBP algorithm fails to capture the inner ellipsoids in the phantom and exhibits significant scale discrepancy, the KR reconstruction continues to provide some meaningful information.

As RMSE serves as a global measure of error across the unit ball, we complement our analysis by visualizing the reconstruction error $\hat{f} - f^{0}$ in \cref{RKHS::error::0_20} for the scenario where the noise level is $\sigma = 20$ with the random grid. In \cref{RKHS::error::0_20}, the KR reconstruction demonstrates greater scale coherence with the true phantom image, while the FBP algorithm produces highly oscillating reconstruction errors across the entire image due to inherent Fourier transform \cite{natterer2001mathematics}. As the sinogram is increasingly perturbed by noise, strictly adhering to it would significantly inflate the norm of the reconstructed image $\|\hat{f}\|_{\bH(K)}$. Consequently, the roughness of the function estimate becomes more important than fidelity to the data, necessitating a balance between these aspects in adversarial situations. This observation underscores the increased significance of the penalty term in Tikhonov regularization, and employing overly sharp kernels with high $\gamma > 0$ to enhance resolution may not necessarily lead to superior results. Indeed, the trends observed in \cref{RKHS::recons::0,RKHS::recons::20} further substantiate this, where the optimal $\gamma^{*}$ tends to decrease, while the optimal $\nu^{*}$ tends to increase as the sinogram becomes noisier.

\medskip
\noindent \textbf{Second Setup}

For the second setup, we conduct Monte Carlo simulations in various configurations to explore the disparities between the two algorithms concerning perturbation and the regularity of the angle grid. Here, we introduce the angle regularity parameter $\lambda \in [0, 1]$ to distribute $N$ orientations across $[0, \pi]$:
\begin{equation*}
    \phi_{j} = (1-\lambda) \cdot \pi U_{(j)}+ \lambda \cdot \frac{\pi (j-1)}{N}, \quad j = 1, \dots, N,
\end{equation*}
where $U_{(j)}$ represents the $j$-th ordered statistic of the sample $U_{1}, \dots, U_{N} \stackrel{iid}{\sim} \mathrm{Unif}[0, 1]$. Higher $\lambda \in [0, 1]$ values indicate more regular viewing, with $\lambda = 1$ representing parallel geometry and $\lambda = 0$ corresponding to a uniformly random angle grid.

\begin{figure}[h]
\centering
\includegraphics[width=\textwidth, height=8cm]{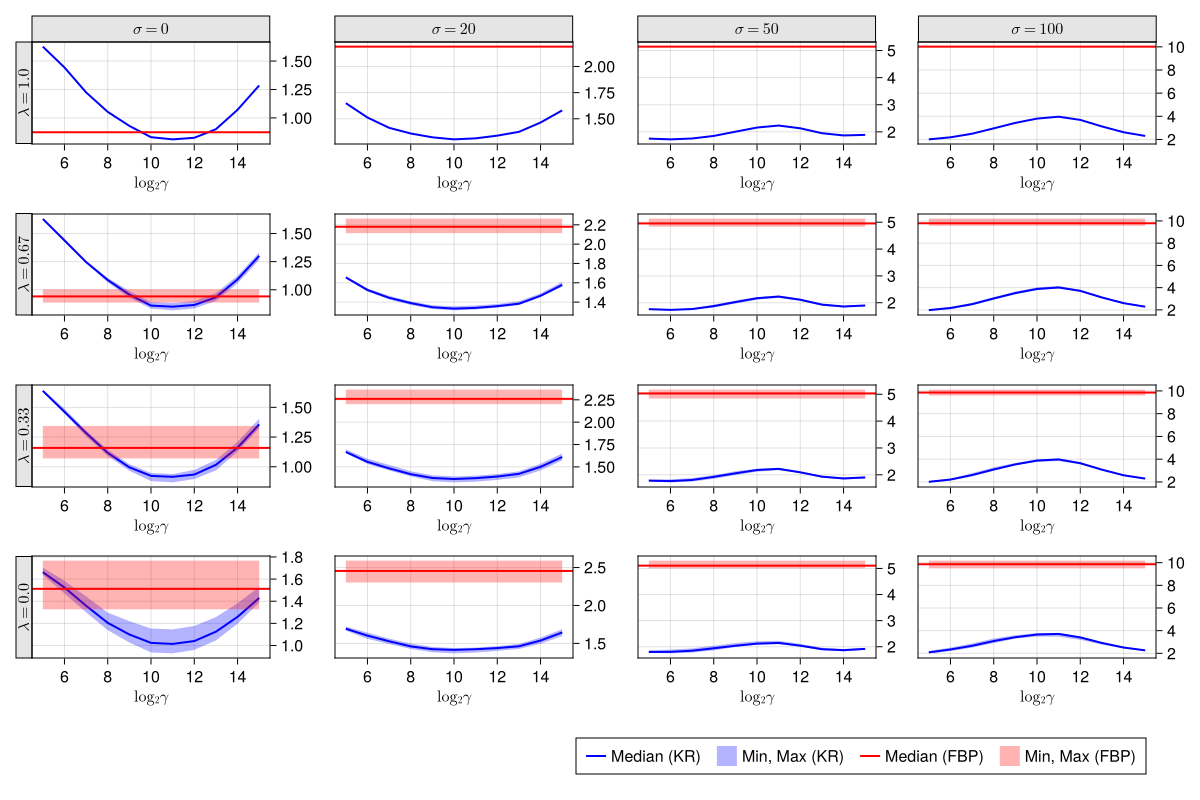}
\caption{For a fixed $M=100$ and $N=40$, $4 \times 4$ panels with different levels of noise $\sigma$ (column) and the regularity of the viewing angles $\lambda$ (row). The upper row and leftmost column represent more regular viewing and less perturbation, respectively. In each panel, we plot the band of RMSE with the maximums and minimums of $n_{MC} = 21$ Monte-Carlo simulations, obtained from the FBP (blue) and KR (red) algorithms. The blue and red curves represent their medians.}
\label{RKHS::MC::recons}
\end{figure}

For each noise level $\sigma \in \{0, 20, 50, 100\}$ and $\lambda \in \{0, 1/3, 2/3, 1 \}$, we generate sinograms $n_{MC} = 21$ times and execute the FBP and KR algorithms with $\gamma = 2^{k}, k = 5, \dots, 15$, $\nu = 2^{-l}, l = 5, \dots, 20$. Thus, we perform the FBP reconstruction $4 \times 4 \times 21 = 336$ times and the KR reconstruction $4 \times 4 \times 21 \times 11 \times 16 = 59136$ times. In each MC simulation with fixed $\sigma$ and $\lambda$, we calculate the RMSE for the FBP algorithm and $\min_{\nu} \mathrm{RMSE} (\gamma, \nu)$ for every grid of $\gamma$. Subsequently, we depict the band of their minima and maxima, alongside the line of their medians in \cref{RKHS::MC::recons}.

We arrive at similar conclusions as before. In noiseless sinograms, the FBP algorithm may outperform the KR algorithm if its tuning parameters $\gamma, \nu > 0$ stray far from their optimal values. Nonetheless, when the parameters are set to be optimal, the KR algorithm yields smaller RMSEs compared to the FBP algorithm. On the other hand, as sinogram noise increases, even the worst RMSE for our method remains smaller than the best case for the FBP algorithm, regardless of the tuning parameters. Furthermore, the scale of RMSEs in \cref{RKHS::MC::recons} indicates that the KR algorithm is much more robust to noise and irregular viewing angles.

The two algorithms exhibit an important difference with regards to \emph{dimension}. The FBP algorithm's approach varies between even and odd dimensions: filtering is a local operation for $n=3$, whereas filtering for $n=2$, the Hilbert transform, involves convolution with a nowhere-zero function, necessitating global information for reconstruction \cite{natterer2001mathematics}. In contrast, our methodology remains consistent across different dimensions $n$.

Though our methodology is dimension-agnostic, specific integral geometries can be leveraged to  ease the computational burden associated with solving the normal equation \eqref{normal::eq::tikho}. We demonstrate in \cref{sec:circulant} that when $n=2$, a parallel geometry only requires $O(NM^{2}+ NM \log N)$ operations, i.e. the same amount of computation as FBP implemented with the aid of the FFT.
\section{Invariant Kernels}\label{sec:invker}
\subsection{Rotation Invariant Kernel}\label{sec::RIK}
We have thus far explored the tomographic reconstruction problem without making any assumptions about the reproducing kernel $K$ on $\bB^{n}$. At this level of generality, the induced reproducing kernel for the projection image space will  depend on the orientation $R \in SO(n)$: for any $\vect{x}_{1}, \vect{x}_{2} \in \bB^{n-1}$,
\begin{equation}\label{induced::rk}
    \tilde{K}^{R}(\vect{x}_{1}, \vect{x}_{2})=
    \int_{I_{\|\vect{x}_{2}\|}} \int_{I_{\|\vect{x}_{1}\|}} K(R^{\top} [\vect{x}_{1}:z_{1}], R^{\top} [\vect{x}_{2}:z_{2}]) \rd z_{1} \rd z_{2}.
\end{equation}
When practitioners suspect a high degree of asymmetry in the original structure, they might opt to choose a suitable kernel that matches this asymmetry. In such cases, multiple induced kernels are likely useful, each corresponding to a different orientation. However, for most purposes, it would be desirable to use a kernel that ensures the induced kernel (and corresponding RKHS) remains \emph{invariant} regardless of the choice of orientations. Additionally, when the orientations $R_{1},\dots, R_{N} \in SO(n)$ are unknown as in cryo-EM, the dependence of \eqref{induced::rk} on $R \in SO(n)$ poses a major obstruction, as it becomes unclear which domain $\bH(\tilde{K}^{R})$ should be used for the micrographs. To address this issue, we consider a specific class of kernels known as rotation-invariant kernels. 

\begin{definition}
Let $E$ be a subset of $\bR^{n}$ containing $0$, on which the rotation group $SO(n)$ acts from the left. A bivariate function $K: E \times E \rightarrow \bR$ is called rotation-invariant if for any $R \in SO(n)$,
\begin{equation*}
    K(\vect{z}_{1},\vect{z}_{2})= K(R^{\top} \vect{z}_{1}, R^{\top} \vect{z}_{2}), \quad \vect{z}_{1},\vect{z}_{2} \in E. 
\end{equation*}
\end{definition}
When $K:\bB^{n} \times \bB^{n} \rightarrow \bR$ is a rotation-invariant kernel on $\bB^{n}$, it is straightforward that the induced kernel $\tilde{K}^{R}=(\sP_{R} \times \sP_{R})K$ in \eqref{induced::rk} does not depend on $R \in SO(n)$. %: for any $\vect{x}_{1}, \vect{x}_{2} \in \bB^{n-1}$,
% \begin{align*}
%     \tilde{K}^{R}(\vect{x}_{1}, \vect{x}_{2})
%     &=\int_{I_{\|\vect{x}_{2}\|}} \int_{I_{\|\vect{x}_{1}\|}} K(R^{\top} [\vect{x}_{1}:z_{1}], R^{\top} [\vect{x}_{2}:z_{2}]) \rd z_{1} \rd z_{2} \nonumber \\
%     &=\int_{I_{\|\vect{x}_{2}\|}} \int_{I_{\|\vect{x}_{1}\|}} K([\vect{x}_{1}:z_{1}], [\vect{x}_{2}:z_{2}]) \rd z_{1} \rd z_{2}=\tilde{K}^{I}(\vect{x}_{1}, \vect{x}_{2}).
% \end{align*}
Consequently, we will suppress the dependency on $R$ and will denote the induced kernel by $\tilde{K}$, and the induced generator at $\vect{x} \in \bB^{n-1}$ by $\tilde{k}_{\vect{x}}$. Moving forward, we will consistently assume that our chosen kernel $K:\bB^{n} \times \bB^{n} \rightarrow \bR$ is a continuous rotation-invariant kernel. To simplify notation, when there is no potential for ambiguity, we will write the RKHSs $\bH, \tilde{\bH}$ and $\bH_{R}$, instead of $\bH(K), \bH(\tilde{K})$ and $\bH_{R}(K)=\cR(\sP_{R}^{*})$,  respectively.

Below, we show  that the rotation-invariance on $K:\bB^{n} \times \bB^{n} \rightarrow \bR$ induces the reflection-invariance when $n \ge 3$. Note that the orthogonal group $O(n) = SO(n) \rtimes \{I, J \}$ is the semidirect product of $SO(n)$ and $\{I, J \}$. 

\begin{theorem}\label{RIK2OIK}
Let $n \ge 3$ and $K:\bB^{n} \times \bB^{n} \rightarrow \bR$ be a rotation-invariant kernel. Then $K$ is invariant under the left action of $O(n)$, i.e. for any $R \in O(n)$,
\begin{equation*}
    K(\vect{z}_{1},\vect{z}_{2})= K(R^{\top} \vect{z}_{1}, R^{\top} \vect{z}_{2}), \quad \vect{z}_{1},\vect{z}_{2} \in \bB^{n}. 
\end{equation*}
\end{theorem}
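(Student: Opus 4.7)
The plan is to reduce $O(n)$-invariance to invariance under the single reflection $J$, using the semidirect-product decomposition $O(n) = SO(n) \rtimes \{I, J\}$. Every element of $O(n)$ has the form $S$ or $SJ$ for some $S \in SO(n)$, so rotation-invariance already handles the $S$ factor and the entire statement reduces to showing
$$K(J\vect{z}_1, J\vect{z}_2) = K(\vect{z}_1, \vect{z}_2), \quad \vect{z}_1, \vect{z}_2 \in \bB^n.$$

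The key observation is that $J$ is an isometry, so the pairs $(\vect{z}_1, \vect{z}_2)$ and $(J\vect{z}_1, J\vect{z}_2)$ share the same Gram matrix. I therefore plan to establish the following geometric lemma: when $n \geq 3$, any two ordered pairs of vectors in $\bR^n$ with equal Gram matrices lie in the same $SO(n)$-orbit. Applied with the target pair $(J\vect{z}_1, J\vect{z}_2)$, it produces some $S \in SO(n)$ with $S \vect{z}_i = J\vect{z}_i$ for $i = 1, 2$. Rotation-invariance of $K$ applied to $S^{-1} \in SO(n)$ then gives $K(\vect{z}_1, \vect{z}_2) = K(S \vect{z}_1, S \vect{z}_2) = K(J\vect{z}_1, J\vect{z}_2)$, which is exactly what is needed.

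To produce $S$, set $V = \mathrm{span}(\vect{z}_1, \vect{z}_2)$ and $V' = \mathrm{span}(J\vect{z}_1, J\vect{z}_2)$, both of dimension at most $2$. Equality of Gram matrices guarantees that the assignment $\vect{z}_i \mapsto J\vect{z}_i$ extends linearly to a well-defined isometry $T_0 : V \to V'$ (any linear relation among the $\vect{z}_i$ forces the same relation among the $J\vect{z}_i$, and inner products on $V$ are determined by the Gram data of a spanning pair). Extend $T_0$ to an orthogonal map $T \in O(n)$ via any isometry $V^\perp \to (V')^\perp$, which exists because $\dim V^\perp = \dim (V')^\perp$. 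If $\det T = +1$, take $S = T$. Otherwise, since $\dim V^\perp \geq n - 2 \geq 1$, pick any $\sigma \in O(n)$ that is the identity on $V$ and acts with determinant $-1$ on $V^\perp$; then $S := T\sigma \in SO(n)$ still satisfies $S \vect{z}_i = J \vect{z}_i$.

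The main subtle point — and the reason the hypothesis $n \geq 3$ is indispensable — is precisely this final determinant-adjustment step: one needs at least one spare direction outside $\mathrm{span}(\vect{z}_1, \vect{z}_2)$ in which to flip orientation without disturbing $V$. For $n = 2$ and generic linearly independent $\vect{z}_1, \vect{z}_2$ no such spare direction exists, and $SO(2)$-invariance genuinely does not imply $O(2)$-invariance; for $n \geq 3$ the extra dimension is always available, so the argument closes uniformly.
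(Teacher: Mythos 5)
Your argument is correct, and it follows the same overall strategy as the paper -- reduce $O(n)$-invariance to invariance under the single reflection $J$ via $O(n)=SO(n)\rtimes\{I,J\}$, then show that $(\vect{z}_1,\vect{z}_2)$ and $(J\vect{z}_1,J\vect{z}_2)$ lie in the same $SO(n)$-orbit -- but the implementation of that second step is genuinely different. The paper proceeds by explicit construction: it uses transitivity of $SO(n)$ on $\bS^{n-1}$ to normalize $\theta_1=e_n$ (so that $J\vect{z}_1=-\vect{z}_1$), applies the Euler matrix $E(-e_n)$ to return $J\vect{z}_1$ to $\vect{z}_1$, observes that the image $\vect{z}_2'$ of $J\vect{z}_2$ has the same $e_n$-component as $\vect{z}_2$, and then invokes transitivity of the stabilizer $G_{e_n}\cong SO(n-1)$ on the latitude sphere $\bS^{n-2}$ to carry $\vect{z}_2'$ to $\vect{z}_2$ while fixing $\vect{z}_1$; the hypothesis $n\ge 3$ enters through this last transitivity (for $n=2$ the "sphere" $\bS^{0}$ is two points and $SO(1)$ is trivial). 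You instead prove the coordinate-free statement that two ordered pairs with equal Gram matrices are $SO(n)$-conjugate when $n\ge 3$, with the dimension hypothesis entering through the spare direction in $V^\perp$ needed for the determinant flip. Your route is shorter and avoids the Euler-matrix bookkeeping, and it isolates the correct general principle; the paper's route is tied to the $E(\theta)$ and stabilizer machinery it has already built in its appendix and reuses elsewhere (e.g.\ in \cref{gene::rot} and \cref{rot::dist}). Both correctly locate the failure at $n=2$. One small point worth making explicit in your write-up: in your application the extension $T_0$ is simply $J|_V$, so $T=J$ itself already works as the orthogonal extension and only the determinant adjustment is actually needed.
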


% \begin{remark}
% When $n=2$, the left action of $SO(n-1)=\{1\}$ on $\bS^{n-1}=\{-1,+1\}$ is not transitive, so our proof of the theorem cannot be applied in this case. We are uncertain whether there exists a real-valued kernel $K:\bB^{2} \times \bB^{2} \rightarrow \bR$ that exhibits rotation-invariance but not reflection-invariance. To further investigate this question, we identify $\bB^{2}$ with the unit disk $\bD$ in the complex plane $\bC$ using the mapping $[x:z] \in \bB^{2} \cong (x+iz) \in \bD$. In this context, the question can be expressed as follows: for any $u, v \in \bD, \theta \in [0, 2\pi]$, it holds that $K(u, v) = K(e^{\imath \theta} u, e^{\imath \theta} v)$,
% but there are some $u, v \in \bD$ such that $K(u, v) \neq K(\bar{u}, \bar{v})$. For instance, Szeg\H{o} kernel for the Hardy space $\bH^{2}(\bD)$ given by
% \begin{equation*}
%     K(u, v) = (1-\bar{u}v)^{-1}, \quad u, v \in \bD, \theta \in [0, 2\pi],
% \end{equation*}
% is not reflection-invariant, but its real-part $(K+\overline{K})/2$ is reflection-invariant.
% \end{remark}

For any $f \in \bH$, its reflection $Jf$ defined by $Jf(\vect{z}):=f(J \vect{z})$ leads to the same  projection image at a conjugacy angle $J R^{\top} J \in SO(n)$:
\begin{align*}
\sP_{R} Jf(\vect{x}) =
\int_{I_{\|\vect{x}\|}} Jf(R^{\top} [\vect{x}:z]) \rd z = \int_{I_{\|\vect{x}\|}} f(J R^{\top} J [\vect{x}:-z]) \rd z = \sP_{J R^{\top} J} f(\vect{x}),
\end{align*}
This reveals that when the orientations are not known, it is impossible to determine the handedness of the structure. It is often more convenient and mathematically natural to work with $O(n)$-invariance, as $O(n)$ includes all the unitary representations in $\bR^{n}$. Nonetheless, \cref{RIK2OIK} implies that $SO(n)$-invariance is sufficient to retrieve $O(n)$-invariance in the case where $n \ge 3$.

\begin{proposition}\label{RIK2RIK}
Let $K:\bB^{n} \times \bB^{n} \rightarrow \bR$ be a rotation-invariant kernel. Then $\tilde{K}: \bB^{n-1} \times \bB^{n-1} \rightarrow \bR$ is invariant under the left action of $O(n-1)$.
\end{proposition}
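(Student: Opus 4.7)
The plan is to work directly from the defining integral of $\tilde{K}$ after reducing to the case $R = I$, and to lift an arbitrary $\tilde{R} \in O(n-1)$ to an element of $O(n)$ that acts on $K$ in a controllable way.

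Since $K$ is rotation-invariant, the induced kernel $\tilde{K}^R$ is independent of $R$, so I may take
\begin{equation*}
    \tilde{K}(\vect{x}_{1}, \vect{x}_{2})
    = \int_{I_{\|\vect{x}_{2}\|}} \int_{I_{\|\vect{x}_{1}\|}} K([\vect{x}_{1}:z_{1}],\, [\vect{x}_{2}:z_{2}]) \, \rd z_{1} \rd z_{2}.
\end{equation*}
Fix $\tilde{R} \in O(n-1)$ and consider $R := \mathrm{diag}(\tilde{R}, 1) \in O(n)$, which satisfies $R^{\top}[\vect{x}:z] = [\tilde{R}^{\top}\vect{x}:z]$. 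Next I would invoke \cref{RIK2OIK} to upgrade the $SO(n)$-invariance of $K$ to $O(n)$-invariance, so that
\begin{equation*}
    K([\vect{x}_{1}:z_{1}], [\vect{x}_{2}:z_{2}]) = K([\tilde{R}^{\top}\vect{x}_{1}:z_{1}], [\tilde{R}^{\top}\vect{x}_{2}:z_{2}]).
\end{equation*}
Because $\tilde{R}$ is orthogonal, $\|\tilde{R}^{\top}\vect{x}_{i}\|=\|\vect{x}_{i}\|$ and hence $I_{\|\tilde{R}^{\top}\vect{x}_{i}\|}=I_{\|\vect{x}_{i}\|}$, so integrating the above equality in $z_{1},z_{2}$ yields $\tilde{K}(\vect{x}_{1},\vect{x}_{2}) = \tilde{K}(\tilde{R}^{\top}\vect{x}_{1}, \tilde{R}^{\top}\vect{x}_{2})$, as required.

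The only subtle point is that \cref{RIK2OIK} is stated under the hypothesis $n \ge 3$, so the case $n=2$ needs a separate (but short) argument. Here $O(n-1)=O(1)=\{\pm 1\}$, and the non-trivial case is $\tilde{R}=-1$, i.e. showing $\tilde{K}(-x_{1},-x_{2}) = \tilde{K}(x_{1},x_{2})$. For this, I would take $R = -\mI_{2} \in SO(2)$ (already inside $SO(n)$, so no upgrade is needed), apply the assumed $SO(2)$-invariance to get $K([-x_{1}:-z_{1}],[-x_{2}:-z_{2}]) = K([x_{1}:z_{1}],[x_{2}:z_{2}])$, and then perform the change of variables $z_{i}\mapsto -z_{i}$, which preserves the symmetric intervals $I_{|x_{i}|}$. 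I don't anticipate any real obstacle; the main conceptual step is simply recognizing that $O(n)$-invariance of the ambient kernel combined with an orthogonal embedding $O(n-1)\hookrightarrow O(n)$ passes invariance through the X-ray-type integration in the obvious way.
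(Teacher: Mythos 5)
Your proof is correct, including the observation that \cref{RIK2OIK} only covers $n \ge 3$ and the separate treatment of $n=2$ via $-\mI_{2} \in SO(2)$; but it takes a different route from the paper. You lift $\tilde{R} \in O(n-1)$ to $\mathrm{diag}(\tilde{R},1) \in O(n)$ and then lean on the upgraded $O(n)$-invariance from \cref{RIK2OIK}, which is a genuinely heavier ingredient (its proof needs transitivity of $SO(n-1)$ on $\bS^{n-2}$) and is exactly what forces your case split at $n=2$. The paper instead never invokes \cref{RIK2OIK}: for $\tilde{R} \in SO(n-1)$ it uses the lift $\tilde{R}_{+} \in SO(n)$ and the independence of $\tilde{K}^{R}$ on $R$, and for the reflection $\tilde{J}$ it composes the lift $\tilde{J}_{+}$ with the ambient reflection $J$ so that the product $\tilde{J}_{+}J$ lands back in $SO(n)$; the price is a sign flip in the $z$-coordinate, which is absorbed by the change of variables $z_{i} \mapsto -z_{i}$ over the symmetric interval $I_{\|\vect{x}_{i}\|}$. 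That argument is uniform in $n$ and uses only the hypothesis of $SO(n)$-invariance. Your $n=2$ patch is in fact precisely this trick specialized to $n=2$ (where $\tilde{J}_{+}J = -\mI_{2}$), so you have implicitly rediscovered the paper's mechanism in the one case where your main route fails; running that same composition argument for all $n$ would give you the paper's shorter, self-contained proof. What your version buys in exchange is a one-line reduction for $n \ge 3$ once \cref{RIK2OIK} is in hand.
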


Drawing an analogy, one might expect that if $K:\bB^{n} \times \bB^{n} \rightarrow \bR$ is translation-invariant, then $\tilde{K}$ would inherit this property. However, this is not true. Although the integration intervals $I_{\|\vect{x}_{1}\|}, I_{\|\vect{x}_{2}\|}$ for $\tilde{K}^{R}$ are independent of rotations, they do rely on the values of $\vect{x}_{1}$ and $\vect{x}_{2}$ in $\bB^{n-1}$. One potential approach to maintain translation invariance might involve extending the domain of $K$ from $\bB^{n}$ to $\bR^{n}$. Unfortunately, this approach presents a more significant challenge: it results in blowing up $\tilde{K}(\vect{x}_{1}, \vect{x}_{2})$ to infinity as $\int_{-\infty}^{+\infty} K([\vect{x}_{1}:z_{1}], [\vect{x}_{2}:z_{2}]) \rd z_{1}$ does not depend on $z_{2}$. Consequently, the hope that the induced kernel $\tilde{K}$ becomes isotropic is abandoned.

\begin{corollary}\label{gene::rot}
Let $K:\bB^{n} \times \bB^{n} \rightarrow \bR$ be a rotation-invariant kernel. If $R_{1} \sim R_{2} \in SO(n)$ as defined in \eqref{rot::equiv}, then the range space of the backprojections $\sP_{R_{1}}^{*}$ and $\sP_{R_{2}}^{*}$ are the same, i.e.
$\bH_{R_{1}}=\bH_{R_{2}}$.
\end{corollary}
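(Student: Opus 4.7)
The plan is to reduce the equality $\bH_{R_1}=\bH_{R_2}$ of two closed subspaces of $\bH(K)$ to a matching of generators. Since $K$ is rotation-invariant, the induced kernel \eqref{induced::ker} satisfies $\tilde{K}^{R_1}=\tilde{K}^{R_2}=\tilde{K}$, so both backprojections $\sP_{R_1}^{*},\sP_{R_2}^{*}$ are bounded isometries from the common RKHS $\bH(\tilde{K})$ into $\bH(K)$. Because $\bH(\tilde{K})$ is the closed linear span of the generators $\{\tilde{k}_{\vect{x}}:\vect{x}\in\bB^{n-1}\}$, the range $\bH_{R_i}=\cR(\sP_{R_i}^{*})$ equals the closed linear span of $\{\sP_{R_i}^{*}\tilde{k}_{\vect{x}}:\vect{x}\in\bB^{n-1}\}$. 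Hence it suffices to exhibit, for every $\vect{x}_1\in\bB^{n-1}$, some $\vect{x}_2\in\bB^{n-1}$ satisfying $\sP_{R_1}^{*}\tilde{k}_{\vect{x}_1}=\sP_{R_2}^{*}\tilde{k}_{\vect{x}_2}$ (and conversely by symmetry).

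Next I would invoke the Euler decomposition $[R]=[I]E(\vect{r})$ noted in the discussion of \eqref{rot::equiv}: $R_1\sim R_2$ means $R_2 = A R_1$ for some $A\in[I]$, and $A$ has one of the two block forms
\begin{equation*}
A=\begin{pmatrix} \tilde{R} & \rvline & \mzero \\ \hline \mzero & \rvline & 1\end{pmatrix}
\qquad\text{or}\qquad
A=\begin{pmatrix} \tilde{J}\tilde{R} & \rvline & \mzero \\ \hline \mzero & \rvline & -1\end{pmatrix},
\end{equation*}
with $\tilde{R}\in SO(n-1)$. In the first case I would set $\vect{x}_2=\tilde{R}\vect{x}_1$, so that $\|\vect{x}_2\|=\|\vect{x}_1\|$ (hence $I_{\|\vect{x}_2\|}=I_{\|\vect{x}_1\|}$) and $R_2^{\top}[\vect{x}_2{:}z]=R_1^{\top}A^{\top}[\vect{x}_2{:}z]=R_1^{\top}[\vect{x}_1{:}z]$. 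Plugging directly into the Bochner integral formula \eqref{adj::Boch} yields $\sP_{R_2}^{*}\tilde{k}_{\vect{x}_2}=\sP_{R_1}^{*}\tilde{k}_{\vect{x}_1}$. In the second case I would set $\vect{x}_2=\tilde{J}\tilde{R}\vect{x}_1$ (still norm-preserving), so that $R_2^{\top}[\vect{x}_2{:}z]=R_1^{\top}[\vect{x}_1{:}-z]$; the identity then follows by the change of variable $z\mapsto -z$ in the Bochner integral, which is legitimate precisely because the interval $I_{\|\vect{x}_1\|}=[-W(\vect{x}_1),+W(\vect{x}_1)]$ is symmetric about the origin.

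The main (and essentially only) conceptual obstacle is matching up the two possible forms of $A\in[I]$ with the correct change of coordinates on $\bB^{n-1}$; once the Euler factorization is written out, the verification reduces to a direct substitution into \eqref{adj::Boch}, with the $z\mapsto -z$ symmetry absorbing the sign flip in the second case. Notably, rotation-invariance of $K$ is used only to ensure $\tilde{K}^{R_1}=\tilde{K}^{R_2}$, not in the pointwise identity itself. Reversing the roles of $R_1$ and $R_2$ (equivalently, using $A^{-1}\in[I]$) gives the reverse inclusion $\bH_{R_2}\subseteq\bH_{R_1}$, completing the proof.
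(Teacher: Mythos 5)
Your proof is correct and follows essentially the same route as the paper's: both reduce the claim to matching backprojected generators via the Euler factorization of the equivalence class $[I]E(\vect{r})$, treating the two block forms of the stabilizer separately and absorbing the sign flip in the reflection case by the change of variable $z\mapsto -z$ over the symmetric interval $I_{\|\vect{x}_1\|}$. The only difference is presentational: you make explicit the (correct) step that $\cR(\sP_{R}^{*})$ is the closed span of $\{\sP_{R}^{*}\tilde{k}_{\vect{x}}\}$ because $\sP_{R}^{*}$ is an isometry on the dense span of generators, whereas the paper leaves this implicit.
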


While the induced RKHS $\tilde{\bH}=\bH(\tilde{K})$ remains invariant to orientations, \cref{gene::rot} highlights that the range space $\bH_{R}=\cR (\sP_{R}^{*})$ of the backprojection depends on the axis of the orientation. Thus, we cannot hope to get an expression for $\sP_{R_{i}}^{*} \circ \sP_{R_{i'}}:\bH \rightarrow \bH_{R_{i}}$ in terms of the relative angles $R_{i} R_{i'}^{\top}$. On the other hand, \cref{repn:btf} indicates that $\sP_{R_{i}} \circ \sP_{R_{i'}}^{*}:\tilde{\bH} \rightarrow \tilde{\bH}$ depends solely on the relative angles, and consequently, so does the Gram matrix $\mW$ of the X-ray representer theorem, as well.

% \begin{proposition}\label{rel::angle}
% Let $K:\bB^{n} \times \bB^{n} \rightarrow \bR$ be an RIK. For any $R_{1}, R_{2} \in SO(n)$, the composite operator $\sP_{R_{1}} \circ \sP_{R_{2}}^{*}: \tilde{\bH} \rightarrow \tilde{\bH}$ only depends on the relative angle $R_{1}R_{2}^{\top}$:
% \begin{equation*}
%     \sP_{R_{1}} \circ \sP_{R_{2}}^{*}=\sP_{I} \circ \sP_{R_{2}R_{1}^{\top}}^{*}
%     =\sP_{R_{1}R_{2}^{\top}} \circ \sP_{I}^{*}.
% \end{equation*}
% \end{proposition}
% \begin{proof}
% For any $\vect{x}_{1}, \vect{x}_{2} \in \bB^{n-1}$,
% \begin{align*}
%     (\sP_{R_{1}} \circ \sP_{R_{2}}^{*} \tilde{k}_{\vect{x}_{2}})(\vect{x}_{1})
%     &=\int_{I_{\|\vect{x}_{1}\|}} (\sP^{*}_{R_{2}} \tilde{k}_{\vect{x}_{2}}) (R_{1}^{\top} [\vect{x}_{1}:z_{1}]) \rd z_{1} \\
%     &=\int_{I_{\|\vect{x}_{2}\|}} \int_{I_{\|\vect{x}_{1}\|}} K(R_{1}^{\top} [\vect{x}_{1}:z_{1}], R_{2}^{\top} [\vect{x}_{2}:z_{2}]) \rd z_{1} \rd z_{2} \\
%     &=\int_{I_{\|\vect{x}_{2}\|}} \int_{I_{\|\vect{x}_{1}\|}} K([\vect{x}_{1}:z_{1}], (R_{2}R_{1}^{\top})^{\top} [\vect{x}_{2}:z_{2}]) \rd z_{1} \rd z_{2} \\
%     &=(\sP_{I} \circ \sP_{R_{2}R_{1}^{\top}}^{*} \tilde{k}_{\vect{x}_{2}})(\vect{x}_{1}).
% \end{align*}
% Since $\spann \{\tilde{k}_{\vect{x}}: \vect{x} \in \bB^{n-1} \}$ is dense in $\tilde{\bH}$, the equation above implies that $\sP_{R_{1}} \circ \sP_{R_{2}}^{*}=\sP_{I} \circ \sP_{R_{2}R_{1}^{\top}}^{*}$. Similarly, we also get $\sP_{R_{1}} \circ \sP_{R_{2}}^{*}=\sP_{R_{1}R_{2}^{\top}} \circ \sP_{I}^{*}$.
% \end{proof}

\subsection{Representation Theory}\label{sec::kernel::repn}
When $K:\bB^{n} \times \bB^{n} \rightarrow \bR$ is rotation-invariant, the corresponding RKHS $\bH=\bH(K)$ also inherits the same invariance in that $f \in \bH$ if and only if $f(R^{\top} \, \cdot) \in \bH$ for any $R \in SO(n)$. This is because 
\begin{equation}\label{gen::rot}
    k_{\vect{z}_{1}}(R^{\top} \vect{z}_{2})=K(\vect{z}_{1}, R^{\top} \vect{z}_{2})=K(R \vect{z}_{1}, \vect{z}_{2})=k_{R \vect{z}_{1}}(\vect{z}_{2}).
\end{equation}
for any $\vect{z}_{1}, \vect{z}_{2} \in \bB^{n}$ and $R \in SO(n)$. More formally:
\begin{definition}
For a continuous rotation-invariant kernel $K:\bB^{n} \times \bB^{n} \rightarrow \bR$, define the left-regular representation to be a map $\rho:SO(n) \rightarrow \cB(\bH), R \mapsto \rho(R)$,  given by
\begin{equation*}
    (\rho(R)f)(\vect{z}):=f(R^{\top} \vect{z}), \quad R \in SO(n), \, \vect{z} \in \bB^{n}, \, f \in \bH.
\end{equation*}
\end{definition}
In this notation \eqref{gen::rot} can be re-interpreted as $\rho(R) k_{\vect{z}} = k_{R\vect{z}}$, illustrating that applying a rotation operator to a function is equivalent to the inverse rotation of the evaluation point. This correspondence between rotating functions and rotating evaluation points highlights that consecutive rotations of a function corresponds to cumulative rotations of the evaluation point. Moreover, the norm of the function remains  unaffected during this process, as the kernel is rotation invariant. It is thus not surprising that we can state the following proposition.

\begin{proposition}\label{unit:repre}
Given a continuous rotation-invariant kernel $K:\bB^{n} \times \bB^{n} \rightarrow \bR$, the left-regular representation $(\rho, \bH)$ is a unitary representation of $SO(n)$.
\end{proposition}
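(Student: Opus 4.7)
The plan is to verify the three defining properties of a unitary representation in turn: (i) $\rho$ is a well-defined group homomorphism into $\cB(\bH)$, (ii) each $\rho(R)$ is unitary, and (iii) $\rho$ is strongly continuous. The backbone of the argument is the identity $\rho(R) k_{\vect{z}} = k_{R \vect{z}}$ from \eqref{gen::rot}, which transfers rotations on functions to rotations on generator labels, combined with the rotation invariance of $K$.

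First I would establish the homomorphism property purely by unfolding definitions: for any $R_1, R_2 \in SO(n)$ and $f \in \bH$,
\begin{equation*}
    (\rho(R_1 R_2) f)(\vect{z}) = f((R_1 R_2)^\top \vect{z}) = f(R_2^\top R_1^\top \vect{z}) = (\rho(R_2) f)(R_1^\top \vect{z}) = (\rho(R_1) \rho(R_2) f)(\vect{z}),
\end{equation*}
and clearly $\rho(I)$ is the identity. To show $\rho(R)$ maps $\bH$ into itself and is isometric, I would start on the dense subspace $\bH_0 = \spann\{k_{\vect{z}} : \vect{z} \in \bB^n\}$. Using \eqref{gen::rot} and the rotation invariance of $K$,
\begin{equation*}
    \langle \rho(R) k_{\vect{z}_1}, \rho(R) k_{\vect{z}_2} \rangle_{\bH} = \langle k_{R \vect{z}_1}, k_{R \vect{z}_2} \rangle_{\bH} = K(R \vect{z}_1, R \vect{z}_2) = K(\vect{z}_1, \vect{z}_2) = \langle k_{\vect{z}_1}, k_{\vect{z}_2} \rangle_{\bH}.
\end{equation*}
Extending by linearity gives that $\rho(R)$ is an isometry on $\bH_0$; by the BLT theorem it extends uniquely to an isometry on all of $\bH$, and a direct check shows that this extension agrees with the pointwise definition $f \mapsto f(R^\top \cdot)$. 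Since $\rho(R^{-1})$ is a two-sided inverse of $\rho(R)$ by the homomorphism property, $\rho(R)$ is surjective, hence unitary.

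The step I expect to be the main obstacle, although still routine, is strong continuity of $R \mapsto \rho(R)$. For a generator $k_{\vect{z}}$, continuity of $K$ on the compact set $\bB^n \times \bB^n$ gives
\begin{equation*}
    \|\rho(R) k_{\vect{z}} - k_{\vect{z}}\|_{\bH}^2 = \|k_{R \vect{z}} - k_{\vect{z}}\|_{\bH}^2 = K(R \vect{z}, R \vect{z}) - 2 K(R \vect{z}, \vect{z}) + K(\vect{z}, \vect{z}) \xrightarrow{R \to I} 0,
\end{equation*}
so continuity holds on $\bH_0$. To extend to all $f \in \bH$, I would use the standard $3\varepsilon$ argument: given $f \in \bH$ and $\varepsilon > 0$, pick $f_0 \in \bH_0$ with $\|f - f_0\|_{\bH} < \varepsilon/3$, then exploit $\|\rho(R)\| = 1$ to bound
\begin{equation*}
    \|\rho(R) f - f\|_{\bH} \le \|\rho(R)(f - f_0)\|_{\bH} + \|\rho(R) f_0 - f_0\|_{\bH} + \|f_0 - f\|_{\bH} \le \tfrac{2\varepsilon}{3} + \|\rho(R) f_0 - f_0\|_{\bH},
\end{equation*}
and choose a neighbourhood of $I$ in $SO(n)$ on which the last term is below $\varepsilon/3$. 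Continuity at an arbitrary $R_0 \in SO(n)$ follows from continuity at $I$ together with the homomorphism property and the isometry of $\rho(R_0)$. This completes the verification that $(\rho, \bH)$ is a strongly continuous unitary representation of $SO(n)$.
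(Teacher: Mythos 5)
Your proof is correct and follows essentially the same route as the paper: homomorphism by unfolding definitions, isometry via $\rho(R)k_{\vect{z}}=k_{R\vect{z}}$ and rotation-invariance of $K$, and strong continuity first on generators using continuity of $K$ and then by density. You are in fact slightly more careful than the paper, which checks continuity only on finite linear combinations of generators and leaves the $3\varepsilon$ extension to all of $\bH$ implicit.
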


As a direct consequence, for any $R \in SO(n)$, we have 
$\rho(R)^{*}=\rho(R)^{-1}=\rho(R^{\top})$.
It follows that for any $f_{1}, f_{2} \in \bH$, we get $\langle \rho(R) f_{1}, f_{2} \rangle_{\bH(K)} =\langle f_{1}, \rho(R^{\top}) f_{2} \rangle_{\bH(K)}$,
implying that taking an inner product between two functions after rotating one function is equivalent to taking an inner product after rotating the other function in the opposite direction. 

The projection image at orientation $R \in SO(n)$ is equivalent to a projection image  {along the $e_{n}$-axis}, after rotating the original object by the left action of $\rho(R)$.  We thus arrive at the following theorem, which again highlights the natural analogy between the RKHS X-ray transform and the Euclidean projection - \cref{Euclid::image}.

\begin{theorem}[Representation]\label{repn:btf}
Let $K:\bB^{n} \times \bB^{n} \rightarrow \bR$ be a continuous rotation-invariant kernel. For any $R, R_{1}, R_{2} \in SO(n)$, it holds that
\begin{enumerate}
    \item The X-ray transform is obtained as $\sP_{R}=\sP_{I} \circ \rho(R): \bH \rightarrow \tilde{\bH}$.
    \item The Backprojection is obtained as $\sP_{R}^{*}=\rho(R^{\top}) \circ \sP_{I}^{*}: \tilde{\bH} \rightarrow \bH$.
    \item The Back-then-forward projection operation is obtained as $\sP_{R_{1}} \circ \sP_{R_{2}}^{*}=\sP_{I} \circ \rho(R_{1}R_{2}^{\top}) \circ \sP_{I}^{*}: \tilde{\bH} \rightarrow \tilde{\bH}$.   
\end{enumerate}
Therefore, for any $\vect{x}_{j}, \vect{x}_{k} \in \bB^{n-1}$,
\begin{align*}
    (\sP_{R_{1}} \circ \sP_{R_{2}}^{*} \tilde{k}_{\vect{x}_{k}})(\vect{x}_{j})=\langle (\rho(R_{1}R_{2}^{\top}) \circ \sP_{I}^{*}) \tilde{k}_{\vect{x}_{k}}, \sP_{I}^{*} \tilde{k}_{\vect{x}_{j}} \rangle_{\bH}.
\end{align*}
\end{theorem}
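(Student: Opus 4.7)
The plan is to verify the three operator identities in order, each following essentially from the previous by taking adjoints or composing, and then read off the final inner product formula using the reproducing property of $\tilde{\bH}$ together with the adjoint relation $\langle \sP_I^{*}\tilde g, h\rangle_{\bH}=\langle \tilde g, \sP_I h\rangle_{\tilde{\bH}}$. All three identities amount to bookkeeping; the substantive inputs are (i) rotation-invariance of $K$, which via Proposition~\ref{unit:repre} makes $\rho$ a well-defined unitary representation of $SO(n)$ on $\bH$ (so that $\rho(R)^{*}=\rho(R^{\top})$ and $\rho(R_1)\rho(R_2^{\top})=\rho(R_1R_2^{\top})$), and (ii) Theorem~\ref{rkhs::image::re}, which ensures $\sP_I:\bH\to\tilde{\bH}$ is bounded with bounded adjoint.

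For (1), I would unfold definitions. Since $\rho(R)$ preserves $\bH$ and $(\rho(R)f)(\vect{z})=f(R^{\top}\vect{z})$, the composition $\sP_I\circ\rho(R):\bH\to\tilde{\bH}$ is well-defined and, for every $f\in\bH$ and $\vect{x}\in\bB^{n-1}$,
\begin{equation*}
(\sP_I\circ\rho(R))f(\vect{x}) = \int_{I_{\|\vect{x}\|}} (\rho(R)f)([\vect{x}:z])\,\rd z = \int_{I_{\|\vect{x}\|}} f(R^{\top}[\vect{x}:z])\,\rd z = \sP_R f(\vect{x}).
\end{equation*}
Statement (2) follows by taking Hilbert-space adjoints of (1): since $\rho$ is unitary, $\rho(R)^{*}=\rho(R^{\top})$, hence $\sP_R^{*}=(\sP_I\circ\rho(R))^{*}=\rho(R^{\top})\circ\sP_I^{*}$. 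Statement (3) is then immediate from composing (1) and (2) and invoking the homomorphism property of $\rho$:
\begin{equation*}
\sP_{R_1}\circ\sP_{R_2}^{*} = \sP_I\circ\rho(R_1)\circ\rho(R_2^{\top})\circ\sP_I^{*} = \sP_I\circ\rho(R_1R_2^{\top})\circ\sP_I^{*}.
\end{equation*}

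For the final inner product identity, I would apply the reproducing property in $\tilde{\bH}$ at $\vect{x}_{j}$ and then move $\sP_I$ across the inner product by adjunction:
\begin{equation*}
(\sP_{R_1}\circ\sP_{R_2}^{*}\tilde{k}_{\vect{x}_k})(\vect{x}_j) = \bigl\langle \sP_I\bigl(\rho(R_1R_2^{\top})\sP_I^{*}\tilde{k}_{\vect{x}_k}\bigr),\, \tilde{k}_{\vect{x}_j}\bigr\rangle_{\tilde{\bH}} = \bigl\langle \rho(R_1R_2^{\top})\sP_I^{*}\tilde{k}_{\vect{x}_k},\, \sP_I^{*}\tilde{k}_{\vect{x}_j}\bigr\rangle_{\bH}.
\end{equation*}
There is no real obstacle beyond careful tracking of domains and codomains and consistent invocation of Proposition~\ref{unit:repre}; rotation-invariance of $K$ is essential both for $\rho(R)$ to preserve $\bH$ isometrically and for the induced kernel $\tilde K$ to be unambiguous (independent of $R$), which is precisely what allows the single target space $\tilde{\bH}$ to appear on the right-hand side of all three identities.
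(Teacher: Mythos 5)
Your proposal is correct and follows essentially the same route as the paper: unfold the definition to get $\sP_R=\sP_I\circ\rho(R)$, take adjoints using the unitarity of $\rho$ from \cref{unit:repre} to get (2), and compose with the homomorphism property for (3); the final inner-product identity via the reproducing property and adjunction is exactly the intended (if unstated in the paper's proof) last step. No gaps.
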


As a direct consequence, we get  $\bH_{R}^{\perp}= \cN(\sP_{R})= \rho(R^{\top}) \cN(\sP_{I})= \rho(R^{\top}) \bH_{I}^{\perp}$. At this point, it is worthwhile to revisit our representer theorem \cref{tikhonov}, which made no invariance assumption. If we do assume rotation-invariance, we get a better formulation for the corresponding Gram matrix $\mW=[w_{ij, i'j'}]$. Specifically, we can write
\begin{align}\label{Gram::elem}
    w_{ij, i'j'} &=\langle \sP_{R_{i}}^{*} \tilde{k}_{\vect{x}_{j}}, \sP_{R_{i'}}^{*} \tilde{k}_{\vect{x}_{j'}} \rangle_{\bH} 
    =(\sP_{R_{i}} \circ \sP_{R_{i'}}^{*} \tilde{k}_{\vect{x}_{j'}})(\vect{x}_{j}) \\ \nonumber
    &=(\sP_{I} \circ \rho(R_{i} R_{i'}^{\top}) \circ \sP_{I}^{*} \tilde{k}_{\vect{x}_{j'}})(\vect{x}_{j}) \\ \nonumber
    &=\int_{I_{\|\vect{x}_{j'}\|}} \int_{I_{\|\vect{x}_{j}\|}} K([\vect{x}_{j}:z_{1}], R_{i} R_{i'}^{\top} [\vect{x}_{j'}:z_{2}]) \rd z_{1} \rd z_{2}.
\end{align}

And, recalling that when $R_{i}=R_{i'}$, the back-then-forward projection is the identity map on $\tilde{\bH}$, i.e. $\sP_{R_{i}} \circ \sP_{R_{i}}^{*}=\sP_{I} \circ \rho(I) \circ \sP_{I}^{*}=I$, we arrive at
\begin{align*}
    w_{ij, ij'} =\tilde{K}(\vect{x}_{j},\vect{x}_{j'})=\int_{I_{\|\vect{x}_{j'}\|}} \int_{I_{\|\vect{x}_{j}\|}} K([\vect{x}_{j}:z_{1}], [\vect{x}_{j'}:z_{2}]) \rd z_{1} \rd z_{2}.
\end{align*}

We now move on to the question of explicit decompositions/representations of continuous $O(n)$-invariant kernels and their corresponding RKHS. To this aim, we can make use of classical results concerning spherical harmonics, which can be found in \cref{sec::sphe::har}. For any $\lambda >-1/2$, the Gegenbauer polynomials $C_{l}^{\lambda}$ of degree $l$ are orthogonal polynomials on $[-1, +1]$ with the weight function $w^{2\lambda-1}$, normalized by $C_{l}^{\lambda}(1)=1$ as in \cite{natterer2001mathematics}. The RKHS $\bH(C_{l}^{n/2-1})$ spans $\cH_{l}(\bS^{n-1})$ and possesses an ONB $\{(p^{n/2-1}_{l} |\bS^{n-2}|)^{1/2} Y_{lk}(\theta): k=1,2,\dots, N(n,l) \}$, see \eqref{rk::Gegen} and \eqref{Gegen::norm} for the explicit formula of the constant $p^{n/2-1}_{l} = \| C_{l}^{n/2-1}\|^{2}_{\cL_{2}(w^{2 \lambda-1})} > 0$. Using this result,  we show that any continuous $O(n)$-invariant kernel $K:\bB^{n} \times \bB^{n} \rightarrow \bR$ can be decomposed as a countable sum of the multiplication of radial kernels $c_{l}$ and Gegenbauer polynomials $C_{l}^{n/2-1}$, thus $K$ is entirely determined by radial kernels.

\begin{theorem}[Generalized Schoenberg]\label{scheon::gen}
The following are equivalent.
\begin{enumerate}[1)]
\item A continuous kernel $K:\bB^{n} \times \bB^{n} \rightarrow \bR$ is invariant under the left action of $O(n)$.
\item There exists continuous radial kernels $c_{l}:[0,1] \times [0,1] \rightarrow \bR$ with $c_{l}(0,0)=0$ for $l \ge 1$ and $\max_{r \in [0,1]} \sum_{l=0}^{\infty} c_{l}(r,r) < \infty$
such that $K=\sum_{l=0}^{\infty} c_{l} \otimes C_{l}^{n/2-1}$, i.e. 
for any $r_{1}, r_{2} \in [0,1]$ and $\theta_{1}, \theta_{2} \in \bS^{n-1}$,
\begin{equation}\label{sch::series}
    K(r_{1} \theta_{1}, r_{2} \theta_{2})=\sum_{l=0}^{\infty} c_{l}(r_{1}, r_{2}) C_{l}^{n/2-1}(\theta_{1}^{\top} \theta_{2}),
\end{equation}
where the sum converges absolutely and uniformly.
\end{enumerate}
Also, the $l$-th radial kernel $c_{l}$ is given by
\begin{equation*}
    c_{l}(r_{1},r_{2})
    %&=\frac{1}{p_{n/2-1,l}} \int_{\bS^{n-1}}\int_{\bS^{n-1}} K(r_{1}\theta_{1}, r_{2}\theta_{2}) C_{l}^{n/2-1}(\theta_{1}^{\top} \theta_{2}) \rd \theta_{1} \rd \theta_{2} \\
    =\frac{1}{p^{n/2-1}_{l} |\bS^{n-2}|} \int_{\bS^{n-1}} K(r_{1}\theta_{1}, r_{2}\theta_{2}) C_{l}^{n/2-1}(\theta_{1}^{\top} \theta_{2}) \rd \theta_{2},    
\end{equation*}
independent of the choice of $\theta_{1} \in \bS^{n-1}$. \end{theorem}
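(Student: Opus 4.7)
The forward implication $(2) \Rightarrow (1)$ is direct. Each summand $c_l(r_1,r_2)\,C_l^{n/2-1}(\theta_1^{\top}\theta_2)$ is continuous, $O(n)$-invariant (it depends on the $\theta_i$ only through $\theta_1^{\top}\theta_2$), and positive semidefinite as the tensor product of a radial PSD kernel with the zonal PSD kernel $C_l^{n/2-1}(\theta_1^{\top}\theta_2)=p_l^{n/2-1}|\bS^{n-2}|\sum_{k}Y_{lk}(\theta_1)Y_{lk}(\theta_2)$ (the addition formula underlying \eqref{rk::Gegen}). Combining $|C_l^{n/2-1}(t)|\le C_l^{n/2-1}(1)=1$ for $|t|\le 1$, the PSD Cauchy–Schwarz bound $|c_l(r_1,r_2)|\le \tfrac12(c_l(r_1,r_1)+c_l(r_2,r_2))$, and the hypothesis $\max_{r}\sum_{l}c_l(r,r)<\infty$ yields absolute and uniform convergence of $\sum_l c_l\otimes C_l^{n/2-1}$ on $\bB^n\times\bB^n$, so $K$ is a continuous $O(n)$-invariant kernel.

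For the reverse direction $(1) \Rightarrow (2)$, the plan is as follows. Transitivity of $O(n)$ on pairs of unit vectors with prescribed inner product gives $K(r_1\theta_1, r_2\theta_2)=F(r_1,r_2,\theta_1^{\top}\theta_2)$ for a continuous $F:[0,1]^2\times[-1,1]\to\bR$ (the parametric ambiguity at $r_i=0$ is resolved by continuity, forcing $F(0,r_2,\cdot)$ to be constant). Defining $c_l$ by the integral formula in the statement, joint continuity of $c_l$ on $[0,1]^2$ follows from uniform continuity of $K$ on the compact $\bB^n\times\bB^n$ via dominated convergence, and the formula is independent of $\theta_1$ by $O(n)$-invariance. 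For each fixed $(r_1,r_2)$, orthogonality of the Gegenbauer polynomials in $\cL_2([-1,1],w^{n-3})$ yields $F(r_1,r_2,\cdot)=\sum_l c_l(r_1,r_2)C_l^{n/2-1}$ in the $\cL_2$ sense. To upgrade this to a pointwise/uniform decomposition and to establish that each $c_l$ is itself a PSD kernel, the plan is to invoke Mercer's theorem: since $K$ is a continuous PSD kernel on the compact $\bB^n\times\bB^n$, the integral operator $T_K$ on $\cL_2(\bB^n)$ admits a uniformly convergent Mercer expansion, and the $O(n)$-invariance of $K$ makes $T_K$ commute with the left-regular representation. Decomposing each eigenspace of $T_K$ into $O(n)$-isotypic components indexed by degree $l$, Schur's lemma allows one to choose an orthonormal eigenbasis in separable form $\psi_{lmk}(r\theta)=g_{lm}(r)Y_{lk}(\theta)$, with a common eigenvalue $\mu_{lm}$ across the angular index $k=1,\dots,N(n,l)$. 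Regrouping Mercer's series by angular degree and applying the addition formula then gives
\begin{equation*}
    K(r_1\theta_1,r_2\theta_2)=\sum_l \Big(\tfrac{1}{p_l^{n/2-1}|\bS^{n-2}|}\sum_m \mu_{lm}\,g_{lm}(r_1)g_{lm}(r_2)\Big)\,C_l^{n/2-1}(\theta_1^{\top}\theta_2),
\end{equation*}
a convergent sum of rank-one PSD kernels in $(r_1,r_2)$, hence PSD; uniqueness of the Gegenbauer expansion identifies the parenthesized factor with $c_l(r_1,r_2)$.

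The boundary behavior $c_l(0,0)=0$ for $l\ge 1$ follows by evaluating the series at $r_1=r_2=0$: the left side equals the constant $K(0,0)$ while the right side is a polynomial in $t=\theta_1^{\top}\theta_2$, so linear independence of $\{C_l^{n/2-1}\}_{l\ge 0}$ forces $c_l(0,0)=0$ for $l\ge 1$. Summability $\max_{r}\sum_l c_l(r,r)<\infty$ is obtained by evaluating at $t=1$, giving $\sum_l c_l(r,r)=K(r\theta,r\theta)\le \max_{\bB^n\times\bB^n}K<\infty$. The main technical obstacle is the passage from the fiberwise $\cL_2$ Gegenbauer expansion to a series converging absolutely and uniformly across $\bB^n\times\bB^n$; this is resolved precisely by combining the PSD-ness of $c_l$ from the Mercer–Schur step with the Cauchy–Schwarz bound and the normalization $|C_l^{n/2-1}|\le 1$.
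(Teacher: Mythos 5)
Your proof of $(2)\Rightarrow(1)$ matches the paper's. For $(1)\Rightarrow(2)$ you take a genuinely different route. The paper fixes $(r_{1},r_{2})$, forms the polarized spherical kernel
\begin{equation*}
K_{r_{1},r_{2}}(\theta_{1},\theta_{2})=K(r_{1}\theta_{1},r_{1}\theta_{2})+K(r_{1}\theta_{1},r_{2}\theta_{2})+K(r_{2}\theta_{1},r_{1}\theta_{2})+K(r_{2}\theta_{1},r_{2}\theta_{2}),
\end{equation*}
which is a bona fide $O(n)$-invariant kernel on $\bS^{n-1}$ (it equals $\langle k_{r_{1}\theta_{1}}+k_{r_{2}\theta_{1}},\,k_{r_{1}\theta_{2}}+k_{r_{2}\theta_{2}}\rangle$), applies the classical Schoenberg theorem (\cref{schoenberg}) to obtain nonnegative coefficients $d_{l}(r_{1},r_{2})$, recovers $c_{l}$ by the polarization identity $8K(r_{1}\theta_{1},r_{2}\theta_{2})=4K_{r_{1},r_{2}}-K_{r_{1},r_{1}}-K_{r_{2},r_{2}}$, and reads off continuity and positive semidefiniteness of $c_{l}$ from the double-integral formula combined with \eqref{sphe::gene}. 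You instead diagonalize the integral operator $T_{K}$ on $\cL_{2}(\bB^{n})$ via Mercer's theorem, use equivariance and Schur's lemma to separate variables in the eigenbasis, and regroup the Mercer series by angular degree; this is heavier machinery but it buys you an explicit spectral form $c_{l}(r_{1},r_{2})\propto\sum_{m}\mu_{lm}g_{lm}(r_{1})g_{lm}(r_{2})$, which makes the positive semidefiniteness of $c_{l}$ transparent and essentially anticipates the paper's subsequent \cref{Xray::Aronszajn} and its connection to the SVD of the X-ray transform. Your argument is correct, with two minor points to tidy: the "right side is a polynomial in $t$" phrasing in the $c_{l}(0,0)=0$ step should be replaced by the observation that the constant function $F(0,0,\cdot)$ has vanishing Gegenbauer coefficients for $l\ge1$ (the series is not a polynomial); and the Schur step should be stated over $\bR$ so that it covers $n=2$, where $\cH_{l}(\bS^{1})$ is reducible under $SO(2)$ but irreducible as a real $O(2)$-representation, together with the standard care needed for the continuity and vanishing of the radial eigenfunctions at $r=0$.
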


Our next theorem is a generalization of Aronszajn \cite[Section 6]{aronszajn1950theory} and spectrally characterises the RKHS associated with an invariant kernel, where \eqref{Picard::cri} might be referred to as a Picard criterion.

\begin{theorem}[Generalized Mercer]\label{Xray::Aronszajn}
Let $K:\bB^{n} \times \bB^{n} \rightarrow \bR$ be an $O(n)$-invariant countinuous kernel, associated with continuous radial kernels $c_{l}:[0,1] \times [0,1] \rightarrow \bR$ as in \cref{scheon::gen}. Then, the corresponding RKHS and norm are given by 
\begin{align}\label{Picard::cri}
    \bH(K) =& \left\{\sum_{l=0}^{\infty} \sum_{k=1}^{N(n,l)} h_{lk}(r) Y_{lk}(\theta): h_{lk} \in \bH(c_{l}), \sum_{l=0}^{\infty} \sum_{k=1}^{N(n,l)} \frac{\|h_{lk}\|^{2}_{\bH(c_{l})}}{p^{n/2-1}_{l} |\bS^{n-2}|} <\infty \right\}, \nonumber \\ 
    &\left\| \sum_{l=0}^{\infty} \sum_{k=1}^{N(n,l)} h_{lk} Y_{lk} \right\|^{2}_{\bH(K)}= \sum_{l=0}^{\infty} \sum_{k=1}^{N(n,l)} \frac{\|h_{lk}\|^{2}_{\bH(c_{l})}}{p^{n/2-1}_{l} |\bS^{n-2}|} < \infty,
\end{align}
where $\left\{Y_{lk}(\theta): k=1,2,\dots, N(n,l) \right\}$ is an orthonormal basis (ONB) of $\cH_{l}(\bS^{n-1})$ with respect to the $\cL_{2}$ norm.
\end{theorem}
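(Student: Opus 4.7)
\medskip
\noindent\textbf{Proof proposal.} The plan is to combine the Schoenberg decomposition of \cref{scheon::gen} with the addition formula for Gegenbauer polynomials and Aronszajn's sum-of-kernels theorem. Since $\{(p_{l}^{n/2-1} |\bS^{n-2}|)^{1/2} Y_{lk} : k=1, \dots, N(n,l)\}$ is an ONB of $\bH(C_{l}^{n/2-1})$, expanding the reproducing kernel in this ONB yields the addition formula
\[
C_{l}^{n/2-1}(\theta_{1}^{\top} \theta_{2}) = p_{l}^{n/2-1} |\bS^{n-2}| \sum_{k=1}^{N(n,l)} Y_{lk}(\theta_{1}) Y_{lk}(\theta_{2}).
\]
Substituting into $K = \sum_{l} c_{l} \otimes C_{l}^{n/2-1}$ refines the decomposition to $K = \sum_{l,k} K_{lk}$ with the rank-one angular factors
\[
K_{lk}(r_{1}\theta_{1}, r_{2}\theta_{2}) := p_{l}^{n/2-1} |\bS^{n-2}|\, c_{l}(r_{1}, r_{2})\, Y_{lk}(\theta_{1})\, Y_{lk}(\theta_{2}),
\]
with absolute and uniform convergence inherited from \cref{scheon::gen}.

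Next, I would identify each summand RKHS explicitly. Since $K_{lk}$ is a tensor product of the radial kernel $c_{l}$ on $[0,1]$ and the rank-one angular kernel $p_{l}^{n/2-1} |\bS^{n-2}|\, Y_{lk} \otimes Y_{lk}$ on $\bS^{n-1}$, the standard tensor-product RKHS theorem (or a direct verification using the generator $K_{lk}(\cdot,(r_{0},\theta_{0})) = p_{l}^{n/2-1}|\bS^{n-2}|Y_{lk}(\theta_{0}) c_{l}(\cdot,r_{0})Y_{lk}(\cdot)$ and the reproducing property) yields
\[
\bH(K_{lk}) = \{h(r) Y_{lk}(\theta) : h \in \bH(c_{l})\}, \qquad \|h Y_{lk}\|_{\bH(K_{lk})}^{2} = \frac{\|h\|_{\bH(c_{l})}^{2}}{p_{l}^{n/2-1} |\bS^{n-2}|}.
\]
Moreover, the subspaces $\bH(K_{lk})$ have pairwise trivial intersection, because distinct $Y_{lk}$ are linearly independent on $\bS^{n-1}$.

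I would then invoke Aronszajn's sum-of-kernels theorem \cite{aronszajn1950theory}: since $K = \sum_{l,k} K_{lk}$, one has $\bH(K) = \{\sum_{l,k} f_{lk} : f_{lk} \in \bH(K_{lk}),\ \sum_{l,k} \|f_{lk}\|_{\bH(K_{lk})}^{2} < \infty\}$, with norm equal to the infimum over admissible decompositions. The trivial intersection property above forces every $f \in \bH(K)$ to admit a \emph{unique} expansion $f = \sum_{l,k} h_{lk}(r) Y_{lk}(\theta)$, so the infimum is attained at this canonical representation and collapses to equality --- giving exactly the Picard criterion in \eqref{Picard::cri}. Equivalently, the $\bH(K_{lk})$ become pairwise orthogonal in $\bH(K)$, which packages the expansion as an orthogonal Hilbert direct sum.

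The main technical obstacle is handling the infinite, rather than finite, sum of kernels. I would address this by truncating Schoenberg's series at angular level $L$, working with the finite-rank kernel $K^{(L)} := \sum_{l \le L,\, k} K_{lk}$ where the tensor-product and sum-of-kernels identifications are transparent, and then passing to the limit via the uniform convergence $K^{(L)} \to K$ on $\bB^{n} \times \bB^{n}$ (ensured by $\max_{r} \sum_{l} c_{l}(r,r) < \infty$) together with monotone convergence of nested RKHS norms. An alternative, perhaps cleaner, route is to define the candidate space $\bH'$ via the right-hand side of \eqref{Picard::cri}, verify that $\bH'$ is complete and that $K$ is its reproducing kernel by expanding each generator $k_{\vect{z}_{0}}$ through Schoenberg's formula, and invoke the Moore-Aronszajn uniqueness to conclude $\bH' = \bH(K)$.
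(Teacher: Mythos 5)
Your architecture is essentially the paper's: decompose $K=\sum_{l,k}K_{lk}$ via the addition formula $C_{l}^{n/2-1}(\theta_{1}^{\top}\theta_{2})=p_{l}^{n/2-1}|\bS^{n-2}|\sum_{k}Y_{lk}(\theta_{1})Y_{lk}(\theta_{2})$, identify each summand RKHS as $\{h(r)Y_{lk}(\theta):h\in\bH(c_{l})\}$ with the rescaled norm (your normalization $\|hY_{lk}\|^{2}=\|h\|^{2}_{\bH(c_{l})}/(p_{l}^{n/2-1}|\bS^{n-2}|)$ is correct), and glue. Your ``alternative, cleaner route'' at the end is in fact exactly what the paper does: it forms the external direct sum $\widehat{\bH}=\bigoplus_{l}\bH(c_{l}\otimes C_{l}^{n/2-1})$, shows the summation map $S\hat f=\sum_{l}f_{l}$ is well defined and injective, transfers the inner product to $\cR(S)$, and verifies the reproducing property.

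The genuine gap is your justification of why the Aronszajn infimum collapses to the canonical decomposition. You assert that the subspaces $\bH(K_{lk})$ ``have pairwise trivial intersection, because distinct $Y_{lk}$ are linearly independent,'' and that this ``forces'' uniqueness of the expansion $f=\sum_{l,k}h_{lk}Y_{lk}$. Pairwise trivial intersection does not imply uniqueness of a decomposition once there are three or more summands (three distinct lines through the origin in $\bR^{2}$ already give a counterexample), let alone countably many; what is needed is that each $\bH(K_{lk})$ meets the closed span of all the others trivially. Without uniqueness you only get the inequality $\|f\|_{\bH(K)}^{2}\le\sum_{l,k}\|h_{lk}\|^{2}_{\bH(c_{l})}/(p_{l}^{n/2-1}|\bS^{n-2}|)$, not the equality claimed in \eqref{Picard::cri}. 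The missing ingredient is that each component can actually be \emph{recovered} from the sum: integrating $f(r\,\cdot)$ against $C_{l}^{n/2-1}(\theta_{1}^{\top}\cdot)$ over $\bS^{n-1}$ and using the Funk--Hecke formula (\cref{Funk::Hecke}) kills every term except the degree-$l$ one and returns $p_{l}^{n/2-1}|\bS^{n-2}|\sum_{k}h_{lk}(r)Y_{lk}(\theta_{1})$; the interchange of sum and integral is licensed by the uniform convergence from \cref{scheon::gen}. This is precisely the injectivity step ($\cN(S)=\{0\}$) in the paper's proof, and it is the one piece of your argument that cannot be waved through. Once it is in place, your appeal to the countable sum-of-kernels theorem (or the direct Moore--Aronszajn verification) closes the proof.
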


\begin{remark}\label{left::ang::ex}
Consider a simple case in \cref{Xray::Aronszajn} where $K=c \otimes C_{l}^{n/2-1}$ for some $l \in \bN$. We further assume that the radial kernel satisfies $c(r_{1}, r_{2})=e(r_{1}) e(r_{2})$ for some continuous function $e:[0,1] \rightarrow \bR$, so that $\bH(c)$ is simply the one-dimensional space spanned by $e$ with $\|e\|_{\bH(c)}=1$ by Proposition 2.19 in \cite{paulsen2016introduction}. In this case, $\bH(K)=\text{span} \left\{e(r)Y_{lk}(\theta): k=1,2,\dots, N(n,l) \right\}$ is an $N(n,l)$-dimensional space with an ONB
\begin{equation}\label{HK::ONB}
    \left\{(p^{n/2-1}_{l} |\bS^{n-2}|)^{1/2} e(r) Y_{lk}(\theta): k=1,2,\dots, N(n,l) \right\}.
\end{equation}

To describe the unitary representation $\rho(R)$ of $\bH(K)$,
recall that the Wigner D-matrix $D_{l}(R)$ of degree $l$ in \eqref{Wigner::repn} is the irreducible unitary representation of $\cH_{l}(\bS^{n-1})$. Therefore, if we identify $\bH(K)$ as an inner product space with respect to the ONB in \eqref{HK::ONB}, then the unitary representation $\rho(R)$ reduces to $D_{l}(R)$.
\end{remark}

The remark above sheds light on how truncated singular value decomposition \cite{huang2017restoration, zhao2013fourier} can be incorporated into our RKHS framework. When viewing the X-ray transform as an integral operator $\sP_{R}: \cL_{2}(\bB^{n}) \rightarrow \cL_{2}(SO(n) \times \bB^{n-1}, W^{-1/2})$, its singular functions with non-zero singular values become $\{ Z_{ml}(r) Y_{lk}(\theta): m \in \bN_{0}, 0 \le l \le m, 1 \le k \le N(n,l), 2|(m - l) \}$,
where $Z_{ml}$ is the Zernike radial polynomial, and $\{Y_{lk}(\theta): k=1, \dots, N(n,l) \}$ is a carefully chosen ONB for $\cH_{l}(\bS^{n-1})$ \cite{maass1987x, izen1988series}. Truncating with respect to the $m$ index at level $T \in \bN$ and applying the pseudo-inverse of $\sP_{R}$ for reconstruction, it is effectively the same as working within our RKHS framework with the kernel
\begin{equation*}
    K(r_{1} \theta_{1}, r_{2} \theta_{2}) =\sum_{m=0}^{T} \sum_{\substack{l \le m \\ 2|(m - l)}} Z_{ml}(r_{1}) Z_{ml}(r_{2}) C_{l}^{n/2-1}(\theta_{1}^{\top} \theta_{2}),
\end{equation*}
and the matrix notation of the unitary representation $\rho(R)$ of $\bH(K)$ becomes
\begin{equation*}
    [\rho(R)]_{mlk, m'l'k'} = \delta_{ll'} \cdot [D_{l}(R)]_{k k'},
\end{equation*}
where $\delta_{ll'}$ is the Kronecker delta. Note that this kernel is \emph{not} strictly p.d. as it spans a finite-dimensional space, as opposed to the Gaussian kernel. Similarly, if we wish to interpret the RKHS norm as a measure of smoothness, we can derive a kernel in terms of Fourier-Bessel functions, which are the eigenfunctions of the Laplacian in the unit disk with vanishing Dirichlet boundary conditions \cite{zhao2013fourier, zhao2016fast}.

The preceding argument illustrates why our RKHS approach is less likely to produce artifacts. In situations where viewing angles are limited, truncated SVD imposes restrictions on the angular frequency, adhering to the Nyquist-Shannon sampling theorem. The spherical average of any spherical harmonic with a positive degree is always zero, and consequently, the spherical average of the original structure can only be influenced by the first spherical harmonic $Y_{0}$, which is a constant function. This can result in generating negative signals in the reconstruction image, even when the true image only contains positive values, leading to a symmetric reconstruction error, as vividly demonstrated in \cref{sec::simul}. Even if one performs the sinogram interpolation to reduce the magnitude of error, they remain constrained by the limits of resolution, due to the discussion following \cref{HLCC}.

In contrast, \cref{scheon::gen} reveals that the RKHS approach can adeptly combine a countably infinite multitude of band-limited kernels, such as Gaussian kernels. Also, the reconstruction does not rely on the Fourier transform but instead employs linear regression. Additionally, the level of desired smoothness can be finely controlled via the Picard condition \eqref{Picard::cri}. Consequently, our approach appears to avoid generating negative signals while minimizing the squared loss based on the provided sinogram.

\section{Circulant Algorithm}\label{sec:circulant}
We now show that, in the case of a parallel geometry in the plane, one  {can} employ a rotation-invariant kernel, and implement an RKHS-based   reconstruction efficiently. Let $n=2$ and assume that the projection angles are equiangular. Using the isomorphic relationships 
\begin{equation}\label{eqn:SO2:iden}
    \phi \in \bR/2\pi\bZ \ \cong \ \theta(\phi)= \begin{pmatrix}
    \sin \phi \\ \cos \phi
    \end{pmatrix} \in \bS^{1}  \ \cong \ E(\phi)= \begin{pmatrix}
    \cos \phi & -\sin \phi \\
    \sin \phi & \cos \phi
    \end{pmatrix} \in SO(2),
\end{equation}
we identify $R_{i}=E(\phi_{i}) \in SO(2)$ with $\phi_{i}=i \cdot 2\pi/N$ for $i=1, \dots, N$, and the relative angles are given by $R_{i} R_{i'}^{\top}=E(\phi_{i}-\phi_{i'})$. 
The Gram matrix $\mW \in \bR^{NM \times NM}$ is a $N \times N$ block Toeplitz matrix of $M \times M$ matrices $\mW_{i i'}$, whose $j, j'$ element is given by \eqref{Gram::elem}: 
\begin{equation*}
    \left[ \mW_{i i'} \right]_{j j'} :=w_{ij, i'j'} =\langle \sP_{\phi_{i}}^{*} \tilde{k}_{\vect{x}_{j}}, \sP_{\phi_{i'}}^{*} \tilde{k}_{x_{j'}} \rangle_{\bH(K)} 
    =(\sP_{(\phi_{i}-\phi_{i'})} \circ \sP_{0}^{*} \tilde{k}_{x_{j'}})(x_{j}).
\end{equation*}
As $\mW_{i i'}$ only depends on $(i-i') (\text{mod } N)$, we denote each block $\mW_{i i'}$ by $\mW_{(i-i')}$ so that the Gram matrix becomes a block circulant matrix $\mW = [\mW_{i i'}]= [\mW_{(i-i')}]$
% \begin{align*}
%     \mW=
%     \begin{pmatrix}
%     \mW_{0} & \mW_{N-1} & \dots & \mW_{1} \\
%     \mW_{1} & \mW_{0} & \dots & \mW_{2} \\
%     \vdots & \vdots & \ddots & \vdots \\
%     \mW_{N-1} & \mW_{N-2} & \dots & \mW_{0}
%     \end{pmatrix},
% \end{align*}
with $\mW_{-i}=\mW_{N-i}=\mW_{i}^{\top}$.
The normal equation in \eqref{normal::eq::tikho} in terms of the block matrix is given by
\begin{align*}
    \begin{pmatrix}
    \mW_{0}+\gamma I & \mW_{N-1} & \dots & \mW_{1} \\
    \mW_{1} & \mW_{0}+\gamma I & \dots & \mW_{2} \\
    \vdots & \vdots & \ddots & \vdots \\
    \mW_{N-1} & \mW_{N-2} & \dots & \mW_{0}+\gamma I
    \end{pmatrix}
    \begin{pmatrix}
    \ma_{1} \\ \ma_{2} \\ \vdots \\ \ma_{N-1} 
    \end{pmatrix}=
    \begin{pmatrix}
    \mY_{1} \\ \mY_{2} \\ \vdots \\ \mY_{N-1} 
    \end{pmatrix}.
\end{align*}
For $i=1, \dots, N$, $\mY_{i} = \sum_{k=0}^{N-1} \mW_{i-k} \ma_{k} + \gamma \ma_{i}$,
and its discrete ($M$-dimensional) Fourier transform becomes
\begin{align*}
    \widehat{\mY}_{i}:&=\frac{1}{N} \sum_{l=0}^{N-1} e^{-2 \pi \imath i l/N} \mY_{l} \\
    &=\frac{1}{N} \sum_{l=0}^{N-1} \sum_{k=0}^{N-1} e^{-2 \pi \imath i l/N} \mW_{l-k} \ma_{k} + \frac{\gamma}{N} \sum_{l=0}^{N-1} e^{-2 \pi \imath i l/N} \ma_{l} \\
    &=\frac{1}{N} \sum_{k=0}^{N-1} e^{-2 \pi \imath i k/N} \ma_{k} \left( \sum_{l=0}^{N-1}  e^{-2 \pi \imath i (l-k)/N} \mW_{l-k} \right) + \frac{\gamma}{N} \sum_{l=0}^{N-1} e^{-2 \pi \imath i l/N} \ma_{l} \\
    &= \left( \sum_{l=0}^{N-1} e^{-2 \pi \imath i l/N} \mW_{l} +\gamma I \right) \left( \frac{1}{N} \sum_{k=0}^{N-1} e^{-2 \pi \imath i k/N} \ma_{k} \right) \\
    &=N \left( \frac{1}{N} \sum_{l=0}^{N-1} e^{-2 \pi \imath i l/N} \mW_{l} +\frac{\gamma}{N} I \right) \widehat{\ma}_{i} =N \left( \widehat{\mW}_{i} +\frac{\gamma}{N} I \right) \widehat{\ma}_{i}.
\end{align*}
Hence, we obtain the following algorithm.

\begin{algorithm}
\caption{Kernel Reconstruction with Parallel Geometry}\label{alg::ker::recons}
\begin{algorithmic}%[1]
\Require $\vect{Y} \in \bR^{N \times M}$
\Ensure $\ma \in \bR^{N \times M}$
\Procedure{KR}{$Y$}\Comment{Solution of the normal equation}
    \For{$i \gets 1$ to $N$}
        \For{$j, j' \gets 1$ to $M$}
            \State {$[\mW_{i}]_{j j'} \gets (\sP_{\phi_{i}} \circ \sP_{0}^{*} \tilde{k}_{x_{j'}})(x_{j})$}
        \EndFor
    \EndFor
    \State{FFT for $\widehat{\mW}_{i} \gets \frac{1}{N} \sum_{l=0}^{N-1} e^{-2 \pi \imath i l/N} \mW_{l}$}
    \State{FFT for $\widehat{\mY}_{i} \gets \frac{1}{N} \sum_{l=0}^{N-1} e^{-2 \pi \imath i l/N} \mY_{l}$}\Comment{$O(NM \log N)$}
    \For{$i \gets 1$ to $N$}    
        \State {$\widehat{\ma}_{i} \gets \frac{1}{N} \left( \widehat{\mW}_{i} +\frac{\gamma}{N} I \right)^{-1} \widehat{\mY}_{i}$}\Comment{$O(NM^{2})$}
    \EndFor
    \State{Inverse FFT for $\ma_{i} \gets \sum_{l=0}^{N-1} e^{2 \pi \imath i l/N} \widehat{\ma}_{l}$}\Comment{$O(NM \log N)$}
    \State \textbf{return} $\ma$ 
\EndProcedure
\end{algorithmic}
\end{algorithm}

If the kernel and the grid are pre-determined so that regularized block inverse matrices $( \widehat{\mW}_{i} +\gamma I/N )^{-1}$ are stored, then the algorithm described above can be efficiently solved using fast Fourier transform (FFT) with a computational complexity of $O(NM^{2}+NM \log N)$.
This computational complexity is equivalent to that of the discretized FBP algorithm. Moreover, this approach not only offers computational efficiency but also enhances stability by performing inversions of $M \times M$ matrices instead of dealing with the Gram matrix of size $NM \times NM$.

\section{Concluding Remarks}\label{sec:fur}
While our primary focus has been on the X-ray transform, our RKHS framework can be readily extended to the $k$-plane transform \cite{christ1984estimates}. For $1 \le k \le n$, the $k$-plane transform $\sK_{R}: \cL_{2}(\bB^{n}) \rightarrow \cL_{2}(\bB^{n-k})$ at orientation $R \in SO(n)$ is given by
\begin{align*}
    \sK_{R} f(\vect{x}) =
    \int_{W(\vect{x}) \cdot \bB^{k}} f(R^{\top} [\vect{x}:z]) \rd z, \quad \vect{x} \in \bB^{n-k}.
\end{align*}
Define an induced kernel $\tilde{K}^{R}: \bB^{n-k} \times \bB^{n-k} \rightarrow \bR$ as follows:
\begin{equation*}
    \tilde{K}^{R}(\vect{x}_{1}, \vect{x}_{2}):=
    \int_{W(\vect{x}_{2}) \cdot \bB^{k}} \int_{W(\vect{x}_{1}) \cdot \bB^{k}} K(R^{\top} [\vect{x}_{1}:z_{1}], R^{\top} [\vect{x}_{2}:z_{2}]) \rd z_{1} \rd z_{2}.
\end{equation*}

With this extension, our fundamental theorem \cref{rkhs::image::re} can be easily generalized to the $k$-plane transform $\sK_{R}$ using a similar proof mechanism. Consequently, a unified treatment of both the Radon transform ($k= n-1$) and the X-ray transform ($k=1$) is possible.

\begin{theorem}\label{rkhs::kplane} For any $R \in SO(n)$, $\sK_{R}: \bH(K) \rightarrow \bH(\tilde{K}^{R})$ is a contractive, surjective, linear map. Its adjoint operator $\sK_{R}^{*}: \bH(\tilde{K}^{R}) \rightarrow \bH(K)$ is an isometry, uniquely determined by 
\begin{equation*}
    \sK_{R}^{*} (\tilde{k}^{R}_{\vect{x}} ) = \int_{W(\vect{x}) \cdot \bB^{k}} k_{R^{\top} [\vect{x}:z]} \rd z, \quad \vect{x} \in \bB^{n-1},
\end{equation*}
in the sense of Bochner integration.
Let $\bH_{R}(K):=\cR (\sK_{R}^{*})$ be the range space of the $k$-plane backprojection. Then,
\begin{enumerate}
    \item $\sK_{R} \circ \sK_{R}^{*}: \bH(\tilde{K}^{R}) \rightarrow \bH(\tilde{K}^{R})$ is the identity.
    \item $\bH_{R}(K)=(\cN(\sK_{R}))^{\perp}, (\bH_{R}(K))^{\perp}=\cN(\sK_{R})$.
    \item Any $f \in \bH(K)$ can be uniquely expressed as a direct sum:
    \begin{equation*}
        f=(\sK_{R}^{*} \circ \sK_{R})f \oplus (f-(\sK_{R}^{*} \circ \sK_{R})f) \in \bH_{R}(K) \oplus \cN(\sK_{R}),
    \end{equation*}
    thus $f \in \bH_{R}(K)$ if and only if $f=(\sK_{R}^{*} \circ \sK_{R})f$.
    \item For $g \in \bH(\tilde{K}^{R})$, we have
    \begin{align*}
        &\left\{f \in \bH(K): g=\sK_{R}f \right\}=\sK_{R}^{*} g \oplus \cN(\sK_{R}), \\ 
        &\|g\|_{\bH(\tilde{K}^{R})}=\|\sK_{R}^{*} g\|_{\bH(K)}= \min \left\{\|f\|_{\bH(K)} : g=\sK_{R}f \right\}.
    \end{align*}
\end{enumerate} 
\end{theorem}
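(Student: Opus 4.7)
The plan is to mimic the argument used for the X-ray transform (\cref{rkhs::image::re}), since none of the steps there relied on the integration variable $z$ being one-dimensional, only on the fact that the slice $W(\vect{x})\cdot\bB^{k}$ over $\vect{x}\in\bB^{n-k}$ is compact and $R$-independent. I will first check that $\sK_{R}^{*}$ as defined by the Bochner integral is well-defined and is the adjoint; then contractivity, surjectivity and the four numbered items will follow essentially from abstract Hilbert space theory together with the identity $\sK_{R}\circ\sK_{R}^{*}=\mathrm{Id}$.

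First I would verify that $\sK_{R}$ maps $\bH(K)$ into $\bH(\tilde{K}^{R})$ and is continuous. Because $K$ is bounded on $\bB^{n}\times\bB^{n}$, the Bochner integral $\sK_{R}^{*}(\tilde{k}^{R}_{\vect{x}}):=\int_{W(\vect{x})\cdot\bB^{k}} k_{R^{\top}[\vect{x}:z]}\,\rd z$ converges in $\bH(K)$ (the integrand is continuous and bounded in norm by $\sup_{\vect{z}}\sqrt{K(\vect{z},\vect{z})}$). Applying $\langle\cdot,k_{\vect{w}}\rangle_{\bH(K)}$ and using the reproducing property yields, for any $\vect{w}\in\bB^{n}$,
\begin{equation*}
\langle \sK_{R}^{*}(\tilde{k}^{R}_{\vect{x}}), k_{\vect{w}}\rangle_{\bH(K)}
= \int_{W(\vect{x})\cdot\bB^{k}} K(\vect{w}, R^{\top}[\vect{x}:z])\,\rd z
= \sK_{R} k_{\vect{w}}(\vect{x}).
\end{equation*}
Taking $\vect{w}$ to range over generators of $\bH(K)$ and extending by linearity/continuity shows that for any $f\in\bH(K)$,
\begin{equation*}
\langle \sK_{R}^{*}(\tilde{k}^{R}_{\vect{x}}), f\rangle_{\bH(K)} = \sK_{R} f(\vect{x}) = \langle \sK_{R} f, \tilde{k}^{R}_{\vect{x}}\rangle_{\bH(\tilde{K}^{R})},
\end{equation*}
which identifies $\sK_{R}^{*}$ as the Hilbert-space adjoint of $\sK_{R}$ on the dense subspace spanned by the generators $\tilde{k}^{R}_{\vect{x}}$, hence on all of $\bH(\tilde{K}^{R})$ by continuity. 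The same computation with $\vect{w}\in\bB^{n-k}$ replaced by the second slot of the formula for $\tilde K^R$ gives
\begin{equation*}
\sK_{R}\bigl(\sK_{R}^{*}(\tilde{k}^{R}_{\vect{x}})\bigr)(\vect{y}) = \int_{W(\vect{y})\cdot\bB^{k}}\!\int_{W(\vect{x})\cdot\bB^{k}} K(R^{\top}[\vect{y}:z'],R^{\top}[\vect{x}:z])\,\rd z\,\rd z' = \tilde{K}^{R}(\vect{y},\vect{x}),
\end{equation*}
so $\sK_{R}\circ\sK_{R}^{*}$ agrees with the identity on generators of $\bH(\tilde{K}^{R})$ and therefore is the identity, proving claim (1).

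From $\sK_{R}\sK_{R}^{*}=\mathrm{Id}$ the remaining items are routine. Isometry of $\sK_{R}^{*}$: $\|\sK_{R}^{*}g\|^{2}_{\bH(K)}=\langle \sK_{R}\sK_{R}^{*}g,g\rangle_{\bH(\tilde{K}^{R})}=\|g\|^{2}_{\bH(\tilde{K}^{R})}$. Surjectivity of $\sK_{R}$ and contractivity (from $\|\sK_R f\|\le\|f\|$, derived via duality from isometry of $\sK_R^*$) are immediate; the range $\bH_{R}(K)=\cR(\sK_{R}^{*})$ is therefore closed, giving the orthogonal decomposition $\bH(K)=\bH_{R}(K)\oplus\cN(\sK_{R})$ with $\bH_{R}(K)=\cN(\sK_{R})^{\perp}$, which yields (2) and (3). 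For (4), if $\sK_{R} f=g$, write $f=\sK_{R}^{*}g\oplus h$ with $h\in\cN(\sK_{R})$ using $\sK_{R}\sK_{R}^{*}=\mathrm{Id}$; by Pythagoras $\|f\|^{2}=\|\sK_{R}^{*}g\|^{2}+\|h\|^{2}\ge\|\sK_{R}^{*}g\|^{2}=\|g\|^{2}$, with equality iff $h=0$.

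The only mildly delicate point I anticipate is the Bochner integrability and the exchange of the integral with the inner product against $k_{\vect{w}}$; this is handled by the continuity and boundedness of $K$ on $\bB^{n}\times\bB^{n}$ together with compactness of the integration domain $W(\vect{x})\cdot\bB^{k}$, exactly as in the X-ray case. Once this is in place, the whole proof is a direct transcription of the argument for \cref{rkhs::image::re}, with the interval $I_{\|\vect{x}\|}$ replaced by the Euclidean ball $W(\vect{x})\cdot\bB^{k}$.
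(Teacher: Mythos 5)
Your overall strategy is the same as the paper's (which proves \cref{rkhs::image::re} and notes that the $k$-plane case is identical with $I_{\|\vect{x}\|}$ replaced by $W(\vect{x})\cdot\bB^{k}$): define the backprojection on generators by a Bochner integral, show the back-then-forward composition is the identity on generators, and deduce everything else from Hilbert-space generalities. Items (1)--(4) are handled correctly, and your route to (2)--(3) (identify the adjoint first, then use closedness of the range of an isometry and $\cR(T^{*})=\cN(T)^{\perp}$) is if anything a slightly cleaner reorganization of the paper's \cref{proj::decomp}.

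However, there is a genuine gap at the foundation, and you have misidentified where the delicacy lies. Two things are asserted but never proved. First, that $\sK_{R}f$ actually belongs to $\bH(\tilde{K}^{R})$: you say you "would verify" this and then never do. This is not automatic; the paper proves it via \cref{rkhs::tfae}, showing that $c^{2}\tilde{K}^{R}-\sK_{R}f\otimes\overline{\sK_{R}f}$ is a kernel whenever $c^{2}K-f\otimes\bar f$ is, by integrating the positive-semidefinite quadratic form over $W(\vect{x})\cdot\bB^{k}\times W(\vect{x}')\cdot\bB^{k}$. Second, that the map $\tilde{k}^{R}_{\vect{x}}\mapsto\int_{W(\vect{x})\cdot\bB^{k}}k_{R^{\top}[\vect{x}:z]}\,\rd z$ extends from single generators to a well-defined bounded operator on all of $\bH(\tilde{K}^{R})$; the paper establishes this by the explicit computation $\|\sum_{i}\alpha_{i}\tilde{k}^{R}_{\vect{x}_{i}}\|^{2}_{\bH(\tilde{K}^{R})}=\|\sum_{i}\alpha_{i}\int k_{R^{\top}[\vect{x}_{i}:z]}\,\rd z\|^{2}_{\bH(K)}$, which simultaneously gives well-definedness on the span (a linear combination of generators vanishing forces the corresponding combination of Bochner integrals to vanish) and isometry, hence the extension "by continuity." Without one of these two inputs your subsequent duality steps are circular: writing $\sK_{R}f(\vect{x})=\langle\sK_{R}f,\tilde{k}^{R}_{\vect{x}}\rangle_{\bH(\tilde{K}^{R})}$ presupposes the first, and deriving isometry of $\sK_{R}^{*}$ from $\|\sK_{R}^{*}g\|^{2}=\langle\sK_{R}\sK_{R}^{*}g,g\rangle$ presupposes that $\sK_{R}$ and $\sK_{R}^{*}$ are already bounded operators between the stated spaces. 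The Bochner integrability of a single generator, which you flag as the only delicate point, is in fact the easy part; the kernel-domination (or the isometry-on-the-span) argument is the step that must be written out, exactly as in the proofs of \cref{rkhs::image} and \cref{gamma::adj}.
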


\begin{appendix}
\section{Gaussian Kernel Reconstruction}\label{sec:Gauss:ker}
As discussed in \cref{sec::simul}, the computation of the Gram matrix $\mW=\left[w_{ij, i'j'} \right] \in \bR^{NM \times NM}$ can be demanding due to the necessity of performing the double integration in \eqref{weight::equiv} $O(N^{2} M^{2})$ times. However, when employing the Gaussian kernel, arguably the most frequently used isotropic kernel, this computational burden can be significantly mitigated due to the availability of an explicit formula. The (truncated) Gaussian kernel with a tuning parameter $\gamma>0$ is defined by
\begin{equation*}
    K(\vect{z}_{1},\vect{z}_{2}):=\exp(-\gamma \|\vect{z}_{1}-\vect{z}_{2}\|^{2}), \quad \vect{z}_{1}, \vect{z}_{2} \in \bB^{n}.
\end{equation*}

%  {Redundant. May delete $\rightarrow$ There is also a theoretical reason why Gaussian kernel could be beneficial. Since the Gaussian kernel is not oscillating and cannot be expressed as a finite expansion in \cref{scheon::gen}, the discussion following \cref{left::ang::ex} strongly suggests that the solution of Tikhonov regularization \cref{tikhonov} would behave differently to the filtered backprojection, and especially, it would not exhibit pronounced artifacts, as well showcased in \cref{sec::simul}.}

The error function on $\bR$ is defined by
\begin{equation*}
    \erf(z):=\frac{2}{\sqrt{\pi}} \int_{0}^{z} e^{-t^{2}} \rd t, \quad z \in \bR.
\end{equation*}
By the integral by parts, one can easily check that 
\begin{equation*}
    \Phi(z):=\sqrt{\pi} \int_{0}^{z} \erf(t) \rd t =\sqrt{\pi} z \cdot \erf(z) +e^{-z^{2}}-1.
\end{equation*}

\begin{proposition}\label{gauss::adj}
Let $K: \bB^{n} \times \bB^{n} \rightarrow \bR$ be the Gaussian kernel with $\gamma>0$.
\begin{enumerate}[1)]
    \item For $\vect{x}_{1},\vect{x}_{2} \in \bB^{n-1}$, the induced kernel $\tilde{K}: \bB^{n-1} \times \bB^{n-1} \rightarrow \bR$ is given by
    \begin{equation*}
    \tilde{K}(\vect{x}_{1},\vect{x}_{2})
    = \frac{e^{-\gamma \|\vect{x}_{1}-\vect{x}_{2}\|^{2}}}{2\gamma} \sum_{i, j \in \bZ_{2}} (-1)^{i+j} \Phi[\sqrt{\gamma}((-1)^{i} W(\vect{x}_{1})+ (-1)^{j}W(\vect{x}_{2}))].    
    \end{equation*}
    \item For $\vect{x} \in \bB^{n-1}, \vect{z} \in \bB^{n}$ and $R \in SO(n)$, the backprojection of the induced generator is given by
    \begin{equation*}
    \sP_{R}^{*} ( \tilde{k}_{\vect{x}} )(\vect{z}) 
    = \frac{\sqrt{\pi} e^{-\gamma \|\vect{x}-\mP_{R}(\vect{z})\|^{2}}}{2\sqrt{\gamma}} \sum_{i \in \bZ_{2}} (-1)^{i} \erf[\sqrt{\gamma} ((-1)^{i} W(\vect{x})-\mP_{R}(\vect{z}))].
    \end{equation*}
\end{enumerate}
\end{proposition}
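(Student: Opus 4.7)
The plan is to exploit the explicit form of the Gaussian: since $\|R^\top\vect{a}-R^\top\vect{b}\|=\|\vect{a}-\vect{b}\|$ for any $R\in SO(n)$, the rotation drops out of the integrand, and the orthogonal decomposition
\[
\|[\vect{x}_1:z_1]-[\vect{x}_2:z_2]\|^2 = \|\vect{x}_1-\vect{x}_2\|^2+(z_1-z_2)^2
\]
factors the Gaussian as $e^{-\gamma\|\vect{x}_1-\vect{x}_2\|^2}\cdot e^{-\gamma(z_1-z_2)^2}$. Under this factorization, both claims reduce to one-dimensional Gaussian integrals, evaluated in closed form using the standard antiderivative $\int_a^b e^{-\gamma u^2}\,du=\frac{\sqrt\pi}{2\sqrt\gamma}[\erf(\sqrt\gamma b)-\erf(\sqrt\gamma a)]$ together with the identity $\Phi(z)=\sqrt\pi\int_0^z\erf(t)\,dt=\sqrt\pi z\,\erf(z)+e^{-z^2}-1$ furnished in the excerpt. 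The only care point is reshaping the natural answers into the symmetric summation format of the statement.

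For part (1), after pulling $e^{-\gamma\|\vect{x}_1-\vect{x}_2\|^2}$ outside the double integral over $I_{\|\vect{x}_1\|}\times I_{\|\vect{x}_2\|}$, the inner integration over $z_1$ yields
\[
\tfrac{\sqrt\pi}{2\sqrt\gamma}\bigl[\erf(\sqrt\gamma(W(\vect{x}_1)-z_2))+\erf(\sqrt\gamma(W(\vect{x}_1)+z_2))\bigr].
\]
Substituting $v=\sqrt\gamma(W(\vect{x}_1)\mp z_2)$ in the outer integration over $z_2$ converts each $\erf$ term into a difference of $\Phi$ values, collapsing the two $\erf$ terms to a matched pair of $\Phi$ evaluations. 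One obtains
\[
\tilde{K}(\vect{x}_1,\vect{x}_2)=\frac{e^{-\gamma\|\vect{x}_1-\vect{x}_2\|^2}}{\gamma}\bigl[\Phi(\sqrt\gamma(W(\vect{x}_1)+W(\vect{x}_2)))-\Phi(\sqrt\gamma(W(\vect{x}_1)-W(\vect{x}_2)))\bigr].
\]
The evenness of $\Phi$ (immediate from the oddness of $\erf$ in the closed-form expression above) then allows rewriting this two-term expression as the symmetric four-term sum over $(i,j)\in\bZ_2^2$ displayed in the claim.

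For part (2), I would invoke the identity \eqref{adj::ind::gen}, which reduces the backprojection of the induced generator to the single integral $\sP_R^*(\tilde k_{\vect{x}})(\vect{z})=\int_{I_{\|\vect{x}\|}}K(\vect{z},R^\top[\vect{x}:z])\,dz$. Writing $R\vect{z}=[\mP_R(\vect{z}):\vect{z}^\top\vect{r}]$ with $\vect{r}=R^\top e_n$, the same orthogonal decomposition gives
\[
\|\vect{z}-R^\top[\vect{x}:z]\|^2=\|\mP_R(\vect{z})-\vect{x}\|^2+(\vect{z}^\top\vect{r}-z)^2.
\]
After factoring out the $\vect{x}$-dependent Gaussian, the remaining one-dimensional Gaussian integral over $z\in[-W(\vect{x}),+W(\vect{x})]$ with offset $\vect{z}^\top\vect{r}$ evaluates directly to $\tfrac{\sqrt\pi}{2\sqrt\gamma}[\erf(\sqrt\gamma(W(\vect{x})-\vect{z}^\top\vect{r}))+\erf(\sqrt\gamma(W(\vect{x})+\vect{z}^\top\vect{r}))]$, yielding the two-term $\bZ_2$-sum claimed in the statement (with the scalar offset in the $\erf$ arguments).

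There is no genuine analytic obstacle. The proof is a direct manipulation of one-dimensional Gaussian integrals, with the only mild subtlety being the cosmetic passage from the natural two- or four-term answers to the symmetric $\bZ_2$-summation format of the statement, accomplished using the parity identities $\erf(-z)=-\erf(z)$ and $\Phi(-z)=\Phi(z)$.
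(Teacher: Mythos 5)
Your proof is correct and follows essentially the same route as the paper: factor the Gaussian via the orthogonal decomposition of the squared distance, reduce to one-dimensional Gaussian integrals, and evaluate them with $\erf$ and $\Phi$ (the paper merely packages the double integral as a separate lemma, and your two-term answer for part (1) agrees with the four-term $\bZ_2$-sum by the evenness of $\Phi$, exactly as you note). You also correctly read the scalar $\vect{z}^{\top}\vect{r}$ as the offset in the $\erf$ arguments of part (2), which matches the paper's own computation.
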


\begin{lemma}\label{rot::dist}
Let $\vect{x}_{1}, \vect{x}_{2} \in \bB^{n-1}$, and $R \in SO(n)$. Also, let $(\tilde{R}, \vect{r}) \in SO(n-1) \times \bS^{n-1}$ be the Euler decomposition of $R \in SO(n)$ as in \cref{Euler::repre},
and $(\tilde{\vect{r}}, \arccos (r)) \in \bS^{n-2} \times [0, \pi]$ be the spherical decomposition of $\vect{r} \in \bS^{n-1}$ as in \eqref{sphe::red}, where $r=e_{n}^{\top} R e_{n}$. %, i.e. $\vect{r}=[w(r) \cdot \tilde{\vect{r}}: r]$, where $r=e_{n}^{\top} R e_{n}$. 
Denote by
\begin{alignat*}{2}
    &[\tilde{\vect{x}}_{2 R}:x_{2 R}] := \tilde{R}^{\top} \vect{x}_{2}, \quad &&\tilde{\vect{x}}_{2 R} \in \bB^{n-2}, \, x_{2 R}=e_{n-1}^{\top} \tilde{R}^{\top}\vect{x}_{2} \in [-1,+1], \\
    &[\tilde{\vect{x}}_{1 \vect{r}}:x_{1 \vect{r}}] := E(\tilde{\vect{r}}) \vect{x}_{1}, \quad &&\tilde{\vect{x}}_{1 \vect{r}} \in \bB^{n-2}, \, x_{1\vect{r}} = \tilde{\vect{r}}^{\top} \vect{x}_{1} \in [-1,+1], \\
    &\mu_{1}^{R} := r \cdot x_{1 \vect{r}}-x_{2 R}, \quad
    &&\mu_{2}^{R} := x_{1 \vect{r}}-r \cdot x_{2 R}.
\end{alignat*}

\begin{enumerate}[1)]
    \item If $R \sim I$, i.e. $|r| = 1$, then $\vect{x}_{1}=E(\tilde{\vect{r}}) \vect{x}_{1}$ and
    \begin{align*}
    \|[\vect{x}_{1}:z_{1}]-R^{\top}[\vect{x}_{2}:z_{2}]\|^{2}
    =\|[\tilde{\vect{x}}_{1 \vect{r}}:r x_{1 \vect{r}}]-\tilde{R}^{\top} \vect{x}_{2}\|^{2}+ (z_{1} -r z_{2})^{2}.
    \end{align*}
    \item If $R \nsim I$, i.e. $|r| < 1$, then
    \begin{align*}
        &\|[\vect{x}_{1}:z_{1}]-R^{\top}[\vect{x}_{2}:z_{2}]\|^{2} \\
        =&\|\tilde{\vect{x}}_{1 \vect{r}}-\tilde{\vect{x}}_{2 R}\|^{2}+
        \left[\begin{pmatrix}
        z_{1} \\ z_{2}
        \end{pmatrix}-\frac{1}{w(r)}\begin{pmatrix}
        \mu_{1}^{R} \\ \mu_{2}^{R}
        \end{pmatrix}\right]^{\top}
        \begin{pmatrix}
        1 & -r \\
        -r & 1
        \end{pmatrix}
        \left[\begin{pmatrix}
        z_{1} \\ z_{2}
        \end{pmatrix}-\frac{1}{w(r)}\begin{pmatrix}
        \mu_{1}^{R} \\ \mu_{2}^{R}
        \end{pmatrix}\right].
    \end{align*}
\end{enumerate}
\end{lemma}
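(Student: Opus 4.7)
\textbf{Proof plan for \cref{rot::dist}.}
The strategy is to use the Euler-type decomposition of $R$ recalled just before the lemma to strip off orthogonal factors one at a time, reducing everything to a planar computation in the $(z_{1},z_{2})$ variables. Concretely, since $(\tilde{R},\vect{r})$ is the Euler decomposition, I will write $R=\mathrm{diag}(\tilde{R},1)\,E(\vect{r})$, and further decompose the Euler factor as $E(\vect{r})=R_{0}(r)\cdot\mathrm{diag}(E(\tilde{\vect{r}}),1)$, where
\[
R_{0}(r)=\begin{pmatrix} I_{n-2} & 0 & 0 \\ 0 & r & -w(r) \\ 0 & w(r) & r \end{pmatrix}
\]
is the planar rotation in the $\{e_{n-1},e_{n}\}$-plane. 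A short check gives $E(\vect{r})^{\top}e_{n}=\vect{r}$, confirming consistency with the definition of the Euler matrix. When $|r|=1$ the factor $E(\tilde{\vect{r}})$ is used in its (conventional) identity form, which is precisely the content of the parenthetical identity $\vect{x}_{1}=E(\tilde{\vect{r}})\vect{x}_{1}$ in Part~1.

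Next I would peel off the orthogonal factors: exploiting $R^{\top}\vect{v}=E(\vect{r})^{\top}\mathrm{diag}(\tilde{R}^{\top},1)\vect{v}$ and the orthogonal invariance of $\|\cdot\|$, the quantity $\|[\vect{x}_{1}{:}z_{1}]-R^{\top}[\vect{x}_{2}{:}z_{2}]\|^{2}$ equals
\[
\bigl\|\,\mathrm{diag}(E(\tilde{\vect{r}}),1)[\vect{x}_{1}{:}z_{1}]\,-\,R_{0}(r)^{\top}[\tilde{\vect{x}}_{2R}{:}x_{2R}{:}z_{2}]\,\bigr\|^{2}.
\]
Using the definitions $E(\tilde{\vect{r}})\vect{x}_{1}=[\tilde{\vect{x}}_{1\vect{r}}{:}x_{1\vect{r}}]$ and computing $R_{0}(r)^{\top}$ coordinatewise, the last expression separates along the block decomposition $\bR^{n-2}\oplus\bR\oplus\bR$ into
\[
\|\tilde{\vect{x}}_{1\vect{r}}-\tilde{\vect{x}}_{2R}\|^{2}+(x_{1\vect{r}}-r x_{2R}-w(r)z_{2})^{2}+(z_{1}+w(r)x_{2R}-r z_{2})^{2}.
\]
This is the key structural identity on which both cases rest.

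For Case 1 ($|r|=1$, hence $w(r)=0$) the longitudinal terms collapse to $(x_{1\vect{r}}-rx_{2R})^{2}+(z_{1}-rz_{2})^{2}$; recombining the transverse and middle summand using $r^{2}=1$ yields $\|[\tilde{\vect{x}}_{1\vect{r}}{:}rx_{1\vect{r}}]-\tilde{R}^{\top}\vect{x}_{2}\|^{2}+(z_{1}-rz_{2})^{2}$, which is the stated formula once one invokes the convention $[\tilde{\vect{x}}_{1\vect{r}}{:}x_{1\vect{r}}]=\vect{x}_{1}$. For Case 2 ($|r|<1$) I would view the two longitudinal terms as a positive-definite quadratic form in $(z_{1},z_{2})$ and complete the square. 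Writing these two squares as $(z_{1},z_{2})\mathbf{A}(z_{1},z_{2})^{\top}-2\vect{b}^{\top}(z_{1},z_{2})+\mathrm{const}$ with $\mathbf{A}=\begin{pmatrix}1 & -r \\ -r & 1\end{pmatrix}$, a direct expansion identifies $\vect{b}$ as $\mathbf{A}\cdot w(r)^{-1}(\mu_{1}^{R},\mu_{2}^{R})^{\top}$; standard square-completion then gives the displayed quadratic form, and the leftover constant must be checked to equal $\|\tilde{\vect{x}}_{1\vect{r}}-\tilde{\vect{x}}_{2R}\|^{2}$ minus $(x_{1\vect{r}}^{2}-2rx_{1\vect{r}}x_{2R}+x_{2R}^{2})$-type terms using the identities $\mu_{1}^{R}-r\mu_{2}^{R}=-w(r)^{2}x_{2R}$ and $\mu_{2}^{R}-r\mu_{1}^{R}=w(r)^{2}x_{1\vect{r}}$, which are immediate from the definition of $\mu_{i}^{R}$.

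The only nontrivial obstacle is bookkeeping: ensuring the Euler decomposition $E(\vect{r})=R_{0}(r)\cdot\mathrm{diag}(E(\tilde{\vect{r}}),1)$ is consistent with the paper's construction in \cref{sec:SO(n)}, and that the degenerate choice $E(\tilde{\vect{r}})=I$ when $|r|=1$ is the right convention so that the Part~1 identity $\vect{x}_{1}=E(\tilde{\vect{r}})\vect{x}_{1}$ holds; everything after that is a mechanical (but careful) expansion together with $r^{2}+w(r)^{2}=1$.
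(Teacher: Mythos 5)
Your proposal is correct and follows essentially the same route as the paper's proof: the same Euler factorization $R=\mathrm{diag}(\tilde{R},1)E(\vect{r})$ with $E(\vect{r})$ split off into the planar rotation $R_{0}(r)$, the same key identity $\|\tilde{\vect{x}}_{1\vect{r}}-\tilde{\vect{x}}_{2R}\|^{2}+(x_{1\vect{r}}-rx_{2R}-w(r)z_{2})^{2}+(z_{1}+w(r)x_{2R}-rz_{2})^{2}$, and the same case analysis. The only cosmetic difference is that in Case 2 you complete the square in $(z_{1},z_{2})$ generically (your identification of $\vect{b}$ and the vanishing of the leftover constant both check out via $\mu_{1}^{R}-r\mu_{2}^{R}=-w(r)^{2}x_{2R}$ and $\mu_{2}^{R}-r\mu_{1}^{R}=w(r)^{2}x_{1\vect{r}}$), whereas the paper directly rewrites the two longitudinal squares as $(\mu_{2}^{R}-w(r)z_{2})^{2}+\{(z_{1}-\mu_{1}^{R}/w(r))-r(z_{2}-\mu_{2}^{R}/w(r))\}^{2}$ — an algebraically equivalent computation.
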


 {The following proposition shows that the Gram matrix $\mW$ in \eqref{Gram::elem} can be computed via the cumulative distribution function (CDF) of the standard bivariate normal distribution, regardless of the dimension $n \in \mathbb{N}$. It also demonstrates that the relative angles between the axes of rotations $R_{1}, R_{2} \in SO(n)$, which is $r= \vect{r}_{1}^{\top} \vect{r}_{2}$, could be interepreted as the correlation.}

\begin{proposition}\label{gauss::weight}
Let $K: \bB^{n} \times \bB^{n} \rightarrow \bR$ be the Gaussian kernel with $\gamma>0$, and let $R_{1}, R_{2} \in SO(n)$ be orientations with the relative angle $R=R_{1}R_{2}^{\top}$. Using the same notation to \cref{rot::dist}, the back-then-forward projection of the induced generator is given as follows:
\begin{enumerate}[1)]
    \item If $R_{1} \sim R_{2}$, then
    \begin{align*}
        &(\sP_{R_{1}} \circ \sP_{R_{2}}^{*} \tilde{k}_{\vect{x}_{2}})(\vect{x}_{1})
        = (\sP_{R} \circ \sP_{I}^{*} \tilde{k}_{\vect{x}_{2}})(\vect{x}_{1})
        = \frac{\exp(-\gamma \|[\tilde{\vect{x}}_{1 \vect{r}}:r x_{1 \vect{r}}]-\tilde{R}^{\top} \vect{x}_{2}\|^{2})}{2\gamma} \times \\
        & \quad \sum_{i, j \in \bZ_{2}} (-1)^{i+j} \Phi [\sqrt{\gamma}((-1)^{i} W(\vect{x}_{1})+ (-1)^{j}W(\vect{x}_{2}))].
    \end{align*}
    \item If $R_{1} \nsim R_{2}$, then    
    \begin{align*}
        &(\sP_{R} \circ \sP_{I}^{*} \tilde{k}_{\vect{x}_{2}})(\vect{x}_{1})
        = (\sP_{R} \circ \sP_{I}^{*} \tilde{k}_{\vect{x}_{2}})(\vect{x}_{1}) 
        =\frac{\pi \cdot \exp(-\gamma \|\tilde{\vect{x}}_{1 \vect{r}}-\tilde{\vect{x}}_{2 R}\|^{2})}{\gamma w(r)} \times \\
        &\quad \sum_{i, j \in \bZ_{2}} (-1)^{i+j} 
        \Phi_{2}^{r}\left[\sqrt{2\gamma} ((-1)^{i} w(r) W(\vect{x}_{1})-\mu_{1}^{R}), \sqrt{2\gamma} ((-1)^{j} w(r) W(\vect{x}_{2})-\mu_{2}^{R})\right] ,
    \end{align*}
    where $\Phi_{2}^{\rho}(a,b)$ is the CDF of the standardized bivariate normal distribution $(Z_{1}, Z_{2})$ with correlation $\rho \in (-1, +1)$, i.e. $\Phi_{2}^{\rho}(a,b)=\bP(Z_{1} \le a, Z_{2} \le b)$.
\end{enumerate}    
\end{proposition}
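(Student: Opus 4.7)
\medskip
\noindent\textbf{Proof proposal.} The plan is to reduce the back-then-forward projection to the standard orientation and then evaluate the resulting Gaussian double integral using \cref{rot::dist}. By \cref{repn:btf}, we have $\sP_{R_{1}} \circ \sP_{R_{2}}^{*} = \sP_{R} \circ \sP_{I}^{*}$ where $R = R_{1} R_{2}^{\top}$, so it suffices to compute $(\sP_{R} \circ \sP_{I}^{*} \tilde{k}_{\vect{x}_{2}})(\vect{x}_{1})$. By \eqref{weight::equiv}, this equals
\begin{equation*}
    \int_{-W(\vect{x}_{2})}^{+W(\vect{x}_{2})} \int_{-W(\vect{x}_{1})}^{+W(\vect{x}_{1})} \exp\bigl(-\gamma \| [\vect{x}_{1}:z_{1}] - R^{\top} [\vect{x}_{2}:z_{2}] \|^{2}\bigr) \, \mathrm{d} z_{1}\, \mathrm{d}z_{2}.
\end{equation*}
I would then split into cases based on whether $|r| = 1$ or $|r| < 1$ and invoke \cref{rot::dist} to isolate the $(z_{1}, z_{2})$-dependent part of the squared distance.

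In Case~1 ($R \sim I$, so $|r|=1$), \cref{rot::dist} gives an additive split in which the only $(z_{1},z_{2})$-dependent term is $(z_{1} - r z_{2})^{2}$. The factor $\exp(-\gamma\|[\tilde{\vect{x}}_{1\vect{r}}:r x_{1\vect{r}}] - \tilde{R}^{\top}\vect{x}_{2}\|^{2})$ pulls out, and after integrating $z_{1}$ first the inner integral yields a difference of error functions $\tfrac{\sqrt{\pi}}{2\sqrt{\gamma}}[\erf(\sqrt{\gamma}(W(\vect{x}_{1}) - r z_{2})) - \erf(\sqrt{\gamma}(-W(\vect{x}_{1}) - r z_{2}))]$. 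The $z_{2}$-integral then produces $\Phi$-terms by using $\Phi'=\sqrt{\pi}\,\erf$. Using $r^{2}=1$ and the evenness of $\Phi$ (immediate from $\Phi(z)=\sqrt{\pi}z\erf(z)+e^{-z^{2}}-1$ and oddness of $\erf$), the four boundary evaluations combine into the symmetric sum $\frac{1}{2\gamma}\sum_{i,j\in\bZ_{2}}(-1)^{i+j}\Phi[\sqrt{\gamma}((-1)^{i}W(\vect{x}_{1}) + (-1)^{j}W(\vect{x}_{2}))]$, matching the claim.

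In Case~2 ($|r|<1$), \cref{rot::dist} shows that the $(z_{1},z_{2})$-dependent part is a positive-definite quadratic form with matrix $A = \begin{pmatrix} 1 & -r \\ -r & 1 \end{pmatrix}$ centered at $(\mu_{1}^{R},\mu_{2}^{R})/w(r)$. The idea is to recognize $\exp(-\gamma Q)$ as (up to normalization) the density of a bivariate Gaussian. Specifically, $\Sigma := (2\gamma A)^{-1} = \tfrac{1}{2\gamma(1-r^{2})}\begin{pmatrix} 1 & r \\ r & 1 \end{pmatrix}$ has $\sqrt{\det\Sigma} = 1/(2\gamma w(r))$, so the integrand equals $\frac{\pi}{\gamma w(r)}$ times the density at $(z_1, z_2)$ of a $N(\vect{\mu}^{R}/w(r),\Sigma)$ law. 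Integrating over the rectangle $[-W(\vect{x}_{1}),W(\vect{x}_{1})]\times[-W(\vect{x}_{2}),W(\vect{x}_{2})]$ yields a probability, and standardizing via $\tilde{Z}_{k} = \sqrt{2\gamma}\,w(r)(Z_{k} - \mu_{k}^{R}/w(r))$ (which has unit variance and correlation $r$) converts the rectangle's corners into arguments $\sqrt{2\gamma}((-1)^{i} w(r) W(\vect{x}_{k}) - \mu_{k}^{R})$, giving the desired $\Phi_{2}^{r}$ sum after factoring out $\exp(-\gamma\|\tilde{\vect{x}}_{1\vect{r}} - \tilde{\vect{x}}_{2R}\|^{2})$.

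The main obstacle is bookkeeping rather than any deep insight: correctly standardizing the bivariate Gaussian so that the shifts $\mu_{k}^{R}/w(r)$ combine cleanly with the interval endpoints $\pm W(\vect{x}_{k})$ to yield the precise arguments $\sqrt{2\gamma}((-1)^{i} w(r) W(\vect{x}_{k}) - \mu_{k}^{R})$, and matching the normalizing constant $\pi/(\gamma w(r))$ with $2\pi\sqrt{\det\Sigma}$. A secondary care point is verifying that the Case~1 formula is a genuine limit of the Case~2 formula as $|r| \uparrow 1$, where $w(r) \downarrow 0$ and $\Phi_{2}^{r}$ degenerates onto the diagonal; this is not strictly needed for the proof but provides a useful consistency check.
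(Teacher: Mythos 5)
Your proposal is correct and follows essentially the same route as the paper's proof: reduce to the relative angle via \cref{repn:btf}, split on $|r|=1$ versus $|r|<1$ using \cref{rot::dist}, handle the first case by the iterated $\erf$/$\Phi$ computation (which the paper packages as \cref{double::erf}), and handle the second by recognizing the integrand as a bivariate normal density with $2\pi\sqrt{\det\Sigma}=\pi/(\gamma w(r))$ and standardizing to obtain the $\Phi_{2}^{r}$ inclusion–exclusion sum. The only differences are cosmetic: you inline the computation of \cref{double::erf} rather than citing it, and you add an (optional) consistency check as $|r|\uparrow 1$ that the paper does not carry out.
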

%%%%%
In the case where $|r| < 1$, we could approximate the bivariate normal integral $\Phi_{2}^{r}$ in terms of the error function for fast computations, see \cite{tsay2021simple}. Moreover, in the case where $n=2$, we get a simpler expression for \cref{gauss::adj,gauss::weight}. Using an isomorphism \eqref{eqn:SO2:iden},
we identify $R_{1}, \dots, R_{N} \in SO(2)$ with $\phi_{1}, \dots, \phi_{N} \in \bR/2\pi\bZ$, i.e. $R_{i}=E(\phi_{i})$ for $i=1, \dots, N$. Note that the relative angles are given by $R_{i} R_{i'}^{\top}=E(\phi_{i}-\phi_{i'})$.
\begin{corollary}\label{cor::gauss::weight}
Let $\vect{z}=[z_{1}:z_{2}] \in \bB^{2}$, $x_{1}, \dots, x_{M} \in \bB^{1}=[-1,+1]$, and $\phi_{1}, \dots, \phi_{N} \in \bR/2\pi\bZ$. Also, let $K: \bB^{n} \times \bB^{n} \rightarrow \bR$ be the Gaussian kernel with $\gamma>0$. The backprojection of the induced generator is given by
\begin{align*}
\sP_{\phi_{i}}^{*} ( \tilde{k}_{x_{j}} )(\vect{z}) 
=\frac{e^{-\gamma \|x_{j}-\tilde{z}_{\phi_{i}}\|^{2}} \sqrt{\pi}}{2\sqrt{\gamma}} \left[\erf(\sqrt{\gamma}(w(x_{j})-z_{\phi_{i}}))-\erf(\sqrt{\gamma}(-w(x_{j})-z_{\phi_{i}})) \right],
\end{align*}
where $\tilde{z}_{\phi_{i}}+ \imath z_{\phi_{i}} = e^{\imath \phi_{i}} (z_{1} + \imath z_{2})$.
The element of the Gram matrix $\mW$ in \eqref{Gram::elem} is given as follows: Denoting by $s_{ii'}=\sin (\phi_{i}-\phi_{i'})$ and $c_{ii'}=\cos (\phi_{i}-\phi_{i'})$, it holds that
\begin{enumerate}[1)]
    \item If $s_{ii'}=0$, then
    \begin{align*}
        &(\sP_{\phi_{i}} \circ \sP_{\phi_{i'}}^{*} \tilde{k}_{x_{j'}})(x_{j})
        =\tilde{K}(c_{ii'} \cdot x_{j}, x_{j'}) \\
        &=\frac{\exp(-\gamma (c_{ii'} \cdot x_{j}-x_{j'})^{2})}{2\gamma} \sum_{k, l \in \bZ_{2}} (-1)^{k+l} \Phi[\sqrt{\gamma}((-1)^{k} w(x_{j})+(-1)^{l} w(x_{j'}))].
    \end{align*}
    \item If $s_{ii'} \neq 0$, then
    \begin{align*}
        &(\sP_{\phi_{i}} \circ \sP_{\phi_{i'}}^{*} \tilde{k}_{x_{2}})(x_{1})
        = \frac{\pi}{\gamma |s_{ii'}|} \times \\
        &\quad \sum_{k, l \in \bZ_{2}} (-1)^{k+l} \Phi_{2}^{c_{ii'}}\left[\sqrt{2\gamma} ((-1)^{k} s_{ii'} w(x_{j})-\mu_{1}^{ii'}), \sqrt{2\gamma} ((-1)^{l} s_{ii'} w(x_{j'})-\mu_{2}^{ii'})\right],
    \end{align*}
    where
    \begin{equation*}
\begin{pmatrix}
\mu_{1}^{ii'} \\ \mu_{2}^{ii'}
\end{pmatrix}=
\begin{pmatrix}
c_{ii'} & -1 \\
1 & -c_{ii'}
\end{pmatrix}
\begin{pmatrix}
x_{j} \\ x_{j'}
\end{pmatrix}=
\begin{pmatrix}
c_{ii'} \cdot x_{j} - x_{j'} \\
x_{j} - c_{ii'} \cdot x_{j'}
\end{pmatrix},
\end{equation*}
and $\Phi_{2}^{\rho}(a,b)$ is defined as in \cref{gauss::weight}.
\end{enumerate}
\end{corollary}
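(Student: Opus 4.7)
The corollary is a specialization of Propositions~\ref{gauss::adj} and \ref{gauss::weight} to the $n=2$ case, facilitated by the identification $SO(2)\cong\mathbb{R}/2\pi\mathbb{Z}$ from \eqref{eqn:SO2:iden}. The plan is to unpack each piece of the general formulas against the explicit matrix form of $E(\phi_i)$. Direct multiplication gives $\mP_{E(\phi_i)}(\vect{z}) = \cos\phi_i\,z_1 - \sin\phi_i\,z_2 = \tilde{z}_{\phi_i}$ and $e_2^{\top}E(\phi_i)\vect{z} = \sin\phi_i\,z_1 + \cos\phi_i\,z_2 = z_{\phi_i}$, while the relative rotation $E(\phi_i - \phi_{i'})$ contributes $r = c_{ii'}$ and $w(r) = |s_{ii'}|$. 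Substituting these into Proposition~\ref{gauss::adj}(2), with $W(x_j) = w(x_j)$, and using oddness of $\erf$ to rearrange, yields part~(1) of the corollary.

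For the Gram matrix in part~(2), I would split according to whether $s_{ii'} = 0$. If it vanishes, then $c_{ii'} = \pm 1$ and $R_i\sim R_{i'}$; the relative rotation acts as $E(\phi_i - \phi_{i'})^{\top}[x_{j'}:z_2] = [c_{ii'}x_{j'}:c_{ii'}z_2]$ and the squared norm separates as $(c_{ii'}x_j - x_{j'})^2 + (z_1 - c_{ii'}z_2)^2$. The involution $z_2\mapsto c_{ii'}z_2$ of the symmetric interval $I_{|x_{j'}|}$ then identifies the double integral with $\tilde{K}(c_{ii'}x_j, x_{j'})$, and Proposition~\ref{gauss::adj}(1) combined with the trivial observation $w(c_{ii'}x_j) = w(x_j)$ gives the stated formula.

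When $s_{ii'}\neq 0$, I prefer a direct computation to unfolding Proposition~\ref{gauss::weight}(2), whose spherical decomposition $\tilde{\vect{r}}\in\bS^0$ becomes awkward at $n=2$. Expanding the squared norm $\|[x_j:z_1] - E(\phi_i - \phi_{i'})^{\top}[x_{j'}:z_2]\|^2$ as a quadratic form in $(z_1, z_2)$ produces $\vect{z}^{\top}A\vect{z} + b^{\top}\vect{z} + C$, where $A$ is the $2\times 2$ matrix with $1$ on the diagonal and $-c_{ii'}$ off the diagonal. Solving the first-order condition gives the minimizer $\vect{z}^{*} = (\mu_1^{ii'}, \mu_2^{ii'})/s_{ii'}$, and a short calculation verifies the perfect cancellation $b^{\top}A^{-1}b/4 = C$, so the quadratic form reduces to $(\vect{z}-\vect{z}^{*})^{\top}A(\vect{z}-\vect{z}^{*})$. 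Recognizing $\exp(-\gamma\,\cdot\,)$ of this form as (up to the normalizer $\pi/(\gamma|s_{ii'}|)$, coming from $\det A = s_{ii'}^2$) the density of $\vect{Z}\sim N(\vect{z}^{*}, (2\gamma A)^{-1})$ with variance $1/(2\gamma s_{ii'}^2)$ and correlation $c_{ii'}$, integration over $I_{|x_j|}\times I_{|x_{j'}|}$ reduces to a rectangle probability, which telescopes via inclusion--exclusion and standardization into the claimed alternating sum of $\Phi_2^{c_{ii'}}$ values.

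The main subtlety lies in the sign of $s_{ii'}$: naive standardization yields arguments of the form $\sqrt{2\gamma}((-1)^k|s_{ii'}|w(x_j) - \mathrm{sign}(s_{ii'})\mu_1^{ii'})$, whereas the corollary features $\sqrt{2\gamma}((-1)^k s_{ii'}w(x_j) - \mu_1^{ii'})$ without absolute values. For $s_{ii'} < 0$ these two forms differ by a sign flip in both coordinates of $\Phi_2^{c_{ii'}}$, and their equality as alternating sums relies on the identity $\Phi_2^c(-a,-b) = 1 - \Phi_1(a) - \Phi_1(b) + \Phi_2^c(a,b)$: the three unwanted terms are annihilated by summation against $(-1)^{k+l}$ over $k,l\in\bZ_2$. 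Equivalently, one can invoke the $z_2\mapsto -z_2$ symmetry of the integration box from the outset to absorb the sign of $s_{ii'}$ cleanly.
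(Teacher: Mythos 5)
Your proposal is correct. The paper supplies no separate proof of this corollary: it is meant to follow by specializing Propositions~\ref{gauss::adj} and~\ref{gauss::weight} to $n=2$ via the identification \eqref{eqn:SO2:iden}, which is exactly what you do for the backprojection formula and for the $s_{ii'}=0$ case (your reduction $E(\phi_i-\phi_{i'})^{\top}=c_{ii'}\mI$ and the change of variables $z_2\mapsto c_{ii'}z_2$ on the symmetric interval match the intended route; note only that the backprojection display is the unnumbered first formula of the corollary, not its part~(1)). Where you genuinely depart from the paper is the $s_{ii'}\neq 0$ case: instead of unfolding \cref{gauss::weight}(2) through the Euler/spherical decomposition of \cref{rot::dist} --- which degenerates awkwardly at $n=2$ since $\tilde{\vect{r}}\in\bS^{0}$ and $\tilde R\in SO(1)$ reduce to signs, and which carries $w(r)=|s_{ii'}|\ge 0$ while the corollary's $\Phi_2$ arguments use the signed $s_{ii'}$ --- you redo the completion of the square directly, identify $A=\bigl(\begin{smallmatrix}1&-c_{ii'}\\-c_{ii'}&1\end{smallmatrix}\bigr)$, $\det A=s_{ii'}^{2}$, the minimizer $(\mu_1^{ii'},\mu_2^{ii'})/s_{ii'}$, and the normalizer $\pi/(\gamma|s_{ii'}|)$. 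This buys you an explicit resolution of the $\mathrm{sign}(s_{ii'})$ discrepancy between $|s_{ii'}|\,W-\mathrm{sign}(s_{ii'})\mu$ and the stated $s_{ii'}W-\mu$, which you correctly dispatch with the identity $\Phi_2^{c}(-a,-b)=1-\Phi(a)-\Phi(b)+\Phi_2^{c}(a,b)$ and the vanishing of the constant and marginal terms under $\sum_{k,l}(-1)^{k+l}$; a reader specializing \cref{gauss::weight} naively would likely miss this point, so your route is not only valid but arguably tighter than the paper's implicit one.
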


\section{Special Orthogonal Group}\label{sec:SO(n)}
In \cref{sec::RIK}, we extensively utilize the special orthogonal group, which we introduce briefly. For more comprehensive details, please refer to \cite{dai2013approximation}.
The special orthogonal group, also called the rotation group, is defined by $SO(n)=\{R \in \bR^{n \times n}: RR^{\top}=R^{\top}R=I, \det R=1 \}$. It consists of all orthogonal matrices of determinant 1 and is a Lie group of dimension $n(n-1)/2$.

For $n \ge 2$, let $\theta \in \bS^{n-1}$ have the spherical coordinates $\phi_{1} \in [-\pi, \pi)$ and $\phi_{2}, \dots, \phi_{n-1} \in [0, \pi]$, i.e.
\begin{equation*}
    \theta=
    \begin{pmatrix}
    \sin \phi_{n-1} \dots \sin \phi_{3} \sin \phi_{2} \sin \phi_{1}  \\
    \sin \phi_{n-1} \dots \sin \phi_{3} \sin \phi_{2} \cos \phi_{1}  \\
    \sin \phi_{n-1} \dots \sin \phi_{3} \cos \phi_{2} \\
    \vdots \\
    \sin \phi_{n-1} \cos \phi_{n-2}\\
    \cos \phi_{n-1}
    \end{pmatrix}.
\end{equation*}
If $\tilde{\theta} \in \bS^{n-2}$ has the spherical coordinates $\phi_{1} \in [-\pi, \pi)$ and $\phi_{2}, \dots, \phi_{n-2} \in [0, \pi]$, then the spherical decomposition reads
\begin{equation}\label{sphe::red}
    \theta=[\sin \phi_{n-1} \tilde{\theta}: \cos \phi_{n-1}].
\end{equation}
For $l=n-1, \dots, 1$, let $R^{l}_{\phi}$ be the rotation in $x_{l}$-$x_{l+1}$ plane with angle $\phi$, i.e.
\begin{equation*}
    R^{l}_{\phi}:=
    \begin{pmatrix}
    \mI_{l-1} & \rvline & \mzero & \rvline & \mzero \\
    \hline
    \mzero & \rvline &
    \begin{matrix}
        \cos \phi & \sin \phi \\
        -\sin \phi & \cos \phi
    \end{matrix}
    & \rvline & \mzero \\
    \hline
    \mzero & \rvline & \mzero & \rvline & \mI_{n-l-1}
    \end{pmatrix} \in SO(n).    
\end{equation*}
% Note that in terms of the spherical coordinates,
% \begin{equation*}
%     R^{l}_{\phi}:(0,\dots,0,\phi_{l}, \phi_{l+1}, \dots, \phi_{n-1}) \mapsto (0,\dots,0,\phi_{l}+\phi, \phi_{l+1}, \dots, \phi_{n-1}).
% \end{equation*}
The Euler rotation matrix for $\theta \in \bS^{n-1}$ is defined by
\begin{equation*}
    E(\theta)=E(\phi_{1},\dots,\phi_{n-1}):=
    R^{n-1}_{-\phi_{n-1}} \dots R^{2}_{-\phi_{2}} R^{1}_{-\phi_{1}} \in SO(n).
\end{equation*}
Note that
\begin{align*}
    E(\theta)^{\top}&=R^{1}_{\phi_{1}} R^{2}_{\phi_{2}} \dots R^{n-1}_{\phi_{n-1}}
    =\begin{pmatrix}
    E(\tilde{\theta})^{\top} & \rvline & \mzero \\
    \hline
    \mzero & \rvline & 1
    \end{pmatrix} \cdot R^{n-1}_{\phi_{n-1}}. 
    % &=
    % \begin{pmatrix}
    % E(\tilde{\theta})^{\top} & \rvline & \mzero \\
    % \hline
    % \mzero & \rvline & 1
    % \end{pmatrix} \cdot
    % \begin{pmatrix}
    % \mI_{n-2} & \rvline & \mzero \\
    % \hline
    % \mzero & \rvline &
    % \begin{matrix}
    %     \cos \phi_{n-1} & \sin \phi_{n-1} \\
    %     -\sin \phi_{n-1} & \cos \phi_{n-1}
    % \end{matrix}
    % \end{pmatrix}.
\end{align*}

\begin{proposition}\label{Euler::repre} 
For any $\theta \in \bS^{n-1}$ and $R \in SO(n)$,
\begin{enumerate}
    \item We have $E(\theta)^{\top} e_{n}=\theta, E(\theta) \theta=e_{n}$. Also, there is a unique $\tilde{R}_{\theta} \in SO(n-1)$ such that
    \begin{equation*}
    E(\theta)=
    \begin{pmatrix}
    \tilde{J} \tilde{R}_{\theta} & \rvline & \mzero \\
    \hline
    \mzero & \rvline & -1
    \end{pmatrix} \cdot E(-\theta).
    \end{equation*}
    \item Let $\vect{r}=R^{\top} e_{n} \in \bS^{n-1}$. Then there is a unique $\tilde{R} \in SO(n-1)$ such that
    \begin{equation*}
        R=
        \begin{pmatrix}
    \tilde{R} & \rvline & \mzero \\
    \hline
    \mzero & \rvline & 1
    \end{pmatrix} \cdot E(\vect{r}).
    \end{equation*}
\end{enumerate}
\end{proposition}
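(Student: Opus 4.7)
The plan is to prove the three claims in turn, exploiting the recursive decomposition
\[
E(\theta)^{\top}=\begin{pmatrix} E(\tilde\theta)^{\top} & \rvline & \mzero \\ \hline \mzero & \rvline & 1 \end{pmatrix} R^{n-1}_{\phi_{n-1}}
\]
stated just before the proposition, plus the elementary fact that an orthogonal matrix that fixes (or negates) $e_{n}$ must be block-diagonal in its last row and column.

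For part (1), I will establish $E(\theta)^{\top} e_{n}=\theta$ by induction on $n$. The base case $n=2$ is a one-line check from the definition of $R^{1}_{-\phi_{1}}$. For the inductive step, I first compute $R^{n-1}_{\phi_{n-1}} e_{n} = \sin\phi_{n-1}\, e_{n-1} + \cos\phi_{n-1}\, e_{n}$ from the $2\times2$ lower-right block. Applying the block form of $E(\theta)^{\top}$ and using the inductive hypothesis $E(\tilde\theta)^{\top} e_{n-1} = \tilde\theta$ then yields $[\sin\phi_{n-1}\tilde\theta : \cos\phi_{n-1}]=\theta$ by the spherical decomposition \eqref{sphe::red}. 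The companion identity $E(\theta)\theta=e_{n}$ is immediate, since $E(\theta)\in SO(n)$ is orthogonal.

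For the second half of part (1), set $M:=E(\theta)\,E(-\theta)^{\top}$. Using the identity just proved, $E(-\theta)^{\top}e_{n}=-\theta$, so $Me_{n}=E(\theta)(-\theta)=-E(\theta)\theta=-e_{n}$. Since $M$ is orthogonal, having the last column equal to $-e_{n}$ forces every other column to have zero last entry, and hence the last row equals $(0,\dots,0,-1)$. Thus $M=\left(\begin{smallmatrix} A & \mzero \\ \mzero & -1\end{smallmatrix}\right)$ with $A\in O(n-1)$, and $\det M=1$ forces $\det A=-1$. Writing $A=\tilde J\tilde R_{\theta}$ gives $\tilde R_{\theta}=\tilde J A\in SO(n-1)$, uniquely determined. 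Right-multiplying $M$ by $E(-\theta)$ recovers the desired equation.

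For part (2), the argument is structurally identical. Set $M:=R\cdot E(\vect r)^{\top}$ where $\vect r=R^{\top}e_{n}$. Using part (1), $E(\vect r)^{\top}e_{n}=\vect r$, so $Me_{n}=R\vect r=RR^{\top}e_{n}=e_{n}$. The orthogonality argument above then forces $M=\left(\begin{smallmatrix}\tilde R & \mzero \\ \mzero & 1\end{smallmatrix}\right)$ with $\tilde R\in O(n-1)$, and $\det M=1$ gives $\tilde R\in SO(n-1)$. Uniqueness of $\tilde R$ follows by cancelling $E(\vect r)$ on the right in any two such decompositions. The only mild pitfall will be verifying carefully that the orthogonality of $M$ combined with $Me_{n}=\pm e_{n}$ truly forces the last row to be $(\mzero,\pm1)$; this is the one step worth writing out explicitly (via orthonormality of the columns of $M$), but it presents no real obstacle.
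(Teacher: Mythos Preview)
Your proposal is correct and follows essentially the same approach as the paper: both arguments establish $E(\theta)^{\top}e_{n}=\theta$ (you by induction on $n$, the paper by tracking the spherical-coordinate action of each $R^{l}_{\phi_{l}}$, which is the same computation), and both then observe that the relevant product $E(\theta)E(-\theta)^{\top}$ (resp.\ $R\,E(\vect r)^{\top}$) sends $e_{n}$ to $\pm e_{n}$ and invoke orthogonality to force the block-diagonal form. The only cosmetic difference is that the paper premultiplies by $J\tilde{J}_{+}$ to reduce the $-e_{n}$ case to the $+e_{n}$ case before reading off the block, whereas you handle $Me_{n}=-e_{n}$ directly; both routes are equally valid.
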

\begin{proof}
Observe that in terms of the spherical coordinates,
\begin{equation*}
   E(\theta)^{\top}=R^{1}_{\phi_{1}} R^{2}_{\phi_{2}} \dots R^{n-1}_{\phi_{n-1}}: (0,\dots,0) \mapsto 
   (0,\dots,0,\phi_{n-1}) \mapsto \dots \mapsto
   (\phi_{1},\dots,\phi_{n-1}),
\end{equation*}
i.e. $E(\theta)^{\top} e_{n}=\theta$, and since $E(\theta)^{-1}=E(\theta)^{\top}$, we get $E(\theta) \theta=e_{n}$. 

Let 
\begin{equation*}
    \tilde{J}_{+}:=
        \begin{pmatrix}
    \tilde{J} & \rvline & \mzero \\
    \hline
    \mzero & \rvline & 1
    \end{pmatrix}.
\end{equation*}
Then, $J \tilde{J}_{+} E(\theta) \cdot E(-\theta)^{\top} e_{n}=- J \tilde{J}_{+} E(\theta) \theta= - J \tilde{J}_{+} e_{n} =e_{n}$, thus $J \tilde{J}_{+} E(\theta) \cdot E(-\theta)^{\top}$ is a rotation matrix that leaves $e_{n}$ fixed, so there is a unique corresponding $\tilde{R}_{\theta} \in SO(n-1)$ such that
\begin{equation*}
   J \tilde{J}_{+} E(\theta) \cdot E(-\theta)^{\top}=
   \begin{pmatrix}
   \tilde{R}_{\theta} & \rvline & \mzero \\
   \hline
   \mzero & \rvline & 1
   \end{pmatrix}
   \, \Longleftrightarrow \,
   E(\theta)=
    % \tilde{J}_{+} J \cdot
    % \begin{pmatrix}
    % \tilde{R}_{\theta} & \rvline & \mzero \\
    % \hline
    % \mzero & \rvline & 1
    % \end{pmatrix} \cdot E(-\theta)=
    \begin{pmatrix}
    \tilde{J} \tilde{R}_{\theta} & \rvline & \mzero \\
    \hline
    \mzero & \rvline & -1
    \end{pmatrix} \cdot E(-\theta).
\end{equation*}
Given $R \in SO(n)$, note that $R \cdot E(\vect{r})^{\top} e_{n}=R \vect{r}=R R^{\top} e_{n}=e_{n}$,
$R \cdot E(\vect{r})^{\top}$ is a rotation matrix that leaves $e_{n}$ fixed, and there is a unique corresponding $\tilde{R} \in SO(n-1)$ such that
\begin{equation*}
   R \cdot E(\vect{r})^{\top}=
   \begin{pmatrix}
   \tilde{R} & \rvline & \mzero \\
   \hline
   \mzero & \rvline & 1
   \end{pmatrix}
   \quad \Longleftrightarrow \quad
   R=\begin{pmatrix}
   \tilde{R} & \rvline & \mzero \\
   \hline
   \mzero & \rvline & 1
   \end{pmatrix} \cdot
   E(\vect{r}).
\end{equation*}
\end{proof}

For $\tilde{R} \in SO(n-1)$, let $\tilde{R}_{+} \in SO(n)$ denote a unique extension that leaves $e_{n}$ fixed:
\begin{equation}\label{rot::exten}
    \tilde{R}_{+}:=
        \begin{pmatrix}
    \tilde{R} & \rvline & \mzero \\
    \hline
    \mzero & \rvline & 1
    \end{pmatrix}.
\end{equation}

Note that $(\tilde{R}_{+}) \tilde{} =\tilde{R}$, and under the equivalence relation on $SO(n)$ as in \eqref{rot::equiv}, $R \sim E(\vect{r})$ and $\tilde{R}_{+} \sim I$.
\cref{Euler::repre} indicates that the left action of $G=SO(n)$ on $\bS^{n-1}$ is transitive, i.e. for any $\theta_{1}, \theta_{2} \in \bS^{n-1}$, there is some $R \in SO(n)$ such that $\theta_{1}=R \theta_{2}$ (choose $R=E(\theta_{1})^{\top} E(\theta_{2})$). The following is straightforward:

\begin{corollary}\label{SO::stab}
For a fixed $\theta \in \bS^{n-1}$, the stabilizer subgroup of $G=SO(n)$ with respect to $\theta$ is given by
\begin{equation*}
   G_{\theta} := \left\{R \in SO(n) : R \theta = \theta \right\} = E(\theta)^{\top} G_{e_{n}} E(\theta), 
\end{equation*}
where $G_{e_{n}} = \{ \tilde{R}_{+} : \tilde{R} \in SO(n-1) \}$.
\end{corollary}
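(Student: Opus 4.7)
The plan is to split the statement into two independent claims and prove each by a direct computation, using only the Euler decomposition already established in Proposition \ref{Euler::repre}. The first claim is the explicit description of the stabilizer of $e_n$, namely $G_{e_n} = \{\tilde R_+ : \tilde R \in SO(n-1)\}$. The second is the conjugation identity $G_\theta = E(\theta)^\top G_{e_n} E(\theta)$. Once the first claim is in place, the second follows purely from the transitivity of $SO(n)$ on $\bS^{n-1}$ via Euler matrices.

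For the first claim, I would argue as follows. If $R \in G_{e_n}$, then $Re_n = e_n$ says the last column of $R$ equals $e_n$. Orthogonality of $R$ (together with the column condition) forces the last row to be $e_n^\top$, so $R$ has block form $R = \tilde R_+$ for a unique $\tilde R \in O(n-1)$ (cf. the notation \eqref{rot::exten}). Then $1 = \det R = \det \tilde R$ gives $\tilde R \in SO(n-1)$. The converse is immediate: every $\tilde R_+$ with $\tilde R \in SO(n-1)$ belongs to $SO(n)$ and fixes $e_n$.

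For the second claim, I would use the two identities $E(\theta)^\top e_n = \theta$ and $E(\theta)\theta = e_n$ from Proposition \ref{Euler::repre}(1). For any $R \in SO(n)$,
\begin{equation*}
R\theta = \theta \ \iff\ RE(\theta)^\top e_n = E(\theta)^\top e_n \ \iff\ E(\theta)RE(\theta)^\top e_n = e_n,
\end{equation*}
which says precisely that $E(\theta) R E(\theta)^\top \in G_{e_n}$, i.e. $R \in E(\theta)^\top G_{e_n} E(\theta)$. This gives both inclusions simultaneously, and inserting the description of $G_{e_n}$ from the first step completes the proof.

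There is no genuine obstacle here: the statement is essentially a bookkeeping consequence of the fact that the Euler matrix $E(\theta)$ carries $\theta$ to $e_n$, combined with the elementary block structure of orthogonal matrices fixing a coordinate vector. The only point requiring minor care is verifying that the off-diagonal blocks must vanish, which is a one-line consequence of orthogonality (the inner product of the fixed last column with any other column must be zero).
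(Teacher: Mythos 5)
Your proof is correct and follows the only natural route, which is exactly what the paper has in mind when it labels the corollary "straightforward": the block-structure characterization of rotations fixing $e_{n}$ (already used implicitly in the proof of \cref{Euler::repre}) together with the identities $E(\theta)^{\top}e_{n}=\theta$ and $E(\theta)\theta=e_{n}$. Nothing is missing; your conjugation equivalence gives both inclusions at once, matching the paper's intended argument.
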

% \begin{proof}
% It is straightforward that $G_{e_{n}} = \{ \tilde{R}_{+} : \tilde{R} \in SO(n-1) \}$. Since $E(\theta)^{\top} e_{n}=\theta$, it follows directly that $G_{\theta}=E(\theta)^{\top} G_{e_{n}} E(\theta)$.
% \end{proof}

\begin{proposition}\label{SO::inte}
For any $\vect{z} \in \bR^{n}$ and a generic function $f$, we have
\begin{equation}\label{SO2sphe::inte}
    \int_{SO(n)} f(R \vect{z}) \rd R = \frac{1}{|\bS^{n-1}|} \int_{\bS^{n-1}} f(|\vect{z}| \theta) \rd \theta.
\end{equation}
Consequently, we obtain
\begin{equation}\label{SOR2R::inte}
    \int_{SO(n)} \int_{\bR^{n-1}} f(R^{\top} [\vect{x}:0]) \rd \vect{x} \rd R
    =\frac{|\bS^{n-2}|}{|\bS^{n-1}|} \int_{\bR^{n}}  |\vect{z}|^{-1} f(\vect{z}) \rd \vect{z},
\end{equation}
\begin{equation}\label{SOB2B::inte}
    \int_{SO(n)} \int_{\bB^{n-1}} f(R^{\top} [\vect{x}:0]) \rd \vect{x} \rd R
    =\frac{|\bS^{n-2}|}{|\bS^{n-1}|} \int_{\bB^{n}}  |\vect{z}|^{-1} f(\vect{z}) \rd \vect{z}.
\end{equation}
\end{proposition}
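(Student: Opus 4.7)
The plan is to prove \eqref{SO2sphe::inte} first and then derive \eqref{SOR2R::inte} and \eqref{SOB2B::inte} by applying it pointwise in $\vect{x}$, followed by a change of variables between $\bR^{n-1}$-polar and $\bR^{n}$-polar coordinates.

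For \eqref{SO2sphe::inte}, I would argue via Haar measure. Fix $\vect{z} \in \bR^{n}$; the case $\vect{z}=0$ is trivial, so assume $\vect{z}\neq 0$. Write $\vect{z}=|\vect{z}|\theta_{\vect{z}}$ with $\theta_{\vect{z}} \in \bS^{n-1}$. By \cref{Euler::repre}, the action of $SO(n)$ on $\bS^{n-1}$ is transitive with stabilizer of $\theta_{\vect{z}}$ conjugate to $\{\tilde{R}_{+}:\tilde{R}\in SO(n-1)\}$ (see \cref{SO::stab}). Hence the map $\Pi:SO(n)\to \bS^{n-1}$, $R\mapsto R\theta_{\vect{z}}$, intertwines the left-translation action with the natural $SO(n)$-action on $\bS^{n-1}$. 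The push-forward of the (bi-invariant, normalized) Haar measure $\rd R$ under $\Pi$ is therefore an $SO(n)$-invariant probability measure on $\bS^{n-1}$, and by uniqueness of the uniform measure this pushforward equals $\rd\theta/|\bS^{n-1}|$. Applying the change-of-variables formula to the function $R\mapsto f(|\vect{z}|\cdot R\theta_{\vect{z}})=f(R\vect{z})$ yields \eqref{SO2sphe::inte}.

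For \eqref{SOR2R::inte}, since $\rd R$ is bi-invariant, the substitution $R\mapsto R^{\top}$ leaves $\int_{SO(n)}$ invariant, so I may first apply \eqref{SO2sphe::inte} with $\vect{z}=[\vect{x}:0]$, giving
\begin{equation*}
    \int_{SO(n)} f(R^{\top}[\vect{x}:0])\rd R = \frac{1}{|\bS^{n-1}|}\int_{\bS^{n-1}} f(|\vect{x}|\theta)\rd\theta.
\end{equation*}
By Fubini (assuming $f$ integrable enough to justify the exchange, or applying it to $|f|$ first), we then integrate in $\vect{x}\in\bR^{n-1}$ and pass to polar coordinates $\vect{x}=r\eta$ with $r\ge 0$, $\eta\in\bS^{n-2}$, $\rd\vect{x}=r^{n-2}\rd r\rd\eta$:
\begin{equation*}
    \int_{SO(n)}\int_{\bR^{n-1}} f(R^{\top}[\vect{x}:0])\rd\vect{x}\rd R = \frac{|\bS^{n-2}|}{|\bS^{n-1}|}\int_{0}^{\infty} r^{n-2} \int_{\bS^{n-1}} f(r\theta)\rd\theta\rd r.
\end{equation*}
Finally, switching to $\bR^{n}$-polar coordinates $\vect{z}=r\theta$ with $\rd\vect{z}=r^{n-1}\rd r\rd\theta$, the right-hand side equals $\frac{|\bS^{n-2}|}{|\bS^{n-1}|}\int_{\bR^{n}} |\vect{z}|^{-1} f(\vect{z})\rd\vect{z}$, which is \eqref{SOR2R::inte}. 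Identity \eqref{SOB2B::inte} is established verbatim, but with the radial integration restricted to $r\in[0,1]$, which corresponds exactly to the domain $\bB^{n-1}$ on the left and $\bB^{n}$ on the right.

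I do not anticipate a substantive obstacle here: the only delicate step is verifying that the push-forward of Haar measure on $SO(n)$ under the orbit map equals the normalized surface measure on $\bS^{n-1}$, which follows from uniqueness of $SO(n)$-invariant probability on the homogeneous space $SO(n)/SO(n-1)\cong\bS^{n-1}$.  Once this is in hand, the remaining manipulations are standard Fubini and polar-coordinate changes.
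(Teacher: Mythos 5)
Your proof is correct and follows essentially the same route as the paper: apply the rotational-averaging identity \eqref{SO2sphe::inte} pointwise in $\vect{x}$, then use Fubini and polar coordinates in $\bR^{n-1}$ and $\bR^{n}$ to obtain \eqref{SOR2R::inte} and \eqref{SOB2B::inte}. The only difference is that you supply a self-contained Haar-measure/homogeneous-space argument for \eqref{SO2sphe::inte} (and explicitly justify replacing $R$ by $R^{\top}$ via inversion-invariance of Haar measure), whereas the paper simply cites equation (2.7) on p.~191 of Natterer for that identity and leaves the transpose step implicit.
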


\begin{proof}
\eqref{SO2sphe::inte} is a direct consequence of (2.7) in p191 of \cite{natterer2001mathematics}. For \eqref{SOR2R::inte}, by \eqref{SO2sphe::inte} and the Fubini theorem,
\begin{align*}
    \int_{SO(n)} \int_{\bR^{n-1}} f(R^{\top} [\vect{x}:0]) \rd \vect{x} \rd R
    =& \frac{1}{|\bS^{n-1}|} \int_{\bS^{n-1}} \int_{\bR^{n-1}}  f(|\vect{x}| \theta) \rd \vect{x} \rd \theta.
\end{align*}
Let $\vect{x}=r \tilde{\theta}$ be the polar decomposition with $r>0 , \tilde{\theta} \in \bS^{n-2}$. Then
\begin{align*}
    \int_{SO(n)} \int_{\bR^{n-1}} f(R^{\top} [\vect{x}:0]) \rd \vect{x} \rd R
    =& \frac{1}{|\bS^{n-1}|} \int_{\bS^{n-1}} \int_{0}^{\infty} \int_{\bS^{n-2}}  r^{n-2} f(r \theta) \rd \tilde{\theta} \rd r \rd \theta \\
    =& \frac{|\bS^{n-2}|}{|\bS^{n-1}|} \int_{\bS^{n-1}} \int_{0}^{\infty}  r^{n-2} f(r \theta) \rd r \rd \theta 
    = \frac{|\bS^{n-2}|}{|\bS^{n-1}|} \int_{\bR^{n}}  |\vect{z}|^{-1} f(\vect{z}) \rd \vect{z}.
\end{align*}
Same goes for \eqref{SOB2B::inte} if we let $r \in (0, 1]$.
\end{proof}

\section{Spherical Harmonics}\label{sec::sphe::har}
Here, we only present a few results on the spherical harmonics, focusing on the RKHS viewpoint that is useful to develop our theory, especially in \cref{sec::kernel::repn}. For further details on the topic, we refer to \cite{dai2013approximation, dai2013spherical}. Let $\cP_{l}(\bR^n)$ be the set of homogeneous polynomials of degree $l$:
\begin{align*}
    &\cP_{l}(\bR^n)=\spann \{z_{1}^{l_{1}}\cdot \dots \cdot z_{n}^{l_{n}}: l_{1}, \dots, l_{n} \ge 0, l_{1}+\dots+l_{n}=l\}, \\
    &\dim \cP_{l}(\bR^n)= \multiset{n}{l}= \binom{n+l-1}{l}.
\end{align*}

Let $\cH_{l}(\bR^n) \subset \cP_{l}(\bR^n)$ be the set of homogeneous harmonic polynomials of degree $l$, i.e. $Y_{l} \in \cP_{l}(\bR^n)$ with $\Laplace Y_{l}=0$. Its restriction on a sphere $Y_{l} \in \cH_{l}(\bS^{n-1})$ is called a spherical harmonic of degree $l$. Any spherical polynomials $Y_{l}, Y_{m}$ of different degree $l \neq m$ is orthogonal, i.e. $\langle Y_{l}, Y_{m} \rangle_{\cL_{2}(\bS^{n-1})}=0$, which yields the following decompositions:
\begin{align*}
    \cP_{l}(\bR^n)&=\cH_{l}(\bR^n) \oplus r^{2} \cP_{l-2}(\bR^n) \\
    &=\cH_{l}(\bR^n) \oplus r^{2} \cH_{l-2}(\bR^n) \oplus r^{4} \cH_{l-4}(\bR^n) \oplus \dots, \\
    %\cP_{l}(\bS^{n-1}) %&=\spann \{Y_{lk}(\theta): k=1, \dots, N(n,l) \} \\
    %&=\cH_{l}(\bS^{n-1}) \oplus \cH_{l-2}(\bS^{n-1}) \oplus \cH_{l-4}(\bS^{n-1}) \oplus \dots, \\
    N(n,l):&=\dim \cH_{l}(\bS^{n-1})=\dim \cH_{l}(\bR^n)=\dim \cP_{l}(\bR^n)-\dim \cP_{l}(\bR^{n-2}) \\
    &=\multiset{n}{l}-\multiset{n}{l-2}=\frac{(2l+n-2)(n+l-3)!}{l! (n-2)!}, \quad N(n,0)=1.
\end{align*}

It is customary to denote by $\{Y_{lk}(\theta): k=1, \dots, N(n,l) \}$ an orthonormal basis (ONB) of $\cH_{l}(\bS^{n-1})$, equipped with the $\cL_{2}$ inner product. Since the Laplace operator $\Laplace$ is $O(n)$-invariant, for any $R \in SO(n)$,  $\{Y_{lk}(R^{\top} \theta): k=1,2,\dots, N(n,l) \}$ again forms an ONB of $\cH_{l}(\bS^{n-1})$. The relationship between these two ONBs can be represented by an orthogonal matrix, called the \textbf{Wigner D-matrix} of degree $l$: for $1 \le k \le N(n,l)$,
\begin{equation}\label{Wigner::repn}
    Y_{lk}(R^{\top}\theta)=\sum_{k^{'}=1}^{N(n,l)} [D_{l}(R)]_{kk^{'}} \cdot Y_{lk^{'}}(\theta).
\end{equation}
In other words, the Wigner D-matrix $D_{l}(R)$ of degree $l$ is the irreducible unitary representation of $\cH_{l}(\bS^{n-1})$. Note that the matrix depends on the choice of an ONB.

As $(\cH_{l}(\bS^{n-1}), \langle \cdot, \cdot \rangle_{\cL_{2}(\bS^{n-1})})$ is a finite-dimensional subspace, it directly follows that the reproducing kernel is given by
\begin{equation*}
    Z_{l}(\theta_{1}, \theta_{2}) = \sum_{k=1}^{N(n,l)} Y_{lk}(\theta_{1}) Y_{lk}(\theta_{2}),
\end{equation*}
which is independent of the choice of ONB. For any spherical harmonic $Y_{l} \in \cH_{l}(\bS^{n-1})$ of degree $l$, the reproducing property reads
\begin{equation}\label{sphe::har::rk}
    \int_{\bS^{n-1}} Z_{l}(\theta_{1}, \theta_{2}) Y_{l}(\theta_{2}) \rd \theta_{2} = Y_{l}(\theta_{1}).
\end{equation}

To achieve an explicit formula for the kernel $Z_{l}$, we introduce a special type of polynomials. Recall that $w(s)=\sqrt{1-s^2}, \, |s| \le 1$. For any $\lambda >-1/2$, the Gegenbauer polynomials $C_{l}^{\lambda}$ of degree $l$ are orthogonal polynomials in $\cL_{2}(w^{2 \lambda-1}):=\cL_{2}([-1,+1], w^{2 \lambda-1})$, the space of functions on $[-1, +1]$ with the weight function $w^{2\lambda-1}$. Here, we \textbf{normalize} by setting $C_{l}^{\lambda}(1)=1$ as in \cite{natterer2001mathematics}. With this scaling, for any $l, k \ge 0$, we have
\begin{align*}
    \langle C_{l}^{\lambda}, C_{k}^{\lambda} \rangle_{\cL_{2}(w^{2 \lambda-1})} 
    = \int_{-1}^{+1} w^{2\lambda-1}(s) C_{l}^{\lambda}(s) C_{k}^{\lambda}(s) \rd s 
    = \frac{2^{2\lambda-1}(\Gamma(\lambda+1/2))^{2} l!}{(l+\lambda) \Gamma(l+2\lambda)} \delta_{lk}
    =:p^{\lambda}_{l} \delta_{lk}.
\end{align*}
By induction, we verify that
\begin{equation*}
    (\Gamma((n-1)/2))^{2} = \frac{2^{-n/2+1} |\bS^{n-1}|}{|\bS^{n-2}|} \Gamma(n-1),
\end{equation*}
hence, when $\lambda=n/2-1$, the normalizing constant can be simplified into
\begin{align}\label{Gegen::norm}
    p^{n/2-1}_{l} 
    = \frac{2^{n-3}(\Gamma((n-1)/2))^{2} l!}{(l+n/2-1) \Gamma(l+n-2)} 
    = \frac{2^{n/2-1} |\bS^{n-1}|}{|\bS^{n-2}|} \frac{l! (n-2)!}{(2l+n-2) (l+n-3)!} 
    = \frac{2^{n/2-1} |\bS^{n-1}|}{N(n,l) |\bS^{n-2}|}.
\end{align}
The reproducing kernel $Z_{l}$ for $\cH_{l}(\bS^{n-1})$ can be expressed in terms of a Gegenbauer polynomial \cite{dai2013approximation}:
\begin{equation}\label{rk::Gegen}
    Z_{l}(\theta_{1}, \theta_{2}) 
    = \frac{1}{p^{n/2-1}_{l} |\bS^{n-2}|} C_{l}^{n/2-1}(\theta_{1}^{\top} \theta_{2}).
    %= \frac{1}{|\bS^{n-1}|} \frac{l+\lambda}{\lambda} \frac{\Gamma(l+2\lambda)}{\Gamma(l+1) \Gamma(2\lambda)} C_{l}^{\lambda}(\theta_{1}^{\top} \theta_{2}) =:z_{l}^{\lambda} C_{l}^{\lambda}(\theta_{1}^{\top} \theta_{2}), \quad \lambda=\frac{n-2}{2}.
\end{equation}
For fixed $\theta_{1}, \theta_{2} \in \bS^{n-1}$, the reproducing property reads
\begin{align}\label{sphe::gene}
    C_{l}^{n/2-1}(\theta_{1}^{\top} \theta_{2}) 
    =& p^{n/2-1}_{l} |\bS^{n-2}| Z_{l}(\theta_{1}, \theta_{2}) \\ \nonumber 
    =& p^{n/2-1}_{l} |\bS^{n-2}| \int_{\bS^{n-1}} Z_{l}(\theta_{1}, \theta_{3}) Z_{l}(\theta_{2}, \theta_{3}) \rd \theta_{3} \\ \nonumber
    =& \frac{1}{p^{n/2-1}_{l} |\bS^{n-2}|} \int_{\bS^{n-1}} C_{l}^{n/2-1}(\theta_{1}^{\top} \theta_{3}) C_{l}^{n/2-1}(\theta_{2}^{\top} \theta_{3}) \rd \theta_{3},    
\end{align}
and the Funk-Hecke formula follows:
\begin{theorem}[Funk-Hecke]\label{Funk::Hecke}
For any $l \ge 0, h \in \cL_{2}(w^{n-3})$ and $\theta_{1} \in \bS^{n-1}$,
\begin{equation*}
    \int_{\bS^{n-1}} h(\theta_{1}^{\top} \theta_{2}) Y_{l}(\theta_{2}) \rd \theta_{2}
    =|\bS^{n-2}| \langle h, C_{l}^{n/2-1} \rangle_{\cL_{2}(w^{n-3})} Y_{l}(\theta_{1}).
\end{equation*}
\end{theorem}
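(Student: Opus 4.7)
The plan is to expand the radial profile $h$ in the complete orthogonal Gegenbauer basis of $\cL_{2}(w^{n-3})$, and reduce the claim term-by-term to the reproducing property \eqref{sphe::har::rk} of the kernel $Z_{l}$. Since both sides of the identity are linear in $h$, it suffices to check equality on a dense subset of $\cL_{2}(w^{n-3})$ and to verify that each side defines a bounded linear functional of $h$.

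First I would rewrite the LHS as a one-dimensional weighted integral. Rotating so that $\theta_{1}= e_{n}$ and introducing spherical coordinates $\theta_{2}=[w(s)\tilde{\theta}: s]$, with $s=\theta_{1}^{\top}\theta_{2}\in[-1,+1]$ and $\tilde{\theta}\in\bS^{n-2}$, the surface measure factors as $\rd\theta_{2}=w(s)^{n-3}\,\rd s\,\rd\tilde{\theta}$, yielding
\begin{equation*}
    \int_{\bS^{n-1}} h(\theta_{1}^{\top}\theta_{2}) Y_{l}(\theta_{2})\,\rd\theta_{2}
    =\int_{-1}^{+1} h(s)\,w(s)^{n-3} G_{l}(s)\,\rd s,
\end{equation*}
where $G_{l}(s):=\int_{\bS^{n-2}} Y_{l}([w(s)\tilde{\theta}: s])\,\rd\tilde{\theta}$ lies in $\cL_{2}(w^{n-3})$ (it is bounded in $s$ because $Y_{l}$ is continuous on the compact sphere). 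Cauchy-Schwarz then shows the LHS is a bounded linear functional of $h\in \cL_{2}(w^{n-3})$; continuity of the RHS is immediate.

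Next I would verify the identity on $h=C_{m}^{n/2-1}$ for each $m\ge 0$. Applying \eqref{sphe::gene}, $C_{m}^{n/2-1}(\theta_{1}^{\top}\theta_{2})=p_{m}^{n/2-1}|\bS^{n-2}| Z_{m}(\theta_{1},\theta_{2})$, so the LHS becomes $p_{m}^{n/2-1}|\bS^{n-2}|\int_{\bS^{n-1}} Z_{m}(\theta_{1},\theta_{2}) Y_{l}(\theta_{2})\,\rd\theta_{2}$. Because $Z_{m}$ is the reproducing kernel of $\cH_{m}(\bS^{n-1})$ and distinct spherical-harmonic subspaces are $\cL_{2}(\bS^{n-1})$-orthogonal, this integral equals $\delta_{ml} Y_{l}(\theta_{1})$, so LHS $=\delta_{ml}\, p_{l}^{n/2-1}|\bS^{n-2}| Y_{l}(\theta_{1})$. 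On the RHS, Gegenbauer orthogonality gives $\langle C_{m}^{n/2-1}, C_{l}^{n/2-1}\rangle_{\cL_{2}(w^{n-3})}=\delta_{ml} p_{l}^{n/2-1}$, producing the same expression. By density and continuity, the identity extends to all $h\in \cL_{2}(w^{n-3})$.

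The only real obstacle is the density of $\{C_{m}^{n/2-1}\}_{m\ge 0}$ in $\cL_{2}(w^{n-3})$. This is standard: for $n\ge 3$ the weight $w^{n-3}$ is continuous and bounded on $[-1,+1]$, so Stone-Weierstrass gives density of polynomials in $C[-1,+1]$, and continuous functions are dense in $\cL_{2}$ of any finite measure; the Gegenbauer family is simply the Gram-Schmidt orthogonalization of monomials in this inner product, hence spans the same dense subspace. The case $n=2$ uses the weight $w^{-1}$, which remains integrable on $[-1,+1]$, so the same finite-measure argument delivers density.
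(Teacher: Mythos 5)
Your proof is correct. The paper itself does not supply an argument for this classical result -- it merely remarks that the formula ``follows'' from the reproducing identity \eqref{sphe::gene} and defers to the cited references -- and your proof is precisely the standard route that remark points to: verify the identity on each Gegenbauer polynomial $C_{m}^{n/2-1}$ via \eqref{rk::Gegen} and the $\cL_{2}(\bS^{n-1})$-orthogonality of the spaces $\cH_{m}(\bS^{n-1})$, then extend by density of polynomials in $\cL_{2}(w^{n-3})$ together with the Cauchy--Schwarz bound obtained from the slice decomposition $\rd\theta_{2}=w(s)^{n-3}\,\rd s\,\rd\tilde{\theta}$. All the details you supply (boundedness of both sides as functionals of $h$, the finite-measure density argument covering $n=2$) are sound, so nothing is missing.
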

% \begin{proof}
% Note that $\{(p^{n/2-1}_{l})^{-1/2} C_{l}^{n/2-1} : \, l=0, 1, \dots \}$ forms an ONB for $\cL_{2}(w^{n-3})$. By \eqref{sphe::har::rk} and \eqref{rk::Gegen},
% \begin{align*}
%     \int_{\bS^{n-1}} h(\theta_{1}^{\top} \theta_{2}) Y_{l}(\theta_{2}) \rd \theta_{2} 
%     =& \frac{\langle h, C_{l}^{n/2-1} \rangle_{\cL_{2}(w^{n-3})}}{p^{\lambda}_{l}} \int_{\bS^{n-1}} C_{l}^{n/2-1}(\theta_{1}^{\top} \theta_{2}) Y_{l}(\theta_{2}) \rd \theta_{2}  \\ \nonumber
%     =& |\bS^{n-2}| \langle h, C_{l}^{n/2-1} \rangle_{\cL_{2}(w^{n-3})} \int_{\bS^{n-1}} Z_{l}(\theta_{1}^{\top} \theta_{2}) Y_{l}(\theta_{2}) \rd \theta_{2} \\ \nonumber
%     =& |\bS^{n-2}| \langle h, C_{l}^{n/2-1} \rangle_{\cL_{2}(w^{n-3})} Y_{l}(\theta_{1}).   
% \end{align*}    
% \end{proof}

\begin{remark}
When $n=2$, $Y_{0}\equiv 1/\sqrt{2 \pi}$ and for $l \in \bN$,
there are $N(2,l)=2$ linearly independent spherical harmonics:
\begin{equation*}
    Y_{l, 1}(\theta)=\frac{\cos l \phi}{\sqrt{\pi}}, \quad Y_{l, 2}(\theta)=\frac{\sin l \phi}{\sqrt{\pi}}, \quad \theta=
    \begin{pmatrix}
    \cos \phi \\ \sin \phi
    \end{pmatrix}.
\end{equation*}
As $\lambda=(n-2)/2=0$ in this case, the corresponding Gegenbauer polynomials become the Chebyshev polynomials of the first kind:
\begin{equation*}
    T_{l}(x)=C_{l}^{0}(x)=\cos(l \arccos x), \quad |x| \le 1. 
\end{equation*}
If $n=3$, then $N(3,l)=2l+1$ for $l>0$, and the corresponding Gegenbauer polynomials become the Legendre polynomials $P_{l}=C_{l}^{1/2}$.
\end{remark}

\section{Additional experimental results}\label{sec:add:exper}
As in \cref{sec::simul}, we have employed a noiseless sinogram acquired from 40 different angles over $[0, \pi)$ (rad). This sinogram was generated using a 10-times intensified, $M \times M$ ($M=200$) pixelated 2D Shepp-Logan phantom. The follwoing results illustrate the influence of two key parameters on reconstruction: \cref{RKHS::recons::wide::ker::img} shows that the tuning parameter $\gamma$ affects the resolution, and \cref{RKHS::recons::no::pen::img} demonstrates that the penalty parameter $\nu > 0$ affects the stability.

\begin{figure}[h]
\centering
\includegraphics[width=0.8\textwidth, height=7cm]
{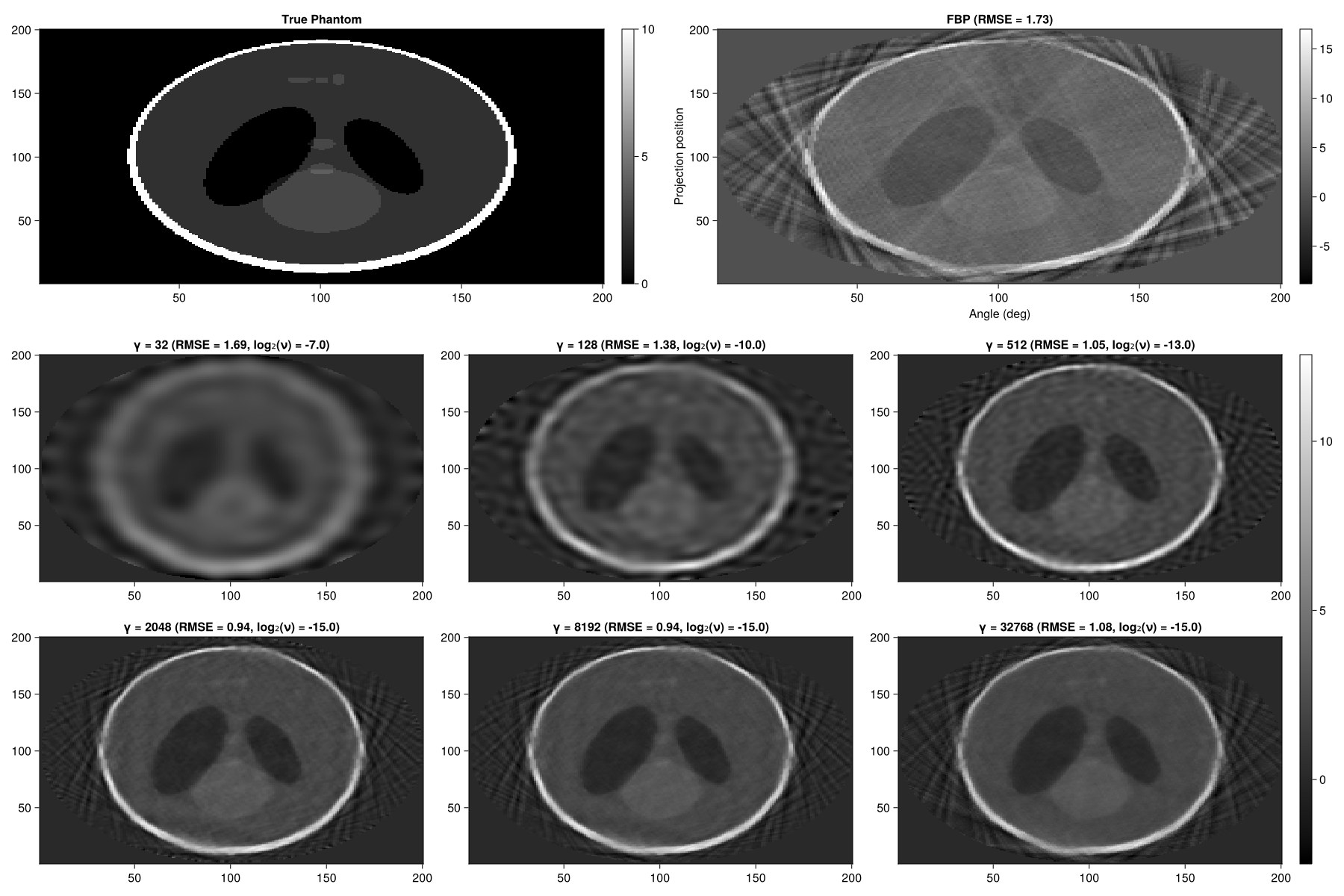}
\caption{Reconstructions from an ideal sinogram with randomly chosen $N=40$ angles over $[-0, \pi)$ (rad), generated by the $M \times M$ ($M=200$) pixelated 2D Shepp-Logan phantom (top left). FBP algorithm is depicted on top right, while kernel reconstructions with six values of $\gamma > 0$, each with optimal values $\nu^{*}$, are depicted at the bottom.}
\label{RKHS::recons::wide::ker::img}
\end{figure}

\begin{figure}[h]
\centering
\includegraphics[width=0.8\textwidth, height=4cm]{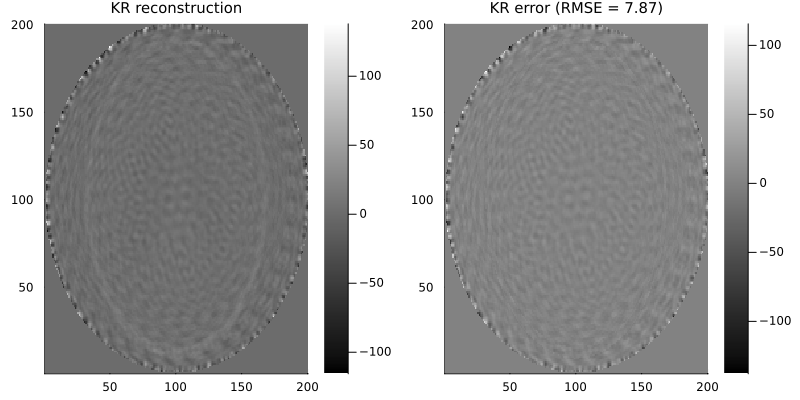}
\caption{Kernel reconstruction of an ideal sinogram (top right) at $N=40$ equidistant angles over $[-0, \pi)$ (rad), generated by the $M \times M$ ($M=200$) pixelated 2D Shepp-Logan phantom (top left). The Gaussian kernel $\gamma = 2000$ in \cref{Xray::MLE} was used for the kernel MLE reconstruction (bottom left) without the penalty.}
\label{RKHS::recons::no::pen::img}
\end{figure}

\section{Proofs of theorems}
\subsection{Proof in Section \ref{sec:prelim}}\label{sec::proof::convention}
\begin{proof}[Proof of \cref{L2::conti}]
For $f \in \cL_{2}(\bR^{n})$, by the Cauchy-Schwarz inequality,
\begin{align*}
    \|\sP_{R} f\|^{2}_{\cL_{2}}
    = &\int_{\bB^{n-1}} \lvert \sP_{R} f(\vect{x}) \rvert^{2} \rd \vect{x} 
    = \int_{\bB^{n-1}} \bigg\lvert \int_{I_{\|\vect{x}\|}} f(R^{\top} [\vect{x}:z]) \rd z \bigg\rvert^{2} \rd \vect{x} \\
    \le & \int_{\bB^{n-1}} 2 W(\vect{x}) \int_{I_{\|\vect{x}\|}} \lvert f(R^{\top} [\vect{x}:z]) \rvert^{2} \rd z \rd \vect{x} \\
    \le & 2 \int_{\bB^{n-1}} \int_{I_{\|\vect{x}\|}} \lvert f(R^{\top} [\vect{x}:z]) \rvert^{2} \rd z \rd \vect{x} 
    = 2 \int_{\bB^{n}} \lvert f(\vect{z}) \rvert^{2} \rd \vect{z}=2 \|f\|^{2}_{\cL_{2}},
\end{align*}
where we have put $\vect{z}=R^{\top} [\vect{x}:z]$ so that $\rd \vect{z} =\rd \vect{x} \rd z$. Similarly,
\begin{align*}
    \|\sP f\|^{2}_{\cL_{2}}
    = \int_{SO(n)} \int_{\bB^{n-1}} \lvert \sP f(R, \vect{x}) \rvert^{2} \rd \vect{x} \rd R 
    \le \int_{SO(n)} 2 \|f\|^{2}_{\cL_{2}} \rd R 
    = 2 \|f\|^{2}_{\cL_{2}}.
\end{align*}
To calculate the adjoint operator, let $f \in \cL_{2}(\bB^{n})$ and $g \in \cL_{2}(\bB^{n-1})$. Then
\begin{align*}
   \langle f, \sP_{R}^{*} g \rangle_{\cL_{2}}
   =&\langle \sP_{R} f, g \rangle_{\cL_{2}}
   =\int_{\bB^{n-1}} \sP_{R} f(\vect{x}) g(\vect{x}) \rd \vect{x} \\
   =& \int_{\bB^{n-1}} \int_{I_{\|\vect{x}\|}} f(R^{\top} [\vect{x}:z]) g(\vect{x}) \rd z \rd \vect{x}
   =\int_{\bB^{n}} f(\vect{z}) g(\mP_{R}(\vect{z})) \rd \vect{z},
\end{align*}
where we have put $\vect{z}=R^{\top} [\vect{x}:z]$ so that $\vect{x}=\mP_{R}(\vect{z})$.
Similarly, for $g \in \cL_{2}(SO(n) \times \bB^{n-1})$,
\begin{align*}
   \langle f, \sP^{*} g \rangle_{\cL_{2}}
   =\langle \sP_{R} f, g \rangle_{\cL_{2}}
   =&\int_{SO(n)} \int_{\bB^{n-1}} \sP_{R} f(\vect{x}) g(R, \vect{x}) \rd \vect{x} \rd R\\
   =&\int_{SO(n)} \int_{\bB^{n-1}} \int_{I_{\|\vect{x}\|}} f(R^{\top} [\vect{x}:z]) g(R, \vect{x}) \rd z \rd \vect{x} \rd R \\
   =&\int_{\bB^{n}} f(\vect{z}) \left( \int_{SO(n)} g(R, \mP_{R}(\vect{z})) \rd R \right) \rd \vect{z}.
\end{align*}
\end{proof}

\begin{proof}[Proof of \cref{Fou::slice}]
For any $\eta \in \bR^{n-1}$,
\begin{align*}
    \widehat{\sP_{R} f}(\eta) 
    =& (2 \pi)^{-(n-1)/2} \int_{\bR^{n-1}} e^{-\imath \vect{x}^{\top} \eta} \sP_{R} f(\vect{x}) \rd \vect{x} \\    
    =& (2 \pi)^{-(n-1)/2} \int_{\bR^{n-1}} e^{-\imath \vect{x}^{\top} \eta} \int_{-\infty}^{+\infty} f(R^{\top} [\vect{x}:z]) \rd z \rd \vect{x} \\    
    =& (2 \pi)^{-(n-1)/2} \int_{\bR^{n}} e^{-\imath \vect{z}^{\top} R^{\top}[\eta:0]} f(\vect{z}) \rd \vect{z}
    = \sqrt{2 \pi} \hat{f}(R^{\top} [\eta:0]), %= \sqrt{2 \pi} \hat{f}(\mP_{R}^{*}(\eta)),
\end{align*}
where we have put $\vect{z}=R^{\top} [\vect{x}:z]$. 
\end{proof}

\begin{proof}[Proof of \cref{inv::for}]
By the Fourier inversion formula and \cref{SO::inte},
\begin{align*}
    \sI^{\alpha} f(\vect{z})
    =& (2 \pi)^{-n/2} \int_{\bR^{n}} e^{\imath \vect{z}^{\top} \xi} |\xi|^{-\alpha} \hat{f}(\xi) \rd \xi \\
    =& (2 \pi)^{-n/2} \frac{|\bS^{n-1}|}{|\bS^{n-2}|} \int_{SO(n)} \int_{\bR^{n-1}} e^{\imath \vect{z}^{\top} R^{\top} [\eta:0]} |\eta|^{1-\alpha} \hat{f}(R^{\top} [\eta:0]) \rd \eta \rd R.
\end{align*}
Note that $\vect{z}^{\top} R^{\top} [\eta:0]=(R \vect{z})^{\top} [\eta:0]= \mP_{R}(\vect{z})^{\top} \eta$. Thus, by \cref{SO::inte,Fou::slice,L2::conti},
\begin{align*}
    \sI^{\alpha} f(\vect{z})
    =& (2 \pi)^{-(n+1)/2} \frac{|\bS^{n-1}|}{|\bS^{n-2}|} \int_{SO(n)} \int_{\bR^{n-1}} e^{\imath \mP_{R}(\vect{z})^{\top} \eta} |\eta|^{1-\alpha} \widehat{\sP_{R} f}(\eta) \rd \eta \rd R \\
    =& \frac{|\bS^{n-1}|}{2 \pi |\bS^{n-2}|} \int_{SO(n)} |\eta|^{1-\alpha} (\widehat{\sP_{R} f}) \check{} (\mP_{R}(\vect{z})) \rd R \\
    =& \frac{|\bS^{n-1}|}{2 \pi |\bS^{n-2}|} \int_{SO(n)} \sI^{\alpha-1} \sP f(R, \mP_{R}(\vect{z})) \rd R 
    = \frac{|\bS^{n-1}|}{2 \pi |\bS^{n-2}|} \sP^{*} \sI^{\alpha-1} \sP f (\vect{z}).
\end{align*}
\end{proof}

\subsection{Proofs in Section \ref{sec::RKHS}}\label{sec:proof:RKHS}
\begin{proof}[Proof of \cref{push::kernel}]
Given distinct points $\vect{x}_{i} \in \bB^{n-1}$ and given $\alpha_{i} \in \bC$ for $1 \le i \le M$,
\begin{align*}
    \sum_{i,j=1}^{M} \overline{\alpha_{i}} \alpha_{j} &\tilde{K}^{R}(\vect{x}_{i},\vect{x}_{j})
    = \sum_{i,j=1}^{M} \overline{\alpha_{i}} \alpha_{j} \int_{I_{\|\vect{x}_{j}\|}} \int_{I_{\|\vect{x}_{i}\|}} K(R^{\top} [\vect{x}_{i}:z_{1}], R^{\top} [\vect{x}_{j}:z_{2}]) \rd z_{1} \rd z_{2} \\
    =&\int_{-1}^{+1} \int_{-1}^{+1} \left( \sum_{i,j=1}^{M} \overline{\alpha_{i}} \alpha_{j} K(R^{\top} [\vect{x}_{i}:z_{1}], R^{\top} [\vect{x}_{j}:z_{2}]) \right) \rd z_{1} \rd z_{2} \ge 0,
\end{align*}
since the integrand is always nonnegative, thus $\tilde{K}^{R}$ is also a kernel. Also, equality holds if and only if the integrand is always 0. Thus, if $\vect{x}_{1}, \vect{x}_{2}, \dots, \vect{x}_{M} \in (\bB^{n-1})^{\circ}$ and $\tilde{K}^{R}$ is strictly p.d., then equality holds if and only if $\alpha_{1}=\dots=\alpha_{M}=0$. This entails that the restriction of $\tilde{K}^{R}$ to $(\bB^{n-1})^{\circ}$ is strictly p.d, provided that $K$ is strictly p.d.

Let $K: \bB^{n} \times \bB^{n} \rightarrow \bR$ be continuous. Then, it is uniformly continuous due to the compactness of $\bB^{n}$, so for any $\varepsilon>0$, there is some $\delta>0$ such that $\|(\vect{z}_{1}, \vect{z}_{2})-(\vect{z}_{1}^{'}, \vect{z}_{2}^{'})\|<\delta$ implies $|K(\vect{z}_{1}, \vect{z}_{2})-K(\vect{z}_{1}^{'}, \vect{z}_{2}^{'})|<\varepsilon$. Then, whenever $\|(\vect{x}_{1}, \vect{x}_{2})-(\vect{x}_{1}^{'}, \vect{x}_{2}^{'})\|<\delta$, we obtain 
\begin{align*}
    &\left|\tilde{K}^{R}(\vect{x}_{1}, \vect{x}_{2})-\tilde{K}^{R}(\vect{x}_{1}^{'}, \vect{x}_{2}^{'})\right| \\
    &\le \int_{-1}^{+1} \int_{-1}^{+1} \left| K(R^{\top} [\vect{x}_{1}:z_{1}], R^{\top} [\vect{x}_{2}:z_{2}])-K(R^{\top} [\vect{x}_{1}^{'}:z_{1}], R^{\top} [\vect{x}_{2}^{'}:z_{2}]) \right| \rd z_{1} \rd z_{2} 
    \le 4\varepsilon,
\end{align*}
hence $\tilde{K}^{R}$ is also uniformly continuous.
\end{proof}

%\subsection{Proof of Theorem \ref{rkhs::image::re}} \label{sec:appendix}
To prove our central theorem (\cref{rkhs::image::re}) that constitutes our framework, we use the following lemma, of which the proof can be found in Theorem 3.11 of \cite{paulsen2016introduction}.

\begin{lemma}\label{rkhs::tfae}
Let $\bH(K)$ be the RKHS on $E$ with a reproducing kernel $K$. Then $f \in \bH(K)$ if and only if there exists a constant $c \ge 0$ such that $c^{2} K(x,y)-f(x)\overline{f(y)}$ is a kernel function. Moreover, if $f \in \bH(K)$, then $\|f\|_{\bH(K)}$ is the least $c$ that satisfies the statement above.
\end{lemma}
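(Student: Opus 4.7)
The plan is to prove both implications with the same candidate witness: the constant $c = \|f\|_{\bH(K)}$. For the forward direction, given $f \in \bH(K)$ I would exhibit $\|f\|_{\bH(K)}$ as a valid $c$; for the reverse direction, given any valid $c$ I would construct $f$ as the Riesz representer of a suitable bounded linear functional on $\bH(K)$, and read off $\|f\|_{\bH(K)} \le c$. The least-$c$ claim then falls out immediately by combining both directions.

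For the forward direction, assume $f \in \bH(K)$ and set $c = \|f\|_{\bH(K)}$. Given any finite $\{x_i\}_{i=1}^M \subset E$ and scalars $\{\alpha_i\} \subset \bC$, I would expand
\begin{align*}
\sum_{i,j=1}^M \overline{\alpha_i} \alpha_j \left[c^2 K(x_i,x_j) - f(x_i)\overline{f(x_j)}\right]
= c^2 \left\| \sum_i \alpha_i k_{x_i} \right\|^2_{\bH(K)} - \left|\sum_i \overline{\alpha_i} f(x_i)\right|^2,
\end{align*}
where the first term uses $\langle k_{x_i}, k_{x_j}\rangle = K(x_j,x_i)$ (adjusting for convention) and the second uses the reproducing property $f(x_i) = \langle f, k_{x_i}\rangle_{\bH(K)}$. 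The Cauchy--Schwarz inequality applied to $f$ and $\sum_i \overline{\alpha_i} k_{x_i}$ yields nonnegativity of the whole expression, so $c^2 K - f(\cdot)\overline{f(\cdot)}$ is a kernel function.

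For the reverse direction — the main technical step — suppose some $c \ge 0$ makes $c^2 K(x,y) - f(x)\overline{f(y)}$ a kernel. Let $V := \mathrm{span}\{k_x : x \in E\}$, which is dense in $\bH(K)$ by the Moore--Aronszajn construction, and define a linear functional $\Lambda : V \to \bC$ by $\Lambda(\sum_i \alpha_i k_{x_i}) := \sum_i \alpha_i \overline{f(x_i)}$. I would first check well-definedness: if $h := \sum_i \alpha_i k_{x_i} = 0$ in $\bH(K)$, then $\|h\|^2 = \sum_{i,j}\overline{\alpha_i}\alpha_j K(x_i,x_j) = 0$, and the hypothesized psd condition forces
\begin{align*}
0 \le \sum_{i,j} \overline{\alpha_i}\alpha_j\left[c^2 K(x_i,x_j) - f(x_i)\overline{f(x_j)}\right] = -\left|\sum_i \overline{\alpha_i}f(x_i)\right|^2,
\end{align*}
hence $\Lambda(h)=0$. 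The very same identity gives $|\Lambda(h)|^2 \le c^2 \|h\|_{\bH(K)}^2$ for every $h \in V$, so $\Lambda$ extends uniquely to a bounded functional on $\overline{V} = \bH(K)$ of norm at most $c$. Invoking the Riesz representation theorem, there is a unique $g \in \bH(K)$ with $\Lambda(h) = \langle h, g\rangle_{\bH(K)}$ for all $h$ and $\|g\|_{\bH(K)} \le c$. Specializing to $h = k_x$ and using the reproducing property gives $\overline{f(x)} = \Lambda(k_x) = \langle k_x, g\rangle_{\bH(K)} = \overline{g(x)}$, so $f = g \in \bH(K)$ with $\|f\|_{\bH(K)} \le c$.

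Finally, for the sharpness claim: the forward direction shows that $c = \|f\|_{\bH(K)}$ is admissible, while the reverse direction shows that any admissible $c$ satisfies $\|f\|_{\bH(K)} \le c$. Hence $\|f\|_{\bH(K)}$ is exactly the infimum (in fact minimum) of admissible values. The only genuine obstacle is the well-definedness / boundedness of $\Lambda$ on $V$, which is where the psd hypothesis must be used in both directions simultaneously; the rest is a standard completion-and-Riesz argument.
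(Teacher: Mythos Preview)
Your argument is correct and is essentially the standard proof of this result: the paper itself does not give an independent proof of \cref{rkhs::tfae} but simply cites Theorem~3.11 of \cite{paulsen2016introduction}, whose proof proceeds exactly as you outline (Cauchy--Schwarz for the forward direction, and a bounded-functional-plus-Riesz argument on the dense span of kernel generators for the converse and the sharpness of $c=\|f\|_{\bH(K)}$).
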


\begin{theorem}\label{rkhs::image} For any $R \in SO(n)$,
\begin{enumerate}
    \item $\sP_{R}: \bH(K) \rightarrow \bH(\tilde{K}^{R})$ is a contractive, surjective, linear map.
    \item For $g \in \bH(\tilde{K}^{R})$, we have $\|g\|_{\bH(\tilde{K}^{R})}= \min \left\{\|f\|_{\bH(K)} : g=\sP_{R}f \right\}$.
\end{enumerate} 
\end{theorem}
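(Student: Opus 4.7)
The plan is to prove linearity, contractivity, surjectivity, and the minimum-norm identity in sequence, leveraging the characterization in \cref{rkhs::tfae} together with the kernel-construction argument already used in \cref{push::kernel}.

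Linearity is immediate from linearity of the line integral. For the contractive bound, I would start with $f\in\bH(K)$ of norm $c:=\|f\|_{\bH(K)}$, so that by \cref{rkhs::tfae} the bivariate function
\[
K^{*}(\vect{z}_{1},\vect{z}_{2}):=c^{2}K(\vect{z}_{1},\vect{z}_{2})-f(\vect{z}_{1})\overline{f(\vect{z}_{2})}
\]
is a positive semi-definite kernel on $\bB^{n}\times\bB^{n}$. Applying the same double-integration procedure used in \eqref{induced::ker} to $K^{*}$ in place of $K$, and invoking \cref{push::kernel} with $K^{*}$, shows that the bivariate function
\[
(\sP_{R}\times\sP_{R})K^{*}(\vect{x}_{1},\vect{x}_{2})=c^{2}\tilde{K}^{R}(\vect{x}_{1},\vect{x}_{2})-\sP_{R}f(\vect{x}_{1})\,\overline{\sP_{R}f(\vect{x}_{2})}
\]
is again a kernel, this time on $\bB^{n-1}\times\bB^{n-1}$. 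Reading \cref{rkhs::tfae} in the other direction then yields $\sP_{R}f\in\bH(\tilde{K}^{R})$ with $\|\sP_{R}f\|_{\bH(\tilde{K}^{R})}\le c=\|f\|_{\bH(K)}$, establishing simultaneously that $\sP_R$ maps $\bH(K)$ into $\bH(\tilde{K}^R)$ and is a contraction.

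For surjectivity, I would construct a candidate right-inverse $T:\bH(\tilde{K}^{R})\to\bH(K)$ by declaring on the induced generators
\[
T\tilde{k}^{R}_{\vect{x}}:=\int_{I_{\|\vect{x}\|}}k_{R^{\top}[\vect{x}:z]}\,\rd z,
\]
interpreted as a Bochner integral in $\bH(K)$, which is well-defined since $\vect{z}\mapsto k_{\vect{z}}$ is norm-bounded on the compact ball $\bB^{n}$. Using the reproducing property inside the Bochner integral one checks
\[
\langle T\tilde{k}^{R}_{\vect{x}_{1}},T\tilde{k}^{R}_{\vect{x}_{2}}\rangle_{\bH(K)}
=\int_{I_{\|\vect{x}_{1}\|}}\!\int_{I_{\|\vect{x}_{2}\|}}\! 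K(R^{\top}[\vect{x}_{1}:z_{1}],R^{\top}[\vect{x}_{2}:z_{2}])\,\rd z_{2}\,\rd z_{1}
=\tilde{K}^{R}(\vect{x}_{1},\vect{x}_{2}),
\]
so $T$ extends by linearity to an inner-product preserving map on $\spann\{\tilde{k}^{R}_{\vect{x}}:\vect{x}\in\bB^{n-1}\}$, and then uniquely to an isometry on $\bH(\tilde{K}^{R})$ by density. A second direct calculation gives $\sP_{R}(T\tilde{k}^{R}_{\vect{x}})(\vect{y})=\tilde{K}^{R}(\vect{y},\vect{x})=\tilde{k}^{R}_{\vect{x}}(\vect{y})$; combining this with the boundedness of $\sP_{R}$ from the previous step and linearity yields $\sP_{R}\circ T=\mathrm{id}_{\bH(\tilde{K}^{R})}$, and surjectivity follows at once.

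The minimum-norm identity then drops out: for any $f\in\bH(K)$ with $\sP_{R}f=g$, contractivity gives $\|g\|_{\bH(\tilde{K}^{R})}\le\|f\|_{\bH(K)}$, and equality is attained by the specific preimage $Tg$, for which $\|Tg\|_{\bH(K)}=\|g\|_{\bH(\tilde{K}^{R})}$ by the isometry property; the minimum is thus both attained and equal to $\|g\|_{\bH(\tilde{K}^{R})}$. The delicate step is the second one: checking that kernel subtraction commutes with the double integration that defines $\tilde{K}^{R}$, i.e.\ that \cref{push::kernel} applies verbatim to $K^{*}$ and not merely to pointwise-nonnegative integrands. Equivalently, one can view the whole argument as the statement that the elements $v_{\vect{x}}:=\int_{I_{\|\vect{x}\|}}k_{R^{\top}[\vect{x}:z]}\,\rd z\in\bH(K)$ have Gram matrix $[\tilde{K}^{R}(\vect{x}_{i},\vect{x}_{j})]$; the regularity needed to make the Bochner integrals and this identification rigorous is supplied by the continuity of $K$ on the compact ball $\bB^{n}$ that is assumed throughout.
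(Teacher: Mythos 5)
Your proposal is correct and follows essentially the same route as the paper: contractivity via \cref{rkhs::tfae} applied to the difference kernel $c^{2}K-f\otimes\overline{f}$ and its double integration, and surjectivity via the Bochner-integral map on induced generators (the paper's $\mG_{R}$, your $T$), shown to be an isometric right inverse by the same Gram-matrix computation and density argument. The "delicate step" you flag is handled in the paper exactly as you suggest, by verifying nonnegativity of the integrand of the quadratic form directly.
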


\begin{proof}
First, we show that if $f \in \bH(K)$, then $\sP_{R}f \in \bH(\tilde{K}^{R})$. Let $\|f\|_{\bH(K)}=c$. By \cref{rkhs::tfae}, $\hat{K}(\vect{z}_{1},\vect{z}_{2}):=c^{2} K(\vect{z}_{1},\vect{z}_{2})-f(\vect{z}_{1})\overline{f(\vect{z}_{2})}$ is a kernel function on $\bB^{n}$. Note that for any $\vect{x}_{1}, \vect{x}_{2}, \dots, \vect{x}_{M} \in \bB^{n-1}$ and $\alpha_{1}, \alpha_{2}, \dots, \alpha_{M} \in \bC$,
\begin{align*}
    &\sum_{i,j=1}^{M} \overline{\alpha_{i}} \alpha_{j} \left(c^{2} \tilde{K}^{R}(\vect{x}_{1},\vect{x}_{2})-\sP_{R}f(\vect{x}_{1}) \overline{\sP_{R}f(\vect{x}_{2})} \right) \\
    =&\sum_{i,j=1}^{M} \overline{\alpha_{i}} \alpha_{j} \int_{I_{\|\vect{x}_{j}\|}} \int_{I_{\|\vect{x}_{i}\|}} \hat{K}(R^{\top} [\vect{x}_{i}:z_{1}], R^{\top} [\vect{x}_{j}:z_{2}]) \rd z_{1} \rd z_{2} \\
    =&\int_{-1}^{+1} \int_{-1}^{+1} \left( \sum_{i,j=1}^{M} \overline{\alpha_{i}} \alpha_{j} \hat{K}(R^{\top} [\vect{x}_{i}:z_{1}], R^{\top} [\vect{x}_{j}:z_{2}]) \right) \rd z_{1} \rd z_{2} \ge 0,
\end{align*}
since the integrand is always non-negative. Hence, $\hat{K}^{R} (\vect{x}_{1},\vect{x}_{2}):=c^{2} \tilde{K}^{R}(\vect{x}_{1},\vect{x}_{2})-\sP_{R}f(\vect{x}_{1}) \overline{\sP_{R}f(\vect{x}_{2})}$ is a kernel on $\bB^{n-1}$, and by \cref{rkhs::tfae} again, we get $\sP_{R}f \in \bH(\tilde{K}^{R})$ with $\|\sP_{R}f\|_{\bH(\tilde{K}^{R})} \le \|f\|_{\bH(K)}$, i.e. $\sP_{R}: \bH(K) \rightarrow \bH(\tilde{K}^{R})$ is a contractive linear map.

To establish surjectivity, let $\bV(\tilde{K}^{R})$ denote the vector space of functions spanned by $\{\tilde{k}^{R}_{\vect{x}}: \vect{x} \in \bB^{n-1} \}$. 
For any $\vect{x}_{1}, \dots, \vect{x}_{M} \in \bB^{n-1}$ and $\alpha_{1}, \dots, \alpha_{M} \in \bF$ $(=\bR \text{ or } \bC)$,
\begin{align*}
    \left\|\sum_{i=1}^{M} \alpha_{i} \tilde{k}^{R}_{\vect{x}_{i}} \right\|^{2}_{\bH(\tilde{K}^{R})}
    % =&\sum_{i,j=1}^{M} \overline{\alpha_{i}} \alpha_{j} \tilde{K}^{R}(\vect{x}_{i}, \vect{x}_{j}) \\
    =&\sum_{i,j=1}^{M} \overline{\alpha_{i}} \alpha_{j} \int_{I_{\|\vect{x}_{j}\|}} \int_{I_{\|\vect{x}_{i}\|}} K(R^{\top} [\vect{x}_{i}:z_{1}], R^{\top} [\vect{x}_{j}:z_{2}]) \rd z_{1} \rd z_{2} \\
    =&\int_{-1}^{+1} \int_{-1}^{+1} \sum_{i,j=1}^{M} \overline{\alpha_{i}} \alpha_{j} \langle k_{R^{\top} [\vect{x}_{i}:z_{1}]}, k_{R^{\top} [\vect{x}_{j}:z_{2}]} \rangle_{\bH(K)} \rd z_{1} \rd z_{2} \\
    =&\left\| \sum_{i=1}^{M} \alpha_{i} \int_{I_{\|\vect{x}_{i}\|}} k_{R^{\top} [\vect{x}_{i}:z]} \rd z \right\|^{2}_{\bH(K)},
\end{align*}
in the sense of the Bochner integration. Therefore, there is a well-defined isometry (injective, but not necessarily surjective) $\mG_{R}: \bV(\tilde{K}^{R}) \rightarrow \bH(K)$ given by
\begin{equation*}
    \mG_{R} \left(\sum_{i=1}^{n} \alpha_{i} \tilde{k}^{R}_{\vect{x}_{i}} \right)
    :=\sum_{i=1}^{n} \alpha_{i} \int_{I_{\|\vect{x}_{i}\|}} k_{R^{\top} [\vect{x}_{i}:z]} \rd z,
\end{equation*}
and it induces a unique isometric extension $\mG_{R}: \bH(\tilde{K}^{R}) \rightarrow \bH(K)$ since $\bV(\tilde{K}^{R})$ forms a dense subspace of $\bH(\tilde{K}^{R})$. Additionally, for any induced generator $\tilde{k}^{R}_{\vect{x}_{1}}$ at $\vect{x}_{1} \in \bB^{n-1}$ and any point $\vect{x}_{2} \in \bB^{n-1}$, 
\begin{align*}
    (\sP_{R} \circ \mG_{R})(\tilde{k}^{R}_{\vect{x}_{1}})(\vect{x}_{2})
    =&\int_{I_{\|\vect{x}_{2}\|}} \mG_{R}(\tilde{k}^{R}_{\vect{x}_{1}})(R^{\top} [\vect{x}_{2}:z_{2}]) \rd z_{2} \\
    =&\int_{I_{\|\vect{x}_{2}\|}} \int_{I_{\|\vect{x}_{1}\|}} k_{R^{\top} [\vect{x}_{1}:z_{1}]} (R^{\top} [\vect{x}_{2}:z_{2}]) \rd z_{1} \rd z_{2} \\
    =&\int_{I_{\|\vect{x}_{2}\|}} \int_{I_{\|\vect{x}_{1}\|}} K(R^{\top} [\vect{x}_{2}:z_{2}], R^{\top} [\vect{x}_{1}:z_{1}]) \rd z_{1} \rd z_{2}
    =\tilde{K}^{R} (\vect{x}_{2}, \vect{x}_{1})= \tilde{k}^{R}_{\vect{x}_{1}}(\vect{x}_{2}),
\end{align*}
because the convergence in RKHS implies pointwise convergence.
Since $\bV(\tilde{K}^{R})$ is dense in $\bH(\tilde{K}^{R})$, the above equation shows that $\sP_{R} \circ \mG_{R}$ is the identity map on $\bH(\tilde{K}^{R})$. %Meanwhile, since $\mG_{R}$ is an isomtery, $\mG_{R}^{*} \circ \mG_{R}$ is also the identity map on $\bH(\tilde{K}^{R})$.
Therefore, for any $g \in \bH(\tilde{K}^{R})$, the function $f=\mG_{R}(g) \in \bH(K)$ satisfies $g=\sP_{R} f$, which shows the surjectivity of $\sP_{R}: \bH(K) \rightarrow \bH(\tilde{K}^{R})$. Finally, we obtain $\|g\|_{\bH(\tilde{K}^{R})}= \|\mG_{R}(g)\|_{\bH(K)}= \min \left\{\|f\|_{\bH(K)} : g=\sP_{R}f \right\}$.
\end{proof}

As can be seen in \cref{rkhs::image}, $\sP_{R}: \bH(K) \rightarrow \bH(\tilde{K}^{R})$ is contractive, meaning its operator norm is at most 1. This naturally prompts the question of its adjoint operator. Interestingly, the isometry $\mG_{R}: \bH(\tilde{K}^{R}) \rightarrow \bH(K)$ serves as the adjoint operator of $\sP_{R}$ for any $R \in SO(n)$. Since the adjoint operator $\mG_{R}$ is isometric, its range forms a closed subspace of $\bH(K)$, and thus itself becomes a Hilbert space.

\begin{proposition}\label{proj::decomp}
Let $R \in SO(n)$ and $\bH_{R}(K):=\{\mG_{R} g: g \in \bH(\tilde{K}^{R}) \}$.
\begin{enumerate}
\item Any $f \in \bH(K)$ can be uniquely expressed as a direct sum:
\begin{equation*}
    f=(\mG_{R} \circ \sP_{R})f \oplus (f-(\mG_{R} \circ \sP_{R})f) \in \bH_{R}(K) \oplus (\bH_{R}(K))^{\perp},
\end{equation*}
thus $f \in \bH_{R}(K)$ if and only if $f=(\mG_{R} \circ \sP_{R})f$.
\item $(\bH_{R}(K))^{\perp}=\cN(\sP_{R}),\ \bH_{R}(K)=(\cN(\sP_{R}))^{\perp}$.
\end{enumerate}
\end{proposition}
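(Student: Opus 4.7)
The plan is to recognize $\mG_R \circ \sP_R$ as the orthogonal projection of $\bH(K)$ onto the closed subspace $\bH_R(K)$, after which both parts of the proposition follow from the standard orthogonal-decomposition picture. The linchpin is the identity $\sP_R \circ \mG_R = I_{\bH(\tilde{K}^R)}$ already established inside the proof of \cref{rkhs::image}, together with the observation that $\mG_R$ is the Hilbert-space adjoint of $\sP_R$.

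First I would note that $\bH_R(K) := \mG_R(\bH(\tilde{K}^R))$ is a closed subspace of $\bH(K)$ because $\mG_R$ is an isometry between Hilbert spaces: the image of a complete space under an isometry is complete, hence closed. Next, I would verify that $\mG_R$ coincides with the adjoint $\sP_R^{*}$. On generators, we have
\begin{align*}
\langle k_{\vect{z}}, \mG_R \tilde{k}^{R}_{\vect{x}} \rangle_{\bH(K)}
&= (\mG_R \tilde{k}^{R}_{\vect{x}})(\vect{z})
= \int_{I_{\|\vect{x}\|}} K(\vect{z}, R^{\top}[\vect{x}:z])\,\rd z \\
&= (\sP_R k_{\vect{z}})(\vect{x})
= \langle \sP_R k_{\vect{z}}, \tilde{k}^{R}_{\vect{x}} \rangle_{\bH(\tilde{K}^{R})},
\end{align*}
using the symmetry of $K$ and the reproducing property. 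Extending linearly to the dense subspaces spanned by generators and passing to limits (which is legal since $\mG_R$ and $\sP_R$ are bounded) gives $\langle f, \mG_R g\rangle_{\bH(K)} = \langle \sP_R f, g\rangle_{\bH(\tilde{K}^{R})}$ for all $f \in \bH(K)$ and $g \in \bH(\tilde{K}^{R})$.

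From this adjoint relation, part (2) follows immediately: $f \in \bH_R(K)^{\perp}$ if and only if $\langle f, \mG_R g\rangle = 0$ for every $g$, which by the identity above is equivalent to $\sP_R f = 0$. So $\bH_R(K)^{\perp} = \cN(\sP_R)$, and since $\bH_R(K)$ is closed, taking the orthogonal complement once more yields $\bH_R(K) = \cN(\sP_R)^{\perp}$. For part (1), the identity $\sP_R \circ \mG_R = I$ gives $(\mG_R \circ \sP_R)^{2} = \mG_R \circ \sP_R$, so $\mG_R \circ \sP_R$ is idempotent with range contained in $\bH_R(K)$; and for any $f \in \bH(K)$,
\begin{equation*}
\sP_R\bigl(f - \mG_R \sP_R f\bigr) = \sP_R f - (\sP_R \circ \mG_R) \sP_R f = 0,
\end{equation*}
so $f - \mG_R \sP_R f \in \cN(\sP_R) = \bH_R(K)^{\perp}$. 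Hence $\mG_R \circ \sP_R$ is precisely the orthogonal projection onto $\bH_R(K)$, giving the unique decomposition $f = \mG_R \sP_R f \oplus (f - \mG_R \sP_R f)$. The equivalence $f \in \bH_R(K) \iff f = \mG_R \sP_R f$ is then automatic: the forward direction follows by writing $f = \mG_R g$ and using $\sP_R \mG_R = I$, and the backward direction is immediate from $\mG_R \sP_R f \in \mathcal{R}(\mG_R) = \bH_R(K)$.

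I expect the only step requiring any real care to be the verification that $\mG_R = \sP_R^{*}$. Everything else is formal Hilbert-space bookkeeping built on top of $\sP_R \mG_R = I$ and the closedness of $\bH_R(K)$. The adjoint verification itself is not deep, but it is where one must combine the explicit Bochner-integral definition of $\mG_R$ on generators with symmetry of $K$ and a density argument, and it is the only place in the proof where the specific structure of the induced kernel $\tilde{K}^{R}$ re-enters.
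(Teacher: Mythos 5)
Your proof is correct, but it takes a genuinely different route from the paper's. The paper proves the orthogonality $\langle \mG_{R}g,\, f-(\mG_{R}\circ\sP_{R})f\rangle_{\bH(K)}=0$ directly by a variational trick: setting $h_{t}=\mG_{R}g+t\bigl(f-(\mG_{R}\circ\sP_{R})f\bigr)$, noting $\sP_{R}h_{t}=g$ for all $t$, and using contractivity of $\sP_{R}$ together with the isometry of $\mG_{R}$ to get $0\le\|h_{t}\|^{2}-\|\sP_{R}h_{t}\|^{2}=2t\langle\cdot,\cdot\rangle+t^{2}\|\cdot\|^{2}$, which forces the cross term to vanish. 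Part (2) then follows from part (1), and the identification $\mG_{R}=\sP_{R}^{*}$ is deferred to the subsequent corollary, where it drops out of the decomposition essentially for free. You invert this logical order: you first verify $\mG_{R}=\sP_{R}^{*}$ on generators (correctly, via the Bochner-integral formula, symmetry of $K$, and a density/boundedness extension), then read off part (2) from $\cR(\mG_{R})^{\perp}=\cN(\sP_{R})$ and part (1) from the idempotence of $\mG_{R}\circ\sP_{R}$. There is no circularity — everything you invoke ($\sP_{R}\mG_{R}=I$, boundedness of $\sP_{R}$, isometry of $\mG_{R}$, the explicit action of $\mG_{R}$ on induced generators) is established in the proof of the preceding theorem. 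Your route is the more standard "identify the adjoint, then project" argument and makes the projection structure transparent at the cost of front-loading the density argument; the paper's route is slicker in that it extracts orthogonality purely from the three abstract properties (contraction, isometry, right inverse) and only afterwards names $\mG_{R}$ as the adjoint.
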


\begin{proof}%[Proof of \cref{proj::decomp}]
\quad
\begin{enumerate}
\item It suffices to show that for any $f \in \bH(K)$, $g \in \bH(\tilde{K}^{R})$, $\langle \mG_{R}g, (f-(\mG_{R} \circ \sP_{R})f) \rangle_{\bH(K)}=0$. Given $t \in \bR$, let $h=\mG_{R}g+t(f-(\mG_{R} \circ \sP_{R})f) \in \bH(K)$. Note that $\sP_{R}h=g$. Since $\sP_{R}: \bH(K) \rightarrow \bH(\tilde{K}^{R})$ is contractive and $\mG_{R}: \bH(\tilde{K}^{R}) \rightarrow \bH(K)$ is isometric, we have for any $t \in \bR$,
\begin{align*}
    0 &\le \|h\|^{2}_{\bH(K)}-\|\sP_{R}h\|^{2}_{\bH(\tilde{K}^{R})} \\
    %&=\|\mG_{R}g\|^{2}_{\bH(K)}-\|g\|^{2}_{\bH(\tilde{K}^{R})}+2t \langle \mG_{R}g, (f-(\mG_{R} \circ \sP_{R})f) \rangle_{\bH(K)} +t^{2} \| (f-(\mG_{R} \circ \sP_{R})f) \|^{2}_{\bH(K)} \\
    &=2t \langle \mG_{R}g, (f-(\mG_{R} \circ \sP_{R})f) \rangle_{\bH(K)} +t^{2} \| (f-(\mG_{R} \circ \sP_{R})f) \|^{2}_{\bH(K)},
\end{align*}
which reveals that $\langle \mG_{R}g, (f-(\mG_{R} \circ \sP_{R})f) \rangle_{\bH(K)}=0$.
\item By (1) and the fact that $\sP_{R} \circ \mG_{R}$ is the identity map on $\bH(\tilde{K}^{R})$,
\begin{equation*}
    f \in \cN(\sP_{R}) \Longleftrightarrow \sP_{R}f=0 \Longleftrightarrow (\mG_{R} \circ \sP_{R})f=0 \Longleftrightarrow f \in (\bH_{R}(K))^{\perp}.
\end{equation*}
Hence, $(\bH_{R}(K))^{\perp}=\cN(\sP_{R})$. Also, we get $\bH_{R}(K)=(\cN(\sP_{R}))^{\perp}$ as $\bH_{R}(K)$ is closed.
\end{enumerate}    
\end{proof}

\begin{corollary}\label{gamma::adj}
For any $R \in SO(n)$, $\sP_{R}^{*}=\mG_{R}: \bH(\tilde{K}^{R}) \rightarrow \bH(K)$. The adjoint operator is uniquely determined by
\begin{equation*}
    \sP_{R}^{*} ( \tilde{k}^{R}_{\vect{x}} )
    = \int_{I_{\|\vect{x}\|}} k_{R^{\top} [\vect{x}:z]} \rd z, \quad \vect{x} \in \bB^{n-1},
\end{equation*}
or equivalently, $\sP_{R}^{*} ( \tilde{k}^{R}_{\vect{x}} )(\vect{z})=\int_{I_{\|\vect{x}\|}} K(\vect{z}, R^{\top} [\vect{x}:z]) \rd z$ for any $\vect{x} \in \bB^{n-1}, \vect{z} \in \bB^{n}$.
\end{corollary}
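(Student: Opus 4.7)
The plan is to verify directly the adjoint relation on generators, and then read off the explicit formula from the definition of $\mG_{R}$ already introduced in the proof of \cref{rkhs::image}. Recall that linear combinations of generators $\{k_{\vect{z}}: \vect{z} \in \bB^{n}\}$ are dense in $\bH(K)$ and $\{\tilde{k}^{R}_{\vect{x}}: \vect{x} \in \bB^{n-1}\}$ are dense in $\bH(\tilde{K}^{R})$, so by sesquilinearity it suffices to check
\begin{equation*}
\langle \sP_{R} k_{\vect{z}}, \tilde{k}^{R}_{\vect{x}} \rangle_{\bH(\tilde{K}^{R})} = \langle k_{\vect{z}}, \mG_{R} \tilde{k}^{R}_{\vect{x}} \rangle_{\bH(K)}
\end{equation*}
for every $\vect{z} \in \bB^{n}$ and $\vect{x} \in \bB^{n-1}$; uniqueness of the adjoint then forces $\sP_{R}^{*} = \mG_{R}$.

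For the left-hand side, the reproducing property in $\bH(\tilde{K}^{R})$ gives $\langle \sP_{R} k_{\vect{z}}, \tilde{k}^{R}_{\vect{x}} \rangle_{\bH(\tilde{K}^{R})} = (\sP_{R} k_{\vect{z}})(\vect{x}) = \int_{I_{\|\vect{x}\|}} K(R^{\top} [\vect{x}:z], \vect{z}) \rd z$. For the right-hand side, the reproducing property in $\bH(K)$ combined with the definition $\mG_{R}(\tilde{k}^{R}_{\vect{x}}) = \int_{I_{\|\vect{x}\|}} k_{R^{\top} [\vect{x}:z]} \rd z$ (as a Bochner integral) and the continuity of the point-evaluation functional $f \mapsto f(\vect{z})$ on $\bH(K)$ (which allows us to pull the evaluation through the integral) yield $\langle k_{\vect{z}}, \mG_{R} \tilde{k}^{R}_{\vect{x}} \rangle_{\bH(K)} = (\mG_{R} \tilde{k}^{R}_{\vect{x}})(\vect{z}) = \int_{I_{\|\vect{x}\|}} K(\vect{z}, R^{\top}[\vect{x}:z]) \rd z$. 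The two expressions coincide by the symmetry $K(\vect{z}_{1},\vect{z}_{2})=K(\vect{z}_{2},\vect{z}_{1})$.

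Once $\sP_{R}^{*} = \mG_{R}$ is established, the first displayed formula in the corollary is precisely the defining action of $\mG_{R}$ on generators. The second (pointwise) expression then follows by taking the inner product with $k_{\vect{z}}$ and invoking the reproducing property on both sides, which is what was already computed above. The identification with $\sP_{R} k_{\vect{z}}(\vect{x})$ stated at the end of the equation is just the Fourier-type symmetry $\sP_{R}^{*}(\tilde{k}^{R}_{\vect{x}})(\vect{z}) = \sP_{R}(k_{\vect{z}})(\vect{x})$, which is a rephrasing of the symmetry of $K$.

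No serious obstacle is anticipated here: the heavy lifting (contractivity of $\sP_{R}$, isometry of $\mG_{R}$, and $\sP_{R}\circ \mG_{R}=I$) has already been done in the proof of \cref{rkhs::image}. The only mild subtlety is justifying the interchange of point-evaluation and Bochner integration used to get the pointwise form of $\mG_{R}\tilde{k}^{R}_{\vect{x}}$, which is immediate because $k_{\vect{z}}$ is the bounded linear functional representing evaluation at $\vect{z}$.
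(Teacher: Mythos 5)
Your proof is correct, but it takes a genuinely different route from the paper's. The paper identifies $\sP_{R}^{*}$ with $\mG_{R}$ by decomposing an arbitrary $f \in \bH(K)$ as $f = \mG_{R}g_{1} \oplus f_{1} \in \bH_{R}(K) \oplus (\bH_{R}(K))^{\perp}$ (using \cref{proj::decomp}), computing $\sP_{R}f = g_{1}$, and then invoking the isometry of $\mG_{R}$ (via polarization) together with the orthogonality $f_{1} \perp \mG_{R}g$ to obtain $\langle \sP_{R}f, g\rangle_{\bH(\tilde{K}^{R})} = \langle f, \mG_{R}g\rangle_{\bH(K)}$ for all $f, g$. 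You instead verify the adjoint identity directly on the generators $k_{\vect{z}}$ and $\tilde{k}^{R}_{\vect{x}}$, where both sides reduce by the reproducing property to the same integral $\int_{I_{\|\vect{x}\|}} K(\vect{z}, R^{\top}[\vect{x}:z])\,\rd z$, and then extend by density. Your approach is more elementary and makes the explicit formula in the corollary the \emph{starting point} rather than a by-product; the paper's approach is more structural and reuses the decomposition machinery it has already built. One small point you should make explicit: extending an identity from the dense spans of generators to all of $\bH(K)$ and $\bH(\tilde{K}^{R})$ requires not just bilinearity but continuity of the form $(f,g) \mapsto \langle \sP_{R}f, g\rangle_{\bH(\tilde{K}^{R})} - \langle f, \mG_{R}g\rangle_{\bH(K)}$; this is supplied by the contractivity of $\sP_{R}$ and the isometry of $\mG_{R}$ established in \cref{rkhs::image}, which you cite only at the end. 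With that remark added, the argument is complete.
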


\begin{proof}
Given $f \in \bH(K)$, there are unique $g_{1}\in \bH(\tilde{K}^{R})$ and $f_{1} \in (\bH_{R}(K))^{\perp}$ such that $f=\mG_{R} g_{1} \oplus f_{1} \in \bH_{R}(K) \oplus (\bH_{R}(K))^{\perp}$.
Then for any $g \in \bH(\tilde{K}^{R})$, by the proof of \cref{rkhs::image} and \cref{proj::decomp},
\begin{align*}
    \langle f, \sP_{R}^{*} g \rangle_{\bH(K)}=\langle \sP_{R}f, g \rangle_{\bH(\tilde{K}^{R})}
    =\langle g_{1}, g \rangle_{\bH(\tilde{K}^{R})}=\langle \mG_{R} g_{1}, \mG_{R} g \rangle_{\bH(K)}
    =\langle f, \mG_{R} g \rangle_{\bH(K)}.
\end{align*}
Since $f \in \bH(K)$ and $g \in \bH(\tilde{K}^{R})$ were chosen arbitrarily, we have $\sP_{R}^{*}=\mG_{R}: \bH(\tilde{K}^{R}) \rightarrow \bH(K)$.
\end{proof}

\cref{rkhs::image}, \cref{proj::decomp}, and \cref{gamma::adj} collectively prove the \cref{rkhs::image::re}.

\begin{proof}[Proof of \cref{repre::proj}]
\begin{enumerate}
\item $\langle \sP_{R}f, k^{R}_{\vect{x}_{k}} \rangle_{\bH(\tilde{K}^{R})}=\sP_{R}f(\vect{x}_{k})=0$ for all $k=1, \dots, M$ if and only if $\sP_{R}f \in (\bV_{\mF}(\tilde{K}^{R}))^{\perp}$, which is equivalent to $f \in \sP^{-1}_{R}((\bV_{\mF}(\tilde{K}^{R}))^{\perp})$.
\item First, note that $(\bV_{\mF}(\tilde{K}^{R}))^{\perp}$ is closed, thus so is $\sP^{-1}_{R}((\bV_{\mF}(\tilde{K}^{R}))^{\perp})$. Now, for any $f \in \bH(K)$, let $f=\sP_{R}^{*} g+f^{'} \in \bH_{R}(K) \oplus \cN(\sP_{R})$ be its decomposition as in \cref{rkhs::image::re}. Then, $f \in \sP^{-1}_{R}((\bV_{\mF}(\tilde{K}^{R}))^{\perp})$ if and only if $g= \sP_{R}f \in (\bV_{\mF}(\tilde{K}^{R}))^{\perp}$. %, i.e. $f \in \sP_{R}^{*}((\bV_{\mF}(\tilde{K}^{R}))^{\perp}) \oplus \cN(\sP_{R})$.
\item From (2) and \cref{rkhs::image::re},
\begin{align*}
    (\sP^{-1}_{R}((\bV_{\mF}(\tilde{K}^{R}))^{\perp}))^{\perp}
    =& [\sP_{R}^{*}((\bV_{\mF}(\tilde{K}^{R}))^{\perp}) \oplus \cN(\sP_{R})]^{\perp} \\
    =& [\sP_{R}^{*}((\bV_{\mF}(\tilde{K}^{R}))^{\perp})]^{\perp} \cap \bH_{R}(K) \\
    =& [\sP_{R}^{*}((\bV_{\mF}(\tilde{K}^{R}))^{\perp})]^{\perp} \cap \sP_{R}^{*}(\bH(K)) = \sP_{R}^{*}(\bV_{\mF}(\tilde{K}^{R})),
\end{align*}
where we have used the fact that $\sP_{R}^{*}: \bH(\tilde{K}^{R}) \rightarrow \bH(K)$ is an isometry for the last equality. Indeed, $\sP_{R}^{*}(\bV_{\mF}(\tilde{K}^{R}))$ is a closed subspace of $\bH(K)$.
\end{enumerate}
\end{proof}

\begin{proof}[Proof of \cref{iso::full::rk}]
The first part of the statement and \eqref{iso::proj} are straightforward from \eqref{eval::proj}. Let us show that if $K: \bB^{n} \times \bB^{n} \rightarrow \bR$ is strictly p.d. and $\mF= \{\vect{x}_{1}, \dots, \vect{x}_{M} \} \subset (\bB^{n-1})^{\circ}$, then $\mW \in \bR^{NM \times NM}$ is also strictly p.d.. Suppose $\ma^{\top} \mW \ma=0$. By \eqref{weight::equiv},
\begin{align*}
    0&=\ma^{\top} \mW \ma
    = \sum_{i,i'=1}^{N} \sum_{j,j'=1}^{M} \alpha_{ij} \alpha_{i'j'} w_{ij, i'j'} \\
    % &= \sum_{i,i'=1}^{N} \sum_{j,j'=1}^{M} \alpha_{ij} \alpha_{i'j'} \int_{I_{\|\vect{x}_{j'}\|}} \int_{I_{\|\vect{x}_{j}\|}} K(R_{i}^{\top} [\vect{x}_{j}:z_{1}], R_{i'}^{\top}  [\vect{x}_{j'}:z_{2}]) \rd z_{1} \rd z_{2} \\
    &=\int_{-1}^{+1} \int_{-1}^{+1} \left( \sum_{i,i'=1}^{N} \sum_{j,j'=1}^{M} \alpha_{ij} \alpha_{i'j'} K(R_{i}^{\top} [\vect{x}_{j}:z_{1}], R_{i'}^{\top}  [\vect{x}_{j'}:z_{2}]) \right) \rd z_{1} \rd z_{2} \ge 0.
\end{align*}
Because the integrand is always nonnegative, equality holds if and only if the integrand is always 0. Since $K$ is strictly p.d and $\|\vect{x}_{j}\|<1$ for all $j=1,\dots, M$, we obtain $\ma=0$, i.e. $\mW$ is strictly p.d..
\end{proof}

\begin{proof}[Proof of \cref{Xray::MLE}]
For (1), note that any solution of \eqref{normal::eq::lse} belongs to the set $\mW^{\dagger} \mY \oplus \cN(\mW)$ since $\mW^{\dagger} \mY= \mW (\mW^{\dagger})^{2} \mY \in \cR(\mW)=(\cN(\mW))^{\perp}$, thus $\hat{\ma}^{0}:=\mW^{\dagger} \mY \in \cR(\mW)$ is the minimizer with the minimum norm. 
Also, by \eqref{space::repre::decomp}, $\hat{f}^{0}=(\cI^{\mR}_{\mF})^{-1} (\mW \hat{\ma}^{0})$ is the minimizer with the minimum norm. For (2), repeat the argument with $\mY$ replaced by $\mW \ma^{0}$.
For (3), $\hat{\ma}^{0} = \mW^{\dagger} \mY \sim N(\mW \mW^{\dagger} \ma^{0}, \sigma^{2} (\mW^{\dagger})^{2})$ since $\mY \sim N(\mW \ma^{0}, \sigma^{2} \mI)$. Also, by the isometry in \eqref{iso::proj}, for any $f \in \bH^{\mR}_{\mF}$,
\begin{align*}
    \text{Cov}(\hat{f}^{0})f &= \bE [(\hat{f}^{0}-\bE[\hat{f}^{0}]) \cdot \langle \hat{f}^{0}-\bE[\hat{f}^{0}], f \rangle_{\bH(K)}] \\
    &= \bE[(\cI^{\mR}_{\mF})^{-1}(\mW (\hat{\ma}^{0}-\mW \mW^{\dagger} \ma^{0})) \cdot \langle (\mW (\hat{\ma}^{0}-\mW \mW^{\dagger} \ma^{0}), \cI^{\mR}_{\mF} f \rangle_{\mW}] \\
    &= \bE[(\cI^{\mR}_{\mF})^{-1} \mW (\hat{\ma}^{0}-\mW \mW^{\dagger} \ma^{0})) \cdot (\hat{\ma}^{0}-\mW \mW^{\dagger} \ma^{0})^{\top} \cI^{\mR}_{\mF} f] \\
    &= \sigma^{2} (\cI^{\mR}_{\mF})^{-1} \circ  \mW^{\dagger} \circ (\cI^{\mR}_{\mF}) f,
\end{align*}
hence $\hat{f}^{0} \sim N(\cP_{\mF}^{\mR} f^{0}, \sigma^{2} (\cI^{\mR}_{\mF})^{-1} \circ  \mW^{\dagger} \circ (\cI^{\mR}_{\mF}))$.
\end{proof}

\begin{proof}[Proof of \cref{tikhonov}]
(1) is a direct consequence of \eqref{normal::eq::tikho}.  (3) can be shown in a similar way as \cref{Xray::MLE}. For (4), note that $\lim_{\nu \downarrow 0} (\mW+\nu \mI)^{-1}=\mW^{\dagger}$ and use the fact that $\cI^{\mR}_{\mF}: \bH^{\mR}_{\mF} \rightarrow \cR(\mW)$ is an isomorphism by \eqref{iso::proj}. Finally, for (2),
\begin{align*}
    \hat{L}^{\nu}(\hat{\ma}^{\nu})&= \|\mY-\mW \hat{\ma}^{\nu}\|^{2}+\nu (\hat{\ma}^{\nu})^{\top} \mW \hat{\ma}^{\nu} \\
    &=\mY^{\top} \left[(\mI-\mW (\mW + \nu \mI)^{-1})^{2}+ \nu \mW (\mW + \nu I)^{-2} \right] \mY \\
    &=\mY^{\top} \left[(\nu \mI)^{2}+ \nu \mW \right] (\mW + \nu \mI)^{-2} \mY 
    =\nu \mY^{\top} (\mW + \nu \mI)^{-1} \mY.
\end{align*}
\end{proof}

\begin{proof}[Proof of \cref{RKHS:HLCC}]
The first part is trivial since 
\begin{equation*}
    \sP \hat{f}^{\nu}(R, \vect{x}) = \langle \sP_{R} \hat{f}^{\nu}, \tilde{k}^{R}_{\vect{x}} \rangle_{\bH(\tilde{K}^{R})} = \langle \sP_{R}^{*} \tilde{k}^{R}_{\vect{x}}, \hat{f}^{\nu}. \rangle_{\bH(K)},
\end{equation*}
For the HLCC, by \eqref{adj::Boch},
\begin{align*}
    \int_{\bB^{n-1}} (\vect{x}^{\top} \vect{y})^{l} \sP \hat{f}^{\nu}(R,\vect{x}) \rd \vect{x} &= \int_{\bB^{n-1}} (\vect{x}^{\top} \vect{y})^{l} \langle \sP_{R}^{*} \tilde{k}^{R}_{\vect{x}}, \hat{f}^{\nu} \rangle_{\bH(K)} \rd \vect{x} \\
    &= \int_{\bB^{n-1}} \int_{I_{\|\vect{x}\|}} (\vect{x}^{\top} \vect{y})^{l} \langle k_{R^{\top} [\vect{x}:z]}, \hat{f}^{\nu} \rangle_{\bH(K)} \rd z \rd \vect{x} \\
    &= \int_{\bB^{n}} (\vect{z}^{\top} R^{\top}[\vect{y}:0])^{l} \langle k_{\vect{z}}, \hat{f}^{\nu} \rangle_{\bH(K)} \rd \vect{z}
    =q_{l}^{\nu}(\mP_{R}^{*}(\vect{y})),
\end{align*}
where we have set $\vect{z}=R^{\top} [\vect{x}:z]$.
\end{proof}

\begin{proof}[Proof of \cref{MSE:Tikho}]
Since $\hat{f}^{\nu} \in \bH^{\mR}_{\mF}$, by \eqref{space::repre::decomp} and \cref{iso::full::rk}, we get
\begin{align*}
    \bE[ & \|\hat{f}^{\nu}-f^{0}\|^{2}_{\bH(K)}] 
    = \bE[ \|\hat{f}^{\nu}-\cP_{\mF}^{\mR} f^{0}\|^{2}_{\bH(K)}] + \|f^{0}-\cP_{\mF}^{\mR} f^{0}\|^{2}_{\bH(K)} \\
    =& \bE[ (\hat{\ma}^{\nu} - \ma^{0})^{\top} \mW (\hat{\ma}^{\nu} - \ma^{0})] + \|f^{0}-\cP_{\mF}^{\mR} f^{0}\|^{2}_{\bH(K)}.    
\end{align*}
Note that
$(\hat{\ma}^{\nu} - \ma^{0}) \sim N( -\nu (\mW+\nu \mI)^{-1} \ma^{0}, \sigma^{2} (\mW+\nu \mI)^{-2})$ from \cref{tikhonov}, thus
\begin{align*}
    & \bE[ \|\hat{f}^{\nu}-f^{0}\|^{2}_{\bH(K)}]  \\
    =& \bE[\hat{\ma}^{\nu} - \ma^{0}]^{\top} \mW \bE[\hat{\ma}^{\nu} - \ma^{0}] + \text{tr} (\mW \text{Cov} [\hat{\ma}^{\nu} - \ma^{0}]) + \|f^{0}-\cP_{\mF}^{\mR} f^{0}\|^{2}_{\bH(K)} \\
    =& \nu^{2} (\ma^{0})^{\top} \mW (\mW + \nu \mI)^{-2} \ma^{0} + \sigma^{2} \text{tr} (\mW (\mW + \nu \mI)^{-2}) + \|f^{0}-\cP_{\mF}^{\mR} f^{0}\|^{2}_{\bH(K)}.    
\end{align*}
\end{proof}

\begin{proof}[Proof of \cref{stable::recons}]
By \eqref{space::repre::decomp} and \cref{iso::full::rk}, we get
\begin{align*}
    \|\hat{f}^{\nu}-f^{0}\|^{2}_{\bH(K)} 
    =& \|\hat{f}^{\nu}-\cP_{\mF}^{\mR} f^{0}\|^{2}_{\bH(K)} + \|f^{0}-\cP_{\mF}^{\mR} f^{0}\|^{2}_{\bH(K)} \\
    =& (\hat{\ma}^{\nu} - \ma^{0})^{\top} \mW (\hat{\ma}^{\nu} - \ma^{0}) + \|f^{0}-\cP_{\mF}^{\mR} f^{0}\|^{2}_{\bH(K)}.    
\end{align*}
Let $\mW = U D U^{\top}$ be the eigendecomposition of the Gram matrix, where $U \in O(NM)$ and $D = \text{diag}[d_{ij}]$. Denote $C(\nu):= (d + 2 \nu)^{-1}$, 
$\vect{v}:=U^{\top} (\mY - \mW \ma^{0}) \in \bR^{NM}$ and $\vect{w}:= U^{\top} \ma^{0} \in \bR^{NM}$. Due to \eqref{normal::eq::tikho}, we obtain
\begin{align*}
    (\hat{\ma}^{\nu} - \ma^{0})^{\top} \mW (\hat{\ma}^{\nu} - \ma^{0}) 
    =& [(\mW + \nu \mI)(\hat{\ma}^{\nu} - \ma^{0})]^{\top} [\mW (\mW + \nu \mI)^{-2}] [(\mW + \nu \mI)(\hat{\ma}^{\nu} - \ma^{0})] \\
    =& [(\mY - \mW \ma^{0}) - \nu \ma^{0}]^{\top} [\mW (\mW + \nu \mI)^{-2}][(\mY - \mW \ma^{0}) - \nu \ma^{0}] \\
    =& [\vect{v}-\nu \vect{w}]^{\top} [D (D + \nu D)^{-2}] [\vect{v}-\nu \vect{w}] 
    = \sum_{i=1}^{N} \sum_{j=1}^{M} \frac{d_{ij}}{(d_{ij}+\nu)^{2}} (v_{ij}- \nu w_{ij})^{2} \\
    \le& C(\nu) \sum_{i=1}^{N} \sum_{j=1}^{M} v_{ij}^{2} + \sum_{i=1}^{N} \sum_{j=1}^{M} d_{ij} w_{ij}^{2} \\
    =& C(\nu) \|\mY - \mW \ma^{0}\|^{2} + (\ma^{0})^{\top} \mW \ma^{0} 
    = C(\nu) \|\mY - \mW \ma^{0}\|^{2} + \|\cP_{\mF}^{\mR} f^{0}\|^{2}_{\bH(K)},    
\end{align*}
where we have used the following quadratic equation for the inequality: for any $v, w \in \bR$ and $d > 0$, it holds that
\begin{align*}
    &(d + \nu)^{2} \left(\frac{v^{2}}{d+2 \nu}  + d w^{2} \right) - d (v- \nu w)^{2} \\
    &= \left[\frac{(d + \nu)^{2}}{d+2 \nu}-d \right] v^{2} + 2 d \nu v w + d[(d+\nu)^{2}-\nu^{2}] w^{2}
    = \frac{[\nu v + d(d+ 2\nu) w]^{2}}{d+2 \nu}  \ge 0,
\end{align*}
with equality if and only if $v = -d(d+2 \nu) w/\nu$. Therefore,
\begin{align*}
    \|\hat{f}^{\nu}-f^{0}\|^{2}_{\bH(K)} 
    \le& C(\nu) \|\mY - \mW \ma^{0}\|^{2} + \|\cP_{\mF}^{\mR} f^{0}\|^{2}_{\bH(K)} + \|f^{0}-\cP_{\mF}^{\mR} f^{0}\|^{2}_{\bH(K)} \\
    =& \|f^{0}\|^{2}_{\bH(K)} + C(\nu) \| \mY - \mW \ma^{0} \|^{2} 
    = \|f^{0}\|^{2}_{\bH(K)} + C(\nu) \sum_{i=1}^{N} \sum_{j=1}^{M} \varepsilon_{ij}^{2}.
\end{align*}
Equality is achieved if $\vect{v} = -c d(d+2 \nu)/\nu \, e_{IJ}, \vect{w} = c \, e_{IJ}$ for any $c \in \bR$ where the index $(I, J)$ is determined by $d_{IJ}=d>0$.  In this case, $\|\cP_{\mF}^{\mR} f^{0}\|^{2}_{\bH(K)} = (\ma^{0})^{\top} \mW \ma^{0} = \vect{w}^{\top} D \vect{w} = d c^{2}$, so we may take $c= \|\cP_{\mF}^{\mR} f^{0}\|_{\bH(K)}/\sqrt{d}$.
\end{proof}

\subsection{Proofs in Section \ref{sec:invker}}
\begin{proof}[Proof of \cref{RIK2OIK}]
Since $O(n) = SO(n) \rtimes \{I, J \}$, it suffices to show that $K(\vect{z}_{1},\vect{z}_{2})= K(J \vect{z}_{1}, J \vect{z}_{2})$ for fixed $\vect{z}_{1},\vect{z}_{2} \in \bB^{n}$. Let $\vect{z}_{i}=r_{i} \theta_{i}$ ($i= 1, 2$) be the polar decomposition with $r_{i} \in [0, 1]$ and $\theta_{i} \in \bS^{n-1}$, respectively. We may assume that $\theta_{1}=e_{n}$, since the left action of $SO(n)$ on $\bS^{n-1}$ is transitive.
%Let $\theta_{1}=e_{n}$. 
Note that $J \vect{z}_{1}= -\vect{z}_{1}$, thus
\begin{equation*}
    K(J \vect{z}_{1}, J \vect{z}_{2})
    = K(E(-e_{n}) J \vect{z}_{1}, E(-e_{n}) J \vect{z}_{2})    
    = K( \vect{z}_{1}, \vect{z}'_{2}),
\end{equation*}
where $\vect{z}'_{2} := E(-e_{n}) J \vect{z}_{2}$. Let $\vect{z}'_{2}=r_{2} \theta '_{2}$ be the polar decomposition with $\theta '_{i} \in \bS^{n-1}$. We obtain
\begin{align*}
    e_{n}^{\top} \vect{z}'_{2} 
    = e_{n}^{\top} E(-e_{n}) J \vect{z}_{2}
    = (E(-e_{n})^{\top} e_{n})^{\top} J \vect{z}_{2}
    = - e_{n}^{\top} J \vect{z}_{2} = e_{n}^{\top} \vect{z}_{2}=: \cos \phi_{n-1},
\end{align*}
for some $\phi_{n-1} \in [0, \pi]$, thus $\vect{z}_{2}, \vect{z}'_{2} \in \{r_{2} [\sin \phi_{n-1} \tilde{\theta}: \cos \phi_{n-1}]: \tilde{\theta} \in \bS^{n-2} \}$.
As the left action of $SO(n-1)$ on $\bS^{n-2}$ is transitive by \cref{Euler::repre} for $n \ge 3$, there is some $\tilde{R} \in SO(n-1)$ such that $\vect{z}_{2}= \tilde{R}_{+} \vect{z}'_{2}$, where $\tilde{R}_{+}$ is defined as in \eqref{rot::exten}. Because  $\tilde{R}_{+} \in G_{e_{n}}$ leaves $\vect{z}_{1}$ fixed by \cref{SO::stab}, we have
\begin{equation*}
    K(J \vect{z}_{1}, J \vect{z}_{2})  
    = K( \vect{z}_{1}, \vect{z}'_{2})
    = K(\tilde{R}_{+} \vect{z}_{1}, \tilde{R}_{+} \vect{z}'_{2})
    = K(\vect{z}_{1}, \vect{z}_{2}).
\end{equation*}
\end{proof}

\begin{proof}[Proof of \cref{RIK2RIK}]
For any $\tilde{R} \in SO(n-1)$ and $\vect{x}_{1}, \vect{x}_{2} \in \bB^{n-1}$,
\begin{align*}
    \tilde{K}(\tilde{R}\vect{x}_{1}, \tilde{R}\vect{x}_{2})
    &=\tilde{K}^{\tilde{R}_{+}}(\tilde{R}\vect{x}_{1}, \tilde{R}\vect{x}_{2}) \\
    &=\int_{I_{\|\vect{x}_{2}\|}} \int_{I_{\|\vect{x}_{1}\|}} K(\tilde{R}_{+}^{\top}[\tilde{R}\vect{x}_{1}:z_{1}], \tilde{R}_{+}^{\top}[\tilde{R}\vect{x}_{2}:z_{2}]) \rd z_{1} \rd z_{2} \\
    &=\int_{I_{\|\vect{x}_{2}\|}} \int_{I_{\|\vect{x}_{1}\|}} K([\vect{x}_{1}:z_{1}], [\vect{x}_{2}:z_{2}]) \rd z_{1} \rd z_{2} =\tilde{K}(\vect{x}_{1}, \vect{x}_{2}),
\end{align*}
which shows the rotation-invariance of $\tilde{K}$. To show the reflection-invariance, note that $(\tilde{J}_{+} J) \in SO(n)$. For any $\vect{x}_{1}, \vect{x}_{2} \in \bB^{n-1}$,
\begin{align*}
    \tilde{K}(\tilde{J}\vect{x}_{1}, \tilde{J}\vect{x}_{2})
    &=\tilde{K}^{(\tilde{J}_{+} J)}(\tilde{J}\vect{x}_{1}, \tilde{R}\vect{x}_{2}) \\
    %&=\int_{I_{\|\vect{x}_{2}\|}} \int_{I_{\|\vect{x}_{1}\|}} K(J^{\top} \tilde{J}_{+}^{\top}[\tilde{J}\vect{x}_{1}:z_{1}], J^{\top} \tilde{J}_{+}^{\top}[\tilde{J}\vect{x}_{2}:z_{2}]) \rd z_{1} \rd z_{2} \\
    &=\int_{I_{\|\vect{x}_{2}\|}} \int_{I_{\|\vect{x}_{1}\|}} K([\vect{x}_{1}: -z_{1}], [\vect{x}_{2}: -z_{2}]) \rd z_{1} \rd z_{2} \\
    &=\int_{I_{\|\vect{x}_{2}\|}} \int_{I_{\|\vect{x}_{1}\|}} K([\vect{x}_{1}: z_{1}], [\vect{x}_{2}: z_{2}]) \rd z_{1} \rd z_{2} =\tilde{K}(\vect{x}_{1}, \vect{x}_{2}).
\end{align*}
\end{proof}

\begin{proof}[Proof of \cref{gene::rot}]
We may assume $R_{2}=E(\vect{r})$ for some $\vect{r} \in \bS^{n-1}$.
\begin{enumerate}[1)]
\item If $\vect{r}=R_{1}^{\top} e_{n}$, by \eqref{adj::ind::gen}, we have
\begin{align*}
    \sP^{*}_{R_{1}}(\tilde{k}_{\tilde{R_{1}} \vect{x}})(\vect{z})
    = \int_{I_{\|\vect{x}\|}} K(\vect{z}, R_{1}^{\top} [\tilde{R_{1}} \vect{x}:z]) \rd z 
    = \int_{I_{\|\vect{x}\|}} K(\vect{z}, E(\vect{r})^{\top} [\vect{x}:z]) \rd z = \sP^{*}_{E(\vect{r})}(\tilde{k}_{\vect{x}})(\vect{z}),  
\end{align*}
hence $\sP^{*}_{R_{1}}(\tilde{k}_{\tilde{R_{1}} \vect{x}})=\sP^{*}_{E(\vect{r})}(\tilde{k}_{\vect{x}})$ and $\bH_{R_{1}}=\bH_{R_{2}}$ follows.
\item If $\vect{r}=-R_{1}^{\top} e_{n}$, by \cref{Euler::repre}, there is a unique $\tilde{R}_{1} \in SO(n-1)$ such that
\begin{equation*}
    R_{1}=
    \begin{pmatrix}
    \tilde{J} \tilde{R}_{1} & \rvline & \mzero \\
    \hline
    \mzero & \rvline & -1
    \end{pmatrix} \cdot E(\vect{r}).
\end{equation*}
Again, we deduce $\bH_{R_{1}}=\bH_{R_{2}}$ since
\begin{align*}
    \sP^{*}_{R_{1}}(\tilde{k}_{\tilde{J} \tilde{R_{1}} \vect{x}})(\vect{z})
    = \int_{I_{\|\vect{x}\|}} K(\vect{z}, R_{1}^{\top} [\tilde{J} \tilde{R_{1}} \vect{x}: -z]) \rd z 
    = \int_{I_{\|\vect{x}\|}} K(\vect{z}, E(\vect{r})^{\top} [\vect{x}:z]) \rd z = \sP^{*}_{E(\vect{r})}(\tilde{k}_{\vect{x}})(\vect{z}).  
\end{align*}
\end{enumerate}
\end{proof}

\begin{proof}[Proof of \cref{unit:repre}]
It is trivial that $\rho(I)$ is the identity map and $\rho(R_{1} R_{2})=\rho(R_{1})\rho(R_{2})$ for any $R_{1}, R_{2} \in SO(n)$. Also, \eqref{gen::rot} and $\|k_{R\vect{z}}\|_{\bH(K)}^{2} =K(R\vect{z}, R\vect{z})=K(\vect{z}, \vect{z})=\|k_{\vect{z}}\|_{\bH(K)}^{2}$ together
imply that $\rho$ is unitary, so it only remains to show that the map $SO(n) \rightarrow \bH(K), R \mapsto \rho(R)f=f(R^{\top} \, \cdot)$ is continuous for any $f \in \bH(K)$. As $\rho$ is unitary, it suffices to show that this map is continuous at $I \in SO(n)$. We further assume that $f=\sum_{i=1}^{n} \alpha_{i} k_{\vect{z}_{i}}$ for some $\alpha_{1}, \alpha_{2}, \dots, \alpha_{n} \in \bC$ and $\vect{z}_{1}, \vect{z}_{2}, \dots, \vect{z}_{n} \in \bB^{n}$, since the linear span of generators is dense in $\bH(K)$. By \eqref{gen::rot} and triangular inequality,
\begin{align*}
    \|\rho(R)f-\rho(I)f\|_{\bH(K)} 
    % &\le
    % \sum_{i=1}^{n} |\alpha_{i}| \|\rho(R) k_{\vect{z}_{i}}- k_{\vect{z}_{i}}\|_{\bH(K)} \\
    \le \sum_{i=1}^{n} |\alpha_{i}| \|k_{R \vect{z}_{i}} - k_{\vect{z}_{i}} \|_{\bH(K)} 
    = \sum_{i=1}^{n} |\alpha_{i}| \sqrt{2(K(\vect{z}_{i}, \vect{z}_{i})-K(R \vect{z}_{i}, \vect{z}_{i}))} \rightarrow 0,
\end{align*}
as $R \rightarrow I$, due to the continuity of $K:\bB^{n} \times \bB^{n} \rightarrow \bR$.
\end{proof}

\begin{proof}[Proof of \cref{repn:btf}]
For any $f \in \bH$ and $\vect{x} \in \bB^{n-1}$,
\begin{equation*}
    \sP_{R} f(\vect{x}) =
    \int_{I_{\|\vect{x}\|}} f(R^{\top} [\vect{x}:z]) \rd z=\int_{I_{\|\vect{x}\|}} (\rho(R)f)([\vect{x}:z]) \rd z=\sP_{I} \circ \rho(R) f(\vect{x}),
\end{equation*}
hence $\sP_{R}=\sP_{I} \circ \rho(R)$. Then, it follows that
$\sP_{R}^{*}=(\sP_{I} \circ \rho(R))^{*}=\rho(R)^{*} \circ \sP_{I}^{*}=\rho(R^{\top}) \circ \sP_{I}^{*}$ for the back-projection, and by combining two formulas, we get $\sP_{R_{1}} \circ \sP_{R_{2}}^{*}=\sP_{I} \circ \rho(R_{1}) \circ \rho(R_{2}^{\top}) \circ \sP_{I}^{*}=\sP_{I} \circ \rho(R_{1}R_{2}^{\top}) \circ \sP_{I}^{*}$.
\end{proof}

\begin{lemma}[Schoenberg, \cite{schoenberg1942positive}]\label{schoenberg}
A continuous kernel $K:\bS^{n-1} \times \bS^{n-1} \rightarrow \bR$ is rotation-invariant if and only if there exists $c_{l} \ge 0$ for $l=0,1,\dots$ with $\sum_{l=0}^{\infty} c_{l} < \infty$ such that for any $\theta_{1}, \theta_{2} \in \bS^{n-1}$,
\begin{equation*}
    K(\theta_{1}, \theta_{2})=\sum_{l=0}^{\infty} c_{l} \cdot C_{l}^{n/2-1}(\theta_{1}^{\top} \theta_{2}),
\end{equation*}
where the sum converges absolutely and uniformly. 
% Let $p^{n/2-1}_{l}$ be the normalizing constant as in \eqref{Gegen::norm}. By \eqref{sphe::gene}, the $l$-th Schoenberg coefficient $c_{l}$ is given by
% \begin{align*}
%     c_{l}
%     % &=c_{l} \cdot C_{l}^{n/2-1}(\theta_{1}^{\top} \theta_{1}) \\
%     % &= \frac{c_{l}}{p^{n/2-1}_{l} |\bS^{n-2}|} \int_{\bS^{n-1}} C_{l}^{n/2-1}(\theta_{1}^{\top} \theta_{2}) C_{l}^{n/2-1}(\theta_{1}^{\top} \theta_{2}) \rd \theta_{2}\\
%     &= \frac{1}{p^{n/2-1}_{l} |\bS^{n-2}|} \int_{\bS^{n-1}} K(\theta_{1}, \theta_{2}) C_{l}^{n/2-1}(\theta_{1}^{\top} \theta_{2}) \rd \theta_{2},
% \end{align*}
% independent of the choice of $\theta_{1} \in \bS^{n-1}$.
\end{lemma}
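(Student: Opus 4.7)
The plan is to prove both directions via the spectral decomposition of a rotation-invariant integral operator together with the Funk-Hecke formula and Mercer's theorem. All tools needed are already collected in \cref{sec::sphe::har}.

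For the easy ``if'' direction, suppose $K(\theta_1,\theta_2)=\sum_{l\ge 0} c_l\,C_l^{n/2-1}(\theta_1^\top\theta_2)$ with $c_l\ge 0$ and $\sum c_l<\infty$. Rotation-invariance is immediate because the summand depends only on $\theta_1^\top\theta_2$. Each term is positive semidefinite: by \eqref{rk::Gegen}, $C_l^{n/2-1}(\theta_1^\top\theta_2)=p_l^{n/2-1}|\bS^{n-2}|\,Z_l(\theta_1,\theta_2)$, and $Z_l$ is a reproducing kernel. Since $|C_l^{n/2-1}(\theta_1^\top\theta_2)|\le C_l^{n/2-1}(1)=1$, the series converges absolutely and uniformly, whence $K$ is a continuous kernel.

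For the ``only if'' direction, I would first note that rotation-invariance together with two-point transitivity of $SO(n)$ on $\bS^{n-1}\times \bS^{n-1}$ (with respect to the inner-product invariant) shows that $K(\theta_1,\theta_2)=F(\theta_1^\top\theta_2)$ for some continuous $F:[-1,+1]\to\bR$. Define the integral operator
\begin{equation*}
    T_K:\cL_2(\bS^{n-1})\to\cL_2(\bS^{n-1}),\qquad (T_K f)(\theta_1)=\int_{\bS^{n-1}} K(\theta_1,\theta_2)f(\theta_2)\,\rd\theta_2.
\end{equation*}
Since $K$ is continuous on a compact space, $T_K$ is Hilbert-Schmidt (hence compact) and self-adjoint; positive semidefiniteness of $K$ makes it a positive operator. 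By the rotation-invariance of $K$, the operator $T_K$ commutes with every $\rho(R)$, so each $\cH_l(\bS^{n-1})$ is $T_K$-invariant. The Funk-Hecke identity (\cref{Funk::Hecke}) then yields that $T_K$ acts as the scalar
\begin{equation*}
    \lambda_l \;=\; |\bS^{n-2}|\,\langle F,\,C_l^{n/2-1}\rangle_{\cL_2(w^{n-3})}\;\ge\;0
\end{equation*}
on all of $\cH_l$, nonnegativity following from $T_K\succeq 0$.

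Applying Mercer's theorem to $K$ and grouping the orthonormal eigenbasis $\{Y_{lk}\}_{l,k}$ by degree $l$, we obtain the absolutely and uniformly convergent expansion
\begin{equation*}
    K(\theta_1,\theta_2)=\sum_{l=0}^{\infty}\lambda_l\sum_{k=1}^{N(n,l)}Y_{lk}(\theta_1)Y_{lk}(\theta_2)
    =\sum_{l=0}^{\infty}\lambda_l\,Z_l(\theta_1,\theta_2)
    =\sum_{l=0}^{\infty}c_l\,C_l^{n/2-1}(\theta_1^\top\theta_2),
\end{equation*}
where $c_l:=\lambda_l/(p_l^{n/2-1}|\bS^{n-2}|)\ge 0$ using \eqref{rk::Gegen}. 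Finally, setting $\theta_1=\theta_2=\theta$ in this absolutely convergent identity and using $C_l^{n/2-1}(1)=1$ gives $\sum_{l\ge 0}c_l=K(\theta,\theta)<\infty$, which is independent of $\theta$ by rotation-invariance.

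The one delicate step is the rearrangement from the raw Mercer series $\sum_{l,k}\lambda_l Y_{lk}(\theta_1)Y_{lk}(\theta_2)$ into the degree-wise sum $\sum_l \lambda_l Z_l$; I would justify this by noting that the inner finite sum over $k$ is the partial sum over a finite-dimensional eigenspace, so Mercer's absolute uniform convergence descends to the coarser grouping. The rest is bookkeeping with the normalization constant $p_l^{n/2-1}$.
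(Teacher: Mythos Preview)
The paper does not supply its own proof of this lemma: it is stated with a citation to Schoenberg (1942) and then invoked as a black box in the proof of \cref{scheon::gen}. So there is no ``paper's proof'' to compare against.

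Your argument is the standard one and is correct. The only mild imprecision is the appeal to two-point transitivity of $SO(n)$: for $n=2$ this fails (orientation obstructs it), but the conclusion $K(\theta_1,\theta_2)=F(\theta_1^\top\theta_2)$ still holds once you use the \emph{symmetry} $K(\theta_1,\theta_2)=K(\theta_2,\theta_1)$ of a kernel to kill the sign of the angle difference. You might mention this explicitly. The Mercer step and the degree-wise regrouping are fine, since Mercer gives absolute uniform convergence and the inner sums over $k$ are finite; the addition formula \eqref{rk::Gegen} then does the rest.
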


\begin{proof}[Proof of \cref{scheon::gen}]
Let $K:\bB^{n} \times \bB^{n} \rightarrow \bR$ be an $O(n)$-invariant continuous kernel. 
Fix $r_{1}, r_{2} \in [0,1]$ and define a new kernel on $\bS^{n-1}$ as
\begin{equation*}
    K_{r_{1}, r_{2}}(\theta_{1}, \theta_{2}) := 
    K(r_{1}\theta_{1}, r_{1}\theta_{2}) +
    K(r_{1}\theta_{1}, r_{2}\theta_{2}) + K(r_{2}\theta_{1}, r_{1}\theta_{2}) + K(r_{2}\theta_{1}, r_{2}\theta_{2}),
\end{equation*}
which is continuous and $O(n)$-invariant. By \cref{schoenberg}, it follows that there are $d_{l}(r_{1},r_{2})=d_{l}(r_{2},r_{1}) \ge 0$ with $\sum_{l=0}^{\infty} d_{l}(r_{1},r_{2}) < \infty$ such that for any $\theta_{1}, \theta_{2} \in \bS^{n-1}$,
\begin{equation*}
    K_{r_{1}, r_{2}}(\theta_{1}, \theta_{2}) =\sum_{l=0}^{\infty} d_{l}(r_{1}, r_{2}) C_{l}^{n/2-1}(\theta_{1}^{\top} \theta_{2}),
\end{equation*}
where the sum converges absolutely and uniformly on $\bS^{n-1}$. Since $K_{r_{1}, r_{2}}= K_{r_{2}, r_{1}}$, we get $K(r_{1}\theta_{1}, r_{2}\theta_{2}) = \left( 4 K_{r_{1}, r_{2}}(\theta_{1}, \theta_{2})- K_{r_{1}, r_{1}}(\theta_{1}, \theta_{2}) - K_{r_{2}, r_{2}}(\theta_{1}, \theta_{2}) \right)/8$, which results in
\begin{equation*}
    K(r_{1} \theta_{1}, r_{2} \theta_{2})=\sum_{l=0}^{\infty} c_{l}(r_{1}, r_{2}) C_{l}^{n/2-1}(\theta_{1}^{\top} \theta_{2}),
\end{equation*}
with $c_{l}(r_{1}, r_{2})=(4 d_{l}(r_{1}, r_{2})-d_{l}(r_{1}, r_{1})-c_{l}(r_{2}, r_{2}))/8$. The absolute and uniform convergence on $\bB^{n}$ follows from the fact that $[0,1]$ is compact. Also, note that $c_{l}(0,0)=0$ for $l \ge 1$ due to continuity, and
\begin{equation*}
    \sup_{r_{1},r_{2},\theta_{1},\theta_{2}} |K(r_{1} \theta_{1}, r_{2} \theta_{2})|
    =\max_{r \in [0,1]} K(r \theta, r \theta)
    =\max_{r \in [0,1]} \sum_{l=0}^{\infty} c_{l}(r,r) < \infty.
\end{equation*}
By \eqref{sphe::gene}, we obtain
\begin{align*}
    c_{l}(r_{1},r_{2})
    &=c_{l}(r_{1},r_{2}) \cdot C_{l}^{n/2-1}(\theta_{1}^{\top} \theta_{1}) \\
    &= \frac{c_{l}(r_{1},r_{2})}{p^{n/2-1}_{l} |\bS^{n-2}|} \int_{\bS^{n-1}} C_{l}^{n/2-1}(\theta_{1}^{\top} \theta_{2}) C_{l}^{n/2-1}(\theta_{1}^{\top} \theta_{2}) \rd \theta_{2}\\
    &=\frac{1}{p^{n/2-1}_{l} |\bS^{n-2}|} \int_{\bS^{n-1}} K(r_{1}\theta_{1}, r_{2}\theta_{2}) C_{l}^{n/2-1}(\theta_{1}^{\top} \theta_{2}) \rd \theta_{2},        
\end{align*}
independent of the choice of $\theta_{1} \in \bS^{n-1}$. Hence,
\begin{align*}
    c_{l}(r_{1},r_{2})
    &=\frac{1}{p^{n/2-1}_{l} |\bS^{n-2}| |\bS^{n-1}|} \int_{\bS^{n-1}} \int_{\bS^{n-1}} K(r_{1}\theta_{1}, r_{2}\theta_{2}) C_{l}^{n/2-1}(\theta_{1}^{\top} \theta_{2}) \rd \theta_{2} \rd \theta_{1},        
\end{align*}
which demonstrates that $c_{l}$ is a continuous kernel on $[0,1]$.

To show the opposite direction, define a truncated kernel $K_{T}=\sum_{l=0}^{T} c_{l} \otimes C_{l}^{n/2-1}$ for some $T \in \mathbb{N}$, which is $O(n)$-invariant and continuous. The bivariate function $K$ defined in \eqref{sch::series} converges absolutely and uniformly since
\begin{equation*}
    \sup_{r_{1},r_{2},\theta_{1},\theta_{2}} \|K(r_{1} \theta_{1}, r_{2} \theta_{2})-K_{T}(r_{1} \theta_{1}, r_{2} \theta_{2}) \| \le \max_{r \in [0,1]} \sum_{l=T+1}^{\infty} c_{l}(r,r) \rightarrow 0,    
\end{equation*}
as $T \rightarrow \infty$. Hence, $K:\bB^{n} \times \bB^{n} \rightarrow \bR$ is also an $O(n)$-invariant continuous kernel.
\end{proof}

\begin{proof}[Proof of \cref{Xray::Aronszajn}]
For $l=0,1, \dots$, let $K_{l}=c_{l} \otimes C_{l}^{n/2-1}$. From \eqref{rk::Gegen} in \cref{sec::sphe::har}, we note that 
$\bH(C_{l}^{n/2-1})$ spans $\cH_{l}(\bS^{n-1})$ with an ONB 
\begin{equation*}
    \{(p^{n/2-1}_{l} |\bS^{n-2}|)^{1/2} Y_{lk}(\theta): k=1,2,\dots, N(n,l) \}.
\end{equation*}
As the kernel $K_{l}$ is the tensor product of $c_{l}$ and $C_{l}^{n/2-1}$, its RKHS becomes $\bH(K_{l})=\bH(c_{l}) \otimes \bH(C_{l}^{n/2-1})= \{\sum_{k=1}^{N(n,l)} h_{lk}(r) Y_{lk}(\theta): h_{lk} \in \bH(c_{l}) \}$.
Now, consider the countable direct sum
\begin{equation*}
    \widehat{\bH}:=\bigoplus_{l=0}^{\infty} \bH(K_{l}) = \left\{ \hat{f}= (f_{l})_{l=0}^{\infty}: f_{l} \in \bH(K_{l}), \sum_{l=0}^{\infty} \|f_{l}\|^{2}_{\bH(K_{l})} < \infty \right\},
\end{equation*}
which is a Hilbert space, equipped with the inner product $\langle \hat{f}, \hat{f '} \rangle = \sum_{l=0}^{\infty} \langle f_{l}, f '_{l} \rangle_{\bH(K_{l})}$. For any $z \in \bB^{n}$, define $\widehat{k_{\vect{z}}} :=((k_{l})_{\vect{z}})_{l=0}^{\infty}$. Note that $\widehat{k_{\vect{z}}} \in \widehat{\bH}$ since
\begin{equation*}
    \|\widehat{k_{\vect{z}}}\|^{2}= \sum_{l=0}^{\infty} \|(k_{l})_{\vect{z}}\|^{2}_{\bH(K_{l})} \le \max_{r \in [0,1]} \sum_{l=0}^{\infty} c_{l}(r,r) < \infty.
\end{equation*}

Therefore, there is a well-defined summation map $S: \widehat{\bH} \rightarrow (\mathcal{C}(\bB^{n}), \|\cdot\|_{\infty})$ given by $(S \hat{f}) (\vect{z}) := \sum_{l=0}^{\infty} f_{l} (\vect{z}) = \langle \hat{f}, \hat{k_{\vect{z}}} \rangle, \ \vect{z} \in \bB^{n}$,
which is continuous by the Cauchy-Schwarz inequality. By the Funk-Hecke formula (\cref{Funk::Hecke}), for any $l \ge 0$ and $r \in [0, 1]$,
\begin{align*}
    \int_{\bS^{n-1}} (S \hat{f}) (r \theta_{2}) C_{l}^{n/2-1}(\theta_{1}^{\top} \theta_{2}) \rd \theta_{2}
    &=\int_{\bS^{n-1}} \left( \sum_{l=0}^{\infty} f_{l} (r \theta_{2}) \right) C_{l}^{n/2-1}(\theta_{1}^{\top} \theta_{2})  \rd \theta_{2} \\
    &=p^{n/2-1}_{l} |\bS^{n-2}| f_{l}(r \theta_{1}),
\end{align*}
so the null space of $S$ is trivial, i.e. $\cN(S)=\{0\}$. Define
\begin{align*}
    \bH: &=\cR(S) = \left\{ \sum_{l=0}^{\infty} f_{l}: f_{l} \in \bH(K_{l}), \sum_{l=0}^{\infty} \|f_{l}\|^{2}_{\bH(K_{l})} < \infty \right\} \\
    &= \left\{\sum_{l=0}^{\infty} \sum_{k=1}^{N(n,l)} h_{lk}(r) Y_{lk}(\theta): h_{lk} \in \bH(c_{l}), \sum_{l=0}^{\infty} \sum_{k=1}^{N(n,l)} \frac{\|h_{lk}\|^{2}_{\bH(c_{l})}}{p^{n/2-1}_{l} |\bS^{n-2}|} <\infty \right\},
\end{align*}
then $S: \widehat{\bH} \rightarrow \bH$ is a vector space isomorphism. Thus, if we endow $\bH$ the inner product $\langle S \hat{f}, S \hat{f '} \rangle_{\bH}= \langle \hat{f}, \hat{f '} \rangle$, $\bH$ becomes a Hilbert space. Finally, recall that $S\hat{k_{\vect{z}}}=\sum_{l=0}^{\infty} K_{l}(\cdot, \vect{z})= K(\cdot, \vect{z})=k_{\vect{z}}$ for any $\vect{z} \in \bB^{n}$, hence for any $\hat{f} \in \widehat{\bH}$, we get $\langle S \hat{f}, k_{\vect{z}} \rangle_{\bH} = \langle \hat{f}, \hat{k_{\vect{z}}} \rangle = S \hat{f} (\vect{z})$,
which demonstrates that $\bH=\bH(K)$.
\end{proof}

\subsection{Proofs in Appendix \ref{sec:Gauss:ker}}
\begin{lemma}\label{double::erf}
For any $c_{1}, c_{2} \in \bR$,
\begin{equation*}
    \int_{-c_{2}}^{c_{2}} \int_{-c_{1}}^{c_{1}} \exp(-\gamma (z_{1}-z_{2})^{2}) \rd z_{1} \rd z_{2}  
    = \frac{1}{2\gamma} \sum_{i, j \in \bZ_{2}} (-1)^{i+j} \Phi[\sqrt{\gamma}((-1)^{i} c_{1}+ (-1)^{j} c_{2})].
\end{equation*}
\end{lemma}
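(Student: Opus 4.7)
The plan is a direct computation by Fubini and repeated antidifferentiation. First I would perform the substitution $u = \sqrt{\gamma}(z_{1} - z_{2})$ in the inner integral to obtain
\begin{equation*}
    \int_{-c_{1}}^{c_{1}} \exp(-\gamma (z_{1}-z_{2})^{2}) \rd z_{1} = \frac{\sqrt{\pi}}{2 \sqrt{\gamma}} \left[ \erf(\sqrt{\gamma}(c_{1}-z_{2})) + \erf(\sqrt{\gamma}(c_{1}+z_{2})) \right],
\end{equation*}
using the oddness of $\erf$ to fold the lower endpoint.

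Next I would integrate over $z_{2} \in [-c_{2}, c_{2}]$. Since $\Phi$ is, by its definition in the excerpt, an antiderivative of $\sqrt{\pi}\,\erf$, the chain rule gives $\frac{\rd}{\rd z_{2}} \Phi(\sqrt{\gamma}(c_{1} \pm z_{2})) = \pm \sqrt{\gamma}\sqrt{\pi}\, \erf(\sqrt{\gamma}(c_{1} \pm z_{2}))$. Evaluating at the endpoints yields
\begin{equation*}
    \int_{-c_{2}}^{c_{2}} \int_{-c_{1}}^{c_{1}} \exp(-\gamma (z_{1}-z_{2})^{2}) \rd z_{1} \rd z_{2} = \frac{1}{\gamma} \left[ \Phi(\sqrt{\gamma}(c_{1}+c_{2})) - \Phi(\sqrt{\gamma}(c_{1}-c_{2})) \right].
\end{equation*}

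Finally I would match this two-term expression to the four-term symmetric sum in the statement. The key observation is that $\Phi$ is an \emph{even} function: from the closed form $\Phi(z) = \sqrt{\pi} z \cdot \erf(z) + e^{-z^{2}} - 1$ and the oddness of $\erf$, one reads off $\Phi(-z) = \Phi(z)$. Consequently $\Phi(\sqrt{\gamma}(-c_{1}-c_{2})) = \Phi(\sqrt{\gamma}(c_{1}+c_{2}))$ and $\Phi(\sqrt{\gamma}(-c_{1}+c_{2})) = \Phi(\sqrt{\gamma}(c_{1}-c_{2}))$, so
\begin{equation*}
    \sum_{i, j \in \bZ_{2}} (-1)^{i+j} \Phi[\sqrt{\gamma}((-1)^{i} c_{1}+ (-1)^{j} c_{2})] = 2 \left[ \Phi(\sqrt{\gamma}(c_{1}+c_{2})) - \Phi(\sqrt{\gamma}(c_{1}-c_{2})) \right],
\end{equation*}
and dividing by $2\gamma$ produces exactly the claimed identity. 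There is no genuine obstacle here; the only point requiring care is bookkeeping of signs when applying the fundamental theorem of calculus twice, and recognizing the evenness of $\Phi$ so that the compact symmetric form on the right-hand side collapses correctly.
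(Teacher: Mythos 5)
Your proposal is correct and follows essentially the same route as the paper: substitute to reduce the inner integral to error functions, then integrate in $z_{2}$ using $\Phi$ as an antiderivative of $\sqrt{\pi}\,\erf$. The only cosmetic difference is that you fold the two $\erf$ terms via oddness and then recover the four-term symmetric sum via the evenness of $\Phi$, whereas the paper integrates the two $\erf$ terms directly into the four $\Phi$ terms; both computations agree.
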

\begin{proof}
For any $c_{1}, c_{2} \in \bR$,
\begin{align*}
    &\int_{-c_{2}}^{c_{2}} \int_{-c_{1}}^{c_{1}} \exp(-\gamma (z_{1}-z_{2})^{2}) \rd z_{1} \rd z_{2} = \frac{1}{\sqrt{\gamma}} \int_{-c_{2}}^{c_{2}} \left(\int_{\sqrt{\gamma}(-c_{1}-z_{2})}^{\sqrt{\gamma}(c_{1}-z_{2})} e^{-t^{2}} \rd t \right) \rd z_{2} \nonumber \\
    &= \frac{\sqrt{\pi}}{2\sqrt{\gamma}} \int_{-c_{2}}^{c_{2}} \left[\erf(\sqrt{\gamma}(c_{1}-z_{2}))-\erf(\sqrt{\gamma}(-c_{1}-z_{2})) \right] \rd z_{2} \nonumber \\
    %=&\frac{\sqrt{\pi}}{2\gamma} \left(\int_{\sqrt{\gamma}(c_{1}-c_{2})}^{{\sqrt{\gamma}(c_{1}+c_{2})}} \erf(z) \rd z +\int_{{\sqrt{\gamma}(-c_{1}-c_{2})}}^{\sqrt{\gamma}(-c_{1}+c_{2})} \erf(z) \rd z \right) \nonumber \\
    &= \frac{1}{2\gamma} \sum_{i, j \in \bZ_{2}} (-1)^{i+j} \Phi[\sqrt{\gamma}((-1)^{i} c_{1}+ (-1)^{j} c_{2})].
\end{align*}        
\end{proof}

%%%%%
\begin{proof}[Proof of \cref{gauss::adj}]
By \cref{double::erf}, 
\begin{align*}
    \tilde{K}(\vect{x}_{1},\vect{x}_{2}) 
    &= \int_{I_{\|\vect{x}_{2}\|}} \int_{I_{\|\vect{x}_{1}\|}} K([\vect{x}_{1}:z_{1}], [\vect{x}_{2}:z_{2}]) \rd z_{1} \rd z_{2} \\
    %=&\int_{I_{\|\vect{x}_{2}\|}} \int_{I_{\|\vect{x}_{1}\|}} \exp(-\gamma (\|\vect{x}_{1}-\vect{x}_{2}\|^{2}+(z_{1}-z_{2})^{2})) \rd z_{1} \rd z_{2} \\
    &= e^{-\gamma \|\vect{x}_{1}-\vect{x}_{2}\|^{2}} \int_{-W(\vect{x}_{2})}^{W(\vect{x}_{2})} \int_{-W(\vect{x}_{1})}^{W(\vect{x}_{1})}  \exp(-\gamma (z_{1}-z_{2})^{2}) \rd z_{1} \rd z_{2} \\
    &= \frac{e^{-\gamma \|\vect{x}_{1}-\vect{x}_{2}\|^{2}}}{2\gamma} \sum_{i, j \in \bZ_{2}} (-1)^{i+j} \Phi[\sqrt{\gamma}((-1)^{i} W(\vect{x}_{1})+ (-1)^{j}W(\vect{x}_{2}))].
\end{align*}
For the adjoint map of the induced generator, note that we define the Euclidean projection to be $R\vect{z} = [\mP_{R}(\vect{z}): \vect{z}^{\top} \vect{r}]$, hence by \eqref{adj::ind::gen},
\begin{align*}
    \sP_{R}^{*} ( \tilde{k}_{\vect{x}} )(\vect{z})
    =&\int_{I_{\|\vect{x}\|}} K(R \vect{z}, [\vect{x}:z]) \rd z
    =\int_{I_{\|\vect{x}\|}} K([\mP_{R}(\vect{z}): \vect{z}^{\top} \vect{r}], [\vect{x}:z]) \rd z \\
    =&e^{-\gamma \|\vect{x}-\mP_{R}(\vect{z})\|^{2}} \int_{-W(\vect{x})}^{W(\vect{x})}  \exp(-\gamma(z-\vect{z}^{\top} \vect{r})^{2}) \rd z \\
    =& \frac{\sqrt{\pi} e^{-\gamma \|\vect{x}-\mP_{R}(\vect{z})\|^{2}}}{2\sqrt{\gamma}} \sum_{i \in \bZ_{2}} (-1)^{i} \erf[\sqrt{\gamma} ((-1)^{i} W(\vect{x})-\mP_{R}(\vect{z}))].
\end{align*}
\end{proof}

%%%%
\begin{proof}[Proof of \cref{rot::dist}]
To ease confusion, we replace by $\theta = \vect{r}$ and $(\tilde{\theta}, \phi_{n-1})=(\tilde{\vect{r}}, \arccos (r))$ in the proof. Note that
\begin{align*}
    \|[\vect{x}_{1}:z_{1}]-R^{\top}[\vect{x}_{2}:z_{2}]\|^{2}
    =& \|[\vect{x}_{1}:z_{1}]-E(\theta)^{\top}[\tilde{R}^{\top} \vect{x}_{2}:z_{2}]\|^{2}.
    %=& \|E(\theta)[\vect{x}_{1}:z_{1}]-[\tilde{R}^{\top} \vect{x}_{2}:z_{2}]\|^{2}.
\end{align*}
Since $\cos \phi_{n-1} = r$ and $\sin \phi_{n-1} = w(r)$, 
\begin{equation*}
    E(\theta)^{\top}=
    \begin{pmatrix}
    E(\tilde{\theta})^{\top} & \rvline & \mzero \\
    \hline
    \mzero & \rvline & 1
    \end{pmatrix} \cdot R_{\phi_{n-1}}^{n-1}=
    \begin{pmatrix}
    E(\tilde{\theta})^{\top} & \rvline & \mzero \\
    \hline
    \mzero & \rvline & 1
    \end{pmatrix} \cdot
    \begin{pmatrix}
    \mI_{n-2} & \rvline & \mzero \\
    \hline
    \mzero & \rvline &
    \begin{matrix}
        r & w(r) \\
        -w(r) & r
    \end{matrix}
    \end{pmatrix},
\end{equation*}
we get
\begin{align*}
    R^{\top}[\vect{x}_{2}:z_{2}]
    &=E(\theta)^{\top}[\tilde{R}^{\top} \vect{x}_{2}:z_{2}]=
    \begin{pmatrix}
    E(\tilde{\theta})^{\top} & \rvline & \mzero \\
    \hline
    \mzero & \rvline & 1
    \end{pmatrix} \cdot
    \begin{pmatrix}
    \mI_{n-2} & \rvline & \mzero \\
    \hline
    \mzero & \rvline &
    \begin{matrix}
        r & w(r) \\
        -w(r) & r
    \end{matrix}
    \end{pmatrix} \cdot
    \begin{pmatrix}
        \tilde{\vect{x}}_{2 \tilde{R}} \\ x_{2 \tilde{R}} \\ z_{2}
    \end{pmatrix} \\
    &=\begin{pmatrix}
    E(\tilde{\theta})^{\top} & \rvline & \mzero \\
    \hline
    \mzero & \rvline & 1
    \end{pmatrix} \cdot
    \begin{pmatrix}
        \tilde{\vect{x}}_{2 \tilde{R}} \\ r \cdot x_{2 \tilde{R}} +w(r) \cdot z_{2} \\ \hline -w(r) \cdot x_{2 \tilde{R}} +r \cdot z_{2}
    \end{pmatrix}.
\end{align*}
Therefore, 
\begin{align*}
    &\|[\vect{x}_{1}:z_{1}]-R^{\top}[\vect{x}_{2}:z_{2}]\|^{2} \\
    &=\left\|\vect{x}_{1}-E(\tilde{\theta})^{\top}
    \begin{pmatrix}
        \tilde{\vect{x}}_{2 \tilde{R}} \\ r \cdot x_{2 \tilde{R}} +w(r) \cdot z_{2}
    \end{pmatrix}\right\|^{2} + (z_{1}+w(r) \cdot x_{2 \tilde{R}} -r \cdot z_{2})^{2} \\
    &=\left\|E(\tilde{\theta}) \vect{x}_{1}-
    \begin{pmatrix}
        \tilde{\vect{x}}_{2 \tilde{R}} \\ r \cdot x_{2 \tilde{R}} +w(r) \cdot z_{2}
    \end{pmatrix}\right\|^{2} + (z_{1}+w(r) \cdot x_{2 \tilde{R}} -r \cdot z_{2})^{2} \\
    &=\left\|\begin{pmatrix}
        \tilde{\vect{x}}_{1 \theta} \\ x_{1 \theta}
    \end{pmatrix}-
    \begin{pmatrix}
        \tilde{\vect{x}}_{2 \tilde{R}} \\ r \cdot x_{2 \tilde{R}} +w(r) \cdot z_{2}
    \end{pmatrix}\right\|^{2} + (z_{1}+w(r) \cdot x_{2 \tilde{R}} -r \cdot z_{2})^{2} \\
    &=\|\tilde{\vect{x}}_{1 \theta}-\tilde{\vect{x}}_{2 \tilde{R}}\|^{2}+(x_{1 \theta}-r \cdot x_{2 \tilde{R}} -w(r) \cdot z_{2})^{2}+ (z_{1}+w(r) \cdot x_{2 \tilde{R}} -r \cdot z_{2})^{2}.
\end{align*}
If $r= \pm 1$, then $w(r)=0$, thus
\begin{align*}
    \|[\vect{x}_{1}:z_{1}]-R^{\top}[\vect{x}_{2}:z_{2}]\|^{2}
    &=\|\tilde{\vect{x}}_{1 \theta}-\tilde{\vect{x}}_{2 \tilde{R}}\|^{2}+(x_{1 \theta}-r \cdot x_{2 \tilde{R}})^{2}+ (z_{1} -r \cdot z_{2})^{2} \\
    &=\|[\tilde{\vect{x}}_{1 \theta}:r \cdot x_{1 \theta}]-[\tilde{\vect{x}}_{2 \tilde{R}}:x_{2 \tilde{R}}]\|^{2}+ (z_{1} -r \cdot z_{2})^{2}.
\end{align*}
If $r \in (-1,+1)$, then $w(r)>0$, and we get
\begin{align*}
    &\|[\vect{x}_{1}:z_{1}]-R^{\top}[\vect{x}_{2}:z_{2}]\|^{2} \\
    &=\|\tilde{\vect{x}}_{1 \theta}-\tilde{\vect{x}}_{2 \tilde{R}}\|^{2}+ (\mu_{2}^{R}-w(r) \cdot z_{2})^{2}  +\left\{\left( z_{1}-\frac{\mu_{1}^{R}}{w(r)} \right)-r \left( z_{2}-\frac{\mu_{2}^{R}}{w(r)} \right) \right\}^{2} \\
    &=\|\tilde{\vect{x}}_{1 \theta}-\tilde{\vect{x}}_{2 \tilde{R}}\|^{2} +
    \left[\begin{pmatrix}
    z_{1} \\ z_{2}
    \end{pmatrix}-\frac{1}{w(r)}\begin{pmatrix}
    \mu_{1}^{R} \\ \mu_{2}^{R}
    \end{pmatrix}\right]^{\top}
    \begin{pmatrix}
    1 & -r \\
    -r & 1
    \end{pmatrix}
    \left[\begin{pmatrix}
    z_{1} \\ z_{2}
    \end{pmatrix}-\frac{1}{w(r)}\begin{pmatrix}
    \mu_{1}^{R} \\ \mu_{2}^{R}
    \end{pmatrix}\right],
\end{align*}
where $\mu_{1}^{R}=r \cdot x_{1 \theta}-x_{2 \tilde{R}}, \    \mu_{2}^{R}=x_{1 \theta}-r \cdot x_{2 \tilde{R}}$.
\end{proof}

\begin{proof}[Proof of \cref{gauss::weight}]
Since the back-then-forward projection  only depends on the relative angle $R=R_{2}R_{1}^{\top} \in SO(n)$ by \cref{repn:btf}, we may assume $R_{1}=R$ and $R_{2}=I$.
If $R \sim I$, then by \eqref{adj::ind::gen} and \cref{rot::dist},
\begin{align*}
    &(\sP_{R} \circ \sP_{I}^{*} \tilde{k}_{\vect{x}_{2}})(\vect{x}_{1}) \\
    =&\int_{I_{\|\vect{x}_{2}\|}} \int_{I_{\|\vect{x}_{1}\|}} K( [\vect{x}_{1}:z_{1}], R^{\top} [\vect{x}_{2}:z_{2}]) \rd z_{1} \rd z_{2} \\
    =&\int_{I_{\|\vect{x}_{2}\|}} \int_{I_{\|\vect{x}_{1}\|}} \exp(-\gamma \|[\vect{x}_{1}:z_{1}]-R^{\top}[\vect{x}_{2}:z_{2}]\|^{2}) \rd z_{1} \rd z_{2} \\
    =&\exp(-\gamma \|[\tilde{\vect{x}}_{1 \vect{r}}:r \cdot x_{1 \vect{r}}]-\tilde{R}^{\top} \vect{x}_{2}\|^{2}) \int_{I_{\|\vect{x}_{2}\|}} \int_{I_{\|\vect{x}_{1}\|}} \exp(-\gamma (z_{1}-r \cdot z_{2})^{2}) \rd z_{1} \rd z_{2}.
\end{align*}
Applying \cref{double::erf} with a change of variables, the above formula becomes
\begin{align*}
    &(\sP_{R} \circ \sP_{I}^{*} \tilde{k}_{\vect{x}_{2}})(\vect{x}_{1}) 
    = \frac{\exp(-\gamma \|[\tilde{\vect{x}}_{1 \vect{r}}:r x_{1 \vect{r}}]-\tilde{R}^{\top} \vect{x}_{2}\|^{2})}{2\gamma} \times \\
    & \quad \sum_{i, j \in \bZ_{2}} (-1)^{i+j} \Phi [\sqrt{\gamma}((-1)^{i} W(\vect{x}_{1})+ (-1)^{j}W(\vect{x}_{2}))].
\end{align*}
On the other hand, if $R \nsim I$, i.e. $r \in (-1,+1)$, then by \cref{rot::dist},    
\begin{align*}
    &(\sP_{R} \circ \sP_{I}^{*} \tilde{k}_{\vect{x}_{2}})(\vect{x}_{1}) \\
    =&\int_{I_{\|\vect{x}_{2}\|}} \int_{I_{\|\vect{x}_{1}\|}} K( [\vect{x}_{1}:z_{1}], R^{\top} [\vect{x}_{2}:z_{2}]) \rd z_{1} \rd z_{2} \\
    =&\int_{I_{\|\vect{x}_{2}\|}} \int_{I_{\|\vect{x}_{1}\|}} \exp(-\gamma \|[\vect{x}_{1}:z_{1}]-R^{\top}[\vect{x}_{2}:z_{2}]\|^{2}) \rd z_{1} \rd z_{2} \\
    =&\exp(-\gamma \|\tilde{\vect{x}}_{1 \vect{r}}-\tilde{\vect{x}}_{2 R}\|^{2}) \times \int_{I_{\|\vect{x}_{2}\|}} \int_{I_{\|\vect{x}_{1}\|}} \exp (-\gamma \left[\begin{pmatrix}
    z_{1} \\ z_{2}
    \end{pmatrix}-\frac{1}{w(r)}\begin{pmatrix}
    \mu_{1}^{R} \\ \mu_{2}^{R}
    \end{pmatrix}\right]^{\top}
    \begin{pmatrix}
    1 & -r \\
    -r & 1
    \end{pmatrix} \\
    &\qquad \left[\begin{pmatrix}
    z_{1} \\ z_{2}
    \end{pmatrix}-\frac{1}{w(r)}\begin{pmatrix}
    \mu_{1}^{R} \\ \mu_{2}^{R}
    \end{pmatrix}\right] ) \rd z_{1} \rd z_{2}.
\end{align*}
Let $(X_{1}, X_{2})$ be the bivariate normal distribution with mean and covariance matrix
\begin{equation*}
    \mu=\frac{1}{w(r)}(\mu_{1}^{R}, \mu_{2}^{R}), \quad
    \Sigma=\frac{1}{2\gamma}
    \begin{pmatrix}
    1 & -r \\
    -r & 1
    \end{pmatrix}^{-1}
    =\frac{1}{2\gamma(1-r^{2})}
    \begin{pmatrix}
    1 & r \\
    r & 1
    \end{pmatrix},
\end{equation*}
respectively. Then, we get
\begin{align*}
    &(\sP_{R} \circ \sP_{I}^{*} \tilde{k}_{\vect{x}_{2}})(\vect{x}_{1}) \\
    =&2\pi |\Sigma|^{1/2} \cdot \exp(-\gamma \|\tilde{\vect{x}}_{1 \vect{r}}-\tilde{\vect{x}}_{2 R}\|^{2}) \cdot \bP((X_{1},X_{2}) \in I_{\|\vect{x}_{1}\|} \times I_{\|\vect{x}_{2}\|}) \\
    =&\frac{\pi \cdot \exp(-\gamma \|\tilde{\vect{x}}_{1 \vect{r}}-\tilde{\vect{x}}_{2 R}\|^{2})}{\gamma w(r)} \bP((X_{1},X_{2}) \in I_{\|\vect{x}_{1}\|} \times I_{\|\vect{x}_{2}\|}) \\ %[-W(\vect{x}_{1}), W(\vect{x}_{1})] \times [-W(\vect{x}_{2}), W(\vect{x}_{2})]) \\
    %=&\frac{\pi \cdot \exp(-\gamma \|\tilde{\vect{x}}_{1 \vect{r}}-\tilde{\vect{x}}_{2 R}\|^{2})}{w(r)} \bP((Z_{1},Z_{2}) \in \sqrt{2\gamma}[(w(r) \cdot I_{\|\vect{x}_{1}\|}-\mu_{1}^{R}) \times (w(r) \cdot I_{\|\vect{x}_{2}\|}-\mu_{2}^{R})]) \\
    =&\frac{\pi \cdot \exp(-\gamma \|\tilde{\vect{x}}_{1 \vect{r}}-\tilde{\vect{x}}_{2 R}\|^{2})}{\gamma w(r)} \times \\
    &\sum_{i, j \in \bZ_{2}} (-1)^{i+j} 
    \Phi_{2}^{r}\left[\sqrt{2\gamma} ((-1)^{i} w(r) W(\vect{x}_{1})-\mu_{1}^{R}), \sqrt{2\gamma} ((-1)^{j} w(r) W(\vect{x}_{2})-\mu_{2}^{R})\right],
\end{align*}
since $(Z_{1}, Z_{2}):=\sqrt{2\gamma}(w(r) (X_{1}, X_{2})-(\mu_{1}^{R}, \mu_{2}^{R}))$ is the standardized bivariate normal distribution with the correlation $\rho=r$.
\end{proof}

\end{appendix}

\begin{acks}[Acknowledgments]
We thank Prof. Micha\"el Unser (EPFL) for several constructive comments.
\end{acks}

\bibliographystyle{imsart-number}
\bibliography{bibliography}   

\end{document}